\definecolor{ltgrey}{RGB}{180, 187, 198}
\numberwithin{equation}{section}
\numberwithin{figure}{section}
\newtheorem{theorem}{Theorem}[section]
\newtheorem{lemma}[theorem]{Lemma}
\newtheorem{proposition}[theorem]{Proposition}
\newtheorem{corollary}[theorem]{Corollary}
\newtheorem{fact}[theorem]{Fact}
\newtheorem{conjecture}[theorem]{Conjecture}
\newtheorem{remark}[theorem]{Remark}
\newtheorem{example}[theorem]{Example}
\theoremstyle{definition}
\newtheorem{definition}[theorem]{Definition}
\newcommand{\C}{{\mathbb{C}}}
\newcommand{\Z}{{\mathbb{Z}}}
\newcommand{\Q}{{\mathbb{Q}}}
\renewcommand{\P}{{\mathbb{P}}}
\newcommand{\g}{\mathfrak{g}}
\DeclareMathOperator{\Stab}{Stab}
\DeclareMathOperator{\Lie}{Lie}
\DeclareMathOperator{\row}{row}
\DeclareMathOperator{\col}{col}
\newcommand{\hsm}{{\hspace{1mm}}}
\newcommand{\Symm}{{\mathfrak{S}}}
\renewcommand{\Stab}{{\mathrm{Stab}}}
\newcommand{\Flags}{{\mathcal{F}\ell ags}}
\newcommand{\Hess}{{\mathcal{H}ess}}
\newcommand{\src}{{\mathrm{src}}}
\newcommand{\tgt}{{\mathrm{tgt}}}
\newcommand{\sk}{{\mathrm{sk}}}
\newcommand{\SK}{{\mathrm{SK}}} 
\newcommand{\asc}{{\mathrm{asc}}}
\newcommand{\inv}{{\mathrm{inv}}}
\begin{document}

\title{The cohomology of abelian Hessenberg varieties and the Stanley-Stembridge conjecture}

\author{Megumi Harada}
\address{Department of Mathematics and
Statistics\\ McMaster University\\ 1280 Main Street West\\ Hamilton, Ontario L8S4K1\\ Canada}
\email{Megumi.Harada@math.mcmaster.ca}
\urladdr{\url{http://www.math.mcmaster.ca/Megumi.Harada/}}
\thanks{The first author is partially supported by an NSERC Discovery Grant and a Canada Research Chair (Tier 2) award.}

\author{Martha Precup}
\address{Department of Mathematics\\ Northwestern University \\ 2033 Sheridan Road \\ Evanston, Illinois 60208 \\ U.S.A. }
\email{mprecup@math.northwestern.edu}
\urladdr{\url{http://www.math.northwestern.edu/~mprecup/}}

\keywords{Stanley-Stembridge conjecture, symmetric functions, e-positivity, Hessenberg varieties, abelian ideal} 

\date{\today}


\begin{abstract}
We define a subclass of Hessenberg varieties called abelian Hessenberg varieties, inspired by the theory of abelian ideals in a Lie algebra developed by Kostant and Peterson. We give an inductive formula for the $\Symm_n$-representation on the cohomology of an abelian regular semisimple Hessenberg variety with respect to the action defined by Tymoczko. Our result implies that a graded version of the Stanley-Stembridge conjecture holds in the abelian case, and generalizes results obtained by Shareshian-Wachs and Teff. Our proof uses previous work of Stanley, Gasharov, Shareshian-Wachs, and Brosnan-Chow, as well as results of the second author on the geometry and combinatorics of Hessenberg varieties. As part of our arguments, we obtain inductive formulas for the Poincar\'e polynomials of regular abelian Hessenberg varieties. 
\end{abstract}

\maketitle

\setcounter{tocdepth}{1}
\tableofcontents

\section{Introduction}\label{sec:intro}

Hessenberg varieties in type A are subvarieties of the full flag variety $\Flags(\C^n)$ of nested sequences of linear subspaces in $\C^n$.  These varieties are parameterized by a choice of linear operator $\mathsf{X}\in \mathfrak{gl}(n,\C)$ and Hessenberg funciton $h: \{1,2,...,n\} \to \{1,2,...,n\}$.  We denote the corresponding Hessenberg variety by $\Hess(\mathsf{X}, h)$. The geometry and (equivariant) topology of Hessenberg varieties has been studied
extensively since the late 1980s \cite{DeMari1987, DeMProSha92}.  This
subject lies at the intersection of, and makes connections between,
many research areas such as geometric representation theory, 
combinatorics,
and
algebraic geometry and topology.

In this manuscript, we are concerned with the connection between Hessenberg varieties and the famous Stanley-Stembridge conjecture in combinatorics, which states that
the chromatic symmetric function of the incomparability graph of a so-called $(3+1)$-free poset is $e$-positive, i.e., it is a non-negative linear combination of elementary symmetric polynomials \cite[Conjecture 5.5]{StanleyStembridge1993} (see also \cite{Stanley1995}). Guay-Paquet has proved that this conjecture can be reduced to the statement that the chromatic symmetric function of the incomparability graph of a unit interval order is $e$-positive \cite{Guay-Paquet2013}.
Shareshian and Wachs then linked the Stanley-Stembridge conjecture to Hessenberg varieties via the ``dot action'' $\Symm_n$-representation on the cohomology ring of a regular semisimple Hessenberg variety defined by Tymoczko \cite{Tym08}.  The Hessenberg variety $\Hess(\mathsf{S}, h)$ is called a regular semisimple Hessenberg variety if $\mathsf{S}$ is a regular semisimple element of $\mathfrak{gl}(n,\C)$.  Specifically, Shareshian and Wachs established a bijection between Hessenberg functions and unit interval orders \cite[Proposition 4.1]{ShareshianWachs2016} and then formulated a conjecture relating the chromatic quasisymmetric function of the incomparability graph of a unit interval order to the dot action representation on the cohomology of an associated regular semisimple Hessenberg variety.  In fact, since cohomology rings are graded by degree, the Shareshian-Wachs conjecture naturally suggests a stronger, ``graded'' conjecture based on the original Stanley-Stembridge conjecture (see \cite[Conjecture 10.4]{ShareshianWachs2016}). We formalize this in Conjecture~\ref{conj:Stanley-Stembridge} below.

The Shareshian-Wachs conjecture was proved in 2015 by Brosnan and Chow \cite{BrosnanChow2015} (also independently by Guay-Paquet \cite{Guay-Paquet2016}) by showing a remarkable relationship between the Betti numbers of different Hessenberg varieties.  (Direct computations of cohomology rings of certain Hessenberg varieties also yield partial proofs of the Shareshian-Wachs conjecture; see \cite{AHHM, AHM2017}.) 
It then follows that, in order to prove the graded Stanley-Stembridge conjecture, it suffices to 
prove that the cohomology 
$H^{2i}(\Hess(\mathsf{S},h))$ for each $i$ is a non-negative combination of the tabloid representations $M^\lambda$ \cite[Part II, Section 7.2]{Ful97} of $\Symm_n$ for $\lambda$ a partition of $n$. In other words, given the decomposition 
\begin{equation}\label{eq: decomp intro}
H^{2i}(\Hess(\mathsf{S},h)) = \sum_{\lambda \vdash n}  c_{\lambda, i} M^\lambda
\end{equation}
in the representation ring $\mathcal{R}ep(\Symm_n)$ of $\Symm_n$, the coefficients $c_{\lambda,i}$ are non-negative.  

The above discussion explains the motivation for this manuscript, and we now describe our main results. 
Let $h: \{ 1, 2, ...,n\} \to \{ 1,2,...,n \}$ be a Hessenberg function. Our approach to the graded Stanley-Stembridge conjecture is by induction.  Roughly, the idea is as follows. From any Hessenberg function $h$ we can construct a corresponding incomparability graph $\Gamma_h$ (made precise in Section~\ref{sec: graphs and orientations}). Previous results of Stanley show that the acyclic orientations of $\Gamma_h$, and their corresponding sets of sinks, encode information about the coefficients $c_{\lambda,i}$. We develop this idea further by decomposing the set of acyclic orientations according to their \textbf{sink sets}, and make a key observation (Proposition~\ref{proposition: max sink set induction}) that, if the size of a sink set is maximal, then the set of acyclic orientations with that fixed sink set corresponds precisely to the set of \emph{all} acyclic orientations on a smaller incomparability graph. This observation sets the stage for an inductive argument.

Any Hessenberg function corresponds uniquely to a certain subset $I_h$ of the negative roots of $\mathfrak{gl}(n,\C)$.  In this manuscript, in the special case when $I_h$ is  \textbf{abelian} (cf. Definition~\ref{def: abelian} below), we are able to fully implement an argument yielding an inductive formula for the coefficients of the tabloid representations. 
A rough statement of our main result is as follows (definitions are in Section~\ref{sec: Hessenberg basics}, Section~\ref{sec: Stanley-Stembridge conjecture and dot action} and Section~\ref{sec: sink sets and induction}); the precise statement is Theorem~\ref{theorem:induction}. The idea is that the coefficients $c_{\lambda,i}$ above, associated to a Hessenberg variety in $\Flags(\C^n)$ for $n\geq 3$, can be computed using the coefficients associated to certain Hessenberg varieties in the flag variety $\Flags(\C^{n-2})$. 

\begin{theorem}\label{theorem:main} 
Let $n\geq 3$ be a positive integer and $h: \{1,2,...,n\} \to \{1,2,...,n\}$ be a Hessenberg function such that the ideal $I_h$ is abelian. Let $\mathsf{S}$ denote a regular semisimple element in the Lie algebra of $\mathfrak{gl}(n,\C)$. Regard the cohomology $H^{2i}(\Hess(\mathsf{S},h))$ as a $\Symm_n$-representation using Tymoczko's dot action. Let $i \geq 0$ be a non-negative integer. 
Then, in 
the representation ring $\mathcal{R}ep(\Symm_n)$ we have the equality 
\begin{equation}\label{eq:main inductive step}
H^{2i}(\Hess(\mathsf{S}, h)) =  c_{(n),i} M^{(n)} +  \sum_{T \in SK_2(\Gamma_h)} \left( \sum_{\substack{\mu \,\vdash (n-2)\\ \mu=(\mu_1,\mu_2)}} c_{\mu, i-\deg(T)}^T M^{(\mu_1+1,\mu_2+1)} \right)
\end{equation}
where the set $SK_2(\Gamma_h)$ is a certain collection of subsets of the vertices of $\Gamma_h$ and the coefficients $c^T_{\mu, i-\deg(T)}$ are the coefficients as in~\eqref{eq: decomp intro} associated to a Hessenberg function $h_T: \{1,2,\ldots,n-2\} \to \{1,2,\ldots,n-2\}$ for a Hessenberg variety in $\Flags(\C^{n-2})$. 
\end{theorem}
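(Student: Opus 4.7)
The plan is to combine three ingredients: the Brosnan--Chow theorem, which identifies $\sum_i \mathrm{ch}\, H^{2i}(\Hess(\mathsf{S},h))\, q^i$ with $\omega(X_{\Gamma_h}(q))$ and sends each tabloid representation $M^\lambda$ to the complete homogeneous symmetric function $h_\lambda$; a Stanley-type $h$-expansion of $\omega(X_\Gamma)$ indexed by acyclic orientations of $\Gamma$ stratified by their sink sets (with the Shareshian--Wachs $q$-grading recording an ascent statistic); and Proposition~\ref{proposition: max sink set induction}, which identifies the acyclic orientations of $\Gamma_h$ with a prescribed sink set $T$ of maximal size with the full set of acyclic orientations of a strictly smaller incomparability graph $\Gamma_{h_T}$.

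First I would translate the claim through Brosnan--Chow so that proving \eqref{eq:main inductive step} reduces to a matching $h$-expansion identity for $\omega(X_{\Gamma_h}(q))$. I would then group the acyclic orientations of $\Gamma_h$ by their sink sets $T$ and write $\omega(X_{\Gamma_h}(q))$ as a sum of contributions indexed by $T$. The singleton contributions $T = \{v\}$ collectively produce the leading $c_{(n),i}\, h_{(n)}$ term, which on the representation side is $c_{(n),i}M^{(n)}$.

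For $|T| \geq 2$, I would invoke the abelian hypothesis to argue that only sink sets of size exactly two survive in the $h$-expansion, and that these are precisely the elements of $SK_2(\Gamma_h)$. The abelian structure of $I_h$ constrains the shape of $\Gamma_h$ so strongly that every stratum with $|T|\geq 3$ must vanish after collecting terms, which explains why no $M^\lambda$ with $\ell(\lambda)\geq 3$ appears. For each $T \in SK_2(\Gamma_h)$, I would then apply Proposition~\ref{proposition: max sink set induction} to obtain a bijection between acyclic orientations of $\Gamma_h$ with sink set $T$ and all acyclic orientations of $\Gamma_{h_T}$. Applying the Stanley-type expansion now to $\Gamma_{h_T}$ produces the inner sum over 2-part $\mu=(\mu_1,\mu_2)\vdash n-2$, with the index shift $\mu\mapsto(\mu_1+1,\mu_2+1)$ arising because each of the two vertices of $T$ attaches as an additional element to one of the two parts recorded by $\mu$ in the smaller graph.

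The chief obstacle I expect is the bookkeeping of the $q$-grading shift $\deg(T)$. Under the bijection with orientations of $\Gamma_{h_T}$, the ascents contributed by the arcs of $\Gamma_h$ incident to the fixed sink set $T$ do not appear in the smaller graph, so one must verify that these contributions depend only on $T$ (yielding the constant $\deg(T)$) and that all remaining ascents are recorded faithfully by the orientation of $\Gamma_{h_T}$. The second, more structural obstacle is justifying the vanishing of all tabloid contributions $M^\lambda$ with $\ell(\lambda)\geq 3$ from the abelian hypothesis; this will likely require a dedicated combinatorial lemma about the sink-set structure of incomparability graphs $\Gamma_h$ whose associated ideal $I_h$ is abelian, translating the Lie-theoretic abelian condition into a precise restriction on acyclic orientations of $\Gamma_h$.
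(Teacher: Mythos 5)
The proposal correctly identifies the combinatorial skeleton that the paper also uses: the sink-set decomposition, Proposition~\ref{proposition: max sink set induction} with its $\deg(T)$ shift, and the fact (Proposition~\ref{lem: root subsets} together with Proposition~\ref{lemma: abelian ht 1}) that the abelian hypothesis forces $m(\Gamma_h)\leq 2$, so no $M^\lambda$ with three or more parts can appear. However, there is a genuine gap at the central step. You assume the existence of a ``Stanley-type $h$-expansion of $\omega(X_{\Gamma_h}(q))$ indexed by acyclic orientations stratified by their sink sets,'' in which each sink-set stratum contributes an identifiable combination of $h_\lambda$'s refining the sink count into actual partitions. No such expansion is known. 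The available result, Theorem~\ref{thm: Stanley}, only computes the \emph{sum} $\sum_{\mathrm{parts}(\lambda)=k} c_{\lambda,i}$ as a count of acyclic orientations with $k$ sinks and $\asc=i$; it does not determine the individual coefficients $c_{\lambda,i}$ for the several two-part partitions $\lambda=(n-1,1),(n-2,2),\dots$. Consequently your argument, as written, establishes only the coarse identity $\sum_{\mathrm{parts}(\lambda)=2} c_{\lambda,i} = \sum_{T\in SK_2(\Gamma_h)}\sum_{\mu} c^T_{\mu,\,i-\deg(T)}$ (essentially Proposition~\ref{prop: reg step}, the regular nilpotent case) together with the formula for $c_{(n),i}$; it does not show that these contributions distribute correctly among the distinct $M^{(\mu_1+1,\mu_2+1)}$. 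Indeed, an orientation-indexed $h$-expansion with the properties you posit would by itself essentially settle the Stanley--Stembridge conjecture.

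To close the gap one needs the refined information carried by the $\Symm_\nu$-invariant subspaces for every two-part $\nu$, and this is where the paper's real work lies. The paper invokes Brosnan--Chow's Theorem~\ref{thm: BrosnanChow main thm} (the statement about regular Hessenberg varieties of Jordan type $\nu$, not the chromatic-function Theorem~\ref{theorem:BrosnanChow for SW} that your proposal cites) to convert $\dim(H^{2i}(\Hess(\mathsf{S},h)))^{\Symm_\nu}$ into the Betti numbers of $\Hess(\mathsf{X}_\nu,h)$, and then proves the inductive Poincar\'e-polynomial identity of Proposition~\ref{prop:induction step}. That proposition is the technical heart of the theorem and is established by root-system and Weyl-group combinatorics (the sets $\mathcal{D}_\nu(\beta)$, the permutations $w_{\nu,\beta}$ and $\sigma_\nu$, shortest coset representatives for $W_\nu\backslash\Symm_n$), none of which is visible from the acyclic-orientation picture. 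One also needs Lemma~\ref{lem: two-part induction} comparing $\dim\bigl(M^{(\mu_1+1,\mu_2+1)}\bigr)^{\Symm_{(\mu_1'+1,\mu_2'+1)}}$ with $\dim\bigl(M^{\mu}\bigr)^{\Symm_{\mu'}}$ to transfer the identity between $\Symm_{n-2}$ and $\Symm_n$. Without these ingredients the proposed argument cannot recover the individual coefficients in~\eqref{eq:main inductive step}.
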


The technical details of the induction argument leading to Theorem~\ref{theorem:main} 
requires the use -- among other things -- of Brosnan and Chow's proof of the Shareshian-Wachs conjecture, as well as the second author's combinatorial characterization of the Betti numbers of regular Hessenberg varieties. In fact, the technical core of the paper consists of two inductive formulas for the Poincar\'e polynomials of regular abelian Hessenberg varieties.  These formulas are stated in Proposition~\ref{prop: reg step} and Proposition~\ref{prop:induction step} and are of independent interest. 

It is quite straightforward to prove the graded Stanley-Stembridge conjecture for the abelian case based on our inductive formula in Theorem~\ref{theorem:main}, and we record this argument in Corollary~\ref{corollary: graded SS for abelian}. Our result generalizes previous results.  Indeed, in the case when $h$ satisfies $h(3)=\cdots=h(n)=n$, Shareshian and Wachs obtained results on the corresponding chromatic quasisymmetric function which, given Brosnan and Chow's proof of the Shareshian-Wachs conjecture, implies Corollary~\ref{corollary: graded SS for abelian} for that case. Separately, Teff \cite[Theorem 4.20]{Teff2013a} proved the case when $h$ corresponds to a maximal standard parabolic Lie subalgebra $\mathfrak{p}$ of $\mathfrak{gl}(n,\C)$. Both instances are special cases of our result, as we explain in Section~\ref{sec: abelian Hessenberg varieties}.  We also note that Gebhard and Sagan have proved the Stanley-Stembridge conjecture for a collection of graphs called $K_{\alpha}$-chains \cite[Corollary 7.7]{GebhardSagan2001}.  Their result does not subsume, nor is subsumed, by the case considered in this manuscript, but it is of independent interest.

As part of our arguments, we also define the \textbf{height} of an ideal of negative roots using the lower central series of an ideal in a Lie algebra. An ideal is abelian precisely when the height is either $1$ or $0$, so we can interpret Theorem~\ref{theorem:main} as a ``base case'' for an argument for the full graded Stanley-Stembridge conjecture using induction on the height of the ideal $I_h$. We intend to explore this further in future work.

As already mentioned, the graded Stanley-Stembridge conjecture implies the classical Stanley-Stembridge conjecture simply by summing over all $i$, or, in the language of chromatic quasisymmetric polynomials, by ``setting $t$ equal to $1$''.  We record this fact in Proposition~\ref{prop: classical from graded}.  We note here that the ``abelian case'' considered in Theorem~\ref{theorem:induction} (and Corollary~\ref{corollary: graded SS for abelian}) corresponds, in combinatorial language, to the case in which the vertices of the graph $\Gamma_h$ can be partitioned into two disjoint cliques. The fact that the coefficients $c_{\lambda} = \sum_{i\geq 0} c_{\lambda, i}$ are non-negative in this case was originally stated by Stanley in \cite[Corollary 3.6]{Stanley1995} as a corollary to \cite[Theorem 3.4]{Stanley1995}; moreover, this fact is also equivalent to \cite[Remark 4.4]{StanleyStembridge1993}. However, \cite[Theorem 3.4]{Stanley1995} is incorrect as stated \cite{Stanley-personal}, and the equivalence of \cite[Remark 4.4]{StanleyStembridge1993} and \cite[Corollary 3.6]{Stanley1995} is not explicit in \cite{StanleyStembridge1993, Stanley1995}. Thus, our Corollary~\ref{corollary: graded SS for abelian} (together with Proposition~\ref{prop: classical from graded}) records a new and explicit proof of this fact.

We now give a brief overview of the contents of the paper. Section~\ref{sec:background} is devoted to background material. Specifically, Section~\ref{sec: Hessenberg basics} is a crash course on Hessenberg varieties. Section~\ref{sec: Stanley-Stembridge conjecture and dot action} establishes the terminology for discussing the $\Symm_n$-representations $H^{2i}(\Hess(\mathsf{S},h))$, and gives a more detailed account of the relation between the Stanley-Stembridge conjecture and our results. Section~\ref{sec: graphs and orientations} recalls the language of incomparability graphs in the setting of Hessenberg functions and states a result of Stanley connecting acyclic orientations on this graph to the $\Symm_n$-representations above. Section~\ref{sec: Ph tableaux} recounts Gasharov's definition of $P_h$-tableau and a result relating these $P_h$-tableau to the same $\Symm_n$-representations above. We then begin our work in earnest in Section~\ref{sec: abelian Hessenberg varieties} where we define abelian Hessenberg varieties and briefly discuss the relation between this notion and the cases of Hessenberg varieties previously studied in the literature. In Section~\ref{sec: sink sets and induction} we focus attention on the sink sets of an acyclic orientation of an incomparability graph, and introduce the notion of sink-set size. In Section~\ref{sec: sink sets, ideals and representations} we link the subjects of Sections~\ref{sec: abelian Hessenberg varieties} and \ref{sec: sink sets and induction} using a new invariant of an ideal called the height.  Sections~\ref{section: inductive formula} and~\ref{section: the proofs} form the technical core of the paper, where we state and prove our main results. Finally, Section~\ref{section: conjecture} states a conjecture which, if true, would represent a first step towards generalizing the techniques in this paper to prove the full Stanley-Stembridge conjecture for all possible heights. 

\bigskip

\noindent \textbf{Acknowledgements.} The research for this manuscript was partially conducted at the Osaka City University Advanced Mathematical Institute, and we thank the Institute for its hospitality. 
 We are also grateful to Timothy Chow, Seung Jin Lee, Mikiya Masuda, John Shareshian, Richard Stanley, and Stephanie Van Willigenburg for valuable comments.

\section{The setup and background}\label{sec:background}

Let $n$ be a positive integer. We denote by $[n]$ the set of positive integers $\{1,2,\ldots,n\}$. 
We work in type A throughout, so $GL(n,\C)$ is the group of invertible $n\times n$ complex matrices and $\mathfrak{gl}(n,\C)$ is the Lie algebra of $GL(n,\C)$ consisting of all $n\times n$ complex matrices.

\subsection{Hessenberg varieties}\label{sec: Hessenberg basics}

Hessenberg varieties in Lie type A are subvarieties of the (full) flag variety
$\Flags(\C^n)$, which is the collection of sequences of nested linear subspaces of $\C^n$:
\[
\Flags(\C^n) := 
\{ V_{\bullet} = (\{0\} \subset  V_1 \subset  V_2 \subset  \cdots V_{n-1} \subset 
V_n = \C^n) \hsm \mid \hsm \dim_{\C}(V_i) = i \ \textrm{for all} \ i=1,\ldots,n\}. 
\]
A Hessenberg variety in $\Flags(\C^n)$ is specified by two pieces of data: a Hessenberg function and a choice of an element in $\mathfrak{gl} (n,\C)$. We have the following. 

\begin{definition}\label{definition:Hessenberg function} 
A \textbf{Hessenberg function} is a function $h: [n] \to [n]$ such that $h(i) \geq i$ for all $i \in [n]$ and $h(i+1) \geq h(i)$ for all $i \in [n-1]$. We frequently write a Hessenberg function by listing its values in sequence,
i.e., $h = (h(1), h(2), \ldots, h(n))$.
\end{definition} 
We now introduce some terminology associated to a given Hessenberg function. 

\begin{definition}\label{definition:Hessenberg subspace} 
Let $h: [n] \to [n]$ be a Hessenberg function. The associated \textbf{Hessenberg space} is the linear subspace $H$ of $\mathfrak{gl} (n,\C)$ specified as follows:
\begin{align}\label{eq:Hessenberg subspace} 
H := \{ A = (a_{ij})_{i,j\in[n]} \in \mathfrak{gl} (n,\C) \mid a_{ij} = 0 \textup{ if } i > h(j) \} 
= \mathrm{span}_{\C} \{E_{ij} \hsm \vert \hsm i, j \in [n] \textup{ and } i \leq h(j) \}
\end{align} 
where $E_{i,j}$ is the usual elementary matrix with a $1$ in the $(i,j)$-th entry and
$0$'s elsewhere. 
\end{definition}

It is important to note that $H$ is frequently \emph{not} a Lie subalgebra of $\mathfrak{gl} (n,\C)$. However, it \emph{is} stable under the conjugation action of the usual maximal torus $T$ (of invertible diagonal matrices) in $GL(n,\C)$, and the $E_{ij}$ appearing in~\eqref{eq:Hessenberg subspace} are exactly the $T$-eigenvectors. It is also straightforward to see that 
  \begin{equation}\label{eq:Hess stable under b} 
[\mathfrak{b}, H] \subseteq H
\end{equation}
where $[ \cdot, \cdot ]$ denotes the usual Lie bracket in $\mathfrak{gl} (n,\C)$ and 
$\mathfrak{b} = \Lie(B)$ is the Lie algebra of the Borel subgroup $B$ of upper-triangular matrices in $GL(n,\C)$. 

Let $\mathfrak{h} \subseteq \mathfrak{gl} (n,\C)$ denote the Cartan subalgebra of diagonal matrices, and let $t_i$ denote the coordinate on $\mathfrak{h}$ reading off the $(i,i)$-th matrix entry along the diagonal. Denote the root system of $\mathfrak{gl}(n,\C)$ by $\Phi$. Then the positive roots $\Phi^+$ of $\mathfrak{gl} (n,\C)$ are $\Phi^+ = \{ t_i - t_j \hsm \vert \hsm 1 \leq i < j \leq n\}$ where $\gamma = t_i - t_j \in \Phi^+$ corresponds to the root space spanned by $E_{ij}$, denoted $\mathfrak{g}_{\gamma}$. Similarly, the negative roots $\Phi^-$ of $\mathfrak{gl} (n,\C)$ are $\Phi^- = \{ t_i - t_j \hsm \vert \hsm 1 \leq j < i \leq n\}$. We denote the simple positive roots in $\Phi^+$ by $\Delta = \{\alpha_i := t_i - t_{i+1} \; \vert \; 1 \leq i \leq n-1\}$.  

Note that the pairs $(i,j)$ with $i > j$ and $i \leq h(j)$ correspond
precisely to those negative roots $\gamma \in \Phi^-$ whose associated root spaces $\g_{\gamma}$ 
are contained in $H$. Motivated by this, we fix the following notation: 
\[
\Phi_h^- := \{  t_i - t_j \in \Phi^- \hsm \vert \hsm E_{ij} \in H \} =
\{ t_i - t_j  \hsm \vert \hsm i > j \textup{ and } i \leq h(j) \}
\]
and 
\[
\Phi_h := \Phi_h^- \sqcup \Phi^+ = \{  t_i - t_j \in \Phi \hsm \vert \hsm i \leq h(j) \}.
\]
It is clear that $h$ is uniquely determined by either $\Phi_h^-$ or $\Phi_h$. 

Recall that an \textbf{ideal} (also called an \textbf{upper-order ideal}) $I$ of $\Phi^-$ is defined to be a collection of (negative) roots such that if $\alpha\in I$, $\beta\in \Phi^-$, and $\alpha+\beta\in \Phi^-$, then $\alpha+\beta\in I$.
The relation~\eqref{eq:Hess stable under b} immediately implies that 
\begin{equation*}\label{eq:definition Ih} 
I_h:= \Phi^- \setminus \Phi_h^-
\end{equation*} 
is an ideal in $\Phi^-$.  We call it the \textbf{ideal corresponding to $h$}. In fact, the association taking a Hessenberg function to its corresponding ideal $I_h$ defines a bijection from the set of Hessenberg functions to ideals in $\Phi^-$, as noted by Sommers and Tymoczko in \cite[Section 10]{SomTym06}.

It is conceptually useful to express the sets $\Phi_h, \Phi_h^-,$ and $I_h$ pictorially.   We illustrate this by an example. 

\begin{example} \label{ex: pictures}
Let $n=6$. Figure~\ref{picture for h} contains the pictures corresponding to the Hessenberg function $h=(3,4,5,6,6,6)$. The leftmost square grid contains a star in the $(i,j)$-th box exactly if the $(i,j)$-th matrix entry is allowed to be non-zero for $A \in H$, or equivalently, either $i=j$, or, the corresponding root $t_i - t_j$ of $\mathfrak{gl} (n,\C)$ is contained in $\Phi_h$. The center square grid contains a star in the $(i,j)$-th box precisely if the corresponding root is contained in $\Phi_h^-$. Finally, the rightmost grid contains a star in the $(i,j)$-th box if and only if the corresponding root is contained in $I_h$, i.e., it is the complement of $\Phi_h$. This illustrates why some authors refer to $I_h$ as (the roots corresponding to) the ``opposite Hessenberg space''. 

\begin{figure}[h]
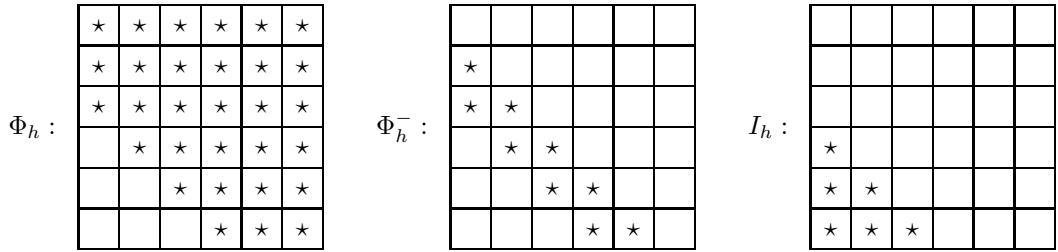


\[\ytableausetup{centertableaux}
\Phi_h: \;\; \begin{ytableau} \star & \star & \star & \star & \star & \star \\ \star & \star & \star & \star & \star & \star \\  \star & \star & \star & \star & \star & \star \\ \empty & \star & \star & \star & \star & \star \\  \empty & \empty & \star & \star & \star & \star\\ \empty & \empty & \empty & \star & \star & \star    \end{ytableau} 
\quad\quad
\Phi_h^-: \;\; \begin{ytableau} \empty & \empty & \empty & \empty & \empty & \empty \\ \star & \empty & \empty & \empty & \empty & \empty \\  \star & \star & \empty & \empty & \empty & \empty \\ \empty & \star & \star & \empty & \empty & \empty \\  \empty & \empty & \star & \star & \empty & \empty\\ \empty & \empty & \empty & \star & \star & \empty    \end{ytableau} 
\quad\quad
I_h: \;\; \begin{ytableau} \empty & \empty & \empty & \empty & \empty & \empty \\ \empty & \empty & \empty & \empty & \empty & \empty \\  \empty & \empty & \empty & \empty & \empty & \empty \\ \star & \empty & \empty & \empty & \empty & \empty \\  \star & \star & \empty & \empty & \empty & \empty\\ \star & \star & \star & \empty & \empty & \empty    \end{ytableau} 
\]
\caption{The pictures of $\Phi_h$, $\Phi_h^-$, and $I_h$ for $h=(3,4,5,6,6,6)$.}
\label{picture for h}
\end{figure}
\end{example}

Let $h:[n]\to[n]$ be a Hessenberg function and $\mathsf{X}$ be an $n\times n$ matrix in $\mathfrak{gl} (n,\C)$, which we also consider as a linear operator $\C^n \to \C^n$. Then the \textbf{Hessenberg variety} $\Hess(\mathsf{X},h)$ associated to $h$ and $\mathsf{X}$ is defined to be  
\begin{align}\label{eq:def-general Hessenberg}
\Hess(\mathsf{X},h) := \{ V_{\bullet}  \in \Flags(\C^n) \;
\vert \;   \mathsf{X} V_i \subset 
V_{h(i)} \text{ for all } i\in[n]\} \subset  \Flags(\C^n).
\end{align}

In this paper we focus on certain special cases of Hessenberg varieties. Let $\lambda = (\lambda_1, \lambda_2, \ldots, \lambda_n)$ be a composition of $n$ in the sense that $\lambda_1+\lambda_2+\cdots + \lambda_n=n$ and $\lambda_i \geq 0$ for all $i$. A linear operator is \textbf{regular of Jordan type $\lambda$} if its standard Jordan canonical form has block sizes given by $\lambda_1$,  $\lambda_2$, etc., and all of its eigenvalues are distinct.  Note that if $g \in \text{GL}(n,\C)$, then $\Hess(\mathsf{X},h)$ and $\Hess(g\mathsf{X}g^{-1},h)$ can be identified via the action of $\text{GL}(n,\C)$ on $\Flags(\C^n)$ \cite{Tym06}.  For concreteness in what follows, for a given $\lambda$ as above we set the notation  
\[
\mathsf{X}_\lambda \textup{ is a (fixed) matrix in standard Jordan canonical form, which is regular of Jordan type $\lambda$} 
\]
and we refer to the corresponding Hessenberg variety $\Hess(\mathsf{X}_\lambda, h)$ as a \textbf{regular Hessenberg variety}. 

Two special cases are of particular interest. Namely, if $\lambda=(n,0,\ldots,0)=(n)$, then we may take the corresponding regular operator to be the regular nilpotent operator which we denote by $\mathsf{N}$, i.e., $\mathsf{N}$ is the matrix whose Jordan form consists of exactly one Jordan block with corresponding eigenvalue equal to $0$. The regular Hessenberg variety $\Hess(\mathsf{N},h)$ is called a \textbf{regular nilpotent Hessenberg variety}. Similarly let $\mathsf{S}$ denote a regular semisimple matrix in $\mathfrak{gl} (n,\C)$, i.e., a matrix which is diagonalizable with distinct eigenvalues. This corresponds to the other extreme case, namely, $\lambda = (1,1,1,\ldots,1)$. We call $\Hess(\mathsf{S},h)$ a
\textbf{regular semisimple Hessenberg variety}.

\subsection{The Stanley-Stembridge conjecture in terms of Tymoczko's dot action representation} \label{sec: Stanley-Stembridge conjecture and dot action}

As already discussed in the Introduction, the main motivation of this manuscript is to study a graded version of the Stanley-Stembridge conjecture (Conjecture~\ref{conj:Stanley-Stembridge} below), stated in terms of the $\Symm_n$-representation on the cohomology rings of regular semisimple Hessenberg varieties defined by Tymoczko \cite{Tym08}. 
Tymoczko's \textbf{dot action} preserves the grading on these cohomology rings (which is concentrated in even degrees). 
The structure of this section is as follows. We first review basic facts and establish notation for partitions and $\Symm_n$-representations. 
We then give in Conjecture~\ref{conj:Stanley-Stembridge} 
a precise statement of the graded Stanley-Stembridge conjecture. We also state the original Stanley-Stembridge conjecture, and briefly recount how a solution to Conjecture~\ref{conj:Stanley-Stembridge} implies the original Stanley-Stembridge conjecture (cf. discussion in \cite{BrosnanChow2015, ShareshianWachs2016}).  The rest of the section is a brief review of standard representation theory facts and a statement of a fundamental result of Brosnan and Chow.

A \textbf{partition} of $n$ is a sequence $\lambda=(\lambda_1,\lambda_2,\ldots,\lambda_n) \in \Z^n$ satisfying $\lambda_1+\lambda_2+\cdots+\lambda_n=n$ and $\lambda_1 \geq \lambda_2 \geq \cdots \geq \lambda_n  \geq 0$. If $\lambda$ is a partition of $n$ we write $\lambda \vdash n$. 
We say a partition $\lambda \vdash n$ \textbf{has $k$ parts} and write $\mathrm{parts}(\lambda)=k$ if $\lambda_k \neq 0$ and $\lambda_{k+1}=\cdots = \lambda_n=0$. (Alternatively, $\lambda$ has $k$ parts if and only if the Young diagram corresponding to $\lambda$ has precisely $k$ rows.) For simplicity if $\mathrm{parts}(\lambda)=k$ then we write $\lambda=(\lambda_1,\ldots,\lambda_k)$ instead of $\lambda=(\lambda_1,\cdots,\lambda_k, 0, 0,\cdots,0)$. 
Moreover, for $\nu \vdash n$ a partition of $n$, we let $\Symm_\nu \subseteq \Symm_n$ denote the \textbf{Young subgroup} of $\Symm_n$ corresponding to $\nu$. Concretely, if $\nu = (\nu_1, \nu_2, ..., \nu_k)$ has $k$ parts then $\Symm_\nu$ is the subgroup 
\[
\Symm_{1,\ldots,\nu_1} \times \Symm_{\nu_1+1, \ldots, \nu_1+\nu_2} \times \cdots \times 
\Symm_{(\sum_{\ell=1}^{k-1} \nu_{\ell})+1, \ldots, n} \subseteq \Symm_n
\]
where $\Symm_{i, i+1,..., j}$ denotes the permutations of the set $\{i, i+1,..., j\}$ for each $1\leq i <j \leq n$.

Following Fulton \cite[Part II, Section 7.2]{Ful97}, we denote by $M^\lambda$ the complex vector space with basis the set of tabloids $\{T\}$ of shape $\lambda$, where $\lambda$ is a partition of $n$. Since $\Symm_n$ acts on the set of tabloids, $M^\lambda$ is a $\Symm_n$-representation. Our main theorem concerns the decomposition of $H^{2i}(\Hess(\mathsf{S},h))$ into $M^\lambda$'s, but to study this, we first decompose $H^{2i}(\Hess(\mathsf{S},h))$ into irreducible representations. Denote by $\mathcal{S}^{\lambda}$ the Specht module corresponding to $\lambda$. It is well-known that each $\mathcal{S}^{\lambda}$ is irreducible and that any irreducible $\Symm_n$-representation is isomorphic to $\mathcal{S}^{\lambda}$ for some $\lambda$ \cite[Section 7.2, Proposition 1]{Ful97}. Thus we conclude that there exist non-negative integers $d_\lambda$ and $d_{\lambda,i}$ such that 
\begin{equation}\label{eq:decomp into Specht} 
H^*(\Hess(\mathsf{S},h)) \cong \bigoplus_{\lambda\vdash n} d_\lambda \mathcal{S}^{\lambda}
\quad \textup{ and } \quad H^{2i}(\Hess(\mathsf{S},h)) \cong \bigoplus_{\lambda \vdash n} d_{\lambda,i} \mathcal{S}^{\lambda} 
\end{equation}
as $\Symm_n$-representations. Note $d_{\lambda} = \sum_{i \geq 0} d_{\lambda,i}$ for any $\lambda$.

There is a well-known formula, called Young's rule, for the decomposition of $M^\lambda$ into Specht modules. We need some terminology. Recall that the \textbf{dominance order} on partitions 
\cite[page 26]{Ful97} is defined as 
\[
\lambda \trianglelefteq \nu \textup{ if and only if } \lambda_1 + \cdots + \lambda_i \leq \nu_1 + \nu_2 + \cdots + \nu_i \textup{ for all } i.
\]
The following lemmas are straightforward. Let $\leq_{lex}$ denote the usual lexicographic order.

\begin{lemma}\label{lemma:dominance and lex} 
Let $\lambda, \nu$ be partitions of $n$. If $\lambda \trianglelefteq \nu$ then $\lambda \leq_{lex} \nu$. 
\end{lemma}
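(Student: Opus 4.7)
The plan is to reduce the lexicographic comparison to a single coordinate by locating the first index at which $\lambda$ and $\nu$ disagree, and then to extract the needed inequality at that index directly from the dominance hypothesis.

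First I would dispose of the trivial case $\lambda = \nu$, where both orders make the conclusion immediate. So assume $\lambda \neq \nu$ and let $i \in \{1,2,\ldots,n\}$ be the least index with $\lambda_i \neq \nu_i$. By the minimality of $i$ we have $\lambda_j = \nu_j$ for every $j < i$, so the partial sums agree up to index $i-1$: $\sum_{j=1}^{i-1} \lambda_j = \sum_{j=1}^{i-1} \nu_j$.

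Next I would invoke the dominance hypothesis at the index $i$ to get
\[
\sum_{j=1}^{i} \lambda_j \;\leq\; \sum_{j=1}^{i} \nu_j.
\]
Subtracting the equal partial sums of length $i-1$ yields $\lambda_i \leq \nu_i$, and combined with $\lambda_i \neq \nu_i$ this gives $\lambda_i < \nu_i$. By the definition of the lexicographic order (first point of disagreement decides the comparison), this is exactly the statement $\lambda <_{lex} \nu$, and in particular $\lambda \leq_{lex} \nu$, completing the argument.

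There is no real obstacle here: the only subtle point, if any, is making sure that the index $i$ where the partitions first differ actually exists and lies in the range where the dominance inequality has content, but both follow immediately from $\lambda \neq \nu$ and the fact that each is a partition of the same integer $n$. The proof is essentially a one-line unpacking of definitions once the index $i$ is chosen.
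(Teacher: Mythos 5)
Your proof is correct, and it is the standard argument; the paper itself states this lemma without proof, remarking only that it is straightforward. Locating the first index of disagreement, using the agreement of the shorter partial sums, and extracting $\lambda_i < \nu_i$ from the dominance inequality at index $i$ is exactly the intended one-line unpacking of the definitions.
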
 

\begin{lemma}\label{lemma:dominance order and length} 
Let $\lambda, \nu$ be partitions of $n$. If $\lambda \trianglelefteq \nu$ then $\mathrm{parts}(\nu) \leq \mathrm{parts}(\lambda)$. 
\end{lemma}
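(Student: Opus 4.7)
The plan is to unwind the definitions of $\mathrm{parts}(\cdot)$ and $\trianglelefteq$ and exploit the fact that both $\lambda$ and $\nu$ are partitions of the \emph{same} integer $n$. Set $p := \mathrm{parts}(\lambda)$ and $q := \mathrm{parts}(\nu)$; the goal is to show $q \leq p$.

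First I would evaluate the dominance inequality $\lambda_1 + \cdots + \lambda_i \leq \nu_1 + \cdots + \nu_i$ precisely at $i = p$. By the definition of $\mathrm{parts}(\lambda)$, the entries $\lambda_{p+1}, \ldots, \lambda_n$ all vanish, so $\lambda_1 + \cdots + \lambda_p = n$. The dominance hypothesis therefore yields $n \leq \nu_1 + \cdots + \nu_p$. Combined with the fact that $\nu$ is also a partition of $n$ (so $\nu_1 + \cdots + \nu_n = n$) and that every $\nu_j \geq 0$, this forces $\nu_{p+1} = \cdots = \nu_n = 0$. By definition of $\mathrm{parts}(\nu)$ this gives $q \leq p$, which is the desired inequality.

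Because the argument is a single evaluation of the dominance condition at the right index, I do not anticipate any genuine obstacle; the only care needed is to note that the equality $\sum_{i=1}^n \lambda_i = \sum_{i=1}^n \nu_i = n$ is what turns the dominance inequality at $i=p$ into an equality, and then to use non-negativity of the $\nu_j$ to conclude that no parts of $\nu$ appear beyond the $p$-th position. No induction, generating functions, or conjugate-partition tricks are required.
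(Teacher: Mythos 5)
Your proof is correct, and since the paper simply declares this lemma ``straightforward'' without supplying an argument, yours is exactly the intended one: evaluating the dominance inequality at $i = \mathrm{parts}(\lambda)$ forces $\nu_1 + \cdots + \nu_p = n$, and non-negativity of the remaining $\nu_j$ then kills all later parts of $\nu$. Nothing to add.
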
 

We define the following total order on partitions: 
\begin{equation}\label{eq: def total order on partitions} 
\lambda \preccurlyeq \nu \Leftrightarrow_{\textup{def}} \left( \mathrm{parts}(\nu) < \mathrm{parts}(\lambda) \right) \textup{ or } ( \mathrm{parts}(\nu) = \mathrm{parts}(\lambda) \textup{ and } \lambda \leq_{lex} \nu ). 
\end{equation}
In words, this total order first compares partitions based on the number of parts, and then breaks ties using the usual lex order. The following is immediate from the above two lemmas. 

\begin{lemma}\label{lemma: the total order we want} 
The order $\preccurlyeq$ is a refinement of the dominance order, i.e., for $\lambda, \nu \vdash n$, if $\lambda \trianglelefteq \nu$ then $\lambda \preccurlyeq \nu$. 
\end{lemma}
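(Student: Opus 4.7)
The plan is a straightforward case split using the two preceding lemmas. Assume $\lambda \trianglelefteq \nu$. By Lemma~\ref{lemma:dominance order and length}, we have $\mathrm{parts}(\nu) \leq \mathrm{parts}(\lambda)$, so either $\mathrm{parts}(\nu) < \mathrm{parts}(\lambda)$ or $\mathrm{parts}(\nu) = \mathrm{parts}(\lambda)$.

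In the first case, the definition~\eqref{eq: def total order on partitions} of $\preccurlyeq$ immediately gives $\lambda \preccurlyeq \nu$ from the first disjunct, with no further work needed. In the second case, I would invoke Lemma~\ref{lemma:dominance and lex} to conclude $\lambda \leq_{lex} \nu$, and then the second disjunct of~\eqref{eq: def total order on partitions} yields $\lambda \preccurlyeq \nu$. Since these two cases exhaust the possibilities, the lemma follows.

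There is no real obstacle here: the content of the claim is essentially packaged into Lemmas~\ref{lemma:dominance and lex} and~\ref{lemma:dominance order and length}, and the order $\preccurlyeq$ has been defined precisely so that the two possibilities for how $\mathrm{parts}(\nu)$ compares to $\mathrm{parts}(\lambda)$ align with its two clauses. The only minor point to be careful about is confirming that the strict inequality $\mathrm{parts}(\nu) < \mathrm{parts}(\lambda)$ does not require the lex comparison at all, which is clear from the definition.
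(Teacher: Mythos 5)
Your proof is correct and is exactly the argument the paper intends: the paper simply states the lemma is "immediate from the above two lemmas," and your case split on $\mathrm{parts}(\nu) < \mathrm{parts}(\lambda)$ versus $\mathrm{parts}(\nu) = \mathrm{parts}(\lambda)$, invoking Lemma~\ref{lemma:dominance order and length} and then Lemma~\ref{lemma:dominance and lex}, is the intended justification.
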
 

For a pair of partitions $\lambda$ and $\nu$ of $n$, the \textbf{Kostka number} $K_{\nu \lambda }$ is defined \cite[Part I, Section 2.2]{Ful97} to be the number of semistandard Young tableau of shape $\nu$ and weight/content $\lambda$.

\begin{fact}\label{fact: Kostka matrix entries}  \cite[Part I, Section 2.2, Exercise 2]{Ful97} 
For $\lambda, \nu$ partitions of $n$, we have $K_{\nu\lambda} \neq 0$ if and only if $\lambda \trianglelefteq \nu$. Moreover, $K_{\lambda \lambda} = 1$ for all partitions $\lambda \vdash n$. 
\end{fact}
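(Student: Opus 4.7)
The plan is to prove both claims by elementary combinatorial arguments directly from the definition that $K_{\nu\lambda}$ counts semistandard Young tableaux (SSYT) of shape $\nu$ and content $\lambda$. The key structural observation, used throughout, is that because the columns of any SSYT are strictly increasing, every cell containing a value at most $k$ must lie in one of the top $k$ rows.

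For the forward direction of the first claim, I would fix an SSYT $T$ of shape $\nu$ and content $\lambda$ and apply the observation above: for each $k$, the number of cells of $T$ containing a value $\leq k$ is at most the number of boxes in the first $k$ rows of $\nu$, giving
\[
\lambda_1+\cdots+\lambda_k \;\leq\; \nu_1+\cdots+\nu_k,
\]
which is exactly $\lambda \trianglelefteq \nu$. For the converse direction, assuming $\lambda \trianglelefteq \nu$, I would produce an SSYT explicitly via a greedy row-filling procedure: for $i = 1, 2, \ldots$ in turn, place the $\lambda_i$ copies of $i$ one by one into the leftmost empty cell of the lowest-indexed row of $\nu$ consistent with both the weakly increasing row and strictly increasing column conditions. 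I would then verify by induction on $i$ that after inserting all letters up through $i$, the filled region is a partition shape $\mu^{(i)}$ with $\mu^{(i)}_j \leq \nu_j$ for every $j$ and with the $i$'s forming a horizontal strip of length $\lambda_i$ added to $\mu^{(i-1)}$. I expect the main obstacle to be formalizing this induction so that the dominance hypothesis $\sum_{j \leq i}\lambda_j \leq \sum_{j\leq i} \nu_j$ is identified precisely with the nonfailure of the greedy step; the horizontal-strip invariant is the right device, since it reduces the induction to the single inequality at level $i$.

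For $K_{\lambda\lambda} = 1$, existence is immediate: the tableau $T_0$ whose $i$-th row is filled entirely with copies of $i$ is plainly an SSYT of shape $\lambda$ and content $\lambda$. For uniqueness, I would apply the forward argument with $\nu = \lambda$: any such SSYT $T$ must make every inequality $\lambda_1+\cdots+\lambda_k \leq \lambda_1+\cdots+\lambda_k$ an equality, which forces the entries of $T$ with value at most $k$ to occupy exactly the first $k$ rows for each $k$. Taking successive differences in $k$, the $k$-th row of $T$ consists entirely of the entry $k$, so $T = T_0$.
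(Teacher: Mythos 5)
The paper does not prove this Fact; it simply cites Fulton, so I am judging your argument on its own terms. Your forward direction is correct and complete: strict increase down columns forces every entry $\leq k$ into the first $k$ rows, and counting gives $\lambda_1+\cdots+\lambda_k \leq \nu_1+\cdots+\nu_k$. Your proof of $K_{\lambda\lambda}=1$ is also correct, and the uniqueness argument via forcing all the dominance inequalities to be equalities is exactly right.

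The converse direction, however, has a genuine gap: the greedy rule you specify (smallest letters first, each copy placed in the \emph{lowest-indexed}, i.e.\ topmost, available row) can fail even when $\lambda \trianglelefteq \nu$. Take $\nu=(4,1,1)$ and $\lambda=(2,2,2)$, so $2\leq 4$, $4\leq 5$, $6\leq 6$ and dominance holds. Your procedure puts the two $1$'s at $(1,1),(1,2)$ and then, since row $1$ still has consistent empty cells, puts both $2$'s at $(1,3),(1,4)$, filling row $1$ as $1,1,2,2$. The first $3$ then goes to $(2,1)$, and the second $3$ has nowhere to go: the only remaining cell is $(3,1)$, directly below the $3$ just placed. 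Note that your stated invariant is intact at every stage here (the filled region is a partition shape and each letter forms a horizontal strip), so the invariant alone does not rescue the argument; the problem is that for this greedy the failure of a step is \emph{not} equivalent to the failure of the level-$i$ dominance inequality, which is exactly the identification your induction relies on. Meanwhile an SSYT of shape $(4,1,1)$ and content $(2,2,2)$ does exist, namely first row $1,1,2,3$ with $2$ and $3$ below the leftmost $1$. The repair is to reverse the preference: within the horizontal strip of cells available to the letter $i$ (cells $(r,c)$ with $\mu^{(i-1)}_r < c \leq \min(\nu_r, \mu^{(i-1)}_{r-1})$), fill from the \emph{bottom} rows first — equivalently, use Fulton's construction, which inserts the largest letters first, as far down and to the left as possible. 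With that orientation the induction you describe does go through, with the level-$i$ dominance inequality exactly measuring the capacity of the addable strip.
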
 

\noindent Let $K := (K_{\nu \lambda})$ denote the \textbf{Kostka matrix} with entries the Kostka numbers with partitions listed in decreasing order with respect to $\preccurlyeq$. 
By Lemma~\ref{lemma: the total order we want} and Fact~\ref{fact: Kostka matrix entries} above, $K$ is upper-triangular and has $1$'s along the diagonal, and in particular, is invertible over $\Z$. 
Young's rule 
 \cite[Section 7.3, Corollary 1]{Ful97} states that 
\begin{equation}\label{eq:Mlambda in terms of Slambda} 
M^{\lambda} \cong \mathcal{S}^{\lambda} \oplus \left( \bigoplus_{\nu \triangleright \lambda} (\mathcal{S}^\nu)^{\oplus K_{\nu\lambda}} \right)
\end{equation}
as $\Symm_n$-representations. 
Since the Kostka matrix is invertible over $\Z$, ~\eqref{eq:Mlambda in terms of Slambda} implies the $\{M^\lambda\}$ form 
a $\Z$-basis for the representation ring $\mathcal{R}ep(\Symm_n)$ of $\Symm_n$. Therefore there exist unique integers $c_\lambda$ and $c_{\lambda,i}$ such that 
\begin{equation}\label{eq: decomp into Mlambda}
H^*(\Hess(\mathsf{S},h)) = \sum_{\lambda\vdash n} c_\lambda M^\lambda \quad \textup{ and } 
\quad H^{2i}(\Hess(\mathsf{S},h)) = \sum_{\lambda \vdash n} c_{\lambda,i} M^{\lambda} 
\end{equation}
as elements in $\mathcal{R}ep(\Symm_n)$. Note that, a priori, the coefficients $c_\lambda$ and $c_{\lambda,i}$ may be negative. We also have $c_{\lambda} = \sum_{i \geq 0} c_{\lambda,i}$ for all $\lambda \vdash n$. We can now formulate the \textbf{graded Stanley-Stembridge conjecture} which motivates this manuscript; the terminology will be justified below.

\begin{conjecture}\label{conj:Stanley-Stembridge} 
Let $n$ be a positive integer and $h: [n] \to [n]$ be a Hessenberg function. Then 
the integers $c_{\lambda,i}$ appearing in~\eqref{eq: decomp into Mlambda} are non-negative. 
\end{conjecture}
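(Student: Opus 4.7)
The plan is to attempt the conjecture by induction, with the base case being the abelian situation and the broader induction carried out on an invariant of the ideal $I_h$ that measures ``how far from abelian'' it is. By the theorem of Brosnan--Chow (to be recalled in the rest of Section~\ref{sec:background}), the multiplicities $c_{\lambda,i}$ can be read off the chromatic quasisymmetric function of the incomparability graph $\Gamma_h$ graded by ascents, and a classical theorem of Stanley expresses each $c_{\lambda,i}$ as a signed count of acyclic orientations of $\Gamma_h$ whose sink sets have a prescribed shape. The central difficulty is that Stanley's expression has cancellations and is not visibly non-negative, so the entire game is to reorganize the sum until non-negativity becomes manifest.

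First, I would partition the acyclic orientations of $\Gamma_h$ according to their sink sets $T$. The crucial combinatorial observation---foreshadowed as Proposition~\ref{proposition: max sink set induction}---is that when $T$ is of \emph{maximum size}, the acyclic orientations with sink set exactly $T$ are in bijection with \emph{all} acyclic orientations of a strictly smaller incomparability graph $\Gamma_{h_T}$, obtained from a reduced Hessenberg function $h_T$. This bijection is the engine of the induction: it turns the maximum-sink-set contributions into the full $M^\mu$-decomposition of a smaller Hessenberg variety, whose coefficients are non-negative by the inductive hypothesis.

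Second, in the abelian case I would assemble the contributions from all maximum sink sets to obtain the inductive formula~\eqref{eq:main inductive step}. The degree shift $\deg(T)$ tracks how the Betti numbers of $\Hess(\mathsf{S},h)$ split by sink set, and controlling it requires the Poincar\'e polynomial identities of Proposition~\ref{prop: reg step} and Proposition~\ref{prop:induction step}, proved using the second author's combinatorial description of Betti numbers of regular Hessenberg varieties together with Brosnan--Chow's theorem. Once~\eqref{eq:main inductive step} is established, non-negativity of each $c_{\lambda,i}$ in the abelian case follows by a straightforward induction on $n$, since every $c^T_{\mu, i-\deg(T)}$ comes from a Hessenberg variety in $\Flags(\C^{n-2})$ whose associated ideal is again abelian.

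Third, for the non-abelian case I would introduce the \textbf{height} of $I_h$ via the lower central series and induct on it, with the abelian regime $\mathrm{height}(I_h) \leq 1$ serving as the base. The hard part will be that the clean identification of maximum-sink-set orientations with a smaller Hessenberg variety breaks down for ideals of height $\geq 2$: the reduced combinatorial object is no longer of the same type, and non-maximum sink sets begin to contribute to $c_{\lambda,i}$ in ways that resist the cancellation pattern used in the abelian base. Overcoming this obstacle appears to require a more refined, iterated sink-set decomposition together with height-aware generalizations of the Poincar\'e polynomial identities above, and this is precisely where the full conjecture resists the methods developed here.
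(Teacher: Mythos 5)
The statement you are asked to prove is labeled a \emph{conjecture} in the paper, and the paper does not prove it: it establishes only the special case in which $I_h$ is abelian (Corollary~\ref{corollary: graded SS for abelian}), via the inductive formula of Theorem~\ref{theorem:induction}. Your proposal is therefore not a proof of the statement either; it is a strategy outline whose first two paragraphs reproduce exactly the paper's argument for the abelian case (sink-set decomposition, the maximal-sink-set bijection of Proposition~\ref{proposition: max sink set induction}, the Poincar\'e polynomial identities of Propositions~\ref{prop: reg step} and~\ref{prop:induction step}, Brosnan--Chow, and induction on $n$), and whose third paragraph correctly identifies that the height-$\geq 2$ case is where the method breaks down --- which is precisely where the paper also stops, offering only Conjecture~\ref{conj: general case} for the coefficients attached to partitions with the maximal number of parts.

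One inaccuracy worth correcting: you describe Stanley's result as expressing ``each $c_{\lambda,i}$ as a signed count of acyclic orientations \ldots with cancellations.'' That is not what Theorem~\ref{thm: Stanley} says. It gives a manifestly non-negative count, but only of the aggregate $\sum_{\mathrm{parts}(\lambda)=k} c_{\lambda,i}$ over all partitions with exactly $k$ parts; individual coefficients are not isolated by it at all (except for $\lambda=(n)$, where there is a unique partition with one part, yielding Corollary~\ref{corollary: triv coeff is nonneg}). The difficulty is therefore not cancellation within a signed formula but the absence of any formula for a single $c_{\lambda,i}$; the abelian hypothesis is what saves the day, since then only partitions with at most two parts contribute (Corollary~\ref{corollary: max sink set gives bound on lambda}), and the $k=1$ and $k=2$ strata can each be resolved. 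In sum: your outline matches the paper's approach for the portion of the conjecture that is actually established, honestly flags the open remainder, but should not be presented as a proof of the conjecture as stated.
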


The main theorem of this manuscript (Theorem~\ref{theorem:induction}) allows us to deduce the above conjecture in the special case that $h$ is abelian (cf. Definition~\ref{def: abelian}). Before proceeding we take a moment to explain how a proof of Conjecture~\ref{conj:Stanley-Stembridge} implies the classical Stanley-Stembridge conjecture. Since this story has been recorded elsewhere (e.g. \cite{BrosnanChow2015, ShareshianWachs2016}) we will be brief. We begin with a statement of the original Stanley-Stembridge conjecture. An incomparability graph of a unit interval order is a finite graph $\Gamma=(V,E)$ whose vertices are (distinct) closed unit intervals on the real line, with a single edge joining unit intervals with non-empty intersection. For any finite graph $\Gamma=(V,E)$, a \textbf{coloring} of $\Gamma$ is a function $\kappa: V \to \{1,2,3,\ldots\}$ assigning the ``color'' $\kappa(v)$ to each $v \in V$ and $\kappa$ is proper if for every edge $e=\{u,v\}$ in $E$, $\kappa(u) \neq \kappa(v)$. The \textbf{chromatic symmetric function} $X_{\Gamma}(x_1,x_2,\ldots)$ is defined as 
\[
X_\Gamma(\underline{x}) = X_{\Gamma}(x_1,x_2,\cdots) = \sum_{\textup{proper } \kappa: V \to \{1,2,\ldots\}} x^\kappa
\]
where $x^\kappa := \prod_{v \in V} x_{\kappa(v)}$. It is not hard to see that $X_{\Gamma}$ is symmetric in the variables $\{x_i\}$. A symmetric function is said to be \textbf{e-positive} if it can be expressed as a non-negative linear combination of the elementary symmetric functions $e_{\lambda}$. The following is the Stanley-Stembridge conjecture, which is related to many other deep conjectures, e.g. about immanants. 

\begin{conjecture} (Stanley-Stembridge conjecture) 
Let $\Gamma=(V,E)$ be the incomparability graph of a unit interval order. Then $X_{\Gamma}(\underline{x})$ is e-positive.
\end{conjecture}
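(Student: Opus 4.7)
The plan is to combine the reductions already recalled in the excerpt with an inductive strategy on the height of the ideal $I_h$, in the spirit of the abelian case treated in this paper. First, by Guay-Paquet's reduction it suffices to establish $e$-positivity of $X_\Gamma(\underline{x})$ when $\Gamma$ is the incomparability graph of a unit interval order. Under the Shareshian-Wachs bijection these are exactly the graphs $\Gamma_h$ associated to Hessenberg functions $h:[n]\to[n]$, so it suffices to prove that $X_{\Gamma_h}(\underline{x})$ is $e$-positive for every such $h$. Combining Brosnan-Chow's proof of the Shareshian-Wachs conjecture with the standard dictionary between the Frobenius characteristic and the tabloid modules $M^\lambda$, this in turn reduces to showing that the integers $c_\lambda=\sum_{i\geq 0} c_{\lambda,i}$ appearing in~\eqref{eq: decomp into Mlambda} are non-negative. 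By Proposition~\ref{prop: classical from graded}, the graded Stanley-Stembridge conjecture (Conjecture~\ref{conj:Stanley-Stembridge}) implies this, so it is enough to prove Conjecture~\ref{conj:Stanley-Stembridge} for every Hessenberg function $h$.

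To attack Conjecture~\ref{conj:Stanley-Stembridge}, I would induct on the height of $I_h$, defined via the lower central series in Section~\ref{sec: sink sets, ideals and representations}. The base case of height at most $1$ is exactly the abelian case, and is the content of Theorem~\ref{theorem:main} together with Corollary~\ref{corollary: graded SS for abelian}. For the inductive step, the plan is to generalize the sink-set decomposition used in the abelian case: one partitions the acyclic orientations of $\Gamma_h$ according to their sink sets $T$, and for each $T$ identifies the orientations with that sink set with the full set of acyclic orientations of a smaller incomparability graph $\Gamma_{h_T}$ whose corresponding ideal $I_{h_T}$ has strictly smaller height. Combined with Poincar\'e polynomial recursions of the type produced by Proposition~\ref{prop: reg step} and Proposition~\ref{prop:induction step}, this should yield an identity in $\mathcal{R}ep(\Symm_n)$ generalizing~\eqref{eq:main inductive step}, expressing $H^{2i}(\Hess(\mathsf{S},h))$ as a sum indexed by sink sets $T$ of non-negative integer combinations of modules $M^\mu$, where each coefficient is non-negative by the inductive hypothesis applied to the strictly smaller Hessenberg function $h_T$. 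Summing over $i$ recovers the $c_\lambda$'s and hence the classical conjecture.

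The main obstacle, and the reason the present paper confines itself to the abelian setting, will be the inductive step itself. In the abelian case, Proposition~\ref{proposition: max sink set induction} shows that acyclic orientations of $\Gamma_h$ with a maximal sink set are in canonical bijection with \emph{all} acyclic orientations of a smaller incomparability graph, and this clean bijection both makes the bookkeeping manageable and guarantees that the height drops by precisely one. In higher height there is no obvious canonical candidate for $\Gamma_{h_T}$ attached to a given sink set $T$, no automatic guarantee that the height decreases by a single unit, and the simple tabloid shape $(\mu_1+1,\mu_2+1)$ appearing in~\eqref{eq:main inductive step} must be replaced by more elaborate combinatorial data reflecting the deeper filtration structure of $I_h$. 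I expect that overcoming this will require a new geometric input, perhaps a geometric interpretation of the successive quotients of the lower central series of $I_h$, in order to extend the Poincar\'e polynomial recursions of Propositions~\ref{prop: reg step} and~\ref{prop:induction step} beyond the abelian regime; the concluding conjecture of Section~\ref{section: conjecture} is presumably a first step in that direction.
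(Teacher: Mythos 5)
The statement you are addressing is stated in the paper only as a conjecture; the paper contains no proof of it, and your text is likewise a research program rather than a proof. What you have written correctly reassembles the reductions the paper itself records (Guay-Paquet's reduction, the Shareshian--Wachs bijection, Brosnan--Chow, and Proposition~\ref{prop: classical from graded}), and correctly identifies the abelian case as the base case of a height induction. But the entire content of the claim lives in the inductive step, and that step is not supplied; you acknowledge as much, so this cannot be accepted as a proof of the conjecture.

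To make the gap concrete: the mechanism that drives the abelian argument is Proposition~\ref{proposition: max sink set induction}, which says that for a sink set $T$ of \emph{maximal} cardinality, restriction gives a bijection between acyclic orientations of $\Gamma_h$ with sink set $T$ and \emph{all} acyclic orientations of $\Gamma_{h_T}$. This fails for non-maximal sink sets --- the paper exhibits an explicit counterexample for $h=(3,4,5,5,5)$ and $T=\{5\}$, where an acyclic orientation of $\Gamma_{h_T}$ is not in the image of the restriction map because any preimage would be forced to have an additional sink. Once $ht(I_h)\geq 2$, the partitions $\lambda$ with fewer than $m(\Gamma_h)$ parts contribute, and their coefficients are controlled by exactly these non-maximal sink sets, for which no clean bijection or degree-shift formula is available. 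This is why the paper's own Conjecture~\ref{conj: general case} only predicts the coefficients $c_{\lambda,i}$ for $\lambda$ with exactly $m(\Gamma_h)$ parts, and why the authors state explicitly (in the example following it) that they do not know an inductive formula for the remaining coefficients such as those of $M^{(6,1)}$, $M^{(5,2)}$, $M^{(4,3)}$ when $n=7$. Until a replacement for Proposition~\ref{proposition: max sink set induction} covering non-maximal sink sets is found and the analogues of Propositions~\ref{prop: reg step} and~\ref{prop:induction step} are proved beyond the abelian hypothesis (which is used essentially in their proofs, e.g.\ in Lemma~\ref{lem: wmin satisfies the Hess. condition} and Lemma~\ref{lem:equivalence of Hess conditions}), the proposed induction does not go through.
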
 

\noindent (In fact, the Stanley-Stembridge conjecture is stated more generally, but Guay-Paquet showed in \cite{Guay-Paquet2013} that the above special case implies the general version.) Shareshian and Wachs linked the Stanley-Stembridge conjecture in \cite{ShareshianWachs2016} to the theory of Hessenberg varieties as follows. For the discussion below, we assume the vertex set $V$ of $\Gamma$ is a finite subset of $\{1,2,\ldots\}$. Shareshian and Wachs consider a refinement of Stanley's chromatic symmetric polynomial by defining 
\[
X_{\Gamma}(\underline{x}, t) := \sum_{\textup{ proper } \kappa: V \to \{1,2,3,\ldots\}} t^{\asc{\kappa}} x^{\kappa}
\]
where 
\[
\mathrm{asc}(\kappa) := \lvert \{ e = \{i,j\} \in E \, \vert \, i < j \textup{ and } \kappa(i) < \kappa(j) \} \rvert.
\]
This polynomial is called the chromatic quasisymmetric function.  Evidently, evaluating $X_{\Gamma}(\underline{x},t)$ at $t=1$ recovers Stanley's $X_{\Gamma}(\underline{x})$. Shareshian and Wachs further conjectured that the coefficients of the $t^i$ in $X_{\Gamma}(\underline{x},t)$ are related to Hessenberg varieties as follows. 
Specifically, they show \cite[Proposition 4.1]{ShareshianWachs2016} that any incomparability graph $\Gamma$ of a unit interval order arises as the incomparability graph of an appropriately chosen Hessenberg function $h: \{1,2,\ldots,n\} \to \{1,2,\ldots,n\}$. Let $\mathrm{ch}: \mathcal{R}ep(\Symm_n) \to \Lambda_{\Q} := \Lambda \otimes \Q$ denote the characteristic map from the representation ring of $\Symm_n$ to the ring of symmetric functions 
in the variables $\underline{x} = (x_1, x_2, \ldots)$ with rational coefficients. It is well-known that $\mathrm{ch}(M^{\lambda}) = h_\lambda$ and $\mathrm{ch}(\mathcal{S}^{\lambda}) = s_{\lambda}$, where the $h_\lambda$ (respectively $s_{\lambda}$) are the complete symmetric (respectively Schur) polynomials. Also let $\omega: \Lambda_{\Q} \to \Lambda_{\Q}$ denote the standard (``Frobenius'') involution on the space of symmetric polynomials which takes $h_\lambda$ to the elementary symmetric polynomials $e_{\lambda}$ and vice versa. We now state the conjecture of Shareshian and Wachs \cite{ShareshianWachs2016} which is now a theorem thanks to the work of Brosnan and Chow \cite{BrosnanChow2015}. 

\begin{theorem}\label{theorem:BrosnanChow for SW} (\cite[Theorem 78]{BrosnanChow2015}) 
Let $\Gamma$ be the incomparability graph of a unit interval order and $h$ be the corresponding Hessenberg function. Then the coefficient of $t^i$ in the chromatic quasisymmetric function $X_{\Gamma}(\underline{x},t)$ is $\omega(\mathrm{ch}(H^{2i}(\Hess(\mathsf{S},h))))$. 
\end{theorem}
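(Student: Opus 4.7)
The plan is to prove the theorem via the ``monodromy + local invariant cycle'' strategy pioneered by Brosnan and Chow. The first step is to promote $\Hess(\mathsf{S},h)$ to a family. Consider the incidence variety
\[
\mathcal{X}_h := \{(V_\bullet, \mathsf{X}) \in \Flags(\C^n) \times \mathfrak{gl}(n,\C) \mid \mathsf{X}V_i \subseteq V_{h(i)} \text{ for all } i \in [n]\}
\]
with its projection $\pi: \mathcal{X}_h \to \mathfrak{gl}(n,\C)$. Over the regular semisimple locus $\mathfrak{gl}(n,\C)^{\mathrm{rss}}$, the map $\pi$ is a smooth proper submersion whose fibers are regular semisimple Hessenberg varieties. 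The key geometric input is then to identify Tymoczko's dot action on $H^{2i}(\Hess(\mathsf{S},h))$ with the topological monodromy representation of $\pi_1(\mathfrak{gl}(n,\C)^{\mathrm{rss}})$ on the cohomology of a generic fiber. Since the characteristic polynomial map $\mathfrak{gl}(n,\C)^{\mathrm{rss}} \to \C^n / \Symm_n$ has Galois group $\Symm_n$, the monodromy factors through $\Symm_n$; one checks by comparing fixed-point sets on the torus-invariant basis that this geometric $\Symm_n$-action matches Tymoczko's combinatorially defined dot action.

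The second step exploits the fact that an element of $\mathcal{R}ep(\Symm_n)$ is determined by the dimensions of its $\Symm_\mu$-invariants as $\mu$ ranges over partitions of $n$. Applying this to both sides under $\omega \circ \mathrm{ch}$, it suffices to verify for each partition $\mu \vdash n$ and each $i \geq 0$ that
\[
\dim H^{2i}(\Hess(\mathsf{S},h))^{\Symm_\mu} = [t^i]\, (\text{suitable evaluation of } X_\Gamma(\underline{x},t)).
\]
To compute the left-hand side, degenerate $\mathsf{S}$ along an analytic arc in $\mathfrak{gl}(n,\C)$ whose generic point is regular semisimple and whose special point is the regular element $\mathsf{X}_\mu$ of Jordan type $\mu$. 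Deligne's local invariant cycle theorem applied to this one-parameter degeneration (after verifying that $\pi$ is proper and that the specialization is appropriately controlled) yields a surjection
\[
H^{2i}(\Hess(\mathsf{X}_\mu, h)) \twoheadrightarrow H^{2i}(\Hess(\mathsf{S},h))^{\Symm_\mu},
\]
where the target is monodromy invariants around the special point. A comparison of Poincaré polynomials, combined with the fact that $\Hess(\mathsf{X}_\mu,h)$ is paved by affines (so its cohomology is pure and concentrated in even degrees), promotes this surjection to an isomorphism.

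The third step is combinatorial. Using the second author's formula for the Betti numbers of $\Hess(\mathsf{X}_\mu,h)$ in terms of Hessenberg-type statistics on a set of permutations (or equivalently on certain $P_h$-tableaux / acyclic orientation data associated to $\Gamma_h$), one rewrites $\dim H^{2i}(\Hess(\mathsf{X}_\mu,h))$ as a weighted count of proper colorings of $\Gamma_h$ whose color classes have sizes $\mu_1,\mu_2,\ldots$, with $t$-weight the ascent statistic. Matching the resulting generating function with the coefficient extraction
\[
[t^i]\, \langle X_\Gamma(\underline{x},t), \, p_\mu \rangle
\]
on the chromatic quasisymmetric side—where the inner product is taken in $\Lambda_\Q$—yields the theorem after applying $\omega$ and using $\omega(\mathrm{ch}(M^\lambda)) = e_\lambda$.

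The main obstacle is the geometric identification in step one (showing the topological monodromy coincides with the dot action) together with the rigorous application of the local invariant cycle theorem in step two: the family $\mathcal{X}_h$ is not proper over all of $\mathfrak{gl}(n,\C)$ in the naive sense needed, so one must either work with a suitable compactification, restrict to a carefully chosen arc, or appeal to purity and decomposition-theorem-type arguments. Once the invariants are pinned down geometrically, the combinatorial reconciliation in step three is the Shareshian–Wachs chromatic identity, which is already essentially established in the literature we cite.
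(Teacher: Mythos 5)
The paper does not prove this statement: it is imported verbatim from Brosnan--Chow (\cite[Theorem 78]{BrosnanChow2015}), so there is no internal proof to compare against. Your proposal is a faithful outline of the Brosnan--Chow strategy itself (monodromy of the universal family, local invariant cycle theorem, combinatorial reconciliation with Shareshian--Wachs), but as written it is an outline rather than a proof, and you say as much yourself.

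Two of your three steps are precisely the hard content of the cited theorem and are left as assertions. First, the identification of the topological monodromy of $\pi\colon \mathcal{X}_h \to \mathfrak{gl}(n,\C)^{\mathrm{rss}}$ with Tymoczko's dot action is not a routine check ``by comparing fixed-point sets''; it is a substantial GKM-theoretic theorem in Brosnan--Chow. Second, the local invariant cycle argument needs real care: $\pi$ is not proper over a compactified base without modification, the local fundamental group around the point $\mathsf{X}_\mu$ must be identified so that its invariants coincide with $\Symm_\mu$-invariants for the global $\Symm_n$-action, and promoting the surjection $H^{2i}(\Hess(\mathsf{X}_\mu,h)) \twoheadrightarrow H^{2i}(\Hess(\mathsf{S},h))^{\Symm_\mu}$ to an isomorphism rests on a dimension count (purity of the affinely paved special fiber together with palindromicity of its Betti numbers, i.e.\ \cite{Precup2016}) that you invoke but do not carry out. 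The output of this step is exactly Theorem~\ref{thm: BrosnanChow main thm}, which is itself a separately cited theorem. Finally, a concrete error in step three: to extract $\dim V^{\Symm_\mu}$ from $\mathrm{ch}(V)$ one pairs with $h_\mu$ (equivalently, after applying $\omega$, one reads off the coefficient of the monomial $x^\mu$ in $X_\Gamma(\underline{x},t)$, which counts $t$-weighted proper colorings with color multiplicities $\mu$); pairing with $p_\mu$ instead computes character values at cycle type $\mu$ and does not give the invariant-dimension identity you need.
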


We can now make precise the argument that deduces the classical Stanley-Stembridge conjecture
from the graded Stanley-Stembridge conjecture (Conjecture~\ref{conj:Stanley-Stembridge}). 

\begin{proposition}\label{prop: classical from graded} 
Let $\Gamma$ and $h$ be as above. If $H^{2i}(\Hess(\mathsf{S},h)) = \sum_{\lambda \vdash n} c_{\lambda,i} M^{\lambda} \in \mathcal{R}ep(\Symm_n)$ and the $c_{\lambda,i}$ are non-negative for all $\lambda \vdash n$, then $X_{\Gamma}(\underline{x})$ is $e$-positive. 
\end{proposition}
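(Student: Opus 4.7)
The plan is to simply chain together the ingredients assembled in the preceding paragraphs: Theorem~\ref{theorem:BrosnanChow for SW}, the expansion~\eqref{eq: decomp into Mlambda}, and the basic properties $\mathrm{ch}(M^\lambda) = h_\lambda$ and $\omega(h_\lambda) = e_\lambda$ of the characteristic map and the Frobenius involution. The observation that $X_\Gamma(\underline{x}) = X_\Gamma(\underline{x},1)$ turns the task into a matter of summing Theorem~\ref{theorem:BrosnanChow for SW} over all $i \geq 0$.

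Concretely, I would proceed as follows. First, note that by definition of the chromatic quasisymmetric function, setting $t=1$ collapses the grading and recovers Stanley's chromatic symmetric function, so $X_\Gamma(\underline{x}) = \sum_{i \geq 0} \omega(\mathrm{ch}(H^{2i}(\Hess(\mathsf{S},h))))$ by Theorem~\ref{theorem:BrosnanChow for SW}. Second, substitute the hypothesized expansion $H^{2i}(\Hess(\mathsf{S},h)) = \sum_{\lambda \vdash n} c_{\lambda,i} M^\lambda$ and use linearity of $\mathrm{ch}$ and $\omega$, together with $\mathrm{ch}(M^\lambda) = h_\lambda$ and $\omega(h_\lambda) = e_\lambda$, to obtain
\begin{equation*}
X_\Gamma(\underline{x}) \;=\; \sum_{i \geq 0} \sum_{\lambda \vdash n} c_{\lambda,i}\, e_\lambda \;=\; \sum_{\lambda \vdash n} \Bigl( \sum_{i \geq 0} c_{\lambda,i} \Bigr) e_\lambda \;=\; \sum_{\lambda \vdash n} c_\lambda\, e_\lambda,
\end{equation*}
where in the last step we use $c_\lambda = \sum_{i \geq 0} c_{\lambda,i}$ as noted after~\eqref{eq: decomp into Mlambda}. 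Finally, since each $c_{\lambda,i}$ is assumed non-negative, each $c_\lambda$ is non-negative, and hence $X_\Gamma(\underline{x})$ is a non-negative linear combination of the elementary symmetric functions, i.e.\ $e$-positive.

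There is essentially no obstacle here: the work has all been done by Brosnan--Chow's theorem and by the standard behavior of $\mathrm{ch}$ and $\omega$. The only minor care needed is to justify interchanging the two summations over $i$ and $\lambda$, but since both sums are finite (cohomology vanishes above the real dimension of $\Hess(\mathsf{S},h)$, and for each $i$ only finitely many $\lambda \vdash n$ contribute) this is automatic. Thus the proposition follows immediately from the graded statement.
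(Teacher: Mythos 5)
Your proposal is correct and follows essentially the same route as the paper's proof: apply Brosnan--Chow's theorem, use linearity of $\mathrm{ch}$ and $\omega$ together with $\mathrm{ch}(M^\lambda)=h_\lambda$ and $\omega(h_\lambda)=e_\lambda$, and evaluate at $t=1$. The only cosmetic difference is that the paper establishes $e$-positivity of each $t^i$-coefficient before setting $t=1$, whereas you sum over $i$ first; the content is identical.
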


\begin{proof} 
By construction of the map $\mathrm{ch}$, we have $\mathrm{ch}(H^{2i}(\Hess(\mathsf{S},h))) = \mathrm{ch}(\sum c_{\lambda,i} M^{\lambda}) = \sum c_{\lambda,i} \mathrm{ch}(M^{\lambda}) = \sum c_{\lambda,i} h_{\lambda}$ is a non-negative linear combination of the $h_{\lambda}$. Thus $\omega(\mathrm{ch}(H^{2i}(\Hess(\mathsf{S},h)))) = \sum c_{\lambda, i} \omega(h_{\lambda}) = 
\sum c_{\lambda,i} e_{\lambda}$ is a non-negative combination of the $e_{\lambda}$. Thus the coefficient of $t^i$ in the chromatic quasi-symmetric polynomial $X_{\Gamma}(\underline{x},t)$ is $e$-positive; by evaluation at $t=1$, the same is true for $X_{\Gamma}(\underline{x})$. 
\end{proof}

The remainder of this section will be a review of some standard facts in the representation theory of $\Symm_n$ as well as a fundamental result of Brosnan and Chow on Hessenberg varieties.

The following lemma is straightforward and probably well-known. 
Let $V = \oplus_{\ell \geq 0} V_\ell$ be a graded  $\Symm_n$-representation, i.e. $\Symm_n$ preserves each subspace $V_\ell$. Let $d_{\lambda,V}$ and $c_{\lambda,V}$ (respectively $d_{\lambda, V_{\ell}}, c_{\lambda, V_{\ell}}$) denote the integers associated to $V$ (respectively $V_\ell$ for each $\ell \geq 0$) given by the decomposition of $V$ (respectively $V_{\ell}$) into $\mathcal{S}^{\lambda}$'s and $M^{\lambda}$'s as elements of $\mathcal{R}ep(\Symm_n)$.

\begin{lemma}\label{lemma: dlambda zero implies all other zero} 
Let $k$ be a positive integer. Suppose $d_{\lambda, V} = 0$ for all $\lambda \vdash n$ with more than $k$ parts. Then 
\begin{enumerate} 
\item $d_{\lambda, V_{\ell}} = 0$ for all $\lambda \vdash n$ with more than $k$ parts and for all $\ell \geq 0$, 
\item $c_{\lambda, V}=0$ for all $\lambda \vdash n$ with more than $k$ parts, and 
\item $c_{\lambda, V_{\ell}} = 0$ for all $\lambda \vdash n$ with more than $k$ parts and for all $\ell \geq 0$. 
\end{enumerate}
\end{lemma}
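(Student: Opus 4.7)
The strategy is to dispatch (1) immediately from the direct-sum decomposition, derive (2) from Young's rule by noticing that the relevant Kostka submatrix is upper-triangular, and then obtain (3) by combining (1) with the argument for (2).

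For (1), since $V = \bigoplus_{\ell \geq 0} V_\ell$ as $\Symm_n$-representations, passing to irreducible multiplicities gives $d_{\lambda, V} = \sum_{\ell \geq 0} d_{\lambda, V_\ell}$. Each $d_{\lambda, V_\ell}$ is a non-negative integer, being the multiplicity of an irreducible in an honest representation, so the hypothesis $d_{\lambda, V} = 0$ forces every $d_{\lambda, V_\ell} = 0$.

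For (2), I would apply Young's rule~\eqref{eq:Mlambda in terms of Slambda} to expand each $M^\mu$ into Specht modules inside the identity $V = \sum_\mu c_{\mu, V} M^\mu$, and collect the coefficient of $\mathcal{S}^\nu$. This produces the system
\[
d_{\nu, V} = \sum_{\mu \trianglelefteq \nu} c_{\mu, V} K_{\nu \mu}, \qquad \nu \vdash n.
\]
The key point is Lemma~\ref{lemma:dominance order and length}: whenever $\mu \trianglelefteq \nu$ one has $\mathrm{parts}(\nu) \leq \mathrm{parts}(\mu)$. Hence if $\nu$ has more than $k$ parts, every $\mu$ contributing on the right side also has more than $k$ parts. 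Ordering the partitions of $n$ with more than $k$ parts by the total order $\preccurlyeq$ (which refines $\trianglelefteq$ by Lemma~\ref{lemma: the total order we want}) and invoking Fact~\ref{fact: Kostka matrix entries}, the resulting subsystem is upper-triangular with $1$'s on the diagonal, hence invertible over $\Z$. Since the left-hand sides all vanish by hypothesis, so do all $c_{\mu, V}$ with $\mu$ having more than $k$ parts.

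Finally (3) follows by combining the previous two parts: by (1), each $V_\ell$ satisfies the same vanishing hypothesis as $V$, so applying the argument of (2) to $V_\ell$ in place of $V$ yields the desired vanishing of $c_{\lambda, V_\ell}$. The only real subtlety throughout is verifying that the Kostka matrix restricts to an upper-triangular block on the set of partitions with more than $k$ parts, and this is precisely what Lemma~\ref{lemma:dominance order and length} guarantees.
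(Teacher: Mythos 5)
Your proposal is correct and follows essentially the same route as the paper: part (1) from the direct-sum decomposition and non-negativity of multiplicities, part (2) from the upper-triangularity of the Kostka matrix with respect to the order $\preccurlyeq$ (which sorts by number of parts first), and part (3) by applying (2) to each $V_\ell$ via (1). The only cosmetic difference is that you invert the closed triangular subsystem indexed by partitions with more than $k$ parts, whereas the paper inverts the full Kostka matrix and reads off the shape of $K^{-1}$; these are the same linear-algebra observation.
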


\begin{proof} 
Since $d_{\lambda, V} =\sum_{\ell} d_{\lambda, V_{\ell}}$, if $d_{\lambda, V}=0$ then $d_{\lambda, V_{\ell}} = 0$ for all $\ell$. This proves (1). 
To prove (2), recall first that Young's rule implies $\mathbf{d}_{V} = K \mathbf{c}_{V}$. We have already seen that $K$ is an upper-triangular matrix with $1$s along the diagonal. Hence $K^{-1}$ has the same properties.  Also, since the total order $\preccurlyeq$ orders partitions by the number of parts, the given hypothesis on the $d_{\lambda, V}$'s implies that the vector $\mathbf{d}_V$ has coordinates all equal to $0$ below a certain point. Since $\mathbf{c}_V = K^{-1}\mathbf{d}_V$, we conclude that $\mathbf{c}_V$ must have the same property. 
The last claim follows from (1) by an argument identical to the discussion above, applied to $V_\ell$ instead of $V$. 
\end{proof}

Next we recall some facts which we later use to show that two representations are isomorphic. 
Given any finite-dimensional representation $V$ of $\Symm_n$ and any partition $\nu$ of $n$, we may consider $V^{\Symm_{\nu}}$, the $\Symm_{\nu}$-stable subspace of $V$. The following is well-known (see e.g. \cite[Proposition 8]{BrosnanChow2015}). 

\begin{proposition}\label{prop: Brosnan Chow}
Let $V$ and $W$ be finite-dimensional representations of $\Symm_n$. Then $V$ and $W$ are isomorphic as $\Symm_n$-representations if and only if 
\[
\dim \,V^{\Symm_{\nu}} = \dim \,W^{\Symm_{\nu}} \textup{ for all partitions } \nu \vdash n.
\]
\end{proposition}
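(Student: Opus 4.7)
The forward direction is immediate: if $V \cong W$ as $\Symm_n$-representations, then any isomorphism restricts to an isomorphism on $\Symm_\nu$-invariants, so the dimensions agree. The plan is to prove the converse by showing that the data $\{\dim V^{\Symm_\nu}\}_{\nu \vdash n}$ determines the multiplicities of the irreducible Specht modules in $V$.

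First I would translate the dimension of invariants into a Hom-space via Frobenius reciprocity. Since $M^\nu = \mathrm{Ind}_{\Symm_\nu}^{\Symm_n} \mathbf{1}$, Frobenius reciprocity gives
\[
\dim V^{\Symm_\nu} = \dim \Hom_{\Symm_\nu}(\mathbf{1}, V|_{\Symm_\nu}) = \dim \Hom_{\Symm_n}(M^\nu, V).
\]
Next, applying Young's rule~\eqref{eq:Mlambda in terms of Slambda} and Schur's lemma, I would compute
\[
\dim \Hom_{\Symm_n}(M^\nu, V) = d_{\nu,V} + \sum_{\mu \triangleright \nu} K_{\mu\nu}\, d_{\mu,V} = \sum_{\mu \vdash n} K_{\mu\nu}\, d_{\mu,V},
\]
where $d_{\mu,V}$ denotes the multiplicity of the Specht module $\mathcal{S}^\mu$ in $V$ (using Fact~\ref{fact: Kostka matrix entries} to extend the sum over all $\mu$).

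The final step is to invert this linear system. Listing partitions in decreasing order with respect to $\preccurlyeq$ as in the excerpt, Lemma~\ref{lemma: the total order we want} together with Fact~\ref{fact: Kostka matrix entries} guarantees that the Kostka matrix $K = (K_{\mu\nu})$ is upper-triangular with $1$'s on the diagonal, hence invertible over $\Z$. Therefore, the vector $(d_{\mu,V})_{\mu \vdash n}$ is uniquely determined by $(\dim V^{\Symm_\nu})_{\nu \vdash n}$. The hypothesis $\dim V^{\Symm_\nu} = \dim W^{\Symm_\nu}$ for all $\nu$ then forces $d_{\mu,V} = d_{\mu,W}$ for every $\mu \vdash n$, which, in view of the decomposition~\eqref{eq:decomp into Specht} into irreducibles, yields $V \cong W$ as $\Symm_n$-representations.

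There is no real obstacle in this argument; the entire proof is a routine application of Frobenius reciprocity together with the triangularity of the Kostka matrix already recorded in the excerpt. The only subtlety worth flagging is the direction of triangularity in Young's rule (summing over $\mu \trianglerighteq \nu$), which must be matched correctly with the ordering $\preccurlyeq$ used to list partitions so that the resulting linear system is genuinely upper-triangular.
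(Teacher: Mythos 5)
Your argument is correct. Note that the paper does not actually prove this proposition---it is stated as well-known with a citation to Brosnan--Chow---so there is no internal proof to compare against; your write-up supplies the standard argument. The two directions are handled properly: the forward implication is immediate, and for the converse the chain $\dim V^{\Symm_\nu} = \dim \Hom_{\Symm_n}(M^\nu, V) = \sum_{\mu} K_{\mu\nu}\, d_{\mu,V}$ via Frobenius reciprocity, Young's rule~\eqref{eq:Mlambda in terms of Slambda}, and Fact~\ref{fact: Kostka matrix entries}, combined with the unitriangularity of $K$ in the order $\preccurlyeq$ (Lemma~\ref{lemma: the total order we want}), shows the multiplicities $d_{\mu,V}$ are determined by the invariant dimensions, which suffices since a finite-dimensional complex representation of $\Symm_n$ is determined up to isomorphism by its irreducible multiplicities. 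Your proof also dovetails with the machinery the paper develops immediately afterward: the identity $N = K^T K$ and the resulting Proposition~\ref{prop: rep ring analogue} are the representation-ring shadow of exactly the linear system you invert here.
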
 

In the setting of the representation ring, there is a similar statement. For two partitions $\lambda,\nu$ of $n$, we set the notation 
\begin{equation}\label{eq:def Nlambdanu}
N_{\lambda, \nu} := \dim (M^{\lambda})^{\Symm_\nu}.
\end{equation}
Let $N = (N_{\lambda,\nu})$ denote the matrix with these entries. From \cite{Stanley-EnumCombVol2} we know that 
\[
N_{\lambda, \nu} = \sum_{\mu \vdash n} K_{\mu, \lambda} K_{\mu, \nu} \quad 
\textup{ and hence } \quad N = K^T K 
\]
where $K=(K_{\lambda \mu})$ is the Kostka matrix and $K^T$ denotes the transpose of $K$. In particular, since $K$ is invertible over $\Z$, it follows that $N$ is invertible over $\Z$. Now suppose we have two elements $\sum a_\lambda M^{\lambda}, \sum b_{\lambda} M^{\lambda}$ in $\mathcal{R}ep(\Symm_n)$ where $a_\lambda, b_{\lambda} \in \Z$ for all $\lambda \vdash n$. By definition, $\sum a_\lambda M^\lambda = \sum b_\lambda M^\lambda$ if and only if $a_\lambda = b_\lambda$ for all $\lambda \vdash n$, or equivalently,  $\mathbf{a} = \mathbf{b}$ where $\mathbf{a} = (a_{\lambda})_{\lambda \vdash n}$ and $\mathbf{b} = (b_{\lambda})_{\lambda \vdash n}$ are (column) vectors with entries $a_{\lambda}, b_{\lambda}$ respectively. Since $N$ is invertible, the following analogue of Proposition~\ref{prop: Brosnan Chow} for $\mathcal{R}ep(\Symm_n)$ is immediate. 

\begin{proposition}\label{prop: rep ring analogue} 
Let $\sum_{\lambda \vdash n} a_\lambda M^{\lambda}, \sum_{\lambda \vdash n} b_{\lambda} M^{\lambda}$ be elements in $\mathcal{R}ep(\Symm_n)$. The following are equivalent:
\begin{enumerate} 
\item   $\sum_{\lambda \vdash n} a_\lambda M^\lambda = \sum_{\lambda \vdash n} b_\lambda M^\lambda$ in $\mathcal{R}ep(\Symm_n)$,
\item $N \mathbf{a} = N \mathbf{b}$ 
\item $\sum_{\lambda \vdash n} a_\lambda \dim(M^\lambda)^{\Symm_\nu} = \sum_{\lambda \vdash n} b_\lambda \dim(M^\lambda)^{\Symm_\nu}$ for all $\nu \vdash n$.
\end{enumerate} 
\end{proposition}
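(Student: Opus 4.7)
The plan is to establish the chain (1) $\iff$ (2) $\iff$ (3), relying on two inputs that have already been assembled in the immediately preceding paragraphs: first, that the family $\{M^\lambda\}_{\lambda \vdash n}$ forms a $\Z$-basis of $\mathcal{R}ep(\Symm_n)$ (a consequence of Young's rule~\eqref{eq:Mlambda in terms of Slambda} together with the invertibility of the Kostka matrix $K$ over $\Z$), and second, the identity $N = K^T K$ from Stanley's enumerative combinatorics text, which combined with the invertibility of $K$ forces $N$ to be invertible over $\Z$ as well. Both equivalences are then essentially formal, so my proof will be brief.

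For (1) $\iff$ (2): the $\Z$-linear independence of $\{M^\lambda\}_{\lambda \vdash n}$ in $\mathcal{R}ep(\Symm_n)$ gives that condition (1) holds precisely when $a_\lambda = b_\lambda$ for every $\lambda \vdash n$, i.e., precisely when $\mathbf{a} = \mathbf{b}$ as column vectors. Since $N$ is invertible over $\Z$ (hence injective as a $\Z$-linear endomorphism of $\Z^{p(n)}$, where $p(n)$ is the number of partitions of $n$), the equation $\mathbf{a} = \mathbf{b}$ is equivalent to $N\mathbf{a} = N\mathbf{b}$. Chaining these gives (1) $\iff$ (2).

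For (2) $\iff$ (3): I would simply unpack the matrix product entry-by-entry. Using the definition $N_{\lambda, \nu} = \dim(M^\lambda)^{\Symm_\nu}$ from~\eqref{eq:def Nlambdanu}, the $\nu$-th coordinate of $N \mathbf{a}$ is the scalar $\sum_{\lambda \vdash n} a_\lambda \dim(M^\lambda)^{\Symm_\nu}$, and similarly for $N\mathbf{b}$. Equality of these two vectors in $\Z^{p(n)}$ is therefore equivalent, coordinate by coordinate over the indexing set of partitions $\nu \vdash n$, to the statement in (3). This finishes the equivalence.

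There is no real technical obstacle, which matches the author's description of the proposition as ``immediate.'' The only subtlety worth flagging is purely notational: one must confirm the row/column convention for $N$ so that $(N\mathbf{a})_\nu$ really equals $\sum_\lambda a_\lambda N_{\lambda,\nu}$ (equivalently, one works with $N$ acting on the left of column vectors whose entries are indexed by $\lambda$, extracting sums over $\lambda$ at each fixed $\nu$). Once that convention is fixed, both implications reduce to the linear-algebraic facts about invertibility of $N$ and the basis property of $\{M^\lambda\}$ that have already been recorded.
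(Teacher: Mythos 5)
Your proposal is correct and follows essentially the same route as the paper: the paper likewise deduces the proposition from the $\Z$-basis property of $\{M^\lambda\}$ and the invertibility of $N = K^T K$, with (3) being just the coordinate-wise reading of $N\mathbf{a} = N\mathbf{b}$. Your remark about fixing the row/column convention for $N$ is a reasonable point of care but poses no issue since $N$ is symmetric.
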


The above discussion shows that proving equality in the representation ring can be viewed as a linear algebra problem. In the case in which these vectors $\mathbf{a} = (a_{\lambda})_{\lambda \vdash n}$ and $\mathbf{b} = (b_{\lambda})_{\lambda \vdash n}$ have coordinates equal to $0$ below a certain point, it will be convenient to further simplify the problem. We now make this more precise. Fix a positive integer $k$. 
Let $\pi_k(\mathbf{a}), \pi_k(\mathbf{b})$, $\pi_k(K), \pi_k(N)$ denote the submatrices obtained from $\mathbf{a}, \mathbf{b}$, $K, N$ by taking the only those entries corresponding to partitions with $\leq k$ parts. (Intuitively, 
this corresponds to taking the ``top parts'' of $\mathbf{a}, \mathbf{b}$ and the ``upper-left corners'' of $K$ and $N$.) 

\begin{lemma}\label{lemma: restrict to submatrix} 
Let $\sum a_\lambda M^{\lambda}, \sum b_{\lambda} M^{\lambda}$ be elements in $\mathcal{R}ep(\Symm_n)$. Let $k$ be a positive integer. Suppose that $a_{\lambda} = b_{\lambda} = 0$ for all $\lambda \vdash n$ with more than $k$ parts. Then the following are equivalent. 
\begin{enumerate} 
\item $\sum a_\lambda M^{\lambda} = \sum b_{\lambda} M^{\lambda}$
\item $\pi_k(N) \pi_k(\mathbf{a}) = \pi_k(N) \pi_k(\mathbf{b})$
\item $\sum a_\lambda \dim(M^\lambda)^{\Symm_\nu} = \sum b_\lambda \dim(M^\lambda)^{\Symm_\nu}$ for all $\nu \vdash n$ with $\leq k$ parts. 
\end{enumerate} 
\end{lemma}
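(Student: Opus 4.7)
The plan is to reduce this statement to Proposition~\ref{prop: rep ring analogue}, which already establishes the equivalence of (1) with $N\mathbf{a} = N\mathbf{b}$ and with the analogue of condition (3) ranging over \emph{all} $\nu \vdash n$. The new content is restricting from all $\nu$ to those with at most $k$ parts, using the hypothesis that the vectors $\mathbf{a}$ and $\mathbf{b}$ are supported on partitions with at most $k$ parts.

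The key structural observation is a block decomposition of $K$ and $N$. Ordering partitions by decreasing $\preccurlyeq$, so that partitions with $\leq k$ parts occupy the initial block, I would first note that Fact~\ref{fact: Kostka matrix entries} together with Lemma~\ref{lemma: the total order we want} makes $K$ upper-triangular with $1$s on the diagonal; consequently $\pi_k(K)$ is itself upper-triangular with $1$s on the diagonal, hence invertible over $\Z$. Next, combining Fact~\ref{fact: Kostka matrix entries} with Lemma~\ref{lemma:dominance order and length}, for $\lambda,\nu$ both with $\leq k$ parts the expansion $N_{\lambda,\nu} = \sum_\mu K_{\mu,\lambda}K_{\mu,\nu}$ is supported only on $\mu$ with $\leq k$ parts, since $K_{\mu,\lambda}\neq 0$ forces $\mathrm{parts}(\mu) \leq \mathrm{parts}(\lambda) \leq k$. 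Therefore $\pi_k(N) = \pi_k(K)^T\pi_k(K)$, which is invertible over $\Z$.

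With this in hand I would do the block computation. Write $\mathbf{a} = \bigl(\pi_k(\mathbf{a}),\, 0\bigr)^T$ and $\mathbf{b} = \bigl(\pi_k(\mathbf{b}),\, 0\bigr)^T$, so that $N\mathbf{a}$ has top block $\pi_k(N)\pi_k(\mathbf{a})$ and bottom block $N_{21}\pi_k(\mathbf{a})$, where $N_{21}$ denotes the lower-left block of $N$. Since $\pi_k(N)$ is invertible, the equation $\pi_k(N)\pi_k(\mathbf{a}) = \pi_k(N)\pi_k(\mathbf{b})$ forces $\pi_k(\mathbf{a}) = \pi_k(\mathbf{b})$, which automatically yields $N_{21}\pi_k(\mathbf{a}) = N_{21}\pi_k(\mathbf{b})$ and hence $N\mathbf{a} = N\mathbf{b}$; conversely $N\mathbf{a} = N\mathbf{b}$ trivially implies equality of the top block. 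Together with Proposition~\ref{prop: rep ring analogue} this gives (1) $\Leftrightarrow$ (2). Finally (2) $\Leftrightarrow$ (3) follows from the symmetry $N = N^T$: condition (3) states that $\sum_\lambda a_\lambda N_{\lambda,\nu} = \sum_\lambda b_\lambda N_{\lambda,\nu}$ for all $\nu$ with $\leq k$ parts, which (by symmetry of $N$) is the assertion $(N\mathbf{a})_\nu = (N\mathbf{b})_\nu$ on the top block, and this is exactly $\pi_k(N)\pi_k(\mathbf{a}) = \pi_k(N)\pi_k(\mathbf{b})$.

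The argument is essentially linear algebra and I do not anticipate any genuine obstacle; the one point to be careful about is the bookkeeping with the $\preccurlyeq$-ordering of partitions, ensuring that the support hypothesis on $\mathbf{a}$ and $\mathbf{b}$ dovetails precisely with the upper-triangularity of $K$ so that truncation to the $\pi_k$ block loses no information.
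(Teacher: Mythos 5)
Your proposal is correct and follows essentially the same route as the paper, whose proof is only a sketch reducing everything to the invertibility of $\pi_k(N)$ via the factorization $\pi_k(N)=\pi_k(K)^T\pi_k(K)$ with $\pi_k(K)$ upper-triangular with $1$'s on the diagonal. You simply flesh out the details the paper leaves implicit (the support argument for why the factorization truncates correctly, and the block computation relating (2) and (3) to $N\mathbf{a}=N\mathbf{b}$), all of which check out.
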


\begin{proof}[Sketch of the proof.]
It is straightforward to see that the essential claim is that $\pi_k(N)$ (the ``upper-left corner'' of $N$) is invertible. Recall that $N=K^T K$ and $K$ is upper-triangular. Hence $\pi_k(N) = \pi_k(K^T) \pi_k(K)$. Moreover, $\pi_k(K^T) = \pi_k(K)^T$ and since $K$ is upper-triangular with $1$'s along the diagonal, the upper-left corner $\pi_k(K)$ has the same properties, and in particular is invertible. Thus $\pi_k(N)$ is also invertible. 
\end{proof}

Finally, we recall a fundamental result of Brosnan and Chow which  identifies the dimension of the subspaces $H^*(\Hess(\mathsf{S},h))^{\Symm_{\nu}}$ with the dimension of the cohomology of a regular Hessenberg variety and which we use repeatedly below.

\begin{theorem}\label{thm: BrosnanChow main thm}  ( \cite[Theorem 76]{BrosnanChow2015})
Let $n$ be a positive integer and $h: [n] \to [n]$ a Hessenberg function. Let $\nu \vdash n$ be a partition of $n$, $\mathsf{X}_\nu$ a regular operator of Jordan type $\nu$, and $\mathsf{S}$ a regular semisimple operator. Then for each non-negative integer $i$, 
\[
\dim (H^{2i}(\Hess(\mathsf{S}, h)))^{\Symm_\nu} = \dim H^{2i}(\Hess(\mathsf{X}_{\nu}, h)).
\]
\end{theorem}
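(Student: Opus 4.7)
The plan is to prove this via a monodromy/specialization argument for the universal Hessenberg family. Set $\pi\colon \mathcal{H}(h)\to \mathfrak{gl}(n,\C)$ to be the total family with fiber $\Hess(\mathsf{X},h)$ over $\mathsf{X}$. First I would restrict to the regular locus $\mathfrak{gl}(n,\C)^{\mathrm{reg}}$ and observe that, since $GL(n,\C)$ acts $\pi$-equivariantly, the Betti numbers of the fiber depend only on the Jordan type of $\mathsf{X}$; in particular $\dim H^{2i}(\Hess(\mathsf{X}_\nu,h))$ is well-defined independently of the chosen representative. Moreover, by Precup's theorem that regular Hessenberg varieties admit affine pavings, the cohomology of every such fiber is concentrated in even degrees and torsion-free, so a Poincar\'e-polynomial semicontinuity argument shows that $R^{2i}\pi_*\Q$ is a local system over $\mathfrak{gl}(n,\C)^{\mathrm{reg}}$.

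The key step is to match the dot action with the geometric monodromy. Over the regular semisimple locus $\mathfrak{gl}(n,\C)^{\mathrm{rss}}$ the fundamental group surjects onto $\Symm_n$ via permutation of eigenvalues. I would identify Tymoczko's dot action on $H^{2i}(\Hess(\mathsf{S},h))$ with the monodromy representation of this $\Symm_n$ on the local system $R^{2i}\pi_*\Q|_{\mathfrak{gl}(n,\C)^{\mathrm{rss}}}$. The cleanest route is to use the GKM description of $H^{2i}(\Hess(\mathsf{S},h))$ in terms of $T$-fixed points (indexed by $\Symm_n$) and edges; the permutation of eigenvalues along a loop in the base induces precisely the prescribed permutation of the $T$-fixed points, so the two $\Symm_n$-actions agree.

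Finally, to compare invariants with the special fiber over $\mathsf{X}_\nu$, choose a generic affine disk $\Delta \subset \mathfrak{gl}(n,\C)$ through $\mathsf{X}_\nu$ whose punctured neighborhood of $\mathsf{X}_\nu$ lies in $\mathfrak{gl}(n,\C)^{\mathrm{rss}}$. Under the identification above, the local monodromy around $\mathsf{X}_\nu$ generates precisely the Young subgroup $\Symm_\nu \subseteq \Symm_n$. Applying the standard specialization theorem (the cohomology of the special fiber in a cohomologically locally constant family equals the local-monodromy invariants on a nearby fiber, via the nearby-cycles functor) yields
\[
\dim H^{2i}(\Hess(\mathsf{X}_\nu,h)) = \dim H^{2i}(\Hess(\mathsf{S},h))^{\Symm_\nu},
\]
as required. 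I expect the main obstacle to be the second step: rigorously identifying the dot action with the geometric monodromy, since it requires tracking the torus-equivariant structure along explicit paths in the base and understanding the behavior of $\pi$ above the non-regular strata. By contrast, the semicontinuity and specialization inputs in steps one and three are comparatively standard once Precup's affine-paving result is in hand.
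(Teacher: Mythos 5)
First, note that the paper does not prove this statement at all: it is imported verbatim from Brosnan--Chow \cite[Theorem 76]{BrosnanChow2015}, so there is no internal proof to compare against. Your sketch follows the same broad strategy as Brosnan and Chow (a universal family, identification of the dot action with monodromy over the regular semisimple locus, and a specialization argument), but as written it contains two genuine errors.

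First, $R^{2i}\pi_*\Q$ is \emph{not} a local system over $\mathfrak{gl}(n,\C)^{\mathrm{reg}}$: the Betti numbers of regular Hessenberg varieties jump with the Jordan type. For $h=(1,2,\ldots,n)$, the fiber $\Hess(\mathsf{S},h)$ consists of $n!$ points while $\Hess(\mathsf{N},h)$ is a single point, and both $\mathsf{S}$ and $\mathsf{N}$ are regular. Affine pavings give evenness and freeness of cohomology fiberwise, but no semicontinuity argument can make the ranks locally constant when they visibly are not. The sheaves $R^{2i}\pi_*\Q$ are local systems only over the regular \emph{semisimple} locus, which is exactly why a nontrivial comparison between $\mathsf{X}_\nu$ and $\mathsf{S}$ is needed in the first place.

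Second, and more seriously, the ``standard specialization theorem'' you invoke does not say what you need. For a proper family, the local invariant cycle theorem (Deligne, BBD) gives only a \emph{surjection}
\[
H^{2i}(\Hess(\mathsf{X}_\nu,h)) \twoheadrightarrow \bigl(H^{2i}(\Hess(\mathsf{S},h))\bigr)^{\Symm_\nu},
\]
not an isomorphism; in general the special fiber can have strictly larger cohomology than the monodromy invariants of a nearby fiber (a conic degenerating to two lines already shows this in degree $2$). Establishing injectivity -- equivalently, the equality of dimensions asserted in the theorem -- is the substantive content of Brosnan--Chow's argument and requires additional input beyond nearby cycles: their analysis of the perverse/decomposition-theorem structure of $R\pi_*\Q$ over all of $\mathfrak{gl}(n,\C)$ (or, in Guay-Paquet's alternative proof, a purely combinatorial identity). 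Your step identifying the dot action with geometric monodromy, and the local monodromy around $\mathsf{X}_\nu$ with $\Symm_\nu$, is the right idea and is indeed a key lemma of \cite{BrosnanChow2015}, but the proposal as it stands omits the half of the theorem that makes it hard.
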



\subsection{Incomparability graphs, acyclic orientations, and Stanley's theorem} \label{sec: graphs and orientations}

In this section we recall some graph-theoretic data which can be constructed from a Hessenberg function. 
Let $n$ be a positive integer and suppose $h: [n] \to [n]$ is a Hessenberg function. 

\begin{definition}\label{definition:incomparability graph} 
We define the \textbf{incomparability graph} $\Gamma_h = (V(\Gamma_h), E(\Gamma_h))$ associated to $h$ 
as follows.  
The vertex set $V(\Gamma_h)$ is $[n] = \{1,2,\ldots,n\}$. The edge set $E(\Gamma_h)$ is defined as follows: $\{i,j\}\in E(\Gamma_h)$ if $1 \leq j < i \leq n$ and $i \leq h(j)$.  
\end{definition}

The incomparability graph is a visual representation of the set of roots $\Phi_h^-$.  In particular, it is straightforward to see that the roots of $\Phi_h^-$ are in bijection with the edges of $\Gamma_h$.  

\begin{example}\label{ex: incomparability graphs} The incomparability graphs for $h=(2,4,4,4)$ and $h=(3,4,5,5,5)$ are given below.
\vspace*{.15in}
\[\xymatrix{1 \ar@{-}[r] & 2 \ar@{-}[r] \ar@{-}@/^1.5pc/[rr] & 3 \ar@{-}[r] & 4 & & 
1 \ar@{-}[r]\ar@{-}@/^1.5pc/[rr]  & 2 \ar@{-}[r]\ar@{-}@/^1.5pc/[rr]  & 3 \ar@{-}[r] \ar@{-}@/^1.5pc/[rr]  & 4 \ar@{-}[r]  & 5
}\]
\end{example}

Recall that an \textbf{orientation} $\omega$ of (the edges of) a graph is an assignment of a direction (i.e. orientation) to each edge $e \in E(\Gamma_h)$. Equivalently, $\omega$ assigns to each edge $e$ a source and a target; we notate the source (respectively target) of $e$ according to the orientation $\omega$ by $\src_\omega(e)$ (respectively $\tgt_\omega(e)$).  A (directed) \textbf{cycle} is a sequence of vertices starting and ending at the same vertex whose edges are oriented consistently with the order of the vertices in the sequence.
We say that an orientation $\omega$ is \textbf{acyclic} if there are no (directed) cycles in the corresponding oriented graph.  Let 
\[
{\mathcal{A}}(\Gamma_h) := \{ \hsm \omega \hsm \vert \hsm \omega \textup{ is an acyclic orientation of } \Gamma_h \}
\]
denote the set of all acyclic orientations of $\Gamma_h$. 
Moreover, given an orientation $\omega$, a \textbf{sink} associated to $\omega$ is a vertex $v$ of the graph such that, for all edges $e$ incident to the vertex $v$, the edge ``points towards $v$'', i.e., $\tgt_\omega(e)=v$ for all edges $e$ incident to $v$.  
It will turn out to be extremely important to pay close attention to the \emph{number} of sinks associated to a given orientation. Thus we define
\begin{equation}\label{eq:def AkGamma_h}
\mathcal{A}_k(\Gamma_h) := \{ \hsm \omega \in \mathcal{A}(\Gamma_h) \hsm \vert \hsm \omega
\textup{ has exactly $k$ sinks}  \}.
\end{equation}
Since every acyclic orientation has at least one sink \cite[Section 8.6, Exercise 4]{Rahman2017}, we have $\mathcal{A}(\Gamma_h) =\bigsqcup_{k\geq 1} \mathcal{A}_k(\Gamma_h)$.  

The following is a result of Shareshian and Wachs \cite[Theorem 5.3]{ShareshianWachs2016} which generalizes a theorem of Stanley.  Following their terminology, for an orientation $\omega$ of $\Gamma_h$, we let 
\begin{equation}\label{eq:definition asc}
\asc(\omega) := \lvert \{  e=\{a,b\} \in E(\Gamma_h) \hsm \vert \hsm \src_\omega(e)=a, \tgt_\omega(e)=b, \textup{ and } a<b \} \rvert.
\end{equation}
In other words, if $\Gamma_h$ is drawn as in Example \ref{ex: incomparability graphs} with the labels of the vertices increasing from left to right, then $\asc(\omega)$ is the total number of edges which ``point to the right''. 

\begin{theorem} \label{thm: Stanley} \cite[Theorem 5.3]{ShareshianWachs2016} 
Let $n$ be a positive integer and $h: [n] \to [n]$ a Hessenberg function. Let $c_{\lambda,i}$ denote the coefficients appearing in~\eqref{eq: decomp into Mlambda}. Then 
\[
\sum_{\lambda \vdash n,\; \mathrm{parts}(\lambda) = k} c_{\lambda,i} = 
\lvert \{ \omega \hsm \vert \hsm \omega \in \mathcal{A}_k(\Gamma_h) \textup{ and } \asc(\omega)=i \} \rvert.
\]
\end{theorem}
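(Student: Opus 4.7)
My plan is to translate the statement into a symmetric-function identity about the chromatic quasisymmetric function $X_{\Gamma_h}(\underline{x}, t)$ and then apply a $t$-refinement of Stanley's classical argument \cite[Theorem 3.3]{Stanley1995}. Combining Theorem~\ref{theorem:BrosnanChow for SW} of Brosnan-Chow with the standard identities $\mathrm{ch}(M^\lambda) = h_\lambda$ and $\omega(h_\lambda) = e_\lambda$, the $M^\lambda$-decomposition $H^{2i}(\Hess(\mathsf{S},h)) = \sum_\lambda c_{\lambda,i} M^\lambda$ in $\mathcal{R}ep(\Symm_n)$ yields
\[
X_{\Gamma_h}(\underline{x}, t) = \sum_{\lambda \vdash n, \, i \geq 0} c_{\lambda, i} \, t^i \, e_\lambda
\]
as an identity in $\Lambda_{\Q}[t]$. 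The target then becomes: for each $k \geq 1$ and $i \geq 0$, the sum of the $e_\lambda$-coefficients of $[t^i] X_{\Gamma_h}(\underline{x}, t)$ over partitions with $\mathrm{parts}(\lambda) = k$ equals $|\{\omega \in \mathcal{A}_k(\Gamma_h) : \asc(\omega) = i\}|$.

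Next, I would exploit Stanley's bijective correspondence between proper colorings and acyclic orientations, enhanced by the $\asc$-statistic. For a proper coloring $\kappa \colon V(\Gamma_h) \to \{1, 2, \ldots\}$, direct each edge $\{a, b\}$ from the endpoint with smaller $\kappa$-value to the larger; this produces an orientation $\omega_\kappa \in \mathcal{A}(\Gamma_h)$, and a direct comparison with \eqref{eq:definition asc} gives $\asc(\kappa) = \asc(\omega_\kappa)$. Partitioning the proper colorings according to $\omega_\kappa$ yields the decomposition
\[
X_{\Gamma_h}(\underline{x}, t) = \sum_{\omega \in \mathcal{A}(\Gamma_h)} t^{\asc(\omega)} F_\omega(\underline{x}),
\]
where $F_\omega(\underline{x}) := \sum_{\kappa : \omega_\kappa = \omega} x^\kappa$ is the generating function for strictly $\omega$-increasing labelings. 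By Gessel's theory of $P$-partitions applied to the poset $P_\omega$ obtained as the transitive closure of $\omega$, each $F_\omega$ is quasisymmetric and its principal specialization $F_\omega(1^N) = \Omega^\circ_{P_\omega}(N)$ is the strict order polynomial of $P_\omega$; crucially, the sinks of $\omega$ are exactly the maximal elements of $P_\omega$, and the degree/leading behavior of $\Omega^\circ_{P_\omega}(N)$ is controlled by $|\mathrm{sinks}(\omega)|$.

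Finally, I would extract the desired count via a specialization/inversion argument. Setting $\underline{x} = (1^N)$ in both expressions for $X_{\Gamma_h}(\underline{x}, t)$ and using $e_\lambda(1^N) = \prod_j \binom{N}{\lambda_j}$ gives the polynomial identity
\[
\sum_{\lambda \vdash n, \, i \geq 0} c_{\lambda, i} \, t^i \prod_j \binom{N}{\lambda_j} \;=\; \sum_{\omega \in \mathcal{A}(\Gamma_h)} t^{\asc(\omega)} \, \Omega^\circ_{P_\omega}(N)
\]
in $\Z[t][N]$. In the ungraded case $t = 1$, Stanley's classical inversion argument \cite[Theorem 3.3]{Stanley1995} recovers $\sum_{\mathrm{parts}(\lambda) = k} c_\lambda = |\mathcal{A}_k(\Gamma_h)|$ by performing Möbius-style inversion over the lattice of set partitions of $V(\Gamma_h)$ and matching ``number of parts of $\lambda$'' with ``number of sinks of $\omega$'' using the structural fact about $\Omega^\circ_{P_\omega}(N)$ above. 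The main obstacle is lifting this inversion to the graded setting: one must carry $\asc(\omega)$ through every Möbius step so that the $t$-grading matches on both sides at each level of the inversion. Shareshian and Wachs accomplish this in \cite[Section 5]{ShareshianWachs2016} via a refined $P$-partition argument that simultaneously enumerates by the $\asc$-statistic, and this refined combinatorial bookkeeping constitutes the technical heart of the proof.
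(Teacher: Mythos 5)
This theorem is not proved in the paper at all: it is quoted verbatim from \cite[Theorem 5.3]{ShareshianWachs2016}, with the geometric phrasing in terms of the coefficients $c_{\lambda,i}$ obtained by combining that purely combinatorial statement with Theorem~\ref{theorem:BrosnanChow for SW}. Your opening reduction --- using $\mathrm{ch}(M^\lambda)=h_\lambda$ and the involution sending $h_\lambda$ to $e_\lambda$ to convert the claim into the assertion that the sum of the $e_\lambda$-coefficients of $[t^i]X_{\Gamma_h}(\underline{x},t)$ over $\lambda$ with $\mathrm{parts}(\lambda)=k$ counts $\{\omega\in\mathcal{A}_k(\Gamma_h):\asc(\omega)=i\}$ --- is correct and is exactly how the paper intends the statement to be read. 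Your second step is also sound: the decomposition $X_{\Gamma_h}(\underline{x},t)=\sum_{\omega}t^{\asc(\omega)}F_\omega(\underline{x})$ is valid, the compatibility $\asc(\kappa)=\asc(\omega_\kappa)$ follows directly from~\eqref{eq:definition asc}, and sinks of $\omega$ are indeed the maximal elements of the transitive closure $P_\omega$.

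The gap is in the final extraction step, which is where the entire content of the theorem lives. First, you do not actually perform it: deferring ``the refined combinatorial bookkeeping'' to \cite[Section 5]{ShareshianWachs2016} amounts to citing the theorem being proved (which is defensible only in the sense that the paper itself does the same). Second, the mechanism you sketch would not work as stated. The specialization $\underline{x}=1^N$ sends $e_\lambda$ to $\prod_j\binom{N}{\lambda_j}$, a polynomial of degree exactly $n$ in $N$; since there are $p(n)$ such polynomials inside the $(n+1)$-dimensional space of polynomials of degree at most $n$, they are linearly dependent for $n\geq 5$. Hence the identity in $\Z[t][N]$ cannot by itself determine the sums $\sum_{\mathrm{parts}(\lambda)=k}c_{\lambda,i}$ unless one proves separately that the functional ``sum of $e_\lambda$-coefficients over $\lambda$ with $k$ parts'' factors through this specialization, and no such argument is offered. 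Moreover, the M\"obius inversion over the lattice of set partitions is the device behind the power-sum expansion of $X_{\Gamma}$ and the identity $X_\Gamma(1^N)=\chi_\Gamma(N)$, not behind the sink theorem; Stanley's proof of \cite[Theorem 3.3]{Stanley1995} and the Shareshian--Wachs $t$-refinement instead rely on an extraction device under which $e_\lambda$ maps to a quantity governed by $\ell(\lambda)$ alone, matched against a sink-refined reciprocity count. That device is precisely what is missing from your write-up.
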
 

Since there is only one partition of $n$ with exactly $1$ part, namely $\lambda=(n)$, and because the representation $M^{(n)}$ corresponding to this partition is the trivial representation \cite{Ful97}, we may immediately conclude the following, which will be important to us later on. 

\begin{corollary}\label{corollary: triv coeff is nonneg}
Under the conditions in the above theorem, the multiplicity of the trivial representation in $H^{2i}(\Hess(\mathsf{S},h))$ is the number of acyclic orientations $\omega$ of $\Gamma_h$ with exactly $1$ sink such that $\asc(\omega)=i$. Equivalently, for $\lambda = (n)$ the trivial partition, we have 
\[
c_{(n),i} = \lvert \{ \omega \hsm \vert \hsm \omega \in \mathcal{A}_1(\Gamma_h) \textup{ and } \asc(\omega)=i \} \rvert. 
\]
\end{corollary}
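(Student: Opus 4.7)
The plan is to deduce this corollary as a direct specialization of Theorem~\ref{thm: Stanley} to the case $k = 1$. First I would observe that the only partition $\lambda \vdash n$ with $\mathrm{parts}(\lambda) = 1$ is $\lambda = (n)$ itself, so the summation on the left-hand side of Theorem~\ref{thm: Stanley} with $k=1$ collapses to the single term $c_{(n), i}$. Theorem~\ref{thm: Stanley} then immediately yields the displayed equality
\[
c_{(n), i} \;=\; \lvert \{ \omega \in \mathcal{A}_1(\Gamma_h) \mid \asc(\omega) = i \} \rvert.
\]

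To justify the first assertion that this count equals the multiplicity of the trivial representation, I would identify $M^{(n)}$ with the trivial $\Symm_n$-representation. Indeed, a tabloid of shape $(n)$ consists of a single row of length $n$ filled with $\{1, 2, \ldots, n\}$, and any row-permutation of the entries produces the same tabloid, so $M^{(n)}$ is one-dimensional and $\Symm_n$ acts trivially on it. Consequently, by~\eqref{eq: decomp into Mlambda} the coefficient $c_{(n), i}$ records the multiplicity of the $M^{(n)}$-summand, i.e., of the trivial representation, appearing in the expansion of $H^{2i}(\Hess(\mathsf{S}, h))$ into the basis $\{M^\lambda\}_{\lambda \vdash n}$ of $\mathcal{R}ep(\Symm_n)$. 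Combining the two observations gives both equalities of the corollary.

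There is essentially no obstacle here: this corollary is a one-line deduction from the $k = 1$ case of Theorem~\ref{thm: Stanley} combined with the elementary identification of $M^{(n)}$ as the trivial $\Symm_n$-module. The only point requiring brief mention is that, via Young's rule~\eqref{eq:Mlambda in terms of Slambda}, $M^{(n)} = \mathcal{S}^{(n)}$ since no partition strictly dominates $(n)$; this confirms that the $\{M^\lambda\}$-coefficient $c_{(n), i}$ is the appropriate quantity to extract from Theorem~\ref{thm: Stanley} in this $k=1$ case.
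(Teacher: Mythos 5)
Your proposal is correct and follows exactly the paper's own (very short) argument: specialize Theorem~\ref{thm: Stanley} to $k=1$, note that $(n)$ is the unique partition with one part, and identify $M^{(n)}$ with the trivial $\Symm_n$-representation. The extra remarks about tabloids of shape $(n)$ and Young's rule are fine elaborations but add nothing essential.
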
  

The following is also immediate from Theorem~\ref{thm: Stanley} by summing over $k$. 

\begin{corollary}\label{corollary: sum clambda for all i} 
Under the conditions in the above theorem we have 
\[
\sum_{\lambda \vdash n} c_{\lambda, i} = \lvert \{ \omega \, \vert \, \omega \in \mathcal{A}(\Gamma_h) \textup{ and } \asc(\omega) = i \} \rvert.  
\]
\end{corollary}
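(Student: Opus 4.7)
The plan is to deduce Corollary~\ref{corollary: sum clambda for all i} directly from Theorem~\ref{thm: Stanley} by summing both identities over all possible values of the number of sinks $k \geq 1$. Indeed, the hint printed just above the statement already indicates this strategy (``by summing over $k$''), and the argument amounts to verifying that both sides of the equality decompose compatibly according to $k$.

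First I would observe that the set of all partitions of $n$ decomposes as a disjoint union indexed by the number of parts:
\[
\{\lambda \vdash n\} \;=\; \bigsqcup_{k \geq 1} \{\lambda \vdash n \,:\, \mathrm{parts}(\lambda) = k\}.
\]
Therefore the left-hand side of the desired identity rearranges as
\[
\sum_{\lambda \vdash n} c_{\lambda, i} \;=\; \sum_{k \geq 1} \;\sum_{\substack{\lambda \vdash n \\ \mathrm{parts}(\lambda) = k}} c_{\lambda,i}.
\]

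Next I would recall from the discussion immediately following~\eqref{eq:def AkGamma_h} that every acyclic orientation of $\Gamma_h$ has at least one sink, so that $\mathcal{A}(\Gamma_h) = \bigsqcup_{k \geq 1} \mathcal{A}_k(\Gamma_h)$ is a disjoint decomposition by sink count. Consequently, the right-hand side of the desired identity also splits as
\[
\left\lvert \{\omega \in \mathcal{A}(\Gamma_h) \,:\, \asc(\omega) = i\}\right\rvert \;=\; \sum_{k \geq 1} \left\lvert \{\omega \in \mathcal{A}_k(\Gamma_h) \,:\, \asc(\omega) = i\}\right\rvert.
\]

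Finally, matching the two displays term-by-term via Theorem~\ref{thm: Stanley} yields the claim. There is no substantive obstacle here: the corollary is an immediate consequence of the refined identity, and the only (trivial) point to verify is that the two decompositions---of partitions of $n$ by the number of parts on one side, and of acyclic orientations by the number of sinks on the other---are indexed by the same range $k \geq 1$, which is guaranteed by the existence-of-a-sink fact cited above.
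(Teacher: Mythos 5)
Your proposal is correct and follows exactly the route the paper intends: the paper states the corollary is ``immediate from Theorem~\ref{thm: Stanley} by summing over $k$,'' and your argument simply makes explicit the two compatible decompositions (partitions by number of parts, acyclic orientations by number of sinks via $\mathcal{A}(\Gamma_h)=\bigsqcup_{k\geq 1}\mathcal{A}_k(\Gamma_h)$) before summing the identity of Theorem~\ref{thm: Stanley} over $k$. No gaps.
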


\subsection{$P_h$-tableaux and a result of Gasharov}\label{sec: Ph tableaux}

Recall that the goal of the present manuscript is to prove a result about the coefficients $c_{\lambda}$ and $c_{\lambda,i}$ appearing in~\eqref{eq: decomp into Mlambda}, for certain cases of Hessenberg functions $h$. We have also seen that the coefficients $d_{\lambda}$ from~\eqref{eq:decomp into Specht} are intimately related to the $c_{\lambda}$. In preparation for the arguments in the following sections, we now take a moment to recall a combinatorial object called a $P_h$-tableau, and the result of Gasharov which computes the $d_{\lambda}$'s in terms of $P_h$-tableau.

\begin{definition}\label{def: P-tab} Fix a Hessenberg function $h: [n] \to [n]$.  A \textbf{$P_h$-tableau of shape $\lambda$} is a filling of a Young diagram of shape $\lambda \vdash n$ with the integers of $[n]$ such that 
\begin{enumerate}
\item each integer $1,2,..., n$ appears exactly once,
\item if $i\in [n]$ appears immediately to the right of $j\in [n]$ then $i>h(j)$, and
\item if $i\in [n]$ appears immediately below $j\in [n]$ then $j \leq h(i)$.
\end{enumerate}
\end{definition} 

\begin{example}  Let $n=5$ and let $h=(2,3,4,5,5)$. 
Then there are nine $P_h$-tableau of shape $(2,2,1)$:  
\[
\young(13,24,5)\quad \young(14,25,3) \quad \young(13,25,4)\quad \young(14,35,2)\quad 
\young(15,24,3) \quad \young(24,13,5) \quad \young(24,15,3)\quad \young(25,14,3) \quad \young(35,24,1)
\]  
\end{example}

Recall that every partition $\lambda \vdash n$ has a dual partition $\lambda^{\vee}$ whose Young diagram is the transpose of the Young diagram of $\lambda$.  The following theorem, which gives a positive, combinatorial formula for the coefficients $d_{\lambda}$, is due to Gasharov \cite{Gasharov1996}.  There is also a graded version of the theorem, due to Shareshian and Wachs \cite[Theorem 6.3]{ShareshianWachs2016}, but we will only need the ungraded version below.

\begin{theorem}\label{thm: irreducible coefficients} Let $n$ be a positive integer and let $h: [n]\to [n]$ be a Hessenberg function. Let $d_{\lambda}$ denote the coefficients  appearing in~\eqref{eq:decomp into Specht}.  Then 
\[
d_\lambda = \lvert \{ \textup{  $P_h$-tableaux of shape $\lambda^{\vee}$ } \} \rvert.
\]
\end{theorem}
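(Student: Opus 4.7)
The plan is to translate the statement into one about symmetric functions and then carry out a bijective/involution argument. Applying Theorem~\ref{theorem:BrosnanChow for SW} and evaluating the chromatic quasisymmetric function at $t=1$ (which specializes $X_{\Gamma_h}(\underline{x},t)$ to Stanley's ordinary $X_{\Gamma_h}(\underline{x})$ and corresponds to summing $H^{2i}(\Hess(\mathsf{S},h))$ over all $i$), we obtain
\[
\omega(\ch(H^*(\Hess(\mathsf{S},h)))) = X_{\Gamma_h}(\underline{x}).
\]
Using the decomposition $H^*(\Hess(\mathsf{S},h)) = \bigoplus_\lambda d_\lambda \mathcal{S}^\lambda$ from~\eqref{eq:decomp into Specht}, together with the standard facts $\ch(\mathcal{S}^\lambda) = s_\lambda$ and $\omega(s_\lambda) = s_{\lambda^\vee}$, this rearranges to
\[
X_{\Gamma_h}(\underline{x}) = \sum_{\lambda \vdash n} d_\lambda \, s_{\lambda^\vee} = \sum_{\mu \vdash n} d_{\mu^\vee} \, s_\mu.
\]
Since the Schur functions $\{s_\mu\}$ form a basis of symmetric functions, it suffices to prove that the coefficient of $s_\mu$ in the Schur expansion of $X_{\Gamma_h}$ equals the number of $P_h$-tableaux of shape $\mu$; the substitution $\mu = \lambda^\vee$ then yields the theorem.

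To establish the Schur expansion combinatorially, I would introduce the notion of a \emph{$P_h$-array} of shape $\mu$: a filling of the Young diagram of $\mu$ by the entries of $[n]$, each used exactly once, that need not satisfy any of the conditions of Definition~\ref{def: P-tab}. The strategy is to start from the proper-coloring definition of $X_{\Gamma_h}$, convert it via RSK-style manipulations or the Jacobi--Trudi determinant into a signed sum of monomials indexed by $P_h$-arrays (each contributing $\pm x^{\mathrm{weight}}$ according to whether the induced row word is a proper coloring), and then construct a sign-reversing, weight-preserving involution on the set of all $P_h$-arrays whose fixed points are precisely the $P_h$-tableaux of shape $\mu$. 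The involution should scan the array (say, along a reading word) to locate the first position where either condition (2) or condition (3) of Definition~\ref{def: P-tab} is violated, and swap entries between the two offending rows in a canonical manner that toggles the sign while preserving the monomial weight.

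The main technical obstacle, as in Gasharov's original proof, is to design this involution cleanly so that (a) it is genuinely an involution (i.e., applying it twice returns the original array), (b) it flips the sign attached to the array, (c) it preserves the weight $x^{\mathrm{weight}}$, and (d) the fixed point set coincides exactly with the set of $P_h$-tableaux of shape $\mu$—neither more nor fewer. The delicate point is reconciling the row condition (2) and column condition (3) of Definition~\ref{def: P-tab} under a single swap rule, because a local swap intended to fix a row violation may create a column violation (or vice versa), and one must prescribe the scanning and swapping carefully so that the first violation encountered is always corrected without introducing an earlier violation. Once the involution is defined and verified, cancellation reduces the signed sum to a positive sum over $P_h$-tableaux, completing the proof.
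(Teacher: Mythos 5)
Your first step---passing through Theorem~\ref{theorem:BrosnanChow for SW} at $t=1$, using $\ch(\mathcal{S}^\lambda)=s_\lambda$ and $\omega(s_\lambda)=s_{\lambda^\vee}$ to convert the claim into the statement that the coefficient of $s_\mu$ in the Schur expansion of $X_{\Gamma_h}(\underline{x})$ is the number of $P_h$-tableaux of shape $\mu$---is correct, and it is exactly how the theorem sits in the literature: the paper does not prove this result but quotes it from Gasharov \cite{Gasharov1996}, whose theorem is precisely the Schur-expansion statement you reduce to, with the translation to the coefficients $d_\lambda$ supplied by Brosnan--Chow. There is no circularity, since Brosnan and Chow's theorem does not rely on Gasharov's.

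The gap is in the second half: you never actually construct the sign-reversing involution, you only list the four properties it must have and acknowledge that designing it is ``the main technical obstacle.'' That obstacle \emph{is} the proof, so as written the argument is incomplete. Moreover, the setup you propose is harder than it needs to be and is not obviously workable: you take arrays satisfying \emph{neither} condition of Definition~\ref{def: P-tab} and ask a single swap rule to resolve both row and column violations, whereas in Gasharov's argument the Jacobi--Trudi expansion $s_\mu=\det(h_{\mu_i-i+j})$ paired against $X_{\Gamma_h}$ produces a signed sum over arrays whose rows \emph{automatically} satisfy the row condition (each row is an independent set listed so that consecutive entries $j,i$ satisfy $i>h(j)$, because $\langle X_{\Gamma_h},h_\alpha\rangle$ counts ordered decompositions of $[n]$ into independent sets of sizes $\alpha$); only the column condition can fail, and the involution swaps tails of two adjacent rows at the first column violation. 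If you insist on arrays with no constraints at all, you first need to justify that the signed sum over such arrays equals $\langle X_{\Gamma_h},s_\mu\rangle$ (it does not follow from Jacobi--Trudi as stated), and then the ``fix the first violation'' swap is genuinely delicate because a row-repair can create an earlier column violation. To complete the proof you should either reproduce Gasharov's involution in the restricted setting, or explicitly cite \cite{Gasharov1996} for this step as the paper does.
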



\section{Abelian Hessenberg varieties} \label{sec: abelian Hessenberg varieties}

In the previous sections we outlined the motivation behind this paper and recalled some background We are finally ready to begin our own arguments in earnest, and the first task is to establish the terminology (and hypothesis) which allows us to make our arguments -- namely, the definition of an abelian ideal and an abelian Hessenberg variety. We also briefly discuss how our special case relates to other situations that have been studied previously in the literature.

In Section~\ref{sec:background} we defined an ideal of $\Phi^-$ associated to a Hessenberg function $h$. We now introduce the definition which is central to this manuscript. 

\begin{definition}\label{def: abelian} 
We say that an ideal $I\subseteq \Phi^-$ is \textbf{abelian} if $\alpha+\beta\notin \Phi^-$ for all $\alpha, \beta\in I$.  \end{definition} 

The notion of abelian ideals is not new in the context of Lie theory. However, as far as we are aware, its use in the study of Hessenberg varieties is new.  The following definition is not essential to this paper but we include it because it frequently arises in the literature. 
 
\begin{definition} 
 Let $I$ be an ideal in $\Phi^-$. We say that $I$ is \textbf{strictly negative} if $-\Delta\cap I$ is empty. 
\end{definition}

Note that if $I=I_h$ is the ideal of $\Phi^-$ associated to a Hessenberg function $h$, then $-\Delta \cap I$ is empty if and only if $-\Delta \subseteq \Phi_h$.  The following is well-known, which partly explains why it is common practice in the study of Hessenberg varieties to assume that $I_h$ is strictly negative. 

\begin{lemma}\cite[Theorem 3.4]{Precup2015}
Let $h$ be a Hessenberg function and $\mathsf{X} \in \mathfrak{gl}(n,\C)$ be a semisimple matrix. Then the corresponding semisimple Hessenberg variety $\Hess(\mathsf{X},h)$ is connected if and only if $I_h$ is strictly negative. 
\end{lemma}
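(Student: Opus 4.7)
Plan: Since this lemma is attributed to \cite[Theorem 3.4]{Precup2015}, I would follow the strategy developed there, separating the two directions. Throughout, I will use the elementary fact (readily unpacked from the definitions in Section~\ref{sec: Hessenberg basics}) that $I_h$ fails to be strictly negative exactly when there exists $i\in[n-1]$ with $h(i)=i$.

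For the forward direction I argue by contrapositive: suppose $I_h$ is not strictly negative, so fix $i\in[n-1]$ with $h(i)=i$. For any flag $V_\bullet\in \Hess(\mathsf{X},h)$, the Hessenberg condition forces $\mathsf{X}V_i\subseteq V_{h(i)}=V_i$, i.e.\ $V_i$ must be $\mathsf{X}$-stable. Composing with the standard projection $\pi_i\colon \Flags(\C^n)\to \Gr(i,n)$, we see that $\pi_i(\Hess(\mathsf{X},h))$ is contained in the closed subvariety $\Gr(i,n)^{\mathsf{X}}$ of $\mathsf{X}$-invariant $i$-planes. When $\mathsf{X}$ is semisimple with eigenspace decomposition $\C^n=\bigoplus_{\alpha} E_\alpha$, one has $\Gr(i,n)^{\mathsf{X}}=\bigsqcup_{(a_\alpha)}\prod_\alpha \Gr(a_\alpha,\dim E_\alpha)$, a disjoint union indexed by tuples $(a_\alpha)$ with $\sum a_\alpha=i$. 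Assuming $\mathsf{X}$ has at least two distinct eigenvalues (the only case that can fail strict negativity nontrivially, since scalar $\mathsf{X}$ yields $\Hess(\mathsf{X},h)=\Flags(\C^n)$ and never triggers the lemma), this union has multiple connected components, and a short construction produces a flag in $\Hess(\mathsf{X},h)$ mapping into each. Hence $\Hess(\mathsf{X},h)$ is disconnected.

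For the reverse direction, suppose $h(i)>i$ for every $i<n$. After conjugating, I may assume $\mathsf{X}$ is diagonal, so $\mathsf{X}$ commutes with the maximal torus $T$, and $T$ acts on $\Hess(\mathsf{X},h)$. A generic one-parameter subgroup $\lambda\colon\C^\times\to T$ produces a Bia\l{}ynicki-Birula paving of $\Hess(\mathsf{X},h)$ by affine attracting cells indexed by certain permutation flags, via Tymoczko's and Precup's results on affine pavings of semisimple Hessenberg varieties. The strict-negativity hypothesis $h(i)>i$ ensures that a distinguished cell, namely the one attracting the canonical flag (say the standard flag of eigenvectors of $\mathsf{X}$ arranged in the order prescribed by $\lambda$), is open and dense, and that the root-system description of the Bruhat-type closure relations forces every other cell to lie in the closure of this big cell. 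Since $\Hess(\mathsf{X},h)$ is the union of these cells and all of them are in the closure of a single one, connectedness follows.

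The main obstacle is the reverse direction. The delicate step is the closure analysis: one must check that, under the sole combinatorial hypothesis $h(i)>i$ for all $i<n$, every BB-cell in the paving lies in the closure of the big cell. This reduces to a careful inspection of the root-theoretic incidence relations encoded in $\Phi^-_h$ and how they interact with the closure of attracting cells, and this is precisely where the work of \cite{Precup2015} is invoked. A secondary obstacle is a clean treatment of the semisimple-but-not-regular case, where the $T$-fixed set is no longer a single copy of $S_n$ but is organized by the partition given by eigenvalue multiplicities; the strictly-negative hypothesis remains strong enough to route the argument through the corresponding regular semisimple case.
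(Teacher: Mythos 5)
This lemma is not proved in the paper; it is quoted from \cite[Theorem 3.4]{Precup2015}, so I am measuring your sketch against the argument in that reference. Your forward direction is essentially right and is the standard one: $h(i)=i$ forces $V_i$ to be $\mathsf{X}$-stable, the projection to $\Gr(i,n)$ lands in the disconnected fixed locus $\Gr(i,n)^{\mathsf{X}}$, and coordinate flags built from an eigenbasis of $\mathsf{X}$ always lie in $\Hess(\mathsf{X},h)$ and can be chosen to hit at least two components whenever $\mathsf{X}$ has two distinct eigenvalues and $1\leq i\leq n-1$. (Your parenthetical about scalar $\mathsf{X}$ is in fact a genuine caveat about the statement itself: for central $\mathsf{X}$ the variety is all of $\Flags(\C^n)$ and the ``only if'' direction fails literally, so non-centrality is an implicit hypothesis.)

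The reverse direction has a genuine gap. You reduce connectedness to the claim that the Bia\l{}ynicki-Birula paving has a single dense cell whose closure contains every other cell, justified only by an appeal to ``Bruhat-type closure relations.'' No such closure relations exist in this setting: for Hessenberg varieties the closure of a cell in these pavings need not even be a union of cells, and, more decisively, your claim would prove that $\Hess(\mathsf{X},h)$ is \emph{irreducible}, which is false for non-regular semisimple $\mathsf{X}$. For example, take $n=3$, $\mathsf{X}=\mathrm{diag}(1,1,2)$ with eigenspaces $E_1=\langle e_1,e_2\rangle$ and $E_2=\langle e_3\rangle$, and $h=(2,3,3)$ (so $I_h=\{t_3-t_1\}$ is strictly negative). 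Unwinding the single condition $\mathsf{X}V_1\subseteq V_2$ gives $\Hess(\mathsf{X},h)=\{V_\bullet : V_1\subseteq E_1\}\cup\{V_\bullet : e_3\in V_2\}$, a union of two distinct $2$-dimensional irreducible components (each a $\P^1$-bundle over $\P^1$), neither contained in the other; the variety is connected but reducible, so no cell closure can contain everything. Even in the smooth regular semisimple case your strategy is circular, since irreducibility there is usually deduced \emph{from} connectedness. The argument that actually works, and the one in \cite{Precup2015}, uses the paving by affines differently: for a projective variety paved by affine cells the number of connected components equals the number of zero-dimensional cells. In the regular semisimple case the cells are indexed by $w\in\Symm_n$ with $\dim C_w=|N^-(w)\cap\Phi_h^-|$, and $N^-(w)\cap\Phi_h^-=\emptyset$ forces $w=e$ exactly when $-\Delta\subseteq\Phi_h^-$, i.e.\ when $I_h$ is strictly negative (and conversely, if $-\alpha_i\notin\Phi_h^-$ then $s_i$ contributes a second zero-cell); the non-regular case is handled by the same count over the appropriate coset space. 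Replacing your closure analysis with this zero-cell count is what is needed to complete the reverse direction.
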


\begin{example}  In the case $n=4$, there are $8$ abelian ideals in $\Phi^-$. The reader may check that these correspond to the Hessenberg functions $(1,4,4,4), (2,2,4,4), (2,3,4,4), (2,4,4,4), (3,3,3,4), (3,3,4,4), (3,4,4,4)$ and $(4,4,4,4)$. Among these, those that are strictly negative are
\[
(4,4,4,4), (3,4,4,4), (3,3,4,4), (2,4,4,4), (2,3,4,4).
\]
and their corresponding ideals $I_h$ are, respectively, 
\[
	\emptyset, \{ t_4-t_1 \}, \{ t_4-t_1, t_4-t_2\}, \{ t_4-t_1, t_3-t_1 \}, \{ t_4-t_1, t_4-t_2, t_3-t_1 \}.
\]

\end{example}

The following extends the notion of abelian ideals to their corresponding Hessenberg varieties. 

\begin{definition}  We say that the Hessenberg variety $\Hess(\mathsf{X},h)$ and the corresponding Hessenberg function $h$ are \textbf{abelian}, if $I_h$ is abelian.
\end{definition}

We thank Timothy Chow for the following remark. 

\begin{remark}\label{rem: alt-def}  There is also a purely combinatorial characterization of abelian Hessenberg functions as follows.   Define the \textit{index} $\mathrm{index}(h)$ of a Hessenberg function to be the largest integer $i$ such that $h(i)<n$.  Then $h$ is abelian if and only if $h(1)\geq \mathrm{index}(h)$. Indeed, if $i$ is the index of $h$ and $h(1)<i$, then $t_i-t_1, t_n-t_i\in I_h$ since $h(1)<i\leq h(i)<n$ and $(t_i-t_1)+ (t_n-t_i) = t_n-t_1\in \Phi^-$ implying $I_h$ is not abelian.  On the other hand, if $I_h$ is not abelian, then there exists roots $t_j -t_k,t_k-t_{\ell}\in I_h$, so $j>k>\ell$ and $j>h(k)$ and $k>h(\ell)$.  Now, since $h(k)<j \leq n$, the index $i$ of $h$ is at least $k$, so we conclude that $h(1)\leq h(\ell)<k\leq i$.
\end{remark}

Abelian ideals of $\Phi^-$ (or equivalently, of $\Phi^+$) are the source of many combinatorial and Lie-theoretic formulas.  The number of abelian ideals in the negative roots $\Phi^-$ of $\mathfrak{gl}(n,\C)$ grows exponentially in $n$. This is a special case of a result by D. Peterson, as recorded by Kostant in \cite[Theorem 2.1]{Kostant1998}.

\begin{proposition} (Peterson, \cite[Theorem 2.1]{Kostant1998}) 
 Let $\g$ be any semisimple Lie algebra and let $\Phi^-$ denote its set of negative roots. Then there are exactly $2^{\textup{rk}(\mathfrak{g})}$ abelian ideals in $\Phi^-$, where $\textup{rk}(\mathfrak{g})$ denotes the rank of $\mathfrak{g}$. 
 \end{proposition}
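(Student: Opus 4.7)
The plan is to follow Peterson's approach as outlined by Kostant, which parametrizes abelian ideals by a distinguished subset of the affine Weyl group $\widetilde{W}$ of $\mathfrak{g}$. First, the map $\alpha \mapsto -\alpha$ is a bijection between abelian ideals of $\Phi^-$ and upper-order ideals $J \subseteq \Phi^+$ satisfying $\alpha + \beta \notin \Phi^+$ for all $\alpha, \beta \in J$, so it suffices to count the latter.

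The main construction is Peterson's bijection. Let $\widetilde{\Phi}$ denote the affine root system of $\mathfrak{g}$ with null imaginary root $\delta$, so that $\widetilde{\Phi}^+$ consists of $\Phi^+$ together with all $\alpha + k\delta$ with $\alpha \in \Phi$ and $k \geq 1$. To each abelian upper-order ideal $J \subseteq \Phi^+$, associate
\[
A_J := \{\delta - \alpha : \alpha \in J\} \subseteq \widetilde{\Phi}^+.
\]
I would verify that $A_J$ is biconvex: closure of $A_J$ under addition in $\widetilde{\Phi}^+$ is vacuous, since $(\delta - \alpha) + (\delta - \beta) = 2\delta - (\alpha + \beta)$ can only lie in $\widetilde{\Phi}^+$ when $\alpha + \beta \in \Phi^+$, which is forbidden by the abelian condition, while closure of the complement in $\widetilde{\Phi}^+$ follows from $J$ being an upper-order ideal. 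Since finite biconvex subsets of $\widetilde{\Phi}^+$ are exactly the inversion sets $N(w) = \{\beta \in \widetilde{\Phi}^+ : w^{-1}(\beta) \in -\widetilde{\Phi}^+\}$ of elements $w \in \widetilde{W}$, this produces a unique $w_J \in \widetilde{W}$ with $N(w_J) = A_J$. Reversing the argument shows that the image of the map $J \mapsto w_J$ is the distinguished subset $\mathcal{M} := \{w \in \widetilde{W} : N(w) \subseteq \delta - \Phi^+\}$, and the assignment is a bijection.

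The final step, which I expect to be the main obstacle, is to prove $|\mathcal{M}| = 2^{\textup{rk}(\mathfrak{g})}$; this count is not a formal consequence of the bijection. One standard route uses the splitting $\widetilde{W} = W \ltimes Q^\vee$, where $Q^\vee$ is the coroot lattice: writing $w \in \mathcal{M}$ as $w = t_\lambda v$ with $\lambda \in Q^\vee$ and $v \in W$, the constraint $N(w) \subseteq \delta - \Phi^+$ translates into a set of linear inequalities on $\lambda$ that confine it to a finite region, and a careful analysis -- ultimately connected to the set of minuscule coweights of $\mathfrak{g}$ -- yields the desired count. An alternative and perhaps cleaner route is to derive a recursion: split the abelian ideals of a rank-$r$ root system according to whether a fixed simple root $\alpha_i$ is maximal in $J$ or not, and use the Levi root subsystems obtained by removing $\alpha_i$ to express the count as twice the count for a rank-$(r-1)$ subsystem, giving $2^r$ by induction on the rank.
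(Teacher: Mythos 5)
First, a point of comparison: the paper does not prove this proposition at all --- it is quoted, with attribution, from Kostant's account of Peterson's theorem --- so there is no in-paper argument to measure yours against. Judged on its own terms, your outline correctly reproduces the first half of Peterson's proof: the passage to abelian upper-order ideals $J\subseteq\Phi^+$, the map $J\mapsto A_J=\{\delta-\alpha:\alpha\in J\}$, the biconvexity verification (the abelian condition makes closure of $A_J$ under addition vacuous, and the upper-order-ideal condition gives closure of the complement), and the identification of $A_J$ with the inversion set of a unique element of the affine Weyl group $\widetilde{W}$. All of that is sound and is the standard route.

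The genuine gap is exactly where you flag it: the count $|\mathcal{M}|=2^{\textup{rk}(\mathfrak{g})}$ is the entire substance of the theorem, and neither of your suggested routes is carried out. The first route is the correct one, but its key geometric inputs are missing: one must show that $N(w)\subseteq\delta-\Phi^+$ holds precisely when the alcove $w^{-1}A_0$ lies inside the dilated fundamental alcove $2A_0$, and then that $2\overline{A_0}$ is tiled by alcoves, of which there are $2^{\textup{rk}(\mathfrak{g})}$ by the volume computation; without these two statements the bijection produces a set but not a number. The second route (a rank recursion via Levi subsystems) fails as described: already in type $A_2$ the four abelian ideals $\emptyset$, $\{\theta\}$, $\{\theta,\alpha_1\}$, $\{\theta,\alpha_2\}$ do not split as two copies of the two abelian ideals of an $A_1$ Levi under the dichotomy you propose (membership of $\alpha_1$ gives a $1+3$ split, and $\alpha_1$ is never a maximal element of a nonempty ideal since $\theta$ always dominates it), and no uniform recursion of this shape is known --- which is precisely why Peterson's alcove argument is regarded as the remarkable step. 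As written, the proposal is a correct plan whose decisive counting step remains open.
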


\begin{remark} 
We can also ask: how many ideals in $\Phi^-$ are both abelian and strictly negative? Suppose $h: [n] \to [n]$ is a Hessenberg function such that its corresponding ideal $I = I_h$ is abelian. Suppose in addition that $I_h$ is not strictly negative. Then the corresponding Hessenberg space $H$ is a maximal, standard parabolic Lie subalgebra of $\mathfrak{gl}(n,\C)$.  In particular, 
\[
\left\lvert \{ \textup{ abelian ideals of $\Phi^-$ which are strictly negative } \} \right\rvert 
= 2^{n-1} - (n-1).
\]
\end{remark}

The set of abelian, strictly-negative Hessenberg varieties contains examples studied previously. For instance, suppose $n$ is positive and $n \geq 3$.  For any strictly negative Hessenberg function of the form $h = (m_1, m_2, n,n,\ldots,n)$, Shareshian and Wachs proved results on the associated chromatic quasisymmetric polynomial which, when paired with Brosnan and Chow's Theorem~\ref{theorem:BrosnanChow for SW}, proves Conjecture~\ref{conj:Stanley-Stembridge} for that case. It is not difficult to see that such a Hessenberg function is abelian using the alternative definition of an abelian Hessenberg function given in Remark~\ref{rem: alt-def}.

In addition, the representation $H^*(\Hess(\mathsf{S},h))$ for the Hessenberg function $h$ associated to any standard parabolic subalgebra $\mathfrak{p}$ was determined to be a direct sum of $M^{\lambda}$'s by Teff \cite[Theorem 4.20]{Teff2013a}. When $\mathfrak{p}$ is a maximal parabolic subalgebra, the corresponding Hessenberg function is abelian so this is another special case of our result.

\begin{example}  Consider $h=(3,4,5,6,6,6)$, as in Example \ref{ex: pictures}.  This Hessenberg function corresponds to the abelian ideal
\[
	I_h = \{ t_3-t_1, t_4-t_1, t_4-t_2, t_5-t_1, t_5-t_2, t_5-t_3 \}.
\]
However $h$ is not of the form $(m_1, m_2, 6,6,6,6)$, nor is it the Hessenberg function corresponding to a standard parabolic subalgebra.  Therefore $h$ is an example of a Hessenberg function for which our proof of Conjecture~\ref{conj:Stanley-Stembridge} is new.
\end{example}


\section{Sink sets, maximum sink set size, and an inductive description of acyclic orientations} \label{sec: sink sets and induction}

The results in Section~\ref{sec: graphs and orientations} make it evident that the set of acyclic orientations, and the cardinalities of the sink sets associated to them, play a crucial role in determining the coefficients $c_{\lambda,i}$. The contribution of this manuscript is to further develop this circle of ideas by analyzing the sink sets themselves. We also pay close attention to those sinks sets which are of maximal cardinality. Below, we make these ideas more precise.

\subsection{Sink sets and induced subgraphs} \label{sec: Sink sets and induced subgraphs}

Our first lemma gives several equivalent characterizations of a subset of $V(\Gamma_h)$ which can appear as a set of sinks for some acyclic orientation. We prepare some terminology. 
First, for a fixed $\Gamma_h$ and acyclic orientation $\omega$ of $\Gamma_h$, let 
\[
\sk(\omega) := \{ v \in V(\Gamma_h) \hsm \vert \hsm v \textup{ is a sink of } \omega \}
\]
denote the set of sinks of $\omega$. We say $\sk(\omega)$ is a \textbf{sink set}. 
Recall that an \textbf{independent set} of vertices in $\Gamma_h$ is a subset of $V(\Gamma_h)$ such that no two of them are connected by an edge in $\Gamma_h$. 
We have the following.

\begin{lemma}\label{lemma: equivalency sink and independence} 
Let $h: [n] \to [n]$ be a Hessenberg function and $\Gamma_h$ be the associated incomparability graph. Let $T = \{\ell_1 < \ell_2 < \cdots < \ell_k\}$ be a subset of $V(\Gamma_h)$ for $k$ a positive integer. Then the following are equivalent: 
\begin{enumerate} 
\item $T$ is a sink set, i.e., there exists an acyclic orientation $\omega \in \mathcal{A}_k(\Gamma_h)$ such that $T = \sk(\omega)$, 
\item $\ell_{i+1} > h(\ell_i)$ for all $i \in [k-1]$, and 
\item $T$ is an independent set in $\Gamma_h$. 
\end{enumerate} 
In particular, 
\[
\max \{ \lvert T \rvert \hsm \vert \hsm T \subseteq V(\Gamma_h) \textup{ and $T$ is independent } \} = \max \{ \lvert T \rvert \hsm \vert \hsm T \subseteq V(\Gamma_h) \textup{ and $T$ is a sink set }\}.
\]
\end{lemma}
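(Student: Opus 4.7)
The plan is to prove the cycle of implications $(1) \Rightarrow (3) \Rightarrow (2) \Rightarrow (1)$; the displayed statement about maximum sizes then follows instantly from the equivalence of (1) and (3).

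The first two implications are essentially definition chasing. For $(1) \Rightarrow (3)$, I would simply note that in any orientation each edge has a unique target, so two endpoints of a single edge cannot both be sinks; hence any sink set is automatically independent in $\Gamma_h$. For $(3) \Rightarrow (2)$, since the pair $\{\ell_i, \ell_{i+1}\}$ with $\ell_i < \ell_{i+1}$ is not an edge of $\Gamma_h$, Definition~\ref{definition:incomparability graph} gives exactly $\ell_{i+1} > h(\ell_i)$. The reverse implication $(2) \Rightarrow (3)$, which I would fold into the next step, is also easy and worth noting: for any $j > i$, the sortedness of $T$ yields $\ell_j \geq \ell_{i+1} > h(\ell_i)$, so the pair $\{\ell_i, \ell_j\}$ fails to be an edge.

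The core of the argument is $(2) \Rightarrow (1)$, where I must exhibit an acyclic orientation $\omega$ with $\sk(\omega) = T$. My plan is to specify a total order $\prec$ on $V(\Gamma_h) = [n]$ and let $\omega$ be the induced orientation pointing each edge from its $\prec$-smaller endpoint to its $\prec$-larger endpoint; this is automatically acyclic. The order $\prec$ should place the elements of $T$ at the top, as $\ell_1 \prec \ell_2 \prec \cdots \prec \ell_k$, so that condition (2) forces every neighbor in $\Gamma_h$ of each $\ell_i$ to be $\prec$-smaller than $\ell_i$, making $\ell_i$ a sink. The subtle point is to order the vertices of $V(\Gamma_h) \setminus T$ so that no such vertex is $\prec$-maximal among its neighbors. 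My plan is to construct this ordering iteratively, exploiting the structure of $\Gamma_h$ (and especially the monotonicity of $h$) to place each non-sink $v$ just below either a neighbor of $v$ in $T$, when such a neighbor exists, or some later non-sink neighbor chosen recursively. I expect this placement to be the main technical obstacle of the proof.

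Once $(2) \Rightarrow (1)$ is in hand, the final displayed equality of the two maxima is immediate: by $(1) \Rightarrow (3)$ every sink set is independent, and conversely by $(3) \Rightarrow (2) \Rightarrow (1)$ every independent subset $T$ also arises as a sink set, so the maxima are taken over the same collection of subsets of $V(\Gamma_h)$ and must coincide.
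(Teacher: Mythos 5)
Your cycle of implications mirrors the paper's (it proves $(1)\Rightarrow(2)\Rightarrow(3)\Rightarrow(1)$), and your arguments for $(1)\Rightarrow(3)$ and $(2)\Leftrightarrow(3)$ are complete and correct. The entire content of the lemma, however, sits in producing the witness orientation, and there your proposal stops at a plan: you yourself flag the ordering of $V(\Gamma_h)\setminus T$ as ``the main technical obstacle'' and describe only loosely how to place each non-$T$ vertex below a neighbor. That step is genuinely nontrivial and is left undone, so the proof is incomplete exactly where the work is. To finish it along your lines you need a total order $\prec$ with $T$ on top in which every $v\notin T$ has some neighbor strictly $\prec$-above it; a clean choice is to rank $V(\Gamma_h)\setminus T$ by graph distance to $T$, placing farther vertices lower, which succeeds precisely when every connected component of $\Gamma_h$ meets $T$. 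Some such condition is unavoidable: if a component of $\Gamma_h$ is disjoint from $T$ (e.g.\ $h=(2,2,4,4)$, whose graph is two disjoint edges, with $T=\{2\}$), every acyclic orientation has a sink in that component, so $T$ is independent yet is not a sink set and no ordering can work. Your ``obstacle'' is therefore not merely technical --- it is the point where a connectivity-type hypothesis (implicit in the paper's setting) enters.

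For comparison, the paper's construction is different and more economical: it orients every edge meeting $T$ into $T$ and every remaining edge toward its smaller endpoint, then proves acyclicity by observing that no cycle can pass through a vertex of $T$ and that the remaining edges all point leftward. This immediately gives acyclicity and $T\subseteq\sk(\omega)$. It is worth noting that the paper does not verify the reverse containment $\sk(\omega)\subseteq T$ either, and its orientation can in fact acquire extra sinks (for $h=(2,3,3)$ and $T=\{3\}$ it yields the sink set $\{1,3\}$). So you have correctly isolated the delicate half of the equivalence --- you just have not proved it, and neither the vague recursive placement you sketch nor the paper's two-rule orientation settles it as written.
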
 

\begin{proof}  We first show that (1) implies (2). Suppose $T$ is a sink set. We wish to show $ \ell_{i+1}> h(\ell_i)$ for all $i$, $1 \leq i \leq k-1$. If $k=1$, the condition is vacuous and there is nothing to check. If $k>1$, suppose for a contradiction that there exists $i \in [k-1]$ with $\ell_{i+1} \leq h(\ell_i)$.  Then by construction of $\Gamma_h$ there exists an edge $e$ between $\ell_i$ and $\ell_{i+1}$. For any orientation $\omega$ of $\Gamma_h$, we must have either $\tgt_{\omega}(e)=\ell_i$ or $\tgt_{\omega}(e)=\ell_{i+1}$, and not both. Thus $\ell_i$ and $\ell_{i+1}$ cannot be simultaneously contained in $\sk(\omega)$, contradicting the fact that $T$ is a sink set. Thus (1) implies (2). 

Next we prove that (2) implies (3). Note that since $\ell_1 < \ell_2 < \cdots < \ell_k$ by assumption, if $\ell_{i+1} > h(\ell_i)$ for all $i \in [k-1]$ then it follows that $\ell_b>h(\ell_a)$ for any pair $a<b$, $a, b \in [k]$. By construction of $\Gamma_h$ this implies there are no edges in $\Gamma_h$ connecting any two of the vertices in $T$. Hence, by definition, $T$ is an independent set.

Finally we prove (3) implies (1). Suppose $T = \{\ell_1 < \ell_2 < \cdots < \ell_k\}$ is an independent set. We explicitly construct an acyclic orientation $\omega$ of $\Gamma_h$ with sink set precisely $T$ as follows. We first consider the set of edges $e$ in $\Gamma_h$ which are incident to a vertex in $T$. Note that any such $e$ is incident to only one vertex, say $\ell_i$, in $T$, because $T$ is independent. We assign an orientation to any such $e$ by requiring $\tgt_\omega(e)=\ell_i$. 
Next consider all edges in $\Gamma_h$ which are not incident to any vertex in $T$. To any such edge $e=\{v, v'\}$ where $v < v'$ we assign the orientation which makes the edge ``point to the left''; more precisely, $\tgt_{\omega}(e)=v$. The above clearly defines an orientation $\omega$ on $\Gamma_h$. To finish the argument we must prove that it is acylic. To see this, first note that no cycle can contain any vertex in $T$, since all edges have to ``point in'' to such a vertex (and for a cycle, at least one edge has to ``point out'' and one edge has to ``point in''). So the only possibility is that a cycle contains only vertices that are not in $T$. But all edges of the form $e=\{v,v'\}$ for $v<v'$ not contained in $T$ were oriented 
``to the left'', implying that there cannot be a cycle in this case either. Thus $\omega$ is acyclic as desired.

The last claim of the lemma follows immediately from the equivalence of (1) and (3). 
\end{proof}

As already mentioned, we wish to focus on the sink sets themselves, not just the acyclic orientations which give rise to them. Let $\mathcal{P}(V(\Gamma_h))$ be the power set of $V(\Gamma_h)$. 
We let 
\[
\SK(\Gamma_h) := \{ \hsm \sk(\omega) \hsm \vert \hsm \omega \in {\mathcal{A}}(\Gamma_h) \} \subseteq \mathcal{P}(V(\Gamma_h))
\]
denote the set of all subsets of $V(\Gamma_h)$ which can arise as the sink set of some acyclic orientation. Similarly, we let $\SK_k(\Gamma_h)$ denote the subset of $SK(\Gamma_h)$ consisting of sink sets of cardinality $k$. By definition, we have 
\begin{equation}\label{eq: sink set decomposition} 
\mathcal{A}_k(\Gamma_h) = \bigsqcup_{T \in SK_k(\Gamma_h)} \{ \omega \in \mathcal{A}_k(\Gamma_h) \hsm \vert \hsm \sk(\omega) = T \}.
\end{equation} 
We call this the \textbf{sink set decomposition} and it is conceptually central to our later arguments. 
Indeed, recall that the statements of Theorems~\ref{thm: Stanley}, Corollary~\ref{corollary: triv coeff is nonneg}, and Corollary~\ref{corollary: sum clambda for all i} relate the coefficients $c_{\lambda,i}$ to cardinalities of certain subsets of $\mathcal{A}_k(\Gamma_h)$ for various $k$. Thus, the set of sink sets $SK_k(\Gamma_h)$ provides a way to further decompose $\mathcal{A}_k(\Gamma_h)$, and refine our understanding of the $c_{\lambda,i}$.  More concretely, we now define a graph $\Gamma_h - T$ on $n-|T|$ vertices using the data of the original graph $\Gamma_h$ together with the data of a sink set $T$. This construction will be critical for the induction argument in Sections~\ref{section: inductive formula} and~\ref{section: the proofs}. 

Let $k \geq 1$ and suppose $T \in SK_k(\Gamma_h)$. Intuitively, the graph $\Gamma_h - T$ is obtained from $\Gamma_h$ by ``deleting'' $T$. Moreover, we will identify this graph as the incomparability graph of a certain Hessenberg function $h_T$. We proceed in steps. The underlying set of vertices $V(\Gamma_h - T)$ is defined to be $V(\Gamma_h) \setminus T$. We also define the edge set $E(\Gamma_h - T)$ as follows: two vertices in $V(\Gamma_h - T)$ are connected in $\Gamma_h - T$ if and only if there exists an edge connecting them in $E(\Gamma_h)$. In graph theory $\Gamma_h - T$ is called the \textbf{induced subgraph} corresponding to the vertices $V(\Gamma_h)\setminus T\subseteq V(\Gamma_h)$. 

In what follows it will sometimes be useful to label the vertices of $\Gamma_h - T$ by the integers $\{1,2,\ldots,n-k\} = [n-k]$ rather than $V(\Gamma_h) -T = [n] \setminus T$. Intuitively, this is straightforward: we simply re-label the vertices $V(\Gamma_h-T)=V(\Gamma_h) \setminus T$ by $[n-k]$ using the ordering induced by the ordering on the original $V(\Gamma_h)$. More precisely, for each $1\leq j \leq n$, let $j'$ denote the number of vertices $i\in T$ such that $i\leq j$.  The function $\phi_T: [n] \to [n-k]$ defined by $\phi_T(j)= j-j'$ is surjective from $[n]$ to $[n-k]$ and restricts to a bijection between $[n]\setminus T$ and $[n-k]$.

\begin{example}\label{example: smaller graph} 
Consider the graph $\Gamma_h$ for $h=(3,4,5,5,5)$ and let $T = \{2,5\}$. Then $T$ is indeed a sink set, for the following acyclic orientation of $\Gamma_h$. 
\vspace*{.15in}
\[\xymatrix{1 \ar[r]\ar@/^1.5pc/[rr]  & 2 & 3 \ar[l]\ar[r] \ar@/^1.5pc/[rr]  & 4 \ar@/_1.5pc/[ll]  \ar[r]  & 5
}\]
We also draw the (unoriented) graphs $\Gamma_h$ and $\Gamma_h - T$ in the figure below. In the figure for $\Gamma_h$, the vertices of $T$ and all incident edges to $T$ are highlighted in red. The red edges and vertices are then deleted to obtain $\Gamma_h - T$ (with re-labelled vertices).
\vspace*{.15in}
\[\xymatrix{1 \ar@[red]@{-}[r] \ar@{-}@/^1.5pc/[rr]  & {\color{red}2} \ar@[red]@{-}[r]\ar@[red]@{-}@/^1.5pc/[rr]  & 3 \ar@{-}[r] \ar@[red]@{-}@/^1.5pc/[rr]  & 4 \ar@[red]@{-}[r]  & {\color{red}5} & & &
1 \ar@{-}[r] & 2 \ar@{-}[r] & 3
}\]
In this case, $\phi_T(1)=1$, $\phi_T(3)=2$, and $\phi_T(4)=3$.
\end{example}

Using the above bijection $\phi_T: [n] \setminus T \to [n-k]$ we may define a function $h_T: [n-k] \to [n-k]$ by setting 
\begin{equation}\label{eq: definition hT} 
h_T(\phi_T(i)) := \phi_T(h(i)) 
\end{equation}
for all $i \in [n] \setminus T$. Our next claim is that the smaller graph $\Gamma_h -T$ is in fact the incomparability graph of $h_T$, so $\Gamma_h - T \cong \Gamma_{h_T}$.  Note that in Example~\ref{example: smaller graph}, the above construction yields the Hessenberg function $h_T = (2,3,3)$.

\begin{lemma}\label{lem: induction graph}  Let $T\in \SK_k(\Gamma_h)$ and let 
$\Gamma_h -T$ and $h_T$ be defined as above. Then $h_T$ is a Hessenberg function, and $\Gamma_h - T = \Gamma_{h_T}$. 
\end{lemma}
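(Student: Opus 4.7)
The plan is to proceed in three stages: first verify that formula~\eqref{eq: definition hT} unambiguously defines a function $h_T : [n-k] \to [n-k]$; second, check that $h_T$ satisfies the two defining axioms of a Hessenberg function; third, exhibit the identification $\Gamma_h - T = \Gamma_{h_T}$ via the bijection $\phi_T|_{[n]\setminus T}$.

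For well-definedness, I would note that $\phi_T|_{[n]\setminus T}$ is a bijection to $[n-k]$, so each $m \in [n-k]$ has a unique preimage $i \in [n]\setminus T$; since $\phi_T$ is defined on all of $[n]$ with image in $[n-k]$, the assignment $m \mapsto \phi_T(h(i))$ makes sense whether or not $h(i) \in T$. For the Hessenberg axioms, I would first observe that $\phi_T|_{[n]\setminus T}$ is in fact strictly order-preserving: for $a < b$ in $[n]\setminus T$ one has
\[
\phi_T(b) - \phi_T(a) = |[a+1,b] \setminus T| \geq 1,
\]
since $b$ lies in this set. The inequality $h_T(m) \geq m$ then follows from $h(i) \geq i$ and the weak monotonicity of $\phi_T$ on all of $[n]$. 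For the inequality $h_T(m+1) \geq h_T(m)$, I would pick $i, i' \in [n]\setminus T$ with $\phi_T(i)=m$ and $\phi_T(i')=m+1$; order-preservation gives $i < i'$, hence $h(i') \geq h(i)$ by the Hessenberg property of $h$, and another application of monotonicity of $\phi_T$ yields the desired inequality.

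For the graph identification, the vertex sets match under $\phi_T|_{[n]\setminus T}$ by construction. For the edge sets, the statement reduces to the following equivalence, for $a < b$ in $[n]\setminus T$:
\[
b \leq h(a) \iff \phi_T(b) \leq \phi_T(h(a)) = h_T(\phi_T(a)).
\]
The $(\Rightarrow)$ direction is immediate from weak monotonicity of $\phi_T$. For the converse I would argue by contrapositive: if $b > h(a)$ then
\[
\phi_T(b) - \phi_T(h(a)) = |(h(a),b] \setminus T| \geq 1,
\]
where the last inequality uses that $b \in (h(a),b] \setminus T$.

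The technical heart of the argument is this final inequality. Because $\phi_T$ is only weakly monotonic on $[n]$ (in fact constant on pairs $(\ell-1, \ell)$ whenever $\ell \in T$), one might a priori worry that a non-edge of $\Gamma_h$ (witnessed by $b > h(a)$) could collapse to a pair satisfying $\phi_T(b) = \phi_T(h(a))$, creating a spurious non-edge in $\Gamma_{h_T}$. The hypothesis $b \notin T$ is exactly what prevents this, and is what makes the induced-subgraph construction compatible with the Hessenberg function $h_T$.
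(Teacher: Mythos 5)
Your proposal is correct and follows essentially the same route as the paper: monotonicity of $\phi_T$ for the Hessenberg axioms, and for the edge correspondence the observation that $b\notin T$ forces $\phi_T(b) > \phi_T(h(a))$ whenever $b > h(a)$ (the paper phrases this via injectivity of $\phi_T$ on $[n]\setminus T$ rather than your explicit count $\phi_T(b)-\phi_T(a)=\lvert [a+1,b]\setminus T\rvert$, but the content is identical). Your quantitative formulation of that key step is a slightly cleaner packaging of the same argument.
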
 

\begin{proof} 
First we show that $h_T$ is a Hessenberg function. Since $\phi_T$ and its inverse $\phi_T^{-1}: [n-k] \to [n] \setminus T$ are non-decreasing and $h$ is non-decreasing by definition of Hessenberg functions, it follows that $h_T$ is also non-decreasing. Next, the facts that $h(i)\geq i$ and $\phi_T$ is non-decreasing imply that 
\[
h_T(\phi_T(i)) = \phi_T(h(i)) \geq \phi_T(i)
\]
for all $i \in [n] \setminus T$. Therefore $h_T$ is a Hessenberg function.  

Second, we wish to show that $\Gamma_h - T$ is the incomparability graph of $h_T$. 
To do this, fix $i, j \in [n] \setminus T$ and $i<j$. It suffices to show that in $\Gamma_h - T$, there exists an edge between $\phi_T(i)$ and $\phi_T(j)$ if and only if $\phi_T(j) \leq h_T(\phi_T(i)) = \phi_T(h(i))$. To see this, note there exists an edge between $\phi_T(i)$ and $\phi_T(j)$ in $\Gamma_h - T$ if and only if there exists an edge between $i$ and $j$ in $\Gamma_h$ by definition of $\Gamma_h-T$. This holds if and only if $j \leq h(i)$ by definition of $\Gamma_h$. This implies 
$\phi_T(j) \leq \phi_T(h(i))$ since $\phi_T$ is non-decreasing. Finally, this holds if and only if 
$\phi_T(j) \leq h_T(\phi_T(i))$ by definition of $h_T$. It now suffices to show that for $i, j \in [n] \setminus T$, we have $\phi_T(j) \leq \phi_T(h(i)) \Rightarrow j \leq h(i)$. Note that $h(i)$ may not be in $[n] \setminus T$. Suppose for a contradiction that $j > h(i)$. Then we must have $\phi_T(h(i)) \leq \phi_T(j)$. This means $\phi_T(j) = \phi_T(h(i))$. Since $\phi_T$ is injective on $[n] \setminus T$ and $j \in [n] \setminus T$ this means $h(i) \in T$. Moreover,  it follows from the definition of $\phi_T$ that $h(i) > j$. This contradicts the initial assumption that $j > h(i)$, so we conclude $j \leq h(i)$ as desired. 
\end{proof}

We have already observed that the edges of an incomparability graph $\Gamma_h$ associated to a Hessenberg function are in one-to-one correspondence with the set of negative roots in $\Phi^-_h$. 
Our construction of a `smaller' graph $\Gamma_{h_T}\cong \Gamma_h - T$, suggests that there should be a correspondence between negative roots in $\Phi_{h_T}^-$ and a certain subset of $\Phi_h^-$ which is determined by $T$. We now make this precise. 
By Lemma~\ref{lem: induction graph}, we may describe the roots $\Phi_{h_T}^-$ and the ideal $I_{h_T}$ corresponding to $h_T$ using those of $h$ as follows:
\[
	\Phi_{h_T}^- = \{ t_{\phi_T(i)} -  t_{\phi_T(j)} \hsm \vert \hsm t_i-t_j \in \Phi_h^- \textup{ and } i,j \notin T \}
\]
and
\[
	I_{h_T} = \{ t_{\phi_T(i)} - t_{\phi_T(j)}  \hsm \vert \hsm t_i-t_j\in I_h \textup{ and } i,j \notin T \}.
\]
In our computations below, it will also be convenient to consider the subset of negative roots in $\Phi_h^-$ and $I_h$ which correspond to $\Phi_{h_T}^-$ and $I_{h_T}$, respectively, under the map $\phi_T$.  We set the notation 
\[
	\Phi_h^-[T]: = \{ t_i-t_j \hsm \vert \hsm t_i - t_j \in \Phi_h^- \textup{ and } i,j \notin T \}
\]
and
\[
	I_h[T]: = \{ t_i-t_j \hsm \vert \hsm t_i - t_j \in I_h \textup{ and } i,j\notin T \}. 
\]
There is an obvious bijection from $\Phi_h^-[T]$ to $\Phi_{h_T}^-$ and $I_h[T]$ to $I_{h_T}$ given by $t_i-t_j \mapsto t_{\phi_T(i)} - t_{\phi_T(j)}$.

Finally, we observe that the construction of the smaller graph $\Gamma_h - T \cong \Gamma_{h_T}$ from the data of $\Gamma_h$ also extends to orientations. Specifically, 
let $\omega\in \mathcal{A}_k(\Gamma_h)$ be any acyclic orientation such that $\sk(\omega)=T$.  Then the orientation $\omega$ naturally induces, by restriction, an orientation on $\Gamma_h-T = \Gamma_{h_T}$ (since the edges of $\Gamma_h - T$ are a subset of those of $\Gamma_h$). We denote this acyclic orientation on $\Gamma_{h_T}$ by $\omega_T$.

\begin{example}\label{ex: omega induces omegaT}  We continue with Example~\ref{example: smaller graph}. In the pictures below, we draw an orientation $\omega$ of $\Gamma_h$ on the left, and its corresponding induced orientation $\omega_T$ of $\Gamma_{h_T}$ on the right. For visualization purposes the sink set $T$ and its incident edges are highlighted in red. 
\vspace*{.15in}
\[\xymatrix{1 \ar@[red][r]   & {\color{red}2}  & 3  \ar@[red][l] \ar@/_1.5pc/[ll]  \ar@[red]@/^1.5pc/[rr] &  4  \ar[l] \ar@[red][r] \ar@[red]@/_1.5pc/[ll]   & {\color{red}5} & & &
1 & 2 \ar[l] & 3 \ar[l]
}\]
\end{example}


\subsection{Sink sets of maximal cardinality and an inductive description of acyclic orientations}\label{subsec:sink sets}

The main observation of the present section, recorded in Proposition~\ref{proposition: max sink set induction}, is that if $k$ is maximal, then the sets appearing on the RHS of the sink set decomposition~\eqref{eq: sink set decomposition} are in bijective correspondence with the set of \emph{all} acyclic orientations corresponding to the graphs $\Gamma_{h_T}$ for $T\in \SK(\Gamma_h)$. Moreover, this natural bijection gives a tight relationship between the number of ascending edges $\asc(\omega)$ of the orientation $\omega$ of the original graph $\Gamma_h$ with the number $\asc(\omega_T)$ of the induced orientation on the smaller graph $\Gamma_{h_T}$, where $\omega_T$ is described at the end of Section~\ref{sec: graphs and orientations} above. These ascending edge statistics record the degree -- i.e. the grading in $H^*(\Hess(\mathsf{S},h))$ -- in Theorem~\ref{thm: Stanley} and Corollary~\ref{corollary: triv coeff is nonneg}, so it is this relation which allows us to prove our ``graded'' results in Section~\ref{sec: main proofs}.

We begin by making precise the notion of a sink set of maximum possible size. 

\begin{definition}\label{definition: max sink set size} 
We define the \textbf{maximum sink-set size} $m(\Gamma_h)$ to be the maximum of the cardinalities of the sink sets $\sk(\omega)$ associated to all possible acyclic orientations of $\Gamma_h$, i.e., 
\begin{equation}\label{eq:definition max sink set length}
m(\Gamma_h) = \mathrm{max} \{ \hsm \lvert \sk(\omega) \rvert \hsm \vert \hsm \omega \in {\mathcal{A}}(\Gamma_h) \}.
\end{equation} 
\end{definition}

Note that the maximum clearly exists since $\lvert \sk(\omega) \rvert$ is bounded above by $n$. Furthermore, by Lemma \ref{lemma: equivalency sink and independence}, the maximal sink-set size of $\Gamma_h$ is also the maximum cardinality of an independent set of vertices in $\Gamma_h$.

\begin{example}\label{ex: max sink set size}  Continuing Example~\ref{example: smaller graph}, 
the sink set $T=\{2,5\}$ given in that example is in fact maximal, i.e., $m(\Gamma_h)=2$. Indeed, in this case any set of three vertices must have at least one edge incident with two of them, and thus cannot be independent. Finally, we note that for this orientation we have $\asc(\omega) = 5$, i.e., there are $5$ edges pointing to the right.
\end{example}

Let $m=m(\Gamma_h)$ be the maximum sink-set size for a fixed incomparability graph $\Gamma_h$ and Hessenberg function $h$ as in Definition~\ref{definition: max sink set size}.  
We need some terminology. Suppose $T\in \SK(\Gamma_h)$. Any acyclic orientation $\omega$ with sink set $T$ must have some number of edges oriented to the right, as determined by the vertices in $T$.

\begin{definition}  Suppose $T\in \SK(\Gamma_h)$. We define the \textbf{degree of $T$} to be
\[
	\deg(T): = \min\{\hsm \asc(\omega)  \hsm \vert \hsm \omega\in \mathcal{A}(\Gamma_h),\; \sk(\omega)=T \}.
\]
\end{definition}

The next lemma shows that in practice it is easy to compute $\deg(T)$ for any $T\in \SK(\Gamma_h)$.

\begin{lemma} \label{lem: deg(T) property1}  
Let $T \in SK(\Gamma_h)$. Then 
\begin{equation}\label{eq: characterize deg T}
\deg(T) = \big\lvert \{ e = \{\ell, \ell'\} \in E(\Gamma_h) \, \vert \, \ell' \in T, \, \ell < \ell' \} \big \rvert.
\end{equation}
Moreover, $\lvert \Phi_h^- \rvert \geq \lvert \Phi_{h_T}^- \rvert + \deg(T)$. 
\end{lemma}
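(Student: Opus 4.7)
The plan is to prove the equality in \eqref{eq: characterize deg T} by establishing matching lower and upper bounds on $\deg(T)$, and then to derive the ``moreover'' inequality as an immediate consequence via an edge count. The lower bound $\deg(T) \geq \big|\{e = \{\ell, \ell'\} : \ell' \in T, \ell < \ell'\}\big|$ is the easy direction: for any $\omega \in \mathcal{A}(\Gamma_h)$ with $\sk(\omega) = T$, every edge $e = \{\ell, \ell'\}$ with $\ell' \in T$ and $\ell < \ell'$ is forced to satisfy $\tgt_\omega(e) = \ell'$ (since $\ell'$ is a sink), and hence contributes to $\asc(\omega)$ by the definition in \eqref{eq:definition asc}. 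Summing over all such edges and then minimizing over $\omega$ yields the desired lower bound on $\deg(T)$.

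For the matching upper bound, I would exhibit an explicit orientation $\omega^*$ with $\sk(\omega^*) = T$ and $\asc(\omega^*)$ equal to the right-hand side. Following the template in the proof of Lemma~\ref{lemma: equivalency sink and independence}, define $\omega^*$ by orienting every edge incident to $T$ toward its (necessarily unique, since $T$ is independent) endpoint in $T$, and by orienting every remaining edge from its larger-indexed to its smaller-indexed endpoint. Three claims must then be checked: (a) $\omega^*$ is acyclic, which follows because no cycle can pass through a vertex of $T$ (all its incident edges point inward) and no cycle can lie entirely outside $T$ (all such edges strictly decrease in index); (b) $\asc(\omega^*)$ equals the RHS, which is immediate from the construction, since an edge of $\omega^*$ ascends if and only if its larger endpoint lies in $T$; and (c) $\sk(\omega^*) = T$.

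The main obstacle is claim (c): ensuring that no vertex outside $T$ is accidentally a sink of $\omega^*$. A vertex $v \notin T$ would fail to have an outgoing edge in $\omega^*$ only if every neighbor of $v$ is larger than $v$ and no neighbor lies in $T$; but then $T \cup \{v\}$ would be an independent set, hence a sink set of size $|T|+1$ by Lemma~\ref{lemma: equivalency sink and independence}. This step is where one must invoke the fact that $T$ has maximum cardinality $m(\Gamma_h)$ (the setting implicit in Section~\ref{subsec:sink sets} and Definition~\ref{definition: max sink set size}) in order to rule out such a $v$, yielding $\sk(\omega^*) = T$. Finally, for the ``moreover'' inequality, I would partition $E(\Gamma_h)$ into the edges of $\Gamma_h - T \cong \Gamma_{h_T}$ (i.e.\ those with both endpoints outside $T$, which are in natural bijection with $\Phi_{h_T}^-$ via $\phi_T$ by Lemma~\ref{lem: induction graph}) together with the edges incident to $T$. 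Since the set appearing on the RHS of \eqref{eq: characterize deg T} is contained in the latter, $|\Phi_h^-| = |\Phi_{h_T}^-| + \big|\{\text{edges incident to } T\}\big| \geq |\Phi_{h_T}^-| + \deg(T)$, which completes the proof.
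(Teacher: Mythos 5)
Your strategy is the same as the paper's: the lower bound comes from the edges forced to point into $T$, and the upper bound from the explicit orientation built in the proof of Lemma~\ref{lemma: equivalency sink and independence}. The one place where the two arguments diverge is your claim (c), and you have put your finger on exactly the right spot: the paper's proof simply asserts that the constructed orientation has sink set precisely $T$, while you correctly observe that a vertex $v \notin T$ with no neighbor in $T$ and no smaller neighbor would also be a sink of $\omega^*$, and that ruling this out requires $T$ to be an independent set of maximum cardinality.

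The catch is that Lemma~\ref{lem: deg(T) property1} is stated for arbitrary $T \in \SK(\Gamma_h)$, with no maximality hypothesis, so your proof (like the paper's) does not establish the statement as written --- and in fact the statement is false in that generality. Take $n=3$ and $h=(2,3,3)$, so that $\Gamma_h$ is the path $1 - 2 - 3$, and let $T=\{3\}$. The unique acyclic orientation with sink set exactly $\{3\}$ is the directed path $1 \to 2 \to 3$, so $\deg(\{3\})=2$, whereas the right-hand side of \eqref{eq: characterize deg T} equals $1$; the ``moreover'' inequality, which would read $2 \geq 1+2$, fails as well. (Here your $\omega^*$ has sink set $\{1,3\} \neq T$, which is precisely the failure mode you anticipated.) So the extra hypothesis you flag is genuinely needed, not an artifact of your argument. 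Fortunately the paper only ever invokes this lemma for sink sets of maximum cardinality ($T \in \SK_m(\Gamma_h)$ in Proposition~\ref{proposition: max sink set induction}, and $T \in \SK_2(\Gamma_h)$ with $I_h$ abelian, where $m(\Gamma_h)=2$), and in that regime your argument is complete; it even works under the weaker hypothesis that $T$ is not properly contained in another independent set, which is the condition singled out in Remark~\ref{rem: maximal sink set restriction}. Your treatment of the ``moreover'' inequality via the edge partition is the same as the paper's and is fine.
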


\begin{proof}  We begin with the first claim. Let $\omega$ denote the orientation of $\Gamma_h$ constructed in the proof of Lemma~\ref{lemma: equivalency sink and independence}. It can be verified that the edges which point to the right with respect to $\omega$ are precisely those which connect a vertex $\ell' \in T$ to a smaller vertex $\ell < \ell'$. Thus $\asc(\omega)$ is equal to the RHS of~\eqref{eq: characterize deg T} and hence $\deg(T) \leq$ RHS by definition. To prove equality, we claim that for any $\omega' \in \mathcal{A}(\Gamma_h)$ with $\sk(\omega')=T$ we must have $\asc(\omega') \geq \asc(\omega)$. But the fact that $\sk(\omega')=T$ implies that any edge of the form $e=\{\ell, \ell'\}$ for $\ell<\ell'$ and $\ell' \in T$ must satisfy $\tgt_\omega(e) = \ell'$, i.e., $e$ must point to the right. It follows that $\asc(\omega') \geq \asc(\omega)$ as desired.

For the second claim, recall that the edges of $\Gamma_h$ (respectively $\Gamma_{h_T}$) are in bijection with $\Phi_h^-$ (respectively $\Phi_{h_T}^-$). 
By definition of $h_T$ 
we know $|\Phi_h^-|$ is equal to $|\Phi_h^-[T]| = |\Phi_{h_T}^-|$ plus the total number of edges incident to any vertex of $T$.  By~\eqref{eq: characterize deg T}, $\deg(T)$ is less than or equal to the total number of vertices incident to a vertex in $T$ so the claim follows. 
\end{proof}

Before stating our main proposition we illustrate with our running example. 

\begin{example}\label{ex: max sink induction}  Consider the graph $\Gamma_h$ for $h=(3,4,5,5,5)$ as in Example \ref{example: smaller graph}. As already noted in Example \ref{ex: max sink set size}, in this example we have $m(\Gamma_h)=2$ and $T=\{2,5\}$ is a sink set of maximal cardinality.  In the figure below we draw all acyclic orientations $\omega \in \mathcal{A}(\Gamma_h)$ such that $\sk(\omega)=\{2,5\}$.  The sink set $T$ and incident edges are highlighted in red, and the corresponding acyclic orientation of $\Gamma_{h_T}$ is displayed to the right.
\vspace*{.15in}
\[\xymatrix{1 \ar@[red][r]   & {\color{red}2}  & 3  \ar@[red][l] \ar@/_1.5pc/[ll]  \ar@[red]@/^1.5pc/[rr] &  4  \ar[l] \ar@[red][r] \ar@[red]@/_1.5pc/[ll]   & {\color{red}5} & & &
1 & 2 \ar[l] & 3 \ar[l]
}\]
\vspace*{.05in}
\[\xymatrix{1 \ar@[red][r] \ar@/^1.5pc/[rr]  & {\color{red}2} & 3  \ar@[red][l]  \ar@[red]@/^1.5pc/[rr] &  4  \ar[l] \ar@[red][r] \ar@[red]@/_1.5pc/[ll]   & {\color{red}5} & & &
1 \ar[r] & 2 & 3 \ar[l] 
}\]
\vspace*{.05in}
\[\xymatrix{1 \ar@[red][r]   & {\color{red}2} & 3  \ar@[red][l] \ar@/_1.5pc/[ll]  \ar@[red]@/^1.5pc/[rr] \ar[r] &  4  \ar@[red][r] \ar@[red]@/_1.5pc/[ll]   & {\color{red}5} & & &
1 & 2 \ar[l] \ar[r] & 3
}\]
\vspace*{.05in}
\[\xymatrix{1 \ar@[red][r]  \ar@/^1.5pc/[rr]   & {\color{red}2}  & 3  \ar@[red][l]   \ar@[red]@/^1.5pc/[rr] \ar[r] &  4   \ar@[red][r] \ar@[red]@/_1.5pc/[ll]   & {\color{red}5} & & &
1 \ar[r] & 2 \ar[r] & 3
}\]
In this example, we have $\deg(T)=3$. 
\end{example}

In the example above, we obtain \emph{every} acyclic orientation of $\Gamma_{h_T}$ by restricting from an acyclic orientation of $\Gamma_h$ with sink set $T$. The next proposition shows that this is always the case when $T$ is a sink set of maximal cardinality.

\begin{proposition}\label{proposition: max sink set induction} 
Let $h:[n] \to [n]$ be a Hessenberg function and let $m=m(\Gamma_h)$ be the maximum sink-set size for $\Gamma_h$. 
Let $T\in \SK_m(\Gamma_h)$.  Then the restriction map 
\begin{equation}\label{eq: orientation restriction map}
	\{ \hsm \omega\in \mathcal{A}_m(\Gamma_h)  \hsm \vert \hsm \sk(\omega)=T \} \to \mathcal{A}(\Gamma_{h_T}) , \quad \omega \mapsto \omega_T 
\end{equation}
is a bijection.  Furthermore, for any $\omega \in \mathcal{A}_m(\Gamma_h)$ with $\sk(\omega)=T$ we have $\asc(\omega) = \deg(T) + \asc(\omega_T)$.
\end{proposition}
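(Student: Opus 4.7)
\medskip

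\noindent\textbf{Proof proposal.} The plan is to verify the bijection and the ascent identity separately, with the maximality of $|T|$ entering only at the surjectivity step.

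First, I would observe that the map is well-defined: the restriction of an acyclic orientation to an induced subgraph is automatically acyclic, and by Lemma~\ref{lem: induction graph}, $\Gamma_h - T = \Gamma_{h_T}$. For injectivity, suppose $\omega_1, \omega_2$ are acyclic orientations of $\Gamma_h$ with common sink set $T$ and $(\omega_1)_T = (\omega_2)_T$. The two orientations can differ only on edges incident to $T$, but since $T$ is independent by Lemma~\ref{lemma: equivalency sink and independence}, every such edge has exactly one endpoint in $T$, and the sink condition forces the edge to point to that endpoint. Hence $\omega_1 = \omega_2$.

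The heart of the argument is surjectivity. Given any $\omega' \in \mathcal{A}(\Gamma_{h_T})$, I would construct an extension $\omega$ of $\omega'$ to $\Gamma_h$ by orienting each edge incident to $T$ towards its (unique, by independence of $T$) endpoint in $T$. Then every vertex of $T$ is automatically a sink of $\omega$, so $T \subseteq \sk(\omega)$. To see $\omega$ is acyclic, note that any directed cycle must pass through each of its vertices as both a source and a target; since every edge incident to $T$ points into $T$, no vertex of $T$ can lie on a cycle, so a cycle in $\omega$ would restrict to a cycle in $\omega'$, contradicting acyclicity of $\omega'$. Finally, to show $\sk(\omega) = T$, I would argue by maximality: if $v \in V(\Gamma_h) \setminus T$ were also a sink of $\omega$, then $v$ could not be adjacent in $\Gamma_h$ to any vertex of $T$ (otherwise the edge between them would point away from $v$), so $T \cup \{v\}$ would be an independent set in $\Gamma_h$ of size $m+1$, contradicting $m = m(\Gamma_h)$ via Lemma~\ref{lemma: equivalency sink and independence}. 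Hence $\sk(\omega) = T$, and by construction $\omega_T = \omega'$.

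For the ascent identity, I would decompose $E(\Gamma_h)$ into the edges of $\Gamma_h - T$ and the edges incident to $T$. The ascending edges in the first group are exactly those counted by $\asc(\omega_T)$. For the second group, since $T$ is the sink set of $\omega$, every edge $e = \{\ell, \ell'\}$ with $\ell' \in T$ satisfies $\tgt_\omega(e) = \ell'$ and $\src_\omega(e) = \ell$; thus $e$ is ascending exactly when $\ell < \ell'$. By Lemma~\ref{lem: deg(T) property1}, the number of such edges equals $\deg(T)$, giving $\asc(\omega) = \deg(T) + \asc(\omega_T)$.

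I expect the main obstacle to be verifying that the extension $\omega$ constructed in the surjectivity step has sink set exactly equal to $T$ (rather than strictly larger); this is precisely where the maximality hypothesis $T \in \SK_m(\Gamma_h)$ is used, via the translation between sink sets and independent sets in Lemma~\ref{lemma: equivalency sink and independence}. The remaining verifications are forced by the independence of $T$ and the structural fact that sinks cannot lie on directed cycles.
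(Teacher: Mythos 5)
Your proposal is correct and follows essentially the same route as the paper's proof: the same forced-orientation argument for injectivity, the same extension construction (orienting all edges incident to $T$ into $T$) with the same acyclicity and maximality arguments for surjectivity, and the same edge decomposition via Lemma~\ref{lem: deg(T) property1} for the ascent identity. The only cosmetic difference is that you rule out an extra sink $v$ directly by exhibiting the larger independent set $T \cup \{v\}$, whereas the paper appeals to maximality of $|T|$ among sink sets; these are interchangeable via Lemma~\ref{lemma: equivalency sink and independence}.
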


\begin{proof}

We first claim that the given restriction map is injective. To see this, recall from Lemma~\ref{lem: induction graph} that $\Gamma_{h_T} = \Gamma_h -T$ and $\Gamma_h - T$ is obtained from $\Gamma_h$ by deleting the vertices $T$ and the edges incident to $T$. Now note that any orientation $\omega$ satisfying $\sk(\omega)=T$ must have the property that $\tgt_\omega(e) = v$ for any $v \in T$ and any edge $e$ incident to $v$, by the definition of a sink. In other words, the orientation $\omega$ is determined on edges incident to $T$ by the condition $\sk(\omega)=T$. Thus the restriction map $\omega \mapsto \omega_T$, which forgets the orientations incident to $T$, is injective. Next we claim that the map is surjective. Recall that the equality $\Gamma_{h_T} = \Gamma_h - T$ identifies $V(\Gamma_{h_T})$ with $V(\Gamma_h) \setminus T$ and the edges of $\Gamma_{h_T}$ with the edges in $\Gamma_h$ which are not incident to $T$. Now suppose $\omega' \in \mathcal{A}(\Gamma_{h_T})$. Define an orientation $\omega''$ on $T$ by
orienting all the edges incident to a vertex in $T$ towards that vertex, and orienting the remaining edges using the orientation of $\omega'$.  We claim that $\omega''$ is an acyclic orientation such that $\sk(\omega'')=T$.  This would show that the restriction map is surjective since $(\omega'')_T=\omega'$ by the definition of $\omega''$.

From the construction of $\omega''$ it follows that  $T\subseteq \sk(\omega'')$.  Since $T$ is a sink set of maximal cardinality of $\Gamma_h$, it  also follows that $\sk(\omega'')=T$.  To finish the proof it suffices to show that $\omega''$ is acyclic. Suppose for a contradiction that $\omega''$ contains an oriented cycle.  Since $\omega'\in \mathcal{A}(\Gamma_{h_T})$, such a cycle must include at least one vertex $v$ of $T$.  An oriented cycle can contain a vertex only if that vertex has at least one edge oriented towards that vertex and one edge oriented outwards from that vertex. But since $v$ is a sink, this is impossible. Therefore no such oriented cycle can exist and $\omega''\in \mathcal{A}(\Gamma_h)$ as desired.

Finally, consider $\omega \in \mathcal{A}_m(\Gamma_h)$ such that $\sk(\omega)=T$ and recall that $\asc(\omega)$ counts the number of edges which point to the right in $\omega$.  By Lemma \ref{lem: deg(T) property1}, $\deg(T)$ counts those edges incident to $T$ that are oriented to the right and $\asc(\omega_T)$ counts those edges oriented to the right in the induced subgraph on the vertices $V(\Gamma_h)-T$.  Since these two sets of edges are mutually disjoint and together comprise all the edges of $\Gamma_h$ pointing to the right, $\asc(\omega) = \deg(T)+\asc(\omega_T)$ for all $\omega \in \{ \omega\in \mathcal{A}(\Gamma_h) \hsm \vert \hsm \sk(\omega)=T \}$ as desired.
\end{proof}

\begin{remark}\label{rem: maximal sink set restriction} 
In fact, the argument in the proof above shows that the restriction map~\eqref{eq: orientation restriction map} is a bijection whenever $T$ has the property that there does \emph{not} exist a $T' \in \SK(\Gamma_h)$ with $T \subsetneq T'$, i.e., $T$ is not a strict subset of any other sink set. 
This can occur even if $T$ is not of the maximal possible size, i.e., it can happen that $T$ has the above property even when $\lvert T \rvert < m(\Gamma_h)$. For example, the reader can check that $T = \{3\}$ in the ongoing example for $h=(3,4,5,5,5)$ satisfies this property despite the fact that $\lvert T \rvert = 1 < 2 = m(\Gamma_h)$, and it can be checked by hand that, for this example, ~\eqref{eq: orientation restriction map} is indeed a bijection. 
\end{remark}

\begin{example} 
Proposition~\ref{proposition: max sink set induction} is certainly false in general without the assumption that $T$ is of size $m(\Gamma_h)$. For instance, take $T = \{5\}$ in the example $h=(3,4,5,5,5)$. 
The reader can check that the following acyclic orientation $\omega'$ on $\Gamma_{h_T}$: 
\vspace*{.15in}
\[\xymatrix{1 & {2}  \ar[l] & 3  \ar[l]   \ar@/_1.5pc/[ll]  &  4   \ar[l] \ar@/_1.5pc/[ll]   
}\]
cannot lie in the image of the map~\eqref{eq: orientation restriction map}
since any $\omega \in \mathcal{A}(\Gamma_h)$ with $\omega_T = \omega'$ must contain $1$ as a sink. 
\end{example}


\section{The height of an ideal}\label{sec: sink sets, ideals and representations}

In this section we introduce an integer invariant associated to an ideal $I$ in $\Phi^-$ called the height of $I$. A (nonempty) ideal $I$ is abelian precisely when its height is $1$. In this sense, the case of abelian ideals is the ``base case'' of an inductive argument based on height. For a Hessenberg function $h$, we then observe a connection between the height of $I_h$ and the maximum sink set size $m(\Gamma_h)$ defined in the previous section. This connection, together with the past results of Stanley, Shareshian-Wachs, and Gasharov as well as some standard representation theory, allows us to significantly simplify the process of proving Conjecture~\ref{conj:Stanley-Stembridge} in the abelian case. 

Given an ideal $I\subseteq \Phi^-$, recall that we may form an ideal in the Borel subalgebra of lower triangular matrices in $\mathfrak{gl}(n,\C)$.  Namely, let $\mathcal{I} = \oplus_{\gamma\in I} \mathfrak{g}_{\gamma}$ be the ideal of root spaces corresponding to the roots in $I$.  The \textbf{lower central series} of $\mathcal{I}$ is the sequence of ideals defined inductively by 
\[
	\mathcal{I}_1 = \mathcal{I}, \textup{ and }  \mathcal{I}_j = [\mathcal{I}, \mathcal{I}_{j-1}] \;\textup{ for all $j\geq 2$}.
\]
We define the lower central series of ideals in $\Phi^-$ analogously, by letting $I_j \subseteq \Phi^-$ be the unique ideal of negative simple roots such that $\mathcal{I}_j = \oplus_{\gamma \in I_j} \mathfrak{g}_{\gamma}$.

\begin{definition}\label{def: height} The \textbf{height of an ideal $I\subseteq \Phi^-$}, denoted $ht(I)$, is the length of its lower central series.  More concretely, 
\[
	ht(I):= \max\{ k\geq 1  \hsm \vert \hsm I_k \neq \emptyset \}.
\]
If $I=\emptyset$, then we adopt the convention that $ht(I)=0$. It is not hard to see
that the height is well-defined, i.e., the maximum always exists. 
\end{definition}

Intuitively, the height of an ideal measures how ``non-abelian'' it is.

\begin{example}  Consider the Hessenberg function $h=(2,4,4,5,5)$, with ideal  
\[
	I_h = \{ t_3-t_1, t_4-t_1, t_5-t_1, t_5-t_2, t_5-t_3 \}.
\] 
Then $I_h$ is not abelian since $(t_3-t_1)+(t_5-t_3) = t_5-t_1\in \Phi^-$.  We see that, 
\[
	I_1=I_h, \; I_2 = \{ t_5-t_1 \},\; \textup{ and } \; I_j=  \emptyset \; \textup{ for all $j\geq 3$}
\]
so $ht(I_h)=2$.
\end{example}

To connect the notions of the height of the ideal $I_h$ and maximal sink sets of $\Gamma_h$, we will use certain subsets of roots in $\Phi^-$, defined below.

\begin{definition}\label{def: root subsets}  Let $R\subseteq \Phi^-$. We say $R$ is {\bf a subset of height $k$} if if there exists integers $q_1, q_2,..., q_{k},q_{k+1}\in [n]$ such that $q_1<q_2<\cdots <q_{k}<q_{k+1}$ and $R=\{ t_{q_2}-t_{q_1}, t_{q_3}-t_{q_2},... , t_{q_{k+1}}-t_{q_{k}} \}$.  We let $\mathcal{R}_k(I)$ denote the set of all subsets of height $k$ in $I$, and define $\mathcal{R}(I) := \bigsqcup_{k\geq 0} \mathcal{R}_k(I)$. 
\end{definition}

\begin{remark}  It is straightforward to show that $R\subseteq \Phi^-$ is a subset of height $k$ if and only if there exists $w\in \Symm_n$ such that $w(R)$ is a subset of simple roots corresponding to $k$ consecutive vertices in the Dynkin diagram for $\mathfrak{gl}(n,\C)$.  
\end{remark}

If $R\subseteq \Phi^-$ is a subset of height $k$, then we may write $R=\{ \beta_1, \beta_2, ..., \beta_k \}$ where $\beta_i = t_{q_{i+1}}-t_{q_i}$ for the integers $q_1, q_2,..., q_{k},q_{k+1}\in [n]$ given in the definition above.  Therefore
\begin{eqnarray} \label{eq: root sums}
	\beta_i+\beta_{i+1} + \cdots + \beta_j \in \Phi^- \textup{ for all } 1\leq i \leq j \leq k.
\end{eqnarray}
The next lemma proves a direct relationship between $ht(I)$ and subsets of maximal height in $I$.

\begin{lemma}\label{lemma: computing ht}  Let $I$ be a nonempty ideal in $\Phi^-$.  Then $ht(I) =\max \{\hsm |R|\hsm \vert \hsm R\in \mathcal{R}(I) \}$.
\end{lemma}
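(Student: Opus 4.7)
The plan is to prove the equality by establishing both inequalities via induction on the length of the lower central series, exploiting the rigid form of root sums in type A: the only way a negative root $t_i-t_j$ (with $i>j$) can be written as a sum of two negative roots is as $(t_i-t_\ell)+(t_\ell-t_j)$ for some $j<\ell<i$. Equivalently, for $\alpha,\beta\in\Phi^-$ we have $\alpha+\beta\in\Phi^-$ if and only if $\{\alpha,\beta\}$ is a subset of height $2$ in the sense of Definition~\ref{def: root subsets}, and in that case the Lie bracket $[\mathfrak{g}_\alpha,\mathfrak{g}_\beta]$ is nonzero and equals $\mathfrak{g}_{\alpha+\beta}$.

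For the inequality $\max\{|R|:R\in\mathcal{R}(I)\}\leq ht(I)$, I would show by induction on $k$ that any $R=\{\beta_1,\ldots,\beta_k\}\in\mathcal{R}_k(I)$ with $\beta_i=t_{q_{i+1}}-t_{q_i}$ satisfies $\beta_1+\cdots+\beta_k = t_{q_{k+1}}-t_{q_1}\in I_k$. The base case $k=1$ is immediate from $I_1=I$. For the inductive step, the subset $\{\beta_2,\ldots,\beta_k\}$ lies in $\mathcal{R}_{k-1}(I)$, so the inductive hypothesis yields $t_{q_{k+1}}-t_{q_2}\in I_{k-1}$; bracketing with $\beta_1\in I$ and invoking the observation above places $t_{q_{k+1}}-t_{q_1}$ in $[I,I_{k-1}]=I_k$. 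Since $q_1<q_{k+1}$ this is an honest negative root, forcing $I_k\neq\emptyset$ and hence $ht(I)\geq k$.

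For the reverse direction $ht(I)\leq \max\{|R|:R\in\mathcal{R}(I)\}$, my plan is to prove the stronger statement that every $\gamma\in I_k$ admits a decomposition $\gamma=\sum_{\beta\in R}\beta$ for some $R\in\mathcal{R}_k(I)$, again by induction on $k$. For the inductive step, unpack the definition $I_k=[I,I_{k-1}]$ to write $\gamma=\alpha+\delta$ with $\alpha\in I$ and $\delta\in I_{k-1}$ such that $\alpha+\delta\in\Phi^-$, and let $R'=\{\beta_2,\ldots,\beta_k\}\in\mathcal{R}_{k-1}(I)$ be the chain with sum $\delta=t_{q_{k+1}}-t_{q_2}$ supplied by induction. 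The type-A rigidity noted above then forces $\alpha$ to attach to $\delta$ at one of the two endpoints: either $\alpha=t_{q_2}-t_b$ for some $b<q_2$ (extending the chain on the left by setting $q_1:=b$), or $\alpha=t_a-t_{q_{k+1}}$ for some $a>q_{k+1}$ (extending it on the right by setting $q_{k+2}:=a$). In either case, $R'\cup\{\alpha\}$ is the desired height-$k$ chain in $I$. The main obstacle --- really the only one --- is this last verification: one must recognize that the type-A assumption prevents $\alpha$ from ``sliding into the middle'' of the chain carrying $\delta$, which in a more general root system could require a more delicate rearrangement argument.
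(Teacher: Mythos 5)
Your proposal is correct and follows essentially the same route as the paper's proof: both directions rest on the telescoping-sum/iterated-bracket argument for $\max\{|R|\}\leq ht(I)$ and on unpacking $\mathcal{I}_k=[\mathcal{I},\mathcal{I}_{k-1}]$ together with the type-A endpoint-attachment rigidity for the reverse inequality. The only (cosmetic) difference is that you fold the chain-building into the induction itself via the stronger invariant ``every $\gamma\in I_k$ is the sum of a chain in $\mathcal{R}_k(I)$,'' whereas the paper first extracts the full list $\alpha_1,\ldots,\alpha_{k'}$ and then proves a separate reordering claim; the key cancellation step (either $a=q_2$ or $b=q_{k+1}$) is identical.
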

\begin{proof}  Let $R$ be a subset of maximal height in $I$, so $|R|=k$ where $k= \max \{\hsm |R|\hsm \vert \hsm R\in \mathcal{R}(I) \}$. By definition, there exists $q_1, q_2,..., q_{k},q_{k+1}\in [n]$ such that $q_1<q_2<\cdots <q_{k}<q_{k+1}$ and $R=\{ \beta_1, \beta_2,..., \beta_k \}$ where $\beta_i = t_{q_{i+1}}-t_{q_i}$ for each $1\leq i \leq k$.  Using the definition of the Lie bracket and \cite[Proposition 8.5]{Humphreys1972} we get that
\[
	\mathfrak{g}_{\beta_1+\cdots+\beta_{i}} = [\mathfrak{g}_{\beta_{i}}, \mathfrak{g}_{\beta_1 + \cdots+\beta_{i-1}}]  \subseteq [\mathcal{I}, \mathcal{I}_{i-1}] = \mathcal{I}_i
\] 
for all $2 \leq i \leq k$.  In particular, $\mathfrak{g}_{\beta_1+\cdots+ \beta_k} \subseteq \mathcal{I}_k$ so $\mathcal{I}_k \neq \emptyset$ and therefore $ht(I)\geq k$. 

Seeking a contradiction, suppose $ht(I)=k' >k$.  Note that $k\geq 1$ since $I$ is nonempty, so we have $k'\geq 2$. We claim that if this is the case, then $\mathcal{R}_{k'}(I)\neq \emptyset$, contradicting the assumption that $k$ is maximal. Recall that $I_i$ denotes the $i$-th ideal in the lower central series of $I=I_1$.  By definition, if $ht(I)=k'$ then $I_{k'}\neq \emptyset$.  Let $\gamma_{k'} \in I_{k'}$ so $ \mathfrak{g}_{\gamma_{k'}} \subseteq \mathcal{I}_{k'} = [\mathcal{I}, \mathcal{I}_{k'-1}]$. By definition of the Lie bracket, there exists $\gamma_{k'-1} \in I_{k'-1}$ and $\alpha_{k'} \in I$ such that $\gamma_{k'}=\alpha_{k'}+ \gamma_{k'-1}$.  Applying the same reasoning, $\mathfrak{g}_{\gamma_{k'-1}} \subseteq \mathcal{I}_{k'-1} = [\mathcal{I}, \mathcal{I}_{k'-2}]$ so there exists $\gamma_{k'-2} \in I_{k'-2}$ and $\alpha_{k'-1} \in I$ such that $\gamma_{k'-1} = \alpha_{k'-1}+\gamma_{k'-2}$.  Continue in this way to obtain $\gamma_i \in I_i$ for each $1\leq i \leq k'$ and $\alpha_i \in I$ for each $2\leq i \leq k'$ such that
\begin{eqnarray}\label{eq: alphas}
	\gamma_{i} =  \alpha_i  + \gamma_{i-1} \textup{ for all $2\leq i \leq k'$}.
\end{eqnarray}
Set $\alpha_1=\gamma_1$ and consider the set $R'=\{ \alpha_1, \alpha_2,..., \alpha_{k'-1}, \alpha_{k'} \}$.  For each $i$ such that $1\leq i \leq k'$, $\alpha_i \in \Phi^-$ so we may write $\alpha_i = t_{a_i} - t_{b_i}$ for some $a_i, b_i \in [n]$ such that $b_i<a_{i}$.  We will prove the following claim.
\begin{itemize}
\item[] Suppose $i$ is an integer such that $2\leq i \leq k'$. Then there exists an ordering $\{ a_1', a_2', ..., a_{i}' \}$ of the set $\{  a_1, a_2, ..., a_{i} \}$ so that if $\{ b_1', b_2',..., b_{i}' \}$ is the corresponding re-ordering of the set $\{ b_1, b_2,..., b_{i} \}$, i.e., the ordering so that 
\[
\{ \alpha_1, \alpha_2,..., \alpha_i \} = \{  t_{a_1'}-t_{b_1'}, t_{a_2'}-t_{b_2'},..., t_{a_{i}'}-t_{b_{i}'} \},
\]  
then $b_{j}'=a_{j-1}'$ for all $2\leq j \leq i$.
\end{itemize}
Given this claim, consider the case in which $i=k'$ and let $q_1=b_1',\,q_2=a_1',\,...,\, q_{k'}=a_{k'-1}'$, and $q_{k'+1} = a_{k'}'$. We immediately get that $q_1<q_2<\cdots<q_{k'}<q_{k'+1}$ since $q_{j}=a_{j-1}' =b_{j}'<a_{j}'=q_{j+1}$ for all $2\leq j \leq k'$ and
\[
	R' = \{ t_{q_2}-t_{q_1}, t_{q_3}-t_{q_2},..., t_{q_{k'+1}} - t_{q_{k'}} \}.
\]
Therefore $R'\in \mathcal{R}_{k'}(I)$, so $\mathcal{R}_{k'}(I)\neq \emptyset$, which is what we wanted to show.

We now prove the claim above, using induction on $i$.  If $i=2$, consider $\alpha_1=t_{a_1}-t_{b_1}$ and $\alpha_2=t_{a_2}-t_{b_2}$.  Equation~\eqref{eq: alphas} implies that
\[
	(t_{a_1}-t_{b_1}) +(t_{a_2}-t_{b_2})  = \alpha_1+\alpha_2= \gamma_1+\alpha_2= \gamma_2\in \Phi^-.
\]
By definition of $\Phi^-$ we must have that either $a_2=b_1$ or $a_1=b_2$.  If $a_2=b_1$, set $a_1'=a_2$ and $a_2'=a_1$.  The corresponding re-ordering of $\{b_1,b_2\}$ is $b_1' = b_2$ and $b_2'=b_1$.  Then $b_2' =a_1'$ as desired.  Similarly, if $a_1=b_2$, set $a_1'=a_1$ and $a_2'=a_2$.  The corresponding re-ordering of $\{ b_1, b_2 \}$ is $b_1'=b_1$ and $b_2'=b_2$.  In this case we also conclude that $b_2'=a_1'$.   Therefore the claim holds for $i=2$.  

Now assume that for some $i$ such that $2\leq i < k'$ there exists a reordering $\{ a_1'', a_2'', ..., a_{i}'' \}$ of the set $\{  a_1, a_2, ..., a_{i} \}$ so that $b_{j}''=a_{j-1}''$ for all $2\leq j \leq i$, where $\{ b_1'', b_2'',..., b_{i}'' \}$ is the corresponding re-ordering of the set $\{ b_1, b_2,..., b_{i} \}$.  By our assumptions,
\begin{eqnarray*}
	(\alpha_1+ \alpha_2 \cdots+ \alpha_i )+ \alpha_{i+1} &=& (t_{a_1''}-t_{b_1''}+t_{a_2''}-t_{a_1''}+\cdots + t_{a_i''}-t_{a_{i-1}''}) + (t_{a_{i+1}}-t_{b_{i+1}})\\
	&=& (t_{a_{i}''} - t_{b_1''}) + (t_{a_{i+1}} - t_{b_{i+1}}).
\end{eqnarray*}
On the other hand, Equation~\eqref{eq: alphas} implies that $(\alpha_1+ \alpha_2 \cdots+ \alpha_i )+ \alpha_{i+1} = \gamma_i+ \alpha_{i+1}=\gamma_{i+1}\in \Phi^-$.  It follows that either $a_{i+1} = b_1''$ or $a_i'' = b_{i+1}$.  If $a_{i+1}=b_1''$, set $a_1' = a_{i+1}$ and $a_j' = a_{j-1}''$ for all $2\leq j \leq i+1$.  The corresponding re-ordering of the set $\{ b_1'', b_2'',..., b_{i}'' \}$ is $b_1' = b_{i+1}$ and $b_j' = b_{j-1}''$ for all $2\leq j \leq i+1$.  It follows that $b_2' = b_{1}''=a_{i+1}=a_{1}'$ and for all $3\leq j \leq i+1$ we get $b_j' = b_{j-1}'' = a_{j-2}'' = a_{j-1}'$.  Next, if $a_i''=b_{i+1}$, set $a_j'= a_{j}''$ for all $1\leq j \leq i$ and $a_{i+1}' = a_{i+1}$.  Using exactly the same reasoning as before (which is even simpler in this case since there are fewer shifts) we get that $b_{j}' = a_{j-1}'$ for all $2\leq j \leq i+1$, proving the induction hypothesis.
\end{proof}

As mentioned above, the height of an ideal gives us another characterization of abelian ideals. 

\begin{proposition}\label{lemma: abelian ht 1}  
Let $I_h\subseteq \Phi^-$ be a nonempty ideal. Then $\mathcal{R}_1(I_h)$ consists of all singleton sets of the roots in $I_h$. Moreover, $I_h$ is abelian if and only if $ht(I_h)=1$.
\end{proposition}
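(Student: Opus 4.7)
The proposition has two assertions which I will handle in order.

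The first claim — that $\mathcal{R}_1(I_h)$ consists of the singletons of roots in $I_h$ — is immediate from unwinding Definition~\ref{def: root subsets} in the case $k=1$. A subset of height $1$ is a set of the form $\{t_{q_2}-t_{q_1}\}$ with $q_1<q_2$, and every element of $\Phi^-$ can be written uniquely in this form. So the plan is just to note this equivalence.

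For the second claim, the forward direction ``$I_h$ abelian $\Rightarrow ht(I_h)=1$'' is where I plan to use Lemma~\ref{lemma: computing ht} together with the key observation~\eqref{eq: root sums}. Since $I_h$ is nonempty, part one gives $\mathcal{R}_1(I_h)\neq\emptyset$, so $ht(I_h)\geq 1$. It then suffices to rule out any subset of height $\geq 2$ in $I_h$. Indeed, if $R=\{\beta_1,\beta_2,\ldots,\beta_k\}\in\mathcal{R}_k(I_h)$ with $k\geq 2$, then $\{\beta_1,\beta_2\}\subseteq I_h$ is a subset of height $2$; but by~\eqref{eq: root sums}, $\beta_1+\beta_2\in\Phi^-$, contradicting that $I_h$ is abelian. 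By Lemma~\ref{lemma: computing ht}, this forces $ht(I_h)=1$.

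For the converse, I plan to proceed by contrapositive: suppose $I_h$ is not abelian, and produce a subset of height $2$ in $I_h$, whence $ht(I_h)\geq 2$ by Lemma~\ref{lemma: computing ht}. By assumption, there exist $\alpha,\beta\in I_h$ with $\alpha+\beta\in\Phi^-$. Writing $\alpha=t_i-t_j$ and $\beta=t_k-t_\ell$ with $i>j$ and $k>\ell$, the sum $\alpha+\beta=t_i+t_k-t_j-t_\ell$ lies in $\Phi$ only if one of the positive indices cancels one of the negative indices. This gives two cases: either $j=k$, in which case $\{\beta,\alpha\}=\{t_j-t_\ell,\,t_i-t_j\}$ with $\ell<j<i$ is a height $2$ subset of $I_h$; or $i=\ell$, in which case $\{\alpha,\beta\}=\{t_i-t_j,\,t_k-t_i\}$ with $j<i<k$ is a height $2$ subset of $I_h$. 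In either case $\mathcal{R}_2(I_h)\neq\emptyset$, completing the argument.

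The main (very minor) obstacle is the case analysis on the form of $\alpha+\beta$ in the converse direction; the rest is essentially bookkeeping from the definitions and the preceding lemma.
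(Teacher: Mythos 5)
Your proof is correct. The first claim and the forward direction (``abelian $\Rightarrow ht(I_h)=1$'') follow exactly the paper's route: any height-$\geq 2$ subset would violate abelianness via~\eqref{eq: root sums}, and then Lemma~\ref{lemma: computing ht} pins down the height. The only genuine divergence is in the converse. The paper argues directly from the Lie-algebraic definition: $ht(I_h)=1$ means $\mathcal{I}_2=[\mathcal{I},\mathcal{I}]=0$, which immediately says no two roots of $I_h$ sum to a root, i.e.\ $I_h$ is abelian. You instead take the contrapositive and work purely combinatorially: from $\alpha,\beta\in I_h$ with $\alpha+\beta\in\Phi^-$ you extract, by the (correct) case analysis on which indices cancel, an explicit chain $q_1<q_2<q_3$ exhibiting a height-$2$ subset, and then invoke Lemma~\ref{lemma: computing ht} again. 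Your version avoids touching the lower central series at all in this direction, at the cost of a short case analysis; the paper's version is one line but leans on the bracket formulation. Both are complete, and your case analysis is airtight (exactly one of $j=k$ or $i=\ell$ can occur, since both together would force $\alpha+\beta=0\notin\Phi$).
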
 

\begin{proof} Suppose $I_h$ is a nonempty ideal.  Each root in $\Phi^-$ is clearly a subset of height $1$ in $I_h$. This proves the first claim. 
Now suppose $I_h$ is abelian.  This implies that there are no subsets of in $I_h$ of height $2$ or more since any such subset would satisfy the summation conditions of~\eqref{eq: root sums}.  On the other hand, $\mathcal{R}_1(I_h) \neq \emptyset$ by the above argument. Therefore $ht(I_h)=1$ by Lemma \ref{lemma: computing ht}. For the converse, suppose $ht(I_h)=1$. By definition of the lower central series, $\mathcal{I}_2 = [\mathcal{I},\mathcal{I}]=\emptyset$.  This means there cannot exist $\beta_1, \beta_2 \in I_h$ with $\beta_1+\beta_2\in \Phi^-$. Therefore $I_h$ is abelian. 
\end{proof}

\begin{remark}
The height of the ideal $I_h$ is related to the \textit{bounce number} and \textit{bounce path} of a Dyck path, which also arise in the theory of Macdonald polynomials. 
Recall that there is a standard way to associate a Dyck path $\pi_h$ to a Hessenberg function $h$ \cite[Proposition 2.7]{MbirikaTymoczko}. Given a Dyck path $\pi_h$, we can define the 
\textit{bounce path} of $\pi_h$ as in \cite[Definition 3.1]{Haglund2008}. The bounce number of $\pi_h$ is then the number of times its bounce path touches the diagonal. Indeed, it is not hard to see (for instance, using the characterization of abelian Hessenberg functions given in Remark~\ref{rem: alt-def}) that $I_h$ is abelian, i.e. the height of $I_h$  is $1$, if and only if this bounce number is $1$. 
\end{remark}

Our next goal is to make the connection between the height of $I_h$ and the maximal sink set size of $\Gamma_h$ introduced in Section~\ref{subsec:sink sets}. More specifically, we
assign to each sink set $T$ of cardinality $k \geq 2$ a unique subset of roots in $I_h$ of height $k-1$ as follows. Suppose $T \in SK_k(\Gamma_h)$ where $k \geq 2$. Write $T$ as $\{\ell_1, \ell_2, \ldots, \ell_k\} \subseteq [n]$ where $\ell_1 < \ell_2 < \cdots < \ell_k$.
We define a function $SK_k(\Gamma_h) \to \mathcal{R}_{k-1}(I_h)$ by 
\begin{equation}\label{eq: map T to RT} 
T \mapsto R_{T} := \{ \beta_i = t_{\ell_{i+1}}-t_{\ell_i} \hsm \vert \hsm 1\leq i\leq k-1  \}.
\end{equation} 
We need the following. 

\begin{proposition}\label{lem: root subsets} 
The map defined in~\eqref{eq: map T to RT} is well-defined, i.e., for any $T \in SK_k(\Gamma_h)$ with $k \geq 2$, the subset $R_T$ is an element of $\mathcal{R}_{k-1}(I_h)$. Moreover, the map~\eqref{eq: map T to RT} is a bijection. In particular, $m(\Gamma_h)=ht(I_h)+1$.
\end{proposition}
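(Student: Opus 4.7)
The plan is to establish the three claims in sequence, using Lemma~\ref{lemma: equivalency sink and independence} as the bridge between sink sets and the root-theoretic data. The key observation is that the characterization of sink sets in terms of the condition $\ell_{i+1} > h(\ell_i)$ is exactly the condition needed for the corresponding root $t_{\ell_{i+1}} - t_{\ell_i}$ to lie in $I_h = \Phi^- \setminus \Phi_h^-$.

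First I would prove well-definedness. Fix $T = \{\ell_1 < \ell_2 < \cdots < \ell_k\} \in SK_k(\Gamma_h)$ with $k \geq 2$. By Lemma~\ref{lemma: equivalency sink and independence}, we have $\ell_{i+1} > h(\ell_i)$ for all $1 \leq i \leq k-1$. Since $\ell_{i+1} > \ell_i$, the root $\beta_i = t_{\ell_{i+1}} - t_{\ell_i}$ is a negative root; the inequality $\ell_{i+1} > h(\ell_i)$ means that $\beta_i \notin \Phi_h^-$ by the definition of $\Phi_h^-$, hence $\beta_i \in I_h$. Combined with the fact that the $\ell_i$ are strictly increasing, this shows $R_T \in \mathcal{R}_{k-1}(I_h)$ by Definition~\ref{def: root subsets}.

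Next I would construct an explicit inverse to the map. Given $R \in \mathcal{R}_{k-1}(I_h)$, by definition there exist $q_1 < q_2 < \cdots < q_k$ with $R = \{t_{q_{i+1}} - t_{q_i} : 1 \leq i \leq k-1\}$. Define $T_R := \{q_1, q_2, \ldots, q_k\}$. Since each $t_{q_{i+1}} - t_{q_i}$ lies in $I_h$ and $q_{i+1} > q_i$, the definition of $I_h$ forces $q_{i+1} > h(q_i)$, so by Lemma~\ref{lemma: equivalency sink and independence} again, $T_R \in SK_k(\Gamma_h)$. The two constructions $T \mapsto R_T$ and $R \mapsto T_R$ are manifestly inverse to each other, since both the sink set $T$ and the root subset $R$ are uniquely determined by the common increasing sequence of indices. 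This yields the bijection $SK_k(\Gamma_h) \cong \mathcal{R}_{k-1}(I_h)$ for each $k \geq 2$.

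Finally, to prove $m(\Gamma_h) = ht(I_h) + 1$, I would combine the bijection with Lemma~\ref{lemma: computing ht}. When $I_h \neq \emptyset$, the bijection implies
\[
m(\Gamma_h) = \max\{k \geq 1 : SK_k(\Gamma_h) \neq \emptyset\} = 1 + \max\{j \geq 1 : \mathcal{R}_j(I_h) \neq \emptyset\} = 1 + ht(I_h),
\]
where the last equality is Lemma~\ref{lemma: computing ht}, noting that the maximum is at least $1$ since $\mathcal{R}_1(I_h) \neq \emptyset$ by Proposition~\ref{lemma: abelian ht 1}. The edge case $I_h = \emptyset$ must be treated separately: here $\Gamma_h$ is the complete graph on $[n]$, so the only independent sets (equivalently, sink sets, by Lemma~\ref{lemma: equivalency sink and independence}) are singletons, giving $m(\Gamma_h) = 1 = 0 + 1 = ht(I_h) + 1$ under the convention $ht(\emptyset) = 0$. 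The main subtlety in the argument is just this bookkeeping at the boundary (the case $k=1$ corresponds trivially to the empty root subset, and one must be careful about the conventions for the height of the empty ideal); the substantive content is simply the observation that the sink-set inequalities and the definition of $I_h$ are two ways of encoding the same combinatorial data.
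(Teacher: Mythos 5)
Your proof is correct and follows essentially the same route as the paper: well-definedness and surjectivity both come from the equivalence in Lemma~\ref{lemma: equivalency sink and independence} between the sink-set condition $\ell_{i+1}>h(\ell_i)$ and membership of $t_{\ell_{i+1}}-t_{\ell_i}$ in $I_h$, and the final identity $m(\Gamma_h)=ht(I_h)+1$ is deduced from the bijection together with Lemma~\ref{lemma: computing ht}. Your explicit treatment of the edge case $I_h=\emptyset$ is a small point of extra care beyond what the paper records, but the substance of the argument is identical.
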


\begin{proof}  
Let $k \geq 2$ and $T \in SK_k(\Gamma_h)$, with $T = \{\ell_1, \ldots, \ell_k\}$ as above. To see that~\eqref{eq: map T to RT} is well-defined we must first show that $R_T = \{\beta_i := t_{\ell_{i+1}} - t_{\ell_i} \hsm \vert \hsm 1 \leq i \leq k-1 \}$ is a subset of $I_h$. By Lemma \ref{lemma: equivalency sink and independence}, $\ell_{i+1}> h(\ell(i))$ for all $i\in [k-1]$.  Therefore each $\beta_i =t_{\ell_{i+1}} - t_{\ell_i} \in I_h$ as desired.  The fact that $R_T$ is a subset of height $k-1$ follows directly from the definition.
We now claim~\eqref{eq: map T to RT} is a bijection. From the definition it is straightforward to see that it is injective, so it suffices to prove that it is also surjective. 
Suppose $R\in \mathcal{R}_{k-1}(I_h)$ is a set of height $k-1$.  By definition there exists $q_1, q_2,..., q_{k-1},q_k\in [n]$ such that $q_1<q_2<\cdots <q_{k-1}<q_k$ and $R= \{ t_{q_2}-t_{q_1}, t_{q_3}-t_{q_2},..., t_{q_k}-t_{q_{k-1}}\}$.  Consider the set $T=\{ q_1, q_2, ..., q_k\}$.  Since $R\subseteq I_h$ we know $q_{i+1}>h(q_i)$ for all $i\in [k-1]$.  Lemma \ref{lemma: equivalency sink and independence} now implies that $T$ is a sink set.  By definition, $R=R_{T}$ is the image of $T$ under~\eqref{eq: map T to RT} so our function is surjective. 
Our last assertion follows directly from the fact that $|\SK_k(\Gamma_h)| = |\mathcal{R}_{k-1}(I_h)|$ for all $k\geq 2$ together with Lemma~\ref{lemma: computing ht}.  This completes the proof. 
\end{proof}

The above lemma establishes, in particular, a bijection between $SK_2(\Gamma_h)$, the set of sink sets of size $2$, and $\mathcal{R}_1(I_h)$. By Proposition~\ref{lemma: abelian ht 1} we know $\mathcal{R}_1(I_h) \cong I_h$ is the set of all singleton subsets of $I_h$, so this implies that 
\[
\lvert SK_2(\Gamma_h) \rvert = \lvert I_h \rvert. 
\]
More concretely, the bijection~\eqref{eq: map T to RT} associates to a sink set $T = \{j, i\} \subseteq [n]$ with $i>j$ the subset $\{t_i - t_j \} \subseteq I_h$ of height $1$. 

\begin{example} 
Continuing Example~\ref{example: smaller graph} with the acyclic orientation drawn therein, the sink set is $\{2,5\}$ and 
the associated (singleton) subset of $I_h$ of height $1$ is $\{ t_5-t_2 \} \subseteq I_h$.
\end{example}

Our next proposition makes the connection between the maximum sink-set size and the coefficients $d_\lambda$ determining the representation $H^*(\Hess(\mathsf{S},h))$.

\begin{proposition}\label{proposition:dlambda zero} 
Let $h: [n]\to [n]$ be a Hessenberg function.  Then
\[
	m(\Gamma_h) = max\{ \hsm i \hsm \vert \hsm d_{\lambda}\neq 0 \textup{ for some } \lambda \vdash n \textup{ with $i$ parts}\}
\]
 where the $d_\lambda$ are the non-negative coefficients appearing in~\eqref{eq:decomp into Specht}.
\end{proposition}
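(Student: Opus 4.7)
The plan is to combine Gasharov's formula (Theorem~\ref{thm: irreducible coefficients}) with the combinatorial characterization of sink sets in Lemma~\ref{lemma: equivalency sink and independence}. Since $d_\lambda$ counts $P_h$-tableaux of shape $\lambda^\vee$, and since $\mathrm{parts}(\lambda)$ equals the first-row length $\lambda^\vee_1$, the proposition reduces to showing
\[
m(\Gamma_h) = \max\{\mu_1 \hsm\vert\hsm \mu \vdash n \textup{ and a $P_h$-tableau of shape $\mu$ exists}\}.
\]
I will establish the two inequalities separately.

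For the $\geq$ direction, I will observe that if $S$ is any $P_h$-tableau of shape $\mu$, then the first row $b_1, b_2, \ldots, b_{\mu_1}$ of $S$ satisfies $b_{s+1}>h(b_s)\geq b_s$ by Definition~\ref{def: P-tab}(2), so the $b_s$ are strictly increasing. Applying Lemma~\ref{lemma: equivalency sink and independence}(2), this sequence of entries forms a sink set of $\Gamma_h$, so $\mu_1\leq m(\Gamma_h)$.

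For the $\leq$ direction, which is the main step, I will explicitly construct a $P_h$-tableau of hook shape $\mu=(m,1^{n-m})$ from any maximum sink set $T=\{a_1<a_2<\cdots<a_m\}$ with $m=m(\Gamma_h)$. The construction places $T$ in the first row in increasing order and lists $[n]\setminus T = \{c_1<c_2<\cdots<c_{n-m}\}$ in increasing order down the first column, beginning immediately below $a_1$. The row condition follows immediately from Lemma~\ref{lemma: equivalency sink and independence}(2) applied to $T$, and each column condition $c_s\leq h(c_{s+1})$ is automatic since $c_s<c_{s+1}\leq h(c_{s+1})$.

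The subtle point, and what I expect to be the heart of the argument, is verifying the one remaining column condition $a_1\leq h(c_1)$. My plan is to argue by contradiction: if $a_1>h(c_1)$, then in particular $c_1<a_1=\min(T)$, which forces $c_1=1$ since $c_1=\min([n]\setminus T)$. Then $a_s\geq a_1>h(1)=h(c_1)$ for every $s$, so none of the pairs $\{1,a_s\}$ is an edge of $\Gamma_h$, making $\{1\}\cup T$ an independent set of size $m+1$. By Lemma~\ref{lemma: equivalency sink and independence} this would contradict the maximality of $T$. With this in hand, the constructed $P_h$-tableau has shape $\mu$ whose transpose $\mu^\vee=(n-m+1,1^{m-1})$ has exactly $m$ parts, yielding $d_{\mu^\vee}\neq 0$ and finishing the proof.
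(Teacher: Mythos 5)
Your proposal is correct and follows essentially the same route as the paper: both directions are handled via Gasharov's theorem, with the first row of a $P_h$-tableau giving an independent set (cf.\ Lemma~\ref{lem: P-tab and ind sets}) and a hook-shaped $P_h$-tableau built from a maximum sink set, where the only nontrivial check is the single column condition at the top-left, resolved by the same maximality contradiction. Your extra observation that $c_1=1$ is a harmless refinement of the paper's argument that $\ell'<\ell_1$ already forces $T\cup\{\ell'\}$ to be independent.
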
 

We first need the following lemma. 

\begin{lemma}\label{lem: P-tab and ind sets} 
If $T=\{ i_1, i_2,..., i_k \}$ is a subset of $[n]$ whose elements fill a single row in a $P_h$-tableau, then $T$ is an independent set of vertices in $\Gamma_h$.
\end{lemma}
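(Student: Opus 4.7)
The plan is to leverage condition~(2) in Definition~\ref{def: P-tab} and then appeal directly to Lemma~\ref{lemma: equivalency sink and independence}. Specifically, suppose $T=\{i_1,i_2,\ldots,i_k\}$ fills a single row of some $P_h$-tableau, where we label the entries so that $i_1,i_2,\ldots,i_k$ appear in order from left to right in the row. Then for each $\ell \in [k-1]$, the entry $i_{\ell+1}$ appears immediately to the right of $i_\ell$, so by condition~(2) of Definition~\ref{def: P-tab} we have
\[
i_{\ell+1} > h(i_\ell).
\]

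Next I would observe that since $h(i_\ell)\geq i_\ell$ (by the defining property of a Hessenberg function), the inequality above forces $i_\ell < i_{\ell+1}$ for every $\ell$, so $i_1<i_2<\cdots<i_k$. Hence the left-to-right ordering of the entries coincides with their natural numerical ordering as a subset of $[n]$. This is precisely the situation of Lemma~\ref{lemma: equivalency sink and independence}(2) with $\ell_j := i_j$, so the equivalence of~(2) and~(3) in that lemma immediately yields that $T$ is an independent set of vertices in $\Gamma_h$.

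There is no real obstacle here: once we match up the row-adjacency condition of a $P_h$-tableau with the sink-set characterization in Lemma~\ref{lemma: equivalency sink and independence}, the result follows in one step. The only minor point requiring attention is the verification that the left-to-right order on the row agrees with the increasing order on $T$, which is immediate from $h(i_\ell)\geq i_\ell$.
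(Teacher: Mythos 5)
Your proof is correct and follows essentially the same route as the paper's: apply condition (2) of Definition~\ref{def: P-tab} to consecutive entries in the row and then invoke the equivalence of (2) and (3) in Lemma~\ref{lemma: equivalency sink and independence}. The only difference is that you explicitly verify (via $h(i_\ell)\geq i_\ell$) that the left-to-right order of the row agrees with the numerical order, a point the paper takes for granted.
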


\begin{proof} 
Suppose the elements of $T$ are listed in increasing order (in the order they appear in the row of the $P_h$-tableau).  By condition (2) in Definition \ref{def: P-tab}, we get $i_j>h(i_{j-1})$ for all $j$ such that $2\leq j \leq k$.  Lemma \ref{lemma: equivalency sink and independence} now implies that $T$ is an independent set of vertices.
\end{proof}

\begin{proof}[Proof of Proposition~\ref{proposition:dlambda zero}] 
Let 
\[
ind(\Gamma_h) := \max\{ |T| \hsm \vert\hsm T\subseteq V(\Gamma_h) \textup{ and } T \textup{ is independent} \}.
\]
By Lemma~\ref{lemma: equivalency sink and independence} it suffices to show that
\begin{eqnarray}\label{eq: ind sets and dlambda=0}
ind(\Gamma_h) = \max\{ \hsm i \hsm \vert \hsm d_{\lambda}\neq 0 \textup{ for some } \lambda \vdash n \textup{ with $i$ parts}\}.
\end{eqnarray}
Suppose $\lambda\vdash n$ is a partition of $n$ with $k$ parts such that $d_{\lambda}\neq 0$.  By Theorem \ref{thm: irreducible coefficients} there exists at least one $P_h$-tableau of shape $\lambda^{\vee}$.  Since $\lambda$ has $k$ parts, $\lambda^{\vee}$ has $k$ boxes in the first row.  By Lemma \ref{lem: P-tab and ind sets} the entries in the first row of this $P_h$-tableau form an independent set of vertices in $\Gamma_h$.  Therefore the LHS of~\eqref{eq: ind sets and dlambda=0} is greater than or equal to the RHS.

To prove the opposite inequality, let $T = \{\ell_1, \ell_2, \ldots, \ell_m\}$, where $\ell_1 < \ell_2 < \cdots < \ell_m$ be an independent subset of vertices in $\Gamma_h$ of maximal size.  By Lemma~\ref{lemma: equivalency sink and independence} we know $\ell_{i+1} > h(\ell_{i})$ for all $i\in [m-1]$. Consider the partition $\lambda^\vee = (m, 1, \ldots, 1)$ of $n$ of ``hook shape'' with first row containing $m$ boxes and all other rows containing only one box. Also consider the filling of the Young diagram of shape $\lambda^\vee$ given by filling the top row with $\ell_1, \ldots, \ell_m$ in increasing order, and filling the remaining boxes by $[n] \setminus \{\ell_1, \ldots, \ell_m\}$ in increasing order from top to bottom. We claim that this is a $P_h$-tableau of shape $\lambda^\vee$. By construction, conditions (1) and (2) of Definition \ref{def: P-tab} are already met, so we have only to check condition (3). Note that, for a pair $i$ and $j$ with $i$ appearing immediately below $j$, the condition (3) -- namely, that $j \leq h(i)$ -- holds automatically if $j < i$ (since $h(i) \geq i$ by definition of Hessenberg functions). Since $\lambda^\vee$ is of hook shape, the only places where condition (3) must be checked is along the leftmost column of $\lambda^\vee$, and since by construction the filling contains entries which increase from top to bottom starting at the second row, the argument above implies that the only remaining place where condition (3) must be checked is for the entry $\ell_1$ in the top-left box of $\lambda^\vee$ and the entry $\ell' := \mathrm{min}([n] \setminus \{\ell_1,\ldots,\ell_m\})$ in the unique box in the second row, for which we must show that $\ell_1 \leq h(\ell')$. Suppose for a contradiction that $\ell_1 > h(\ell')$ (and hence $\ell' < \ell_1$). This implies there is no edge connecting $\ell'$ with $\ell_1$ for any $i$, $1 \leq i \leq m$. Thus $T' = \{\ell', \ell_1, \ldots, \ell_m\}$ is a sink set of $\Gamma_h$ by Lemma \ref{lemma: equivalency sink and independence}.  Since $|T'|=m+1$, this contradicts the maximality of $m=|T|$. Thus $\ell_1 \leq h(\ell')$ and hence the above filling is indeed a $P_h$-tableau.  By construction of the $\lambda^\vee$, its dual partition $\lambda$ has $m \geq k+1$ parts proving that the RHS of Equation~\eqref{eq: ind sets and dlambda=0} is greater than or equal to the LHS.
\end{proof}

The following is now straightforward. In the case that $I_h$ is abelian, the corresponding restriction on the partitions that can appear in the RHS of~\eqref{eq: decomp into Mlambda} is quite striking. 

\begin{corollary}\label{corollary: max sink set gives bound on lambda}
Let $h: [n] \to [n]$ be a Hessenberg function and let $c_{\lambda}$ and $c_{\lambda,i}$ be the coefficients appearing in~\eqref{eq: decomp into Mlambda}. Then $c_{\lambda}=c_{\lambda,i}=0$ for all $\lambda \vdash n$ with more than $m(\Gamma_h) = ht(I_h)+1$ parts and for all $i \geq 0$.  In particular, if $I_h$ is abelian, then $c_{\lambda}=c_{\lambda,i}=0$ for all $\lambda \vdash n$ with more than $2$ parts and for all $i \geq 0$. 
\end{corollary}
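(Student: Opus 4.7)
The plan is to chain together three earlier results: Proposition~\ref{proposition:dlambda zero}, which characterizes $m(\Gamma_h)$ in terms of the $d_\lambda$ coefficients of the Specht decomposition; Lemma~\ref{lemma: dlambda zero implies all other zero}, which transfers vanishing statements from $d_\lambda$ to $c_\lambda$ and $c_{\lambda,i}$; and Proposition~\ref{lem: root subsets}, which gives $m(\Gamma_h) = ht(I_h) + 1$. Essentially no new ideas are required; the bulk of the work was done in establishing these ingredients.

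First, I would apply Proposition~\ref{proposition:dlambda zero} directly to conclude that for any $\lambda \vdash n$ with more than $m(\Gamma_h)$ parts we have $d_\lambda = 0$, where $d_\lambda$ denotes the multiplicity of the Specht module $\mathcal{S}^\lambda$ in $H^*(\Hess(\mathsf{S},h))$ as in~\eqref{eq:decomp into Specht}. Next, I would take the graded $\Symm_n$-representation $V = H^*(\Hess(\mathsf{S},h)) = \bigoplus_{\ell \geq 0} H^{2\ell}(\Hess(\mathsf{S},h))$ and apply Lemma~\ref{lemma: dlambda zero implies all other zero} with $k = m(\Gamma_h)$; the hypothesis of that lemma is exactly the vanishing of $d_\lambda$ just established, and parts (2) and (3) of its conclusion yield $c_\lambda = 0$ and $c_{\lambda,i} = 0$ for all $\lambda \vdash n$ with more than $m(\Gamma_h)$ parts and for all $i \geq 0$. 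The identity $m(\Gamma_h) = ht(I_h) + 1$ then comes from the last sentence of Proposition~\ref{lem: root subsets}, which gives the form stated in the corollary.

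For the ``in particular'' clause, I would split on whether $I_h$ is empty. If $I_h \neq \emptyset$, Proposition~\ref{lemma: abelian ht 1} gives $ht(I_h) = 1$, so $m(\Gamma_h) = 2$, and the general statement specializes to the claim that $c_\lambda = c_{\lambda,i} = 0$ for $\lambda$ with more than $2$ parts. If $I_h = \emptyset$, then by the convention in Definition~\ref{def: height} we have $ht(I_h) = 0$, hence $m(\Gamma_h) = 1$, and the conclusion is strictly stronger than what is claimed, so it still holds.

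There is no real obstacle here: the corollary is essentially an immediate corollary of the three named results, and the only thing to be mildly careful about is invoking Lemma~\ref{lemma: dlambda zero implies all other zero} with the correct graded module and the correct value of $k$, together with handling the empty-ideal edge case in the abelian specialization.
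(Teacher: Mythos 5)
Your proposal is correct and follows essentially the same route as the paper's own proof: Proposition~\ref{proposition:dlambda zero} to kill the $d_\lambda$, Lemma~\ref{lemma: dlambda zero implies all other zero} applied to $H^*(\Hess(\mathsf{S},h))$ to transfer the vanishing to $c_\lambda$ and $c_{\lambda,i}$, and Propositions~\ref{lemma: abelian ht 1} and~\ref{lem: root subsets} for the abelian specialization, with the empty-ideal case handled separately (the paper does this by noting $\Gamma_h$ is complete so $m(\Gamma_h)=1$, which matches your conclusion).
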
 

\begin{proof} 
It follows from Proposition~\ref{proposition:dlambda zero} that, under the hypotheses, $d_{\lambda}=0$ for all $\lambda$ with more than $m(\Gamma_h)$ parts. Now apply Lemma~\ref{lemma: dlambda zero implies all other zero} to $H^*(\Hess(\mathsf{S},h))$. For the abelian case, if $I_h$ is non-empty then this follows from Propositions~\ref{lemma: abelian ht 1} and~\ref{lem: root subsets}. If $I_h$ is empty, then $h=(n,n,\ldots,n)$ and $\Hess(\mathsf{S},h) = \Flags(\C^n)$. The corresponding graph $\Gamma_h$ has the property that every vertex is connected to every other vertex, implying that $m(\Gamma_h)=1$ and hence $c_{\lambda}=c_{\lambda,i}=0$ for all $\lambda$ with $2$ or more parts and all $i\geq 0$. Hence the conclusion holds in this case as well. 
\end{proof}

We have already indicated that our strategy for proving Theorem~\ref{theorem:main} is by induction, using the association of $\Gamma_h$ with $\Gamma_{h_T} = \Gamma_h -T$ for a sink set $T$ as in Lemma~\ref{lem: induction graph}.  Let $\mathsf{S}_T$ denote any regular semisimple element in $\mathfrak{gl}(n-|T|, \C)$.
 It will be useful for us to know that vanishing conditions on the coefficients of the dot action representation on $H^*(\Hess(\mathsf{S},h))$ imply vanishing conditions for the analogous coefficients of $H^*(\Hess(\mathsf{S}_T,h_T))$. To state the lemma precisely we introduce some terminology.  For each partition $\mu$ of $n-|T|$  we define $c_{\mu, i}^T$ (respectively $d_{\mu,i}^T$) to be the coefficient of $M^{\mu}$ (respectively $\mathcal{S}^{\mu}$) for the decomposition of the $\Symm_{n-|T|}$-representation $H^{2i}(\Hess(\mathsf{S}_T, h_T))$ in $\mathcal{R}ep(\Symm_{n-|T|})$.

\begin{lemma}\label{lem: maximal sink set induction} 
Let $h: [n] \to [n]$ be a Hessenberg function, and let $T \in SK_k(\Gamma_h)$ be a sink set of $\Gamma_h$. Then 
\begin{enumerate} 
\item $m(\Gamma_{h_T}) \leq m(\Gamma_h)$, and 
\item $c_\mu^T = 0$ and $c_{\mu,i}^T = 0$ for all $\mu \vdash (n-\lvert T \rvert)$ with more than $m(\Gamma_h)$ parts and all $i \geq 0$. 
\end{enumerate} 
In particular, if $I_h$ is abelian and $T \in SK_2(\Gamma_h)$, then $c^T_{\lambda,i}=0$ for all $\lambda$ with more than $2$ parts and all $i \geq 0$. 
\end{lemma}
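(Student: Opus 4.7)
The plan is straightforward. By Lemma~\ref{lem: induction graph}, $\Gamma_{h_T}$ is (up to the relabeling $\phi_T$) the induced subgraph of $\Gamma_h$ on $V(\Gamma_h) \setminus T$. First I would use this to establish (1); then I would combine (1) with the representation-theoretic vanishing results of Section~\ref{sec: Stanley-Stembridge conjecture and dot action} to deduce (2); the final ``abelian'' assertion will drop out of (2) via the identification $m(\Gamma_h) = ht(I_h) + 1$ from Proposition~\ref{lem: root subsets}.

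For (1), pick an independent set $U \subseteq V(\Gamma_{h_T})$ of maximal size, so that $|U| = m(\Gamma_{h_T})$ by Lemma~\ref{lemma: equivalency sink and independence}. Set $U' := \phi_T^{-1}(U) \subseteq V(\Gamma_h) \setminus T$. Since the edges within $V(\Gamma_h)\setminus T$ correspond bijectively under $\phi_T$ to the edges of $\Gamma_{h_T}$, the set $U'$ is independent in $\Gamma_h$ with $|U'| = |U|$. Applying Lemma~\ref{lemma: equivalency sink and independence} once more yields $m(\Gamma_{h_T}) = |U'| \leq m(\Gamma_h)$. For (2), I would apply Proposition~\ref{proposition:dlambda zero} to $h_T$ (which is a Hessenberg function by Lemma~\ref{lem: induction graph}) to conclude $d_\mu^T = 0$ for all $\mu \vdash (n-|T|)$ with more than $m(\Gamma_{h_T})$ parts. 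Combined with (1), this forces $d_\mu^T = 0$ for all such $\mu$ with more than $m(\Gamma_h)$ parts. Lemma~\ref{lemma: dlambda zero implies all other zero}, applied with $k = m(\Gamma_h)$ to the graded $\Symm_{n-|T|}$-representation $H^*(\Hess(\mathsf{S}_T, h_T))$, then yields $c_\mu^T = 0$ and $c_{\mu,i}^T = 0$ for all such $\mu$ and all $i \geq 0$, giving (2).

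For the final sentence, assume $I_h$ is abelian and $T \in SK_2(\Gamma_h)$. The existence of a size-$2$ sink set forces $I_h$ to be nonempty (for instance via the bijection in Proposition~\ref{lem: root subsets}), so $ht(I_h) = 1$ by Proposition~\ref{lemma: abelian ht 1}; hence $m(\Gamma_h) = 2$ by Proposition~\ref{lem: root subsets}, and (2) delivers the claim. I do not anticipate any genuine obstacle: the lemma is a bookkeeping combination of the graph-theoretic monotonicity of maximum independent sets under passage to induced subgraphs (essentially immediate from Lemma~\ref{lem: induction graph}) with the vanishing criterion already packaged in Lemma~\ref{lemma: dlambda zero implies all other zero}.
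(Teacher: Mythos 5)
Your proposal is correct and follows essentially the same route as the paper: part (1) via monotonicity of maximum independent sets under passage to the induced subgraph $\Gamma_{h_T}$ (using Lemma~\ref{lemma: equivalency sink and independence}), and part (2) by the vanishing machinery of Proposition~\ref{proposition:dlambda zero} and Lemma~\ref{lemma: dlambda zero implies all other zero}, which the paper invokes in packaged form as Corollary~\ref{corollary: max sink set gives bound on lambda} applied to $h_T$. Your explicit justification of the final ``abelian'' clause via Propositions~\ref{lemma: abelian ht 1} and~\ref{lem: root subsets} is a correct filling-in of a step the paper leaves implicit.
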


\begin{proof} 
We begin with the first claim. Since $\Gamma_{h_T}$ is by definition an induced subgraph of $\Gamma_h$, if there exists an independent set of vertices in $\Gamma_{h_T}$, then the corresponding subset is also independent in $\Gamma_h$. 
It follows from Lemma~\ref{lemma: equivalency sink and independence} that $m(\Gamma_{h_T}) = \max \{ \lvert T' \rvert \hsm \vert \hsm T'\subseteq V(\Gamma_{h_T}) \textup{ is independent in } \Gamma_{h_T} \}$ so we conclude
$m(\Gamma_{h_T}) \leq m(\Gamma_h)$ as desired. The second statement follows from the first by Corollary~\ref{corollary: max sink set gives bound on lambda}. 
\end{proof}


\section{An inductive formula for the coefficients of the dot action}\label{section: inductive formula}

In this section, we state our main theorem, which gives an inductive formula which, in the case when $I_h$ is abelian, expresses Tymoczko's ``dot action'' representation on $H^{2i}(\Hess(\mathsf{S},h))$ as a combination of trivial representations together with a sum of tabloid representations with coefficients associated to \emph{smaller} Hessenberg varieties in $\Flags(\C^{n-2})$. To illustrate this result, we give an extended example when $n=6$. We also state three technical results -- one (simple) lemma and two propositions -- and give a proof of Theorem~\ref{theorem:induction} based on these three results.  Each of the Propositions below are themselves inductive formulas, and are of interest in their own right. The proofs of the two propositions are postponed to Section~\ref{section: the proofs}.

\begin{theorem}\label{theorem:induction} 
Let $n$ be a positive integer and $n \geq 3$. Let $h: [n] \to [n]$ be a Hessenberg function such that $I_h$ is abelian and $i \geq 0$ be a non-negative integer. In the representation ring $\mathcal{R}ep(\Symm_n)$ we have the equality 
\begin{equation}\label{eq:main inductive step}
H^{2i}(\Hess(\mathsf{S}, h)) =  c_{(n),i} M^{(n)} +  \sum_{T \in SK_2(\Gamma_h)} \left( \sum_{\substack{\mu \,\vdash (n-2)\\ \mu=(\mu_1,\mu_2)}} c_{\mu, i-\deg(T)}^T M^{(\mu_1+1,\mu_2+1)} \right). 
\end{equation}
\end{theorem}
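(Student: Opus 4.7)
The plan is to use Lemma~\ref{lemma: restrict to submatrix} together with Brosnan--Chow's dimension identity (Theorem~\ref{thm: BrosnanChow main thm}) to reduce the desired representation-theoretic identity to two numerical identities about Betti numbers of regular Hessenberg varieties. These two identities are precisely the content of Propositions~\ref{prop: reg step} and~\ref{prop:induction step}, from which the theorem then follows by assembling the pieces.

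Since $I_h$ is abelian, Corollary~\ref{corollary: max sink set gives bound on lambda} forces $c_{\lambda, i} = 0$ on the left-hand side of \eqref{eq:main inductive step} for every $\lambda$ with more than two parts, and Lemma~\ref{lem: maximal sink set induction} forces the same vanishing on the coefficients $c^T_{\mu, i-\deg(T)}$ appearing on the right-hand side. Both sides therefore lie in the $\Z$-span of $\{M^{\lambda} : \mathrm{parts}(\lambda) \leq 2\}$ inside $\mathcal{R}ep(\Symm_n)$, so by Lemma~\ref{lemma: restrict to submatrix} with $k = 2$ it suffices to verify equality of the $\Symm_\nu$-invariant dimensions for every $\nu \vdash n$ with $\mathrm{parts}(\nu) \leq 2$. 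Theorem~\ref{thm: BrosnanChow main thm} then identifies the left-hand invariant dimension with $\dim H^{2i}(\Hess(\mathsf{X}_\nu, h))$.

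When $\nu = (n)$, so that $\mathsf{X}_\nu$ is regular nilpotent and $\dim (M^{\lambda})^{\Symm_n} = N_{\lambda, (n)} = 1$ for every $\lambda$, a second application of Theorem~\ref{thm: BrosnanChow main thm} to the smaller Hessenberg variety $\Hess(\mathsf{S}_T, h_T)$ on $\Flags(\C^{n-2})$ collapses the inner sum $\sum_{\mu = (\mu_1, \mu_2) \vdash (n-2)} c^T_{\mu, i-\deg(T)}$ into the single Betti number $\dim H^{2(i-\deg(T))}(\Hess(\mathsf{X}_{(n-2)}, h_T))$. The identity to verify then becomes
\begin{equation*}
\dim H^{2i}(\Hess(\mathsf{N}, h)) \;=\; c_{(n), i} \;+\; \sum_{T \in SK_2(\Gamma_h)} \dim H^{2(i-\deg(T))}(\Hess(\mathsf{X}_{(n-2)}, h_T)),
\end{equation*}
which will be the statement of Proposition~\ref{prop: reg step}. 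When $\nu = (\nu_1, \nu_2)$ has exactly two parts, the Kostka numbers $N_{(\mu_1+1, \mu_2+1), \nu}$ weight the terms on the right-hand side of \eqref{eq:main inductive step}; combining these with Theorem~\ref{thm: BrosnanChow main thm} applied on $\C^{n-2}$, the identity reduces to a formula expressing $\dim H^{2i}(\Hess(\mathsf{X}_\nu, h))$ in terms of Betti numbers of regular Hessenberg varieties in $\Flags(\C^{n-2})$ attached to the functions $h_T$; this is the content of Proposition~\ref{prop:induction step}.

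The main obstacle is the proof of Propositions~\ref{prop: reg step} and~\ref{prop:induction step} themselves, which is carried out in Section~\ref{section: the proofs}. The crux there is to combine the second author's combinatorial description of the Betti numbers of a regular Hessenberg variety with the bijection of Proposition~\ref{proposition: max sink set induction} between acyclic orientations of $\Gamma_h$ having a fixed maximal sink set $T$ (necessarily of size $2$, by Proposition~\ref{lem: root subsets}) and all acyclic orientations of $\Gamma_{h_T}$, together with the grading identity $\asc(\omega) = \deg(T) + \asc(\omega_T)$ which ensures that the cohomological degree $i$ is tracked correctly through the induction. Once those two propositions are in hand, the theorem follows from the linear-algebra reduction above; the appearance of $(\mu_1+1, \mu_2+1)$ rather than $\mu$ on the right-hand side of \eqref{eq:main inductive step} arises precisely from the way the Kostka numbers for two-part $\nu$ pair with two-part tabloid representations of $\Symm_n$.
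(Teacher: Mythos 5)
Your proposal is correct and follows essentially the same route as the paper: reduce to at most two-part partitions via Corollary~\ref{corollary: max sink set gives bound on lambda} and Lemma~\ref{lem: maximal sink set induction}, pass to $\Symm_\nu$-invariant dimensions via Lemma~\ref{lemma: restrict to submatrix} and Theorem~\ref{thm: BrosnanChow main thm}, and then invoke Propositions~\ref{prop: reg step} and~\ref{prop:induction step} for the cases $\nu=(n)$ and $\nu$ with two parts respectively. The only ingredient you gloss over is the explicit identity $\dim(M^{(\mu_1+1,\mu_2+1)})^{\Symm_{(\mu_1'+1,\mu_2'+1)}} = \dim(M^{\mu})^{\Symm_{\mu'}}+1$ of Lemma~\ref{lem: two-part induction}, whose ``$+1$'' is what produces the $P(\Hess(\mathsf{N},h),t)$ term needed to match Proposition~\ref{prop:induction step} in the two-part case.
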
 

We first illustrate the theorem via an extended example. 

\begin{example} 
Let $n=6$ and $h=(3,4,5,6,6,6)$ as in Example~\ref{ex: pictures}. Then $I_h$ is abelian, and $|I_h| = 6$.  Thus, there are six maximum dimensional sink sets in $\SK_2(\Gamma_h)$. The graphs below show the acyclic orientation $\omega\in \mathcal{A}_2(\Gamma_h)$ such that $\asc(\omega)=\deg(T)$ for each $T\in \SK_2(\Gamma_h)$.  In each case, the sink set $T$ and incident edges are highlighted in red, we display the corresponding acyclic orientation of $\Gamma_h-T\cong \Gamma_{h_T}$ in the center column, and the Hessenberg function $h_T$ in the right column.
\vspace*{.15in}
\[\xymatrix{ {\color{red}1}    & {2} \ar@[red][l] \ar@[red]@/^1.5pc/[rr]  & 3 \ar@[red][r] \ar[l] \ar@[red]@/_1.5pc/[ll]  &  {\color{red}4}    & {5} \ar@/_1.5pc/[ll] \ar@[red][l]  & {6} \ar[l] \ar@[red]@/_1.5pc/[ll] &
1 & 2 \ar[l] & 3 \ar[l] & 4 \ar[l] & (2,3,4,4)
}\]
\vspace*{.05in}
\[\xymatrix{ {\color{red}1}    & {2} \ar@[red][l]  & 3 \ar[l] \ar@[red]@/_1.5pc/[ll]  \ar@[red]@/^1.5pc/[rr] &  {4} \ar@/_1.5pc/[ll]  \ar@[red][r] \ar[l]   & {\color{red}5}  & {6} \ar@[red][l] \ar@/_1.5pc/[ll] & 
1 & 2 \ar[l] & 3 \ar[l] \ar@/_1.5pc/[ll] & 4 \ar[l]  & (3,3,4,4)
}\]
\vspace*{.05in}
\[\xymatrix{ {\color{red}1}    & {2} \ar@[red][l]  & 3 \ar[l] \ar@[red]@/_1.5pc/[ll]  &  {4} \ar[l] \ar@/_1.5pc/[ll]   \ar@[red]@/^1.5pc/[rr]   & { 5} \ar[l] \ar@/_1.5pc/[ll] \ar@[red][r]  & {\color{red}6} & 
1 & 2 \ar[l] & 3 \ar@/_1.5pc/[ll] \ar[l]  & 4  \ar@/_1.5pc/[ll] \ar[l]  & (3,4,4,4)
}\]
\vspace*{.05in}
\[\xymatrix{ {1}    \ar@[red][r]   & {\color{red}2}  & 3 \ar@/_1.5pc/[ll] \ar@[red][l]  \ar@[red]@/^1.5pc/[rr] &  {4} \ar@[red]@/_1.5pc/[ll]  \ar@[red][r] \ar[l]   & {\color{red}5}  & {6} \ar@[red][l] \ar@/_1.5pc/[ll] & 
1 & 2 \ar[l] & 3 \ar[l] & 4 \ar[l]  & (2,3,4,4)
}\]
\vspace*{.05in}
\[\xymatrix{ {1}  \ar@[red][r]  & {\color{red}2}   & 3 \ar@[red][l] \ar@/_1.5pc/[ll]  &  {4} \ar[l] \ar@[red]@/_1.5pc/[ll]   \ar@[red]@/^1.5pc/[rr]   & { 5} \ar[l] \ar@/_1.5pc/[ll] \ar@[red][r]  & {\color{red}6} & 
1 & 2 \ar[l] & 3 \ar[l]  & 4  \ar@/_1.5pc/[ll] \ar[l]  & (2,4,4,4)
}\]
\vspace*{.05in}
\[\xymatrix{ {1}   \ar@[red]@/^1.5pc/[rr]  & {2} \ar@[red][r]  \ar[l]  & {\color{red}3}    &  {4} \ar@[red][l] \ar@/_1.5pc/[ll]   \ar@[red]@/^1.5pc/[rr]   & { 5} \ar[l] \ar@[red]@/_1.5pc/[ll] \ar@[red][r]  & {\color{red}6} & 
1 & 2 \ar[l] & 3 \ar[l]  & 4  \ar[l] &  (2,3,4,4)
}\]
Since the graphs are symmetric, $\Gamma\setminus\{1,5\} \cong \Gamma\setminus\{2,6\}$ and $\Hess(\mathsf{S}',(3,3,4,4))\cong \Hess(\mathsf{S}', (2,4,4,4))$ where $\mathsf{S}'\in \mathfrak{gl}(n-2,\C)$ is a regular semisimple element.  The representation $H^*(\Hess(\mathsf{S}', h_T))$ for each $T\in \SK_2(\Gamma_h)$ is as shown in the table below.  The reader can confirm this using the graded version of Theorem~\ref{thm: irreducible coefficients}, namely \cite[Theorem 6.3]{ShareshianWachs2016}, together with \eqref{eq:Mlambda in terms of Slambda}.
\begin{eqnarray*}
\begin{array}{c|l|l|l}
\textup{Hessenberg function $h_T$:} & (2,3,4,4) & (3, 3, 4, 4) & (3,4,4,4) \\ \hline
H^0(\Hess(\mathsf{S}', h_T)) & M^{(4)} &  M^{(4)} &  M^{(4)}\\ 
H^2(\Hess(\mathsf{S}', h_T)) & M^{(4)} + M^{(3,1)} + M^{(2,2)} &  2M^{(4)}+ M^{(3,1)} &  3M^{(4)}\\ 
H^4(\Hess(\mathsf{S}', h_T)) & M^{(4)}+ M^{(3,1)} + M^{(2,2)}  &  2M^{(4)}+ 2 M^{(3,1)} &  4M^{(4)} + M^{(3,1)} \\ 
H^6(\Hess(\mathsf{S}', h_T))& M^{(4)} &  2M^{(4)}+ M^{(3,1)} &  4M^{(4)} + M^{(3,1)}\\ 
H^8(\Hess(\mathsf{S}', h_T))& \empty &  M^{(4)} &  3M^{(4)}\\ 
H^{10}(\Hess(\mathsf{S}', h_T)) & \empty  &  \empty  &  M^{(4)}
\end{array}
\end{eqnarray*}
Next we see that
\[
\deg(\{1,4\}) = \deg(\{ 1,5 \}) = \deg(\{1,6\}) = 2,\; \deg(\{ 2,5 \})=\deg(\{ 2,6 \}) = 3, \textup{ and } \deg(\{3,6\}) = 4.
\]
from the graphs above.  We now have all the information we need to compute $H^*(\Hess(\mathsf{S}, h))$ in all degrees as the shifted sum of $M^{(\mu_1+1, \mu_2+1)}$'s where $M^{(\mu_1,\mu_2)}$ appears in the representations above.  The next two tables show how to shift these representations using $\deg(T)$ in order to obtain $H^*(\Hess(\mathsf{S},h))$.

\begin{eqnarray*}
\begin{array}{c | c | c | c  }
\textup{Sink set $T\in \SK_2(\Gamma_h)$:} & \{ 1, 4\} & \{ 1, 5\} & \{ 1,6 \} \\ \hline
H^2(\Hess(\mathsf{S},h)) & & & \\
H^4(\Hess(\mathsf{S},h)) & M^{(5,1)} &  M^{(5,1)} &  M^{(5,1)} \\
H^6(\Hess(\mathsf{S},h)) & M^{(5,1)} + M^{(4,2)} + M^{(3,3)} &  2M^{(5,1)}+ M^{(4,2)} &  3M^{(5,1)} \\
H^8(\Hess(\mathsf{S},h)) & M^{(5,1)}+ M^{(4,2)} + M^{(3,3)}  &  2M^{(5,1)}+ 2 M^{(4,2)} &  4M^{(5,1)} + M^{(4,2)} \\
H^{10}(\Hess(\mathsf{S},h)) & M^{(5,1)} &  2M^{(5,1)}+ M^{(4,2)} &  4M^{(5,1)} + M^{(4,2)} \\
H^{12}(\Hess(\mathsf{S},h)) &  \empty &  M^{(5,1)} &  3M^{(5,1)} \\
H^{14}(\Hess(\mathsf{S},h))  & \empty  &  \empty  &  M^{(5,1)} \\
H^{16}(\Hess(\mathsf{S},h)) & & & \\ \hline \hline
\textup{Sink set $T\in \SK_2(\Gamma_h)$:} & \{ 2, 5 \} & \{ 3,6 \} & \{ 2,6 \}\\ \hline
H^4(\Hess(\mathsf{S},h)) & & & \\
H^6(\Hess(\mathsf{S},h)) & M^{(5,1)} &   &  M^{(5,1)} \\
H^8(\Hess(\mathsf{S},h)) & M^{(5,1)} + M^{(4,2)} + M^{(3,3)} &   M^{(5,1)} &  2M^{(5,1)}+M^{(4,2)} \\
H^{10}(\Hess(\mathsf{S},h)) & M^{(5,1)}+ M^{(4,2)} + M^{(3,3)}  &  M^{(5,1)} + M^{(4,2)} + M^{(3,3)}  &  2M^{(5,1)} + 2M^{(4,2)} \\
H^{12}(\Hess(\mathsf{S},h)) & M^{(5,1)} &  M^{(5,1)} + M^{(4,2)} + M^{(3,3)}  &  2M^{(5,1)} + M^{(4,2)} \\
H^{14}(\Hess(\mathsf{S},h)) &  \empty &  M^{(5,1)} &  M^{(5,1)} \\
H^{16}(\Hess(\mathsf{S},h))  & \empty  &  \empty  &  \empty \\
\end{array}
\end{eqnarray*}
For example, we get,
\[
H^{8}(\Hess(\mathsf{S},h)) = c_{(6),4} M^{(6)}+11 M^{(5,1)} + 6 M^{(4,2)} + 2M^{(3,3)}. 
\]

\end{example}

 As mentioned above, we prove Theorem~\ref{theorem:induction} using the following three results, recorded as a Lemma and two Propositions, each of which are themselves inductive formulas.  Indeed, Lemma~\ref{lem: two-part induction} expresses the number $N_{\lambda', \lambda}$ associated to two partitions of $n$ in terms of the same value associated to two partitions of the smaller integer $n-2$. Proposition~\ref{prop: reg step} gives a formula for the Poincar\'e polynomial of $\Hess(\mathsf{N},h) \subseteq \Flags(\C^n)$ in terms of Poincar\'e polynomials of regular nilpotent Hessenberg varieties in $\Flags(\C^{n-2})$, and Proposition~\ref{prop:induction step}
 is of a similar flavor.  
 Throughout the remainder of this section and the next, for a positive integer $n \geq 3$, we let $\mathsf{N}'$ and $\mathsf{S}'$ denote the regular nilpotent and regular semisimple elements, respectively, in $\g \ell(n-2,\C)$.

\begin{lemma}\label{lem: two-part induction} 
Let $n$ be a positive integer and $n \geq 3$. Let $\mu=(\mu_1, \mu_2)$ and $\mu' = (\mu'_1, \mu'_2)$ be any partitions of $n-2$ with at most $2$ parts. Then 
\begin{equation}\label{eq: N equality} 
\dim \left(M^{(\mu_1+1, \mu_2+1)}\right)^{\Symm_{(\mu_1'+1, \mu_2'+1)}} = 
\dim \left(M^{\mu}\right)^{\Symm_{\mu'}} +1.
\end{equation}
\end{lemma}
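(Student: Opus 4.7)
My plan is to reduce Equation~\eqref{eq: N equality} to an elementary combinatorial count of $2 \times 2$ non-negative integer matrices. The key ingredient I will invoke is the classical fact (see, e.g., \cite[Section 7.3]{Ful97}) that for any partitions $\lambda, \nu \vdash m$, the dimension $N_{\lambda,\nu} = \dim (M^\lambda)^{\Symm_\nu}$ equals the number of non-negative integer matrices with row sums prescribed by $\lambda$ and column sums prescribed by $\nu$. Indeed, a tabloid of shape $\lambda$ with entries in $[m]$ is $\Symm_\nu$-fixed precisely when it is determined by specifying, for each Young-subgroup block of $\nu$, how many of its elements lie in each row of $\lambda$; recording these counts yields a non-negative integer matrix with the prescribed row and column sums.

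Once this reformulation is in place, both sides of~\eqref{eq: N equality} will count non-negative integer matrices with at most two rows and at most two columns. If $\lambda = (r_1, r_2)$ and $\nu = (c_1, c_2)$ with $r_1 + r_2 = c_1 + c_2$, then any such matrix is completely determined by its $(1,1)$-entry $a$, and the non-negativity of the remaining three entries forces
\[
\max(0,\, r_1 - c_2) \;\leq\; a \;\leq\; \min(r_1,\, c_1),
\]
so that the count equals $\min(r_1, c_1) - \max(0, r_1 - c_2) + 1$. I will then apply this formula with $(r_1, r_2) = (\mu_1+1, \mu_2+1)$ and $(c_1, c_2) = (\mu_1'+1, \mu_2'+1)$ to evaluate the left-hand side, and with $(r_1, r_2) = (\mu_1, \mu_2)$ and $(c_1, c_2) = (\mu_1', \mu_2')$ to evaluate the right-hand side. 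The two identities $\min(\mu_1+1, \mu_1'+1) = \min(\mu_1, \mu_1') + 1$ and $\max(0, (\mu_1+1) - (\mu_2'+1)) = \max(0, \mu_1 - \mu_2')$ will then produce precisely the discrepancy of $+1$ asserted by~\eqref{eq: N equality}.

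I would finish by observing that the degenerate cases in which $\mu$ or $\mu'$ has only one nonzero part are handled uniformly under the convention that a missing second part equals $0$: whenever a row or column sum is $0$, the corresponding entries are forced to vanish, so the matrix count collapses correctly to the count for a one-row (or one-column) filling, which matches $\dim(M^{(m)})^{\Symm_{\mu'}} = 1$. The main obstacle here is essentially nonexistent; once the matrix interpretation of $N_{\lambda,\nu}$ is invoked, the lemma follows from the two-line calculation described above.
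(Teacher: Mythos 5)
Your proposal is correct and follows essentially the same route as the paper: both rest on the identification of $\dim(M^{\lambda})^{\Symm_{\nu}}$ with the number of non-negative integer matrices having prescribed row and column sums, reduced to an explicit count of $2\times 2$ such matrices. The only cosmetic difference is that the paper normalizes to $a\geq c$ (using the symmetry of $N$) and phrases the count as $b+1$ via weakly increasing fillings, while you parametrize by the $(1,1)$-entry and read off the $+1$ shift directly from the $\min/\max$ bounds; both computations are equivalent.
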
 

\begin{proof} 
Recall from~\eqref{eq:def Nlambdanu} and the related discussion that $N_{\mu, \mu'} = \dim (M^{\mu})^{\Symm_{\mu'}}$
 and the matrix $N=(N_{\mu, \mu'})$ is symmetric. To prove the lemma it clearly suffices to prove the formula 
 \begin{equation}\label{eq: formula for N}
 N_{(a,b),(c,d)} = b+1
 \end{equation}
for any $a,b,c,d \geq 0$ integers with $a+b=c+d=k$ for a fixed positive integer $k$ and $a \geq c$, since this would imply that the LHS and RHS of~\eqref{eq: N equality} are equal, thus proving~\eqref{eq: N equality}. To prove~\eqref{eq: formula for N}, we recall that in general $N_{\mu,\mu'}$ is the number of matrices $A=(a_{ij})$ with $a_{ij} \geq 0$ integers such that $\row(A)=\mu$ and $\col(A)=\mu'$ (see \cite[Corollary 7.12.3]{Stanley-EnumCombVol2}), where $\row(A)$ is the vector obtained from a matrix by taking row-wise sums, and $\col(A)$ is the vector obtained by taking column-wise sums, 
\[
	\row(A):=(r_1, r_2,...) \textup{ where } r_i=\sum_j a_{ij} \textup{ and }
	\col(A):=(c_1, c_2,... ) \textup{ where } c_j=\sum_i a_{ij}.
\]
In our case, since both $(a,b)$ and $(c,d)$ have only $2$ parts, this is equal to the number of matrices 
\[
\begin{pmatrix} \alpha & \beta \\ \gamma & \delta \end{pmatrix} \textup{ such that }\alpha+\beta=a, \gamma + \delta = b, \alpha + \gamma=c, \textup{ and } \beta+\delta = d. 
\]
It is both straightforward to see and well-known that this is the number of ways to fill a Young diagram of shape $(a,b)$ with $c$ many $1$'s and $d$ many $2$'s, such that the rows are weakly increasing. Since there are only $2$ rows in this Young diagram, the filling is completely determined by the number of boxes in the 2nd row which contain a $1$. Since $a \geq c$, it follows that $d\geq b$, and this number of boxes is between $0$ and $b$.  Thus there are precisely $b+1$ many such fillings, proving~\eqref{eq: formula for N} as desired. 
\end{proof} 

\begin{remark}  It is also a well known fact that $\dim(M^{\lambda})^{\mu} = |\Symm_{\mu}\backslash\Symm_n/\Symm_{\lambda}|$.  When both $\lambda$ and $\mu$ have two parts, there is a set of coset representatives for $\Symm_{\mu}\backslash\Symm_n/\Symm_{\lambda}$ known as \emph{bigrassmanian permutations}. These elements play an important role in the combinatorial properties of the symmetric group and in Schubert calculus.
\end{remark}

For any $\mathsf{X} \in \mathfrak{gl}(n,\C)$ and Hessenberg function $h:[n] \to [n]$, we denote by $P(\Hess(\mathsf{X},h),t)$ the Poincar\'e polynomial (with variable $t$) associated to the Hessenberg variety $\Hess(\mathsf{X},h)$. For the varieties considered in this paper, all Poincar\'e polynomials are concentrated in even degrees. 

\begin{proposition}\label{prop: reg step}  Let $n$ be a positive integer and $n\geq 3$. Let $h: [n]\to [n]$ be a Hessenberg function such that $I_h$ is abelian.
  Let $\mathsf{N}$ be a regular nilpotent element of $\mathfrak{gl}(n,\C)$ and $\mathsf{N}'$ be a regular nilpotent element of $\mathfrak{gl}(n-2, \C)$.  Then 
\[
P(\Hess(\mathsf{N},h), t) = \sum_{i=0}^{|\Phi_h^-|} c_{(n),i} \,t^{2i} +\sum_{T\in \SK_2(\Gamma_h)} t^{2\deg(T)} P(\Hess(\mathsf{N}', h_T), t).
\]
In particular, the $2i$-th Betti number of $\Hess(\mathsf{N},h)$ satisfies 
\[
\dim H^{2i}(\Hess(\mathsf{N},h)) = c_{(n),i}+\sum_{T\in \SK_2(\Gamma_h)} \dim H^{2i-2\deg(T)}(\Hess(\mathsf{N}',h_T)).
\]
\end{proposition}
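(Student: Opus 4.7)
The plan is to compute the Betti numbers on both sides via Brosnan--Chow's theorem and Stanley's ``ascent'' count, and then see the two agree term-by-term via the maximum-sink-set bijection from Proposition~\ref{proposition: max sink set induction}. Since the Poincar\'e polynomial formula follows from the Betti number formula by multiplying by $t^{2i}$ and summing over $i$, it suffices to prove the statement about dimensions of $H^{2i}(\Hess(\mathsf{N},h))$.

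First I would apply Theorem~\ref{thm: BrosnanChow main thm} with $\nu=(n)$, so that $\Symm_\nu=\Symm_n$ and $\mathsf{X}_{(n)}=\mathsf{N}$. This gives
$\dim H^{2i}(\Hess(\mathsf{N},h)) = \dim H^{2i}(\Hess(\mathsf{S},h))^{\Symm_n}$.
Since each tabloid representation $M^\lambda = \mathrm{Ind}_{\Symm_\lambda}^{\Symm_n}\mathbf{1}$ has a one-dimensional $\Symm_n$-invariant subspace, the decomposition~\eqref{eq: decomp into Mlambda} yields $\dim H^{2i}(\Hess(\mathsf{S},h))^{\Symm_n} = \sum_{\lambda \vdash n} c_{\lambda,i}$, and Corollary~\ref{corollary: sum clambda for all i} then identifies this with $\lvert\{\omega \in \mathcal{A}(\Gamma_h) : \asc(\omega)=i\}\rvert$. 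The same chain of identifications applied to $\Hess(\mathsf{N}',h_T)$ gives $\dim H^{2j}(\Hess(\mathsf{N}',h_T)) = \lvert\{\omega' \in \mathcal{A}(\Gamma_{h_T}) : \asc(\omega')=j\}\rvert$ for any $T\in \SK_2(\Gamma_h)$.

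Next I would decompose $\mathcal{A}(\Gamma_h)$ according to sink-set size. By Proposition~\ref{lem: root subsets} and Proposition~\ref{lemma: abelian ht 1}, the abelian hypothesis on $I_h$ forces $m(\Gamma_h)\leq 2$, so
\[
\mathcal{A}(\Gamma_h) = \mathcal{A}_1(\Gamma_h) \sqcup \mathcal{A}_2(\Gamma_h) = \mathcal{A}_1(\Gamma_h) \sqcup \bigsqcup_{T\in \SK_2(\Gamma_h)} \{\omega \in \mathcal{A}_2(\Gamma_h) : \sk(\omega)=T\}
\]
by the sink-set decomposition~\eqref{eq: sink set decomposition}. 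By Corollary~\ref{corollary: triv coeff is nonneg}, restricting $\asc=i$ to $\mathcal{A}_1(\Gamma_h)$ contributes exactly $c_{(n),i}$. Now comes the key step: for each $T\in \SK_2(\Gamma_h)$, the set $T$ is of maximum sink-set size, so Proposition~\ref{proposition: max sink set induction} provides a bijection $\omega \mapsto \omega_T$ from $\{\omega\in \mathcal{A}_2(\Gamma_h) : \sk(\omega)=T\}$ onto $\mathcal{A}(\Gamma_{h_T})$ satisfying $\asc(\omega)=\deg(T)+\asc(\omega_T)$. This bijection matches ascent statistics with the required shift by $\deg(T)$, so counting acyclic orientations on $\Gamma_h$ with $\asc(\omega)=i$ contributed by this fiber equals $\lvert\{\omega'\in \mathcal{A}(\Gamma_{h_T}): \asc(\omega') = i-\deg(T)\}\rvert$, which in turn equals $\dim H^{2i-2\deg(T)}(\Hess(\mathsf{N}',h_T))$ by the previous paragraph.

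Summing the contributions yields
\[
\dim H^{2i}(\Hess(\mathsf{N},h)) = c_{(n),i} + \sum_{T\in \SK_2(\Gamma_h)} \dim H^{2i-2\deg(T)}(\Hess(\mathsf{N}',h_T)),
\]
and multiplying by $t^{2i}$ and summing over $i$ (noting that $\deg(H^*(\Hess(\mathsf{N},h))) \leq 2\lvert \Phi_h^-\rvert$ bounds the index) gives the Poincar\'e polynomial identity. The degenerate case $I_h=\emptyset$ is handled separately but trivially: then $\SK_2(\Gamma_h)=\emptyset$, $m(\Gamma_h)=1$, and the formula reduces to $P(\Hess(\mathsf{N},h),t)=\sum_i c_{(n),i}t^{2i}$ with no contribution from the sum. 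The main conceptual obstacle is verifying that the ascent-shifted bijection of Proposition~\ref{proposition: max sink set induction} is valid for every $T\in \SK_2(\Gamma_h)$, and this is exactly where the abelian hypothesis is used (through $m(\Gamma_h)=ht(I_h)+1 \leq 2$); the rest is bookkeeping.
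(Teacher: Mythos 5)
Your proposal is correct and follows essentially the same route as the paper's proof: Brosnan--Chow applied with $\nu=(n)$, the identification of $\sum_\lambda c_{\lambda,i}$ with counts of acyclic orientations (Corollary~\ref{corollary: sum clambda for all i} and Corollary~\ref{corollary: triv coeff is nonneg}), the sink-set decomposition~\eqref{eq: sink set decomposition}, and the ascent-shifting bijection of Proposition~\ref{proposition: max sink set induction}, with the abelian hypothesis entering exactly through $m(\Gamma_h)\leq 2$. The only cosmetic difference is that the paper carries out the bookkeeping at the level of Poincar\'e polynomials while you work degree-by-degree with Betti numbers, which is equivalent.
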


\begin{proposition}\label{prop:induction step} Let $n$ be a positive integer, $n\geq 3$. Let $h: [n]\to [n]$ be a Hessenberg function such that $I_h$ is abelian.  Let $\mathsf{X}_{\nu}$ be the regular element of $\mathfrak{gl}(n,\C)$ associated to $\nu = (\mu_1+1, \mu_2+1) \vdash n$ and $\mathsf{X}_{\mu}$ be a regular element of $\mathfrak{gl}(n-2, \C)$ associated to $\mu = (\mu_1, \mu_2) \vdash (n-2)$.  Then
\begin{equation}\label{eq:induction equation} 
P(\Hess(\mathsf{X}_{\nu}, h), t) = P(\Hess(\mathsf{N}, h), t) + \sum_{T\in \SK_2(\Gamma_h)} t^{2\deg(T)} P(\Hess(\mathsf{X}_{\mu}, h_T), t).
\end{equation} 
In particular, the $2i$-th Betti number of $\Hess(\mathsf{X}_{\nu}, h)$ satisfies 
\[ 
\dim H^{2i}(\Hess(\mathsf{X}_{\nu},h)) = 
\dim H^{2i}(\Hess(\mathsf{N}, h)) + \sum_{T\in \SK_2(\Gamma_h)} \dim H^{2i-2\deg(T)} (\Hess(\mathsf{X}_{\mu}, h_T)).
\]
\end{proposition}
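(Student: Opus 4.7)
My plan is to convert both sides of~\eqref{eq:induction equation} into statements about dimensions of $\Symm$-invariant subspaces of $H^{*}(\Hess(\mathsf{S}, h))$ via Brosnan--Chow (Theorem~\ref{thm: BrosnanChow main thm}), and then exploit the severe restriction on the tabloid decomposition forced by the abelian hypothesis.

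First I would apply Brosnan--Chow to the LHS: $\dim H^{2i}(\Hess(\mathsf{X}_\nu, h)) = \dim H^{2i}(\Hess(\mathsf{S}, h))^{\Symm_\nu}$ for $\nu = (\mu_1+1, \mu_2+1)$. Since $I_h$ is abelian, Corollary~\ref{corollary: max sink set gives bound on lambda} gives
\[
H^{2i}(\Hess(\mathsf{S}, h)) = c_{(n), i}\, M^{(n)} + \sum_{\lambda' \vdash n-2} c_{\lambda'+(1,1), i}\, M^{(\lambda'_1+1, \lambda'_2+1)}
\]
in $\mathcal{R}ep(\Symm_n)$ (allowing $\lambda'_2 = 0$). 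Taking $\Symm_\nu$-fixed dimensions and applying Lemma~\ref{lem: two-part induction} termwise, so that $\dim(M^{(\lambda'_1+1, \lambda'_2+1)})^{\Symm_\nu} = \dim(M^{\lambda'})^{\Symm_\mu} + 1$, gives
\[
\dim H^{2i}(\Hess(\mathsf{X}_\nu, h)) = \Big(c_{(n), i} + \sum_{\lambda'} c_{\lambda'+(1,1), i}\Big) + \sum_{\lambda'} c_{\lambda'+(1,1), i} \dim(M^{\lambda'})^{\Symm_\mu}.
\]
The parenthesized sum equals $\sum_\lambda c_{\lambda, i} = \dim H^{2i}(\Hess(\mathsf{S}, h))^{\Symm_n}$, which by Brosnan--Chow with $\nu = (n)$ (so $\mathsf{X}_{(n)} = \mathsf{N}$) equals $\dim H^{2i}(\Hess(\mathsf{N}, h))$, matching the first term of the RHS of~\eqref{eq:induction equation}.

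Second, the problem reduces to showing
\[
\sum_{\lambda' \vdash n-2} c_{\lambda'+(1,1), i}\, \dim(M^{\lambda'})^{\Symm_\mu} = \sum_{T \in SK_2(\Gamma_h)} \dim H^{2i - 2\deg(T)}(\Hess(\mathsf{X}_\mu, h_T)).
\]
Since $h$ abelian implies $h_T$ abelian (a direct check from Definition~\ref{def: abelian}), the RHS decomposes by Brosnan--Chow and Corollary~\ref{corollary: max sink set gives bound on lambda} into $\sum_{\mu'}\bigl(\sum_T c^T_{\mu', i-\deg(T)}\bigr) \dim(M^{\mu'})^{\Symm_\mu}$. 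Lemma~\ref{lemma: restrict to submatrix}, applied to the two-part block of the matrix $N$, reduces the equality (as $\mu$ ranges over two-part partitions of $n-2$) to the coefficient-level identity
\[
c_{\lambda'+(1,1), i} = \sum_{T \in SK_2(\Gamma_h)} c^T_{\lambda', i - \deg(T)}
\]
for every $\lambda' \vdash (n-2)$ with at most two parts.

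The main obstacle is that this coefficient identity is logically equivalent to Theorem~\ref{theorem:induction} itself, so to avoid circularity it must be established without appeal to the main theorem. My plan is to establish it using the second author's combinatorial characterization of the Betti numbers of regular Hessenberg varieties, which presents $\Hess(\mathsf{X}_\mu, h_T)$ by a paving by affines whose cells carry combinatorial data compatible with the incomparability graph $\Gamma_{h_T}$. The strategy is to decompose the combinatorial objects indexing cells of $\Hess(\mathsf{X}_\nu, h)$ stratum-by-stratum according to the sink-set structure on $\Gamma_h$: Proposition~\ref{proposition: max sink set induction} provides the bijection at the level of acyclic orientations, and the ascent identity $\asc(\omega) = \deg(T) + \asc(\omega_T)$ yields the grading shift $t^{2\deg(T)}$. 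Combined with Proposition~\ref{prop: reg step}, which already isolates the regular nilpotent piece $c_{(n), i}\, t^{2i}$, this stratum-by-stratum cell count should produce~\eqref{eq:induction equation}. The hardest step will be the rigorous cell-by-cell matching — in particular, controlling how the two distinct eigenvalues of the non-nilpotent regular element $\mathsf{X}_\nu$ (compared with the single eigenvalue of $\mathsf{N}$) interact with the Hessenberg condition $\mathsf{X}_\nu V_i \subseteq V_{h(i)}$ to produce precisely the predicted dimension shift $\deg(T)$ for each $T \in SK_2(\Gamma_h)$.
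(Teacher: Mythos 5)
Your opening reduction is sound but it runs the paper's logic backwards: converting $\dim H^{2i}(\Hess(\mathsf{X}_\nu,h))$ into $\Symm_\nu$-invariants via Theorem~\ref{thm: BrosnanChow main thm}, splitting off the $\Symm_n$-invariant piece with Lemma~\ref{lem: two-part induction}, and then invoking the invertibility of the two-part block of $N$ is precisely how the paper \emph{derives} Theorem~\ref{theorem:induction} \emph{from} Proposition~\ref{prop:induction step}. So your reduction lands on the coefficient identity $c_{\lambda'+(1,1),i} = \sum_{T} c^T_{\lambda',\, i-\deg(T)}$, which (as you note) is equivalent to the main theorem, and no mathematical content of the proposition has yet been established. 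Everything therefore hinges on your final paragraph, and that is where the gap is. The tools you propose to close it cannot do the job: Proposition~\ref{proposition: max sink set induction} and the ascent statistic live entirely in the world of acyclic orientations, and by Theorem~\ref{thm: Stanley} acyclic orientations only compute the \emph{aggregated} quantities $\sum_{\mathrm{parts}(\lambda)=k} c_{\lambda,i}$ — equivalently, $\Symm_n$-invariants. That is exactly why they suffice for Proposition~\ref{prop: reg step} (the regular nilpotent case) but cannot separate the individual coefficients $c_{\lambda',i}$ for distinct two-part $\lambda'$, which is what your coefficient identity requires. No stratification of $\mathcal{A}(\Gamma_h)$ by sink sets can see the dependence on $\nu$.

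What the paper actually does at this point is start from the permutation formula $P(\Hess(\mathsf{X}_\nu,h),t) = \sum_{w:\, w^{-1}(J_\nu)\subseteq\Phi_h} t^{2|N^-(w)\cap\Phi_h^-|}$ of \cite[Lemma 2.6]{Precup2016}, split the index set into the nilpotent piece and $\mathcal{D}_\nu = \bigsqcup_{T} \mathcal{D}_\nu(\beta_T)$ according to which root of $I_h$ equals $w^{-1}(\alpha_\nu)$, construct an explicit bijection $\mathcal{D}_\nu(\beta_T) \to \{x\in\Symm_{n-2} : x^{-1}(J_\mu)\subseteq\Phi_{h_T}\}$ (Lemmas~\ref{claim: Stab map}--\ref{lem:equivalence of Hess conditions}, using the abelian hypothesis essentially), and then control the grading by a translation $\sigma_\nu$, the key degree-shift computation of Lemma~\ref{lem: degree shifts}, the special case $\nu=(n-1,1)$ handled by comparing with the conjugate composition $(1,n-1)$, and a reduction to that case via shortest coset representatives. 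None of this appears in your proposal; the sentence acknowledging that "the hardest step will be the rigorous cell-by-cell matching" is naming the entire proof rather than supplying it. To repair the proposal you would need to abandon the reduction to the coefficient identity and instead work directly with the permutation-counting formula, proving the bijection and the degree shift $\deg(T)$ for each $\mathcal{D}_\nu(\beta_T)$.
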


 Below, we give a proof of Theorem \ref{theorem:induction} using the  three results above. 
The basic idea of the proof is as follows. A priori, the assertion of Theorem~\ref{theorem:induction} is an equality in the representation ring $\mathcal{R}ep(\Symm_n)$. We first reduce this problem to a collection of equalities of integers by taking $\Symm_\nu$-invariants for varying $\nu \vdash n$ and using Proposition~\ref{prop: Brosnan Chow}. Next, we repeatedly use Brosnan and Chow's Theorem~\ref{thm: BrosnanChow main thm} to relate these $\Symm_\nu$-invariant subspaces to the Betti numbers of other regular Hessenberg varieties. In this manner, the problem is reduced to an induction on the Poincar{\'e} polynomials of regular Hessenberg varieties.

\begin{proof}[Proof of Theorem \ref{theorem:induction}]  

Since $h$ is an abelian Hessenberg function,  we know from Corollary~\ref{corollary: max sink set gives bound on lambda} that $c_{\lambda,i}=0$ for all $\lambda \vdash n$ with $3$ or more parts. In other words, by the abelian assumption, we know 
\[
H^{2i}(\Hess(\mathsf{S},h)) = \sum_{\substack{\lambda \vdash n \\ \lambda \textup{ has at most $2$ parts} }} c_{\lambda,i} M^\lambda.
\]
Therefore the LHS of~\eqref{theorem:induction} can be written as a linear combination of $M^{\lambda}$'s for $\lambda$ with at most $2$ parts. By inspection, the same is true of the RHS of~\eqref{theorem:induction}. An application of Lemma~\ref{lemma: restrict to submatrix} implies that in order to prove~\eqref{theorem:induction} it suffices to prove the equality
\begin{eqnarray*}\label{eq: nu-fixed pts}\begin{split}
	&\dim (H^{2i}(\Hess(\mathsf{S},h)))^{\Symm_\nu}  = c_{(n),i} \dim (M^{(n)})^{\Symm_\nu} +\\ 
	& \quad\quad\quad\quad\quad\quad\quad\quad\quad\sum_{T\in \SK_2(\Gamma_h)} \left( \sum_{\substack{\mu\vdash (n-2)\\ \mu=(\mu_1,\mu_2)}} c_{\mu, i-\deg(T)}^T \dim (M^{(\mu_1+1, \mu_2+1)})^{\Symm_{\nu}}\right)
\end{split}\end{eqnarray*}
for all $\nu \vdash n$ with at most $2$ parts.

Note that since $M^{(n)}$ is the trivial $1$-dimensional $\Symm_n$-representation, we have $(M^{(n)})^{\Symm_\nu} = M^{(n)}$ for all $\nu \vdash n$, and in particular, $\dim (M^{(n)})^{\Symm_\nu} = 1$ for all $\nu \vdash n$. We also know from Theorem~\ref{thm: BrosnanChow main thm} that 
\[
\dim (H^{2i}(\Hess(\mathsf{S},h)))^{\Symm_\nu} = \dim H^{2i}(\Hess(\mathsf{X}_{\nu},h))
\]
where $\mathsf{X}_\nu$ denotes a regular element of $\mathfrak{gl}(n,\C)$ with Jordan block sizes given by $\nu \vdash n$. It follows that it suffices to prove 
\begin{eqnarray}\label{eq: nu-fixed pts2}
	&&\dim H^{2i} (\Hess(\mathsf{X}_{\nu} , h)) = c_{(n),i} + \sum_{T\in \SK_2(\Gamma_h)} \left(\sum_{\substack{\mu \vdash (n-2)\\ \mu=(\mu_1,\mu_2)}} c_{\mu, i-\deg(T)}^T \dim (M^{(\mu_1+1, \mu_2+1)})^{\Symm_{\nu}}\right)
\end{eqnarray}
for all $\nu \vdash n$ with at most two parts.

To prove~\eqref{eq: nu-fixed pts2} we take cases. 
First, we consider~\eqref{eq: nu-fixed pts2} for the unique case in which $\nu \vdash n$ has only one part, namely $\nu=(n)$. In this case $\Symm_{\nu}=\Symm_n$ and $X_{\nu}$ may be taken to be the regular nilpotent element $\mathsf{N}$ of $\mathfrak{gl}(n,\C)$. Recall also that $\dim(M^{(\mu_1+1,\mu_2+1)})^{\Symm_n} =1$ since the multiplicity of the trivial representation in any $M^{(\mu_1+1,\mu_2+1)}$ is $1$.  Thus, in this case the RHS of~\eqref{eq: nu-fixed pts2} is
\begin{eqnarray}\label{eq: (n)-case}
	c_{(n),i} + \sum_{T\in \SK_2(\Gamma_h)}\; \sum_{\substack{\mu \vdash (n-2)\\ \mu=(\mu_1,\mu_2)}} c_{\mu, i-\deg(T)}^T. 
\end{eqnarray}
In order to simplify~\eqref{eq: nu-fixed pts2} further we recall that the coefficients $c^T_{\mu, i-\deg(T)}$ appearing there are associated to $H^{2i-2\deg(T)}(\Hess(\mathsf{S}', h_T))$ by the equality 
\[
H^{2i-2\deg(T)}(\Hess(\mathsf{S}', h_T)) = \sum_{\mu \vdash (n-2)} c^T_{\mu, i-\deg(T)} M^{\mu}.
\]
Now the assumption that $I_h$ is abelian implies $c^T_{\mu, i-\deg(T)} = 0$ for all $\mu \,\vdash (n-2)$ with more than $2$ parts by Lemma~\ref{lem: maximal sink set induction}. Thus we have $H^{2i-2\deg(T)}(\Hess(\mathsf{S}', h_T)) = \sum_{\mu=(\mu_1,\mu_2) \vdash (n-2)} c^T_{\mu, i-\deg(T)} M^\mu$
and taking $\Symm_{n-2}$-invariants we obtain 
\begin{eqnarray*}
\dim H^{2i-2\deg(T)}(\Hess(\mathsf{N}', h_T)) = \dim (H^{2i-2\deg(T)}(\Hess(\mathsf{S}', h_T)))^{\Symm_{n-2}}
= \sum_{\substack{\mu \vdash (n-2)\\ \mu=(\mu_1,\mu_2)}} c^T_{\mu, i-\deg(T)}
\end{eqnarray*}
where the first equality follows from Theorem~\ref{thm: BrosnanChow main thm}. 
Therefore~\eqref{eq: (n)-case} can be rewritten as  
\begin{equation*}\label{eq: nu-fixed pts trivial piece}
c_{(n),i} + \sum_{T \in SK_2(\Gamma_h)} \dim H^{2i-2\deg(T)}(\Hess(\mathsf{N}', h_T))
\end{equation*}
and now~\eqref{eq: nu-fixed pts2} follows for the case $\nu=(n)$ and $\mathsf{X}_\nu=\mathsf{N}$ by  Proposition~\ref{prop: reg step}.

Next, we consider the case in which $\nu = (\mu'_1+1, \mu_2'+1)\vdash n$ for some $\mu'=(\mu_1', \mu_2') \vdash (n-2)$, i.e. the case in which $\nu$ has exactly two parts.  Using an argument similar to the above, the RHS of~\eqref{eq: nu-fixed pts} for $\nu=(\mu_1'+1,\mu_2'+1) \vdash n$ can be expressed as 
\begin{eqnarray*}
        && c_{(n),i} + \sum_{T\in \SK_2(\Gamma_h)}\; \sum_{\substack{\mu\vdash (n-2)\\ \mu=(\mu_1,\mu_2)}} c_{\mu, i-\deg(T)}^T 
        \dim(M^{(\mu_1+1, \mu_2+1)})^{\Symm_{(\mu_1'+1,\mu_2'+1)}} \\ 
       && = c_{(n),i} + \sum_{T\in \SK_2(\Gamma_h)}\; \sum_{\substack{\mu\vdash (n-2)\\ \mu=(\mu_1,\mu_2)}} c_{\mu, i-\deg(T)}^T \left( \dim (M^{\mu})^{\Symm_{\mu'}} + 1 \right) \;\; \textup{ by Lemma~\ref{lem: two-part induction}  } \\ 
	&&= c_{(n),i} + \sum_{T\in \SK_2(\Gamma_h)}\; \sum_{\substack{\mu\vdash (n-2)\\ \mu=(\mu_1,\mu_2)}} c_{\mu, i-\deg(T)}^T + \sum_{T\in \SK_2(\Gamma_h)}\; \sum_{\substack{\mu\vdash (n-2)\\ \mu=(\mu_1,\mu_2)}} c_{\mu, i-\deg(T)}^T \dim (M^{\mu})^{\Symm_{\mu'}}\\
	&& = \dim(H^{2i}(\Hess(\mathsf{N}, h))) +  \sum_{T\in \SK_2(\Gamma_h)}\; \sum_{\substack{\mu\vdash (n-2)\\ \mu=(\mu_1,\mu_2)}} c_{\mu, i-\deg(T)}^T \dim (M^{\mu})^{\Symm_{\mu'}} \\
\end{eqnarray*}
where in the last expression, both $\mu=(\mu_1, \mu_2)$ and $\mu'=(\mu_1', \mu_2')$ are partitions of $n-2$, and the last equality follows from the case $\nu=(n)$ proven above.
Since $\nu = (\mu_1'+1, \mu_2'+2)$, it follows from Proposition~\ref{prop:induction step} that to prove~\eqref{eq: nu-fixed pts2} it is enough to prove
\begin{eqnarray}\label{eq: two-parts}
	\dim H^{2i-2\deg(T)} (\Hess(\mathsf{X}_{\mu'}, h_T)) = \sum_{\substack{\mu\vdash (n-2)\\ \mu=(\mu_1,\mu_2)}} c_{\mu, i-\deg(T)}^T \dim (M^{\mu})^{\Symm_{\mu'}}
\end{eqnarray}
for each $T\in \SK_2(\Gamma_h)$. To see this, recall that the coefficients $c_{\mu, i-\deg(T)}^T$ are defined by the equality 
\[
H^{2i - 2\deg(T)}(\Hess(\mathsf{S}', h_T)) = \sum_{\mu\vdash (n-2)} c^T_{\mu, i-\deg(T)} M^\mu
\]
in $\mathcal{R}ep(\Symm_n)$. Moreover, as in the argument above, since $I_h$ is abelian we know from Lemma~\ref{lem: maximal sink set induction} that\break $c^T_{\mu, i-\deg(T)} = 0$ for all $\mu \vdash (n-2)$ with more than $2$ parts. This observation, together with taking $\Symm_{\mu'}$-invariants for $\mu'=(\mu_1',\mu_2') \vdash (n-2)$, yields 
\begin{equation}\label{eq: mu-fixed points n-2}
\dim (H^{2i-2\deg(T)}(\Hess(\mathsf{S}', h_T)))^{\Symm_{\mu'}} = 
\sum_{\substack{\mu\vdash (n-2)\\ \mu=(\mu_1,\mu_2)}} c_{\mu, i-\deg(T)}^T \dim(M^\mu)^{\Symm_{\mu'}}.
\end{equation}
Now another application of Theorem~\ref{thm: BrosnanChow main thm} on the LHS of~\eqref{eq: mu-fixed points n-2} yields the equality in~\eqref{eq: two-parts} as desired. Hence~\eqref{eq: nu-fixed pts2} holds for all $\nu$ with at most $2$ parts, concluding the proof.
\end{proof}


\section{Proofs of Propositions~\ref{prop: reg step} and~\ref{prop:induction step} and the abelian graded Stanley-Stembridge conjecture}\label{section: the proofs}

In this section, we prove the two main inductive propositions from the previous section. The arguments involve the combinatorics of $\Symm_n$ and root systems. Given all of the preparation in the previous sections, the arguments are lengthy but not particularly difficult. \textbf{Throughout this section we work in the setting of Propositions~\ref{prop: reg step} and~\ref{prop:induction step} and Theorem~\ref{theorem:main}. Thus we always assume that $n \geq 3$, that $h:[n]\to [n]$ is a Hessenberg function such that $I_h$ is abelian,  and that any partition has at most two parts.}

\subsection{The proof of Proposition~\ref{prop: reg step}}

We begin with a proof of Proposition~\ref{prop: reg step}. This is much simpler than the proof of Proposition~\ref{prop:induction step}, which occupies the bulk of this section, due to the fact that the cohomology of the regular nilpotent Hessenberg variety $\Hess(\mathsf{N},h)$ is related to the subspace of $H^*(\Hess(\mathsf{S},h))$ which is invariant under the entire group $\Symm_n$, as opposed to a Young subgroup $\Symm_\nu$ for some $\nu \vdash n$. The fact that $\dim(M^\lambda)^{\Symm_n}=1$ for any partition $\lambda$ then allows us to use Theorem \ref{thm: Stanley} to translate our problem into the language of acyclic orientations. Our ``sink-set decomposition''~\eqref{eq: sink set decomposition}, and the inductive description of acyclic orientations given in Proposition~\ref{proposition: max sink set induction}, then yields the result. We now make this sketch precise.

\begin{proof}[Proof of Proposition~\ref{prop: reg step}]

We begin by observing that 
\[
P(\Hess(\mathsf{N},h), t) = \sum_{i=0}^{\lvert \Phi_h^- \rvert} \dim H^{2i}(\Hess(\mathsf{N},h)) t^{2i} = 
\sum_{i=0}^{\lvert \Phi_h^- \rvert} \dim (H^{2i}(\Hess(\mathsf{S},h)))^{\Symm_n}t^{2i} 
\]
where the first equality is the definition of the Poincar\'e polynomial, together with the fact that 
$\dim_{\C}(\Hess(\mathsf{N},h)) = \lvert \Phi_h^- \rvert$ \cite[Corollary 2.7]{Precup2015}, and the second equality is by Brosnan and Chow's
Theorem~\ref{thm: BrosnanChow main thm}.
Since $H^{2i}(\Hess(\mathsf{S},h)) = \sum_{\nu\, \vdash n} c_{\nu,i} M^\nu$ by definition of the coefficients $c_{\nu,i}$, by taking $\Symm_n$-invariants we obtain 
\begin{equation*}
\begin{split} 
P(\Hess(\mathsf{N},h),t) 
 & = \sum_{i=0}^{\lvert \Phi_h^- \rvert} \left( \sum_{\nu \vdash n} c_{\nu,i} \dim(M^\nu)^{\Symm_n} \right) t^{2i}  \\
 & = \sum_{i=0}^{\lvert \Phi_h^- \rvert} \left( \sum_{\nu \vdash n} c_{\nu,i} \right) t^{2i} \;\;
 \textup{ since $\dim (M^{\nu})^{\Symm_n} = 1$ } \\
 & = \sum_{i=0}^{\lvert \Phi_h^- \rvert} \left(  \sum_{\substack{\substack{\nu\vdash n \textup{ and } \nu \\ \textup{ has at most $2$ parts}}}} c_{\nu,i} \right) t^{2i} \;\; \textup{ by Lemma~\ref{lem: maximal sink set induction} since $I_h$ is abelian} \\ 
   & = \sum_{i=0}^{\lvert \Phi_h^- \rvert} c_{(n),i} t^{2i}  + \sum_{i=0}^{\lvert \Phi_h^- \rvert} \left( \sum_{\substack{\substack{\nu\vdash n \textup{ and } \nu \\ \textup{has $2$ parts}}}} c_{\nu,i} \right) t^{2i}. 
 \end{split} 
 \end{equation*}
 A similar argument yields
 \[
 P(\Hess(\mathsf{N}', h_T),t) = \sum_{i=0}^{\lvert \Phi_{h_T}^- \rvert} \left( \sum_{\mu \vdash (n-2)} c^T_{\mu, i} \right) t^{2i}
 \]
 for any $T \in SK_2(\Gamma_h)$. The above equalities imply that in order to prove the proposition it suffices to prove
\begin{eqnarray}\label{eq: triv case0} 
	\sum_{i=0}^{|\Phi_h^-|} \; \left( \sum_{\substack{\nu\vdash n \textup{ and } \nu \\ \textup{has $2$ parts}}} c_{\nu,i} \right) \,t^{2i} =  \sum_{T\in \SK_2(\Gamma_h)} t^{2\deg(T)} 
	\sum_{i=0}^{\lvert \Phi_{h_T}^- \rvert} \left( \sum_{\mu \vdash (n-2)} c^T_{\mu, i} \right) t^{2i}.
\end{eqnarray}
Applying Theorem~\ref{thm: Stanley} and our previous combinatorial analysis of acyclic orientations we have	
\begin{eqnarray}\label{eq: triv case1}\begin{split}
	\quad\quad\quad \sum_{i=0}^{|\Phi_h^-|} \;  \left( \sum_{\substack{\nu\vdash n \textup{ and } \nu \\ \textup{has $2$ parts}}} c_{\nu,i}\,\right) \, t^{2i} &= \sum_{i=0}^{|\Phi_h^-|}\,  | \{ \omega \in \mathcal{A}_2(\Gamma_h) \hsm \vert \hsm  \asc(\omega)=i \}|\,t^{2i} \;\; \textup{ by Theorem~\ref{thm: Stanley} }  \\	 
	&= \sum_{T\in \SK_2(\Gamma_h)} \; \sum_{i=0}^{|\Phi_h^-|} \; |\{\omega\in \mathcal{A}_2(\Gamma_h) \hsm \vert \hsm \asc(\omega)=i \textup{ and } \sk(\omega)=T\} |\,t^{2i} \\ 
	&\quad\quad\quad\quad\quad\quad\quad\quad\quad\textup{by our sink-set decomposition from~\eqref{eq: sink set decomposition}} \\
	&= \sum_{T\in \SK_2(\Gamma_h)} \; \sum_{i=\deg(T)}^{|\Phi_h^-|} \; |\{\omega_T \in \mathcal{A}(\Gamma_{h_T}) \hsm \vert \hsm \asc(\omega_T)=i-\deg(T)\} |\,t^{2i}\\
	&\quad\quad\quad\quad\quad\quad\quad\quad\quad\textup{ by Proposition~\ref{proposition: max sink set induction}, } \\ 
\end{split}\end{eqnarray}
where the sum over the index $i$ ranges between $\deg(T)$ and $\lvert \Phi_h^- \rvert$ because it follows from Proposition~\ref{proposition: max sink set induction} that if $\sk(\omega)=T$ then $\asc(\omega)\geq \deg(T)$. 
For each $T\in \SK_2(\Gamma_h)$ we shift the index $i$ of the sum appearing on the RHS of the above equation to get
\begin{eqnarray}\label{eq: triv case2} \begin{split}
	&\sum_{i=\deg(T)}^{|\Phi_h^-|} \; |\{\omega_T \in \mathcal{A}(\Gamma_{h_T}) \hsm \vert \hsm \asc(\omega_T) =i-\deg(T)\} |\,t^{2i}\\
	& \quad\quad\quad\quad\quad = t^{2\deg(T)}\, \sum_{i=0}^{|\Phi_h^-|-\deg(T)} |\{ \omega_T\in \mathcal{A}(\Gamma_{h_T}) \hsm \vert \hsm \asc(\omega_T) = i \}|\,t^{2i} \\
	& \quad\quad\quad\quad\quad = t^{2\deg(T)}\, \sum_{i=0}^{|\Phi_{h_T}^-|} |\{ \omega_T\in \mathcal{A}(\Gamma_{h_T}) \hsm \vert \hsm \asc(\omega_T) = i \}|\,t^{2i}
\end{split}\end{eqnarray}
where the last equality follows from that fact that $|\Phi_h^-|-\deg(T) \geq |\Phi_{h_T}^-|$ by Lemma \ref{lem: deg(T) property1} and \break$|\{ \omega\in \mathcal{A}(\Gamma_{h_T}) \hsm \vert \hsm \asc(\omega) = i \}|=0$ for all $i>|\Phi_{h_T}^-|$ since $\asc(\omega_T)\leq |E(\Gamma_{h_T})|=|\Phi_{h_T}^-|$ for all $\omega_T\in \mathcal{A}(\Gamma_{h_T})$. 
Putting together Corollary~\ref{corollary: sum clambda for all i} with the above equation~\eqref{eq: triv case2} we obtain
\begin{eqnarray}\label{eq: triv case3}
\quad\quad\sum_{i=\deg(T)}^{|\Phi_h^-|} \; |\{\omega_T \in \mathcal{A}(\Gamma_{h_T}) \hsm \vert \hsm \asc(\omega_T) =i-\deg(T)\} |\,t^{2i} = t^{2\deg(T)} \sum_{i=0}^{|\Phi^-_{h_T}|}\; \left( \sum_{\mu \vdash (n-2)}  c_{\mu,i}^T\,\right) \, t^{2i}
\end{eqnarray}
for each $T\in \SK_2(\Gamma_h)$.  Finally, Equations~\eqref{eq: triv case1} and~\eqref{eq: triv case3} together imply~\eqref{eq: triv case0} as desired.
\end{proof}


\subsection{Proof of Proposition~\ref{prop:induction step}} \label{sec: main proofs}

In this section we prove Proposition~\ref{prop:induction step}. This argument is the technical heart of this paper and is rather involved, so a sketch of the overall picture may be helpful. Our starting point is the explicit and purely combinatorial formula for the Betti numbers $b_{2i}$ of the regular Hessenberg variety $\Hess(\mathsf{X}_{\nu}, h)$ given by the second author in \cite{Precup2016} which expresses $b_{2i}$ as the number of permutations $w\in \Symm_n$ satisfying certain conditions related to $\nu \vdash n$ and $h$. Our assumptions that $I_h$ is abelian and that all partitions have at most $2$ parts simplifies the combinatorics of the Poincar\'e polynomial. From there, the remainder of the argument is a careful analysis of the sets of permutations in question, which boils down to the combinatorics of $\Symm_n$ and the root system of type A. There are two points worth mentioning. First, it turns out to be important that the formula for the Poincar\'e polynomial in \cite{Precup2016} is valid for any two-part composition $n=\nu_1+\nu_2$ where $\nu=(\nu_1,\nu_2)$ is not necessarily a partition, i.e., we may have $\nu_1 < \nu_2$ instead of the more customary $\nu_1 \geq \nu_2$. Accordingly, in this section, the standing hypotheses on $\nu=(\nu_1, \nu_2)$ are as follows: 
\[
\nu_1, \nu_2 \in \Z, \quad \nu_1+\nu_2=n, \quad \nu_1 \geq 0, \nu_2 \geq 0.
\]
Allowing this level of generality allows us to prove an important special case in our arguments below. Secondly, in order to reduce the argument to the special case mentioned in the previous sentence, we use the \textbf{shortest coset representative} (also used by the second author in \cite{Precup2016}) of a permutation $w\in \Symm_n$ as an element of $W\backslash \Symm_n$ for a certain Young subgroup $W\subseteq \Symm_n$.

To begin, we state the formula for the Betti numbers of regular Hessenberg varieties given in \cite{Precup2016}.  We prepare some terminology. For each $w\in \Symm_n$ we define the \textbf{inversion set of $w$} as 
\[
	N^-(w):= \{ \gamma\in \Phi^- \hsm \vert \hsm w(\gamma)\in \Phi^+  \},
\]
i.e., for each $w\in \Symm_n$, the set $N^-(w)$ consists of the negative roots which become positive under the action of $w$. In Lie type A this can be expressed quite concretely. Indeed, let $\gamma=t_i-t_j$ for some $i>j$. The action of $\Symm_n$ on roots is given by 
\[
w(t_i - t_j) = t_{w(i)} - t_{w(j)}.
\]
Thus $\gamma\in N^-(w)$ if and only if $w(i)<w(j)$.  We may therefore naturally identify the set $N^-(w)$ with the set of ordered pairs
\[
\inv(w) := \{ (i,j) \hsm \vert \hsm i>j \textup{ and } w(i)<w(j) \}.
\]
We with use this identification frequently throughout this section.

We now state (a special case of) the key formula for the Poincar\'e polynomial of regular Hessenberg varieties \cite[Lemma 2.6]{Precup2016}, which is the starting point of our discussion. Let $\nu=(\nu_1, \nu_2) \in \Z^2$ such that $\nu_1+\nu_2=n$ and $\nu_1, \nu_2 \geq 0$. There is a corresponding subset $J_\nu$ of positive simple roots given by
\[
J_\nu := \Delta \setminus \{\alpha_{\nu_1}\} \subset \Delta
\]
if $0 < \nu_1 < n$ and $J_\nu = \Delta$ otherwise. 
Recall from Section~\ref{sec: Hessenberg basics} that $\mathsf{X}_{\nu} = \mathsf{X}_{(\nu_1, \nu_2)}$ denotes a matrix in standard Jordan canonical form which is regular of Jordan type $\nu$. Recall also that the value of the Poincar\'e polynomial $P(\Hess(\mathsf{X}_{\nu}, h), t)$ when $t=1$ is equal to $\dim H^*(\Hess(\mathsf{X}_{\nu},h))$. 

\begin{lemma} \label{lem: Betti numbers of regular Hess} (\cite[Lemma 2.6]{Precup2016}) 
Let $n$ be a positive integer, $h: [n] \to [n]$ a Hessenberg function and $\nu = (\nu_1, \nu_2)$ as above. Then 
\begin{equation}\label{eq: Poincare polynomial of reg Hess}
	P(\Hess(\mathsf{X}_{\nu},h),t) = \sum_{\substack{w\in \Symm_n \\ w^{-1}(J_\nu) \subseteq \Phi_h }} t^{2|N^-(w)\cap \Phi_h^-|}.
\end{equation}
In particular, evaluating at $t=1$, we obtain 
\[
P(\Hess(\mathsf{X}_{\nu},h),1) = \dim H^*(\Hess(\mathsf{X}_{\nu},h)) = \lvert \{ w \in \Symm_n \hsm \vert \hsm w^{-1}(J_{\nu}) \subseteq \Phi_h \} \rvert.
\]
\end{lemma}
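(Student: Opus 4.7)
The plan is to produce an affine paving of $\Hess(\mathsf{X}_\nu, h)$ by intersecting with Bruhat cells, characterize the non-empty pieces combinatorially, and read off the Poincar\'e polynomial from the cell dimensions. The exponents $2|N^-(w)\cap \Phi_h^-|$ appearing in the target formula match those in the classical Schubert cell paving of $\Hess(\mathsf{S},h)$, so the strategy is to view $\mathsf{X}_\nu$ as a degeneration of a regular semisimple element that eliminates certain Schubert cells from the paving while preserving the dimensions of the survivors.

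Concretely, for each $w\in \Symm_n$ set $C_w^{\nu,h} := \Hess(\mathsf{X}_\nu,h)\cap BwB/B$ and parameterize the Schubert cell $BwB/B \cong \mathbb{A}^{\ell(w)}$ via $U \cap wU^-w^{-1}$, with coordinates indexed by the roots of $N^-(w)$. The next step is to write out the defining Hessenberg conditions $\mathsf{X}_\nu V_i \subseteq V_{h(i)}$ in these coordinates, using the explicit Jordan block form of $\mathsf{X}_\nu$, and to prove two claims: (a) if $w^{-1}(\alpha)\notin \Phi_h$ for some simple root $\alpha \in J_\nu$, then the lowest-order term of one of the resulting Hessenberg equations on $C_w$ is a nonzero constant, forcing $C_w^{\nu,h} = \emptyset$; and (b) if $w^{-1}(J_\nu) \subseteq \Phi_h$, then the system of equations is triangular in a way that solves for precisely the coordinates indexed by $N^-(w)\setminus \Phi_h^-$ in terms of those indexed by $N^-(w)\cap \Phi_h^-$, exhibiting $C_w^{\nu,h}$ as an affine cell of dimension $|N^-(w)\cap \Phi_h^-|$. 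A standard Bia\l ynicki--Birula argument applied to a generic $1$-parameter subgroup of the two-dimensional torus $Z(\mathsf{X}_\nu)\cap T$ then certifies that the $C_w^{\nu,h}$ assemble into an honest affine paving, and summing $t^{2\dim C_w^{\nu,h}}$ over the qualifying permutations yields~\eqref{eq: Poincare polynomial of reg Hess}.

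The main technical obstacle is step (b): actually triangulating the Hessenberg equations on $C_w$ and confirming that each coordinate corresponding to a root in $N^-(w)\setminus\Phi_h^-$ is uniquely determined as a polynomial in the coordinates for roots in $N^-(w)\cap\Phi_h^-$. The Jordan-block structure of $\mathsf{X}_\nu$ couples these equations non-trivially across flag levels, so the elimination must be carried out by induction on the flag index $i = 1, 2, \dots, n$, using at each stage the non-degeneracy supplied by the condition $w^{-1}(J_\nu)\subseteq \Phi_h$ on simple roots to guarantee that the next batch of constrained coordinates is uniquely pinned down by the free ones. Step (a) is comparatively routine once the coordinate form of the equations is in hand: the offending simple root produces a monomial identity $0=$constant at the lowest order, ruling out solutions.
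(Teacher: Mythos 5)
This statement is not proved in the paper at all: it is imported verbatim as \cite[Lemma 2.6]{Precup2016}, which in turn rests on the affine-paving results for regular (and nilpotent) Hessenberg varieties developed in \cite{Precup2013} and in earlier work of Tymoczko. So there is no internal proof to compare against; the relevant comparison is with the cited reference, and your outline does follow the same broad strategy used there, namely intersecting $\Hess(\mathsf{X}_\nu,h)$ with Schubert cells and showing the nonempty intersections are affine spaces of dimension $|N^-(w)\cap\Phi_h^-|$.

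That said, as a proof your proposal has a genuine gap: claims (a) and (b) \emph{are} the theorem, and you state them without establishing either. In particular, (b) --- that after imposing the Hessenberg conditions the coordinates of $BwB/B$ indexed by $N^-(w)\setminus\Phi_h^-$ are uniquely solved for in terms of those indexed by $N^-(w)\cap\Phi_h^-$ --- is the entire technical content of Tymoczko's and Precup's arguments, and you explicitly defer it. Two further points would need care even at the level of the outline. First, the role of $J_\nu$ is tied to the Jordan decomposition $\mathsf{X}_\nu=\mathsf{S}_\nu+\mathsf{N}_\nu$: the condition $w^{-1}(J_\nu)\subseteq\Phi_h$ arises because $J_\nu$ is the set of simple roots of the Levi $Z_G(\mathsf{S}_\nu)$, and the nonemptiness/dimension analysis is reduced to the regular nilpotent case inside that Levi; your sketch does not make this reduction, which is what makes (a) and (b) tractable. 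Second, the Bia\l ynicki--Birula step is more delicate than ``a generic one-parameter subgroup of $Z(\mathsf{X}_\nu)\cap T$'': the fixed locus of that two-dimensional torus on $\Flags(\C^n)$ is a disjoint union of positive-dimensional $L$-orbits, not isolated points, so one needs an iterated decomposition (first by the subtorus, then by a further one-parameter subgroup inside each fixed component) before the pieces $C_w^{\nu,h}$ can be certified to form an affine paving. None of this is wrong in spirit, but the proposal as written is a plan for reproving the cited lemma rather than a proof of it.
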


The exponents appearing in the RHS of~\eqref{eq: Poincare polynomial of reg Hess} can be interpreted concretely in terms of the Hessenberg function. Indeed, under the identification of $N^-(w)$ with $\inv(w)$, the set $N^-(w)\cap \Phi_h^-$ corresponds to 
\[
	\inv_h(w):=\{ (i,j) \hsm \vert \hsm i>j ,\; w(i)<w(j), \textup{ and } i\leq h(j)  \}.
\]

Next, we analyze the indexing set of the summation on the RHS of~\eqref{eq: Poincare polynomial of reg Hess}, i.e., we study the set of $w \in \Symm_n$ such that $w^{-1}(J_\nu) \subseteq \Phi_h$. 
Let $\nu=(\nu_1, \nu_2)$. Then 
$J_\nu = \Delta \setminus \{ \alpha_\nu \}$.  Thus for any $w\in \Symm_n$ with $w^{-1}(J_{\nu}) \subseteq \Phi_h$, we have that either $w^{-1}(\Delta) \subseteq \Phi_h$, or, $w^{-1}(J_\nu) \subseteq \Phi_h$ and $w^{-1}(\alpha_{\nu}) \in I_h$.  Motivated by this observation we consider the set
\[
\mathcal{D}_\nu := \{w \in \Symm_n \hsm \vert \hsm w^{-1}(J_\nu) \subseteq \Phi_h \textup{ and } w^{-1}(\alpha_{\nu}) \in I_h \}.
\]
With this notation in place we obtain from Lemma~\ref{lem: Betti numbers of regular Hess} that
\begin{eqnarray}\label{eq: poincare polynomial decomp} \begin{split}
	P(\Hess(\mathsf{X}_{\nu}, h),t) &= \sum_{\substack{w\in \Symm_n\\ w^{-1}(\Delta)\subseteq \Phi_h}} t^{2|N^-(w)\cap \Phi_h^-|}+ \sum_{w\in \mathcal{D}_{\nu}} t^{2|N^-(w)\cap \Phi_h^-|}\\
	&= P(\Hess(\mathsf{N}, h), t) + \sum_{w\in \mathcal{D}_{\nu}} t^{2|N^-(w)\cap \Phi_h^-|}
\end{split}\end{eqnarray}
where the second equality follows from an application of Lemma~\ref{lem: Betti numbers of regular Hess} to the case $\nu=(n)$, the trivial composition with $\nu_1=0$ or $\nu_2=0$, and for which we may take $X_{(n)}=\mathsf{N}$ and $J_{(n)} = \Delta$. The discussion above indicates that the key step in the proof of Proposition~\ref{prop:induction step} is the following.

\begin{proposition}\label{prop: graded case}  
Under the notation and assumptions of Proposition \ref{prop:induction step}, in particular with $\nu=(\mu_1+1, \mu_2+1) \vdash n$, we have 
\begin{eqnarray}\label{eq:induction equation 2}
\sum_{w\in \mathcal{D}_{\nu}} t^{2|N^-(w)\cap \Phi_h^-|} = \sum_{T\in \SK_2(\Gamma_h)} t^{2\deg(T)} P(\Hess(\mathsf{X}_{\mu},h_T),t)
\end{eqnarray}
where $\mu=(\mu_1,\mu_2) \vdash (n-2)$.  
\end{proposition}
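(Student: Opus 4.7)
The plan is to prove Proposition~\ref{prop: graded case} by using Lemma~\ref{lem: Betti numbers of regular Hess} (Precup's formula) to reduce to a combinatorial identity, and then establishing the identity via a careful analysis of $\mathcal{D}_\nu$ that crucially exploits the abelian hypothesis on $I_h$. Applying Lemma~\ref{lem: Betti numbers of regular Hess} to each $P(\Hess(\mathsf{X}_\mu,h_T),t)$ on the right-hand side rewrites the claim as the purely combinatorial identity
\[
\sum_{w \in \mathcal{D}_\nu} t^{2|\inv_h(w)|} \;=\; \sum_{T \in \SK_2(\Gamma_h)} \sum_{\substack{u \in \Symm_{n-2} \\ u^{-1}(J_\mu) \subseteq \Phi_{h_T}}} t^{2(\deg(T) + |\inv_{h_T}(u)|)}.
\]

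The first structural step is to analyze $\mathcal{D}_\nu$. For each $w \in \mathcal{D}_\nu$, the root $w^{-1}(\alpha_{\mu_1+1}) = t_i - t_j \in I_h$ (with $i > j$) determines a sink set $T_w = \{j,i\} \in \SK_2(\Gamma_h)$ via Proposition~\ref{lem: root subsets}. Using the abelianness of $I_h$ I would show that the remaining non-trivial conditions in $w^{-1}(J_\nu) \subseteq \Phi_h$, namely those at $\alpha_{\mu_1}$ and $\alpha_{\mu_1+2}$, hold automatically: their failure would produce a second negative root $t_a - t_b \in I_h$ whose sum with $t_i - t_j$ lies in $\Phi^-$, contradicting abelianness. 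A similar argument shows that the remaining conditions on $w^{-1}(J_\nu) \subseteq \Phi_h$ are equivalent to $u^{-1}(J_\mu) \subseteq \Phi_{h_T}$, where $u \in \Symm_{n-2}$ is obtained by relabeling $w|_{[n] \setminus T_w}$ via $\phi_{T_w}$ on the domain and an analogous bijection on the target.

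The technical core is matching weights. The naive map $w \mapsto (T_w, u_w)$ does not preserve weights termwise, so the sum must be reorganized. Following the strategy of \cite{Precup2016}, I would partition $\mathcal{D}_\nu$ according to right cosets $wW$ of the Young subgroup $W = \Symm_\nu$ acting on positions, and use shortest coset representatives to track the inversion count within each coset. A decisive ingredient is that Lemma~\ref{lem: Betti numbers of regular Hess} applies to any two-part composition of $n$, not just partitions: applying it also to the composition $\nu' = (\mu_2+1, \mu_1+1)$ yields a second combinatorial expression for $P(\Hess(\mathsf{X}_\nu, h),t)$ (since $\Hess(\mathsf{X}_\nu, h) \cong \Hess(\mathsf{X}_{\nu'}, h)$), which allows one to reduce the general case to a more tractable special case --- for instance $\mu_2 = 0$, where $\mathsf{X}_\mu = \mathsf{N}'$ and the right-hand side collapses via Proposition~\ref{prop: reg step}.

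The hard part will be the weight equality. The strategy is to decompose $|\inv_h(w)|$ into inversions within $T_w$, those mixing $T_w$ with $[n] \setminus T_w$, and those within $[n] \setminus T_w$, and to analyze how each piece varies as $w$ ranges over a coset of $W$ intersected with $\mathcal{D}_\nu$. Summation within each coset should yield the contribution $t^{2\deg(T)} \sum_u t^{2|\inv_{h_T}(u)|}$ for a distinguished $T \in \SK_2(\Gamma_h)$ associated to the coset, and summing over cosets gives the desired identity; throughout, the abelian hypothesis is repeatedly used to collapse cross-interactions between inversions within and outside $T$.
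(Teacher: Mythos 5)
Your first two paragraphs accurately reproduce the first half of the paper's argument: the reduction via Lemma~\ref{lem: Betti numbers of regular Hess}, the decomposition of $\mathcal{D}_{\nu}$ according to the root $w^{-1}(\alpha_{\nu})\in I_h$ (equivalently the sink set $T_w$), the use of abelianness to dispose of the conditions at the two simple roots adjacent to $\alpha_{\nu}$, and the identification of each block with $\{u\in\Symm_{n-2}\hsm\vert\hsm u^{-1}(J_{\mu})\subseteq\Phi_{h_T}\}$ are exactly the content of Lemmas~\ref{lem: wmin satisfies the Hess. condition}--\ref{lem:equivalence of Hess conditions}, and they do give the ungraded ($t=1$) statement.

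The gap is in the weight-matching, which is the actual content of the proposition. You rightly note that $w\mapsto(T_w,u_w)$ is not weight-preserving, but your proposed repair --- that summation within each coset yields $t^{2\deg(T)}\sum_u t^{2|\inv_{h_T}(u)|}$ for a single distinguished $T$ --- does not work. A coset of the Young subgroup $W_{\nu}$ acting on positions contains elements with different values of $w^{-1}(\alpha_{\nu})$, hence different sink sets, so no distinguished $T$ exists per coset; and for the one decomposition that does have a distinguished $T$ per block, namely $\mathcal{D}_{\nu}=\bigsqcup_T\mathcal{D}_{\nu}(\beta_T)$, the blockwise identity is simply false. In the paper's running example ($n=5$, $h=(3,4,5,5,5)$, $\nu=(3,2)$, $\beta=t_5-t_2$, $T=\{2,5\}$, $\deg(T)=3$) one computes $\sum_{w\in\mathcal{D}_{\nu}(\beta)}t^{2|N^-(w)\cap\Phi_h^-|}=t^4+2t^6+t^8+t^{10}$, whereas $t^{2\deg(T)}P(\Hess(\mathsf{X}_{\mu},h_T),t)=t^6+3t^8+t^{10}$; the identity holds only after summing over all $T$. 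The missing idea is the shift $\sigma_{\nu}$ of Definition~\ref{def: sigma-nu}: the \emph{shifted} weights do satisfy $|N^-(\sigma_{\nu}w)\cap\Phi_h^-|=\deg(T)+|N^-(\tau)\cap\Phi_h^-[T]|$ blockwise (Lemma~\ref{lem: degree shifts}, Corollary~\ref{cor: sigma shifts}), and the remaining task is the ``key equality'' that the shifted and unshifted generating functions over all of $\mathcal{D}_{\nu}$ coincide. Your two remaining ingredients are the right ones for that step, but you assemble them in the wrong order: the composition-reversal trick does not reduce the general case to $\mu_2=0$ (for $\nu'=(\nu_2,\nu_1)$ with $\nu_2\geq 2$ the shift $\sigma_{\nu'}$ is still nontrivial); it proves only the case $\nu=(n-1,1)$, where $\sigma_{(1,n-1)}$ is the identity (Proposition~\ref{cor: subreg case}). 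It is the shortest-coset-representative factorization $w=yz$ with $y\in W_{\nu}$, together with the induced Hessenberg functions $h_z$, that reduces the general case to that special case. Without $\sigma_{\nu}$ and this two-step structure the argument does not close.
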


In order to prove Proposition~\ref{prop: graded case}, first recall from Proposition~\ref{lem: root subsets} that there is a bijection $\SK_2(\Gamma_h) \to \mathcal{R}_1(I_h)$ which assigns each sink set $T$ of cardinality $2$ to a unique singleton set $R_T \in \mathcal{R}_1(I_h)$.  Under this correspondence, if $R_T=\{\beta\}$, we write $\beta=\beta_T$. Moreover, by Proposition~\ref{lemma: abelian ht 1}, for any Hessenberg function $h$, the ideal $I_h$ bijectively corresponds to $\mathcal{R}_1(I_h)$ by $\beta \leftrightarrow \{\beta\}$. With this in mind, for each $\beta\in I_h$, we set the notation 
\[
	\mathcal{D}_{\nu}(\beta) := \{ w\in \Symm_n \hsm \vert \hsm w^{-1}(J_{\nu})\subseteq \Phi_h \textup{ and } w^{-1}(\alpha_{\nu}) = \beta \}.
\]
 From the above discussion it is straightforward to see that 
\begin{eqnarray}\label{eq: Dnu decomposition}
\mathcal{D}_\nu = \bigsqcup_{\beta \in I_h} \mathcal{D}_\nu(\beta) = \bigsqcup_{T\in\SK_2(\Gamma_h) } \mathcal{D}_{\nu}(\beta_T).
\end{eqnarray}
We can now rewrite~\eqref{eq:induction equation 2} as
\begin{equation}\label{eq: Poincare poly in terms of Dnu}
	 \sum_{T\in \SK_2(\Gamma_h)}\sum_{w\in \mathcal{D}_{\nu}(\beta_T)} t^{2|N^-(w)\cap \Phi_h^-|} = \sum_{T\in \SK_2(\Gamma_h)}t^{2\deg(T)} P(\Hess(\mathsf{X}_{\mu}, h_T), t).
\end{equation}
The proof of this equality consists of two parts.  We first show an ungraded version of this equality (i.e., we prove that the equation above holds when we evaluate at $t=1$) and then address the graded case.  We need some additional notation for our proof.

Suppose $\beta= t_a - t_b \in I_h$. Recall that this is equivalent to the conditions $a>b$ and $a>h(b)$. If $w \in \mathcal{D}_{\nu}(\beta)$ then by definition of $\mathcal{D}_{\nu}(\beta)$ we must have 
\[
w^{-1}(t_{\nu_1}-t_{\nu_1+1}) = t_{w^{-1}(\nu_1)} - t_{w^{-1}(\nu_1+1)} = t_a - t_b,
\]
or equivalently
\begin{equation}\label{eq:wa and wb}
w(a)=\nu_1 \textup{ and } w(b)=\nu_1+1, 
\end{equation}
so the $b$-th entry in the one-line notation of $w$ is $\nu_1+1$ and the $a$-th entry is $\nu_1$. 

In the arguments that follow it will be useful to choose a specific element of $\mathcal{D}_\nu(\beta)$ for each $\beta \in I_h$. We define this element as follows. 

\begin{definition} \label{def: w-nu-beta}
Suppose $\beta=t_a-t_b \in I_h$. We define a permutation in $\Symm_n$, denoted $w_{\nu, \beta}$, by: 
\begin{enumerate} 
\item $w_{\nu,\beta}(a)=\nu_1$ and $w_{\nu,\beta}(b)=\nu_1+1$ (i.e. $w_{\nu,\beta}$ satisfies the condition~\eqref{eq:wa and wb}) and 
\item the remaining entries in the one-line notation of $w_{\nu,\beta}$ list the integers $[n] \setminus \{\nu_1, \nu_1+1\}$ in increasing order from left to right. 
\end{enumerate} 
\end{definition} 

\begin{example} \label{ex: w-nu-beta example}
Let $n=6$ and $\beta= t_5-t_2$, so $a=5$ and $b=2$. Let $\nu=(4,2)$. Then $w_{\nu,\beta}(2)=\nu_1+1=5$ and $w_{\nu,\beta}(5)=\nu_1=4$, and the remaining entries are filled, in increasing order, by $[6] \setminus \{4,5\} = \{1,2,3,6\}$. The one-line notation of $w_{\nu,\beta}$ is 
$[1 \hsm \mathbf{5} \hsm 2 \hsm 3 \hsm \mathbf{4} \hsm 6 ]$
where condition~\eqref{eq:wa and wb} determines the entries in bold. 
\end{example}

We need the following. 

\begin{lemma}  \label{lem: wmin satisfies the Hess. condition}
Let $w_{\nu,\beta}$ be as above and suppose $I_h$ is abelian. Then
\begin{enumerate} 
\item If $(i,j)\in \inv(w_{\nu,\beta})$ then $\{ i,j \} \cap \{ a,b \}\neq \emptyset$, and
\item $w_{\nu,\beta} \in \mathcal{D}_{\nu}(\beta)$. 
\end{enumerate} 
\end{lemma}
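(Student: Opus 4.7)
The plan is to prove (1) directly from the construction of $w_{\nu,\beta}$, and then bootstrap (2) from (1) using the abelian hypothesis on $I_h$.

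For (1), I would observe that by Definition~\ref{def: w-nu-beta}, the entries of the one-line notation of $w_{\nu,\beta}$ in positions $[n] \setminus \{a,b\}$ list the integers $[n] \setminus \{\nu_1, \nu_1+1\}$ in increasing order from left to right. Consequently, if $i > j$ and neither $i$ nor $j$ lies in $\{a,b\}$, then $w_{\nu,\beta}(i) > w_{\nu,\beta}(j)$, so $(i,j) \notin \inv(w_{\nu,\beta})$. Contrapositively, every inversion of $w_{\nu,\beta}$ involves $a$ or $b$.

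For (2), the condition $w_{\nu,\beta}^{-1}(\alpha_\nu) = \beta$ is immediate from $w_{\nu,\beta}(a) = \nu_1$ and $w_{\nu,\beta}(b) = \nu_1+1$, so the core of the argument is to verify $w_{\nu,\beta}^{-1}(J_\nu) \subseteq \Phi_h$. Fix $\alpha_k \in J_\nu$, so $k \in [n-1]$ with $k \neq \nu_1$, and write $w_{\nu,\beta}^{-1}(\alpha_k) = t_p - t_q$ where $p = w_{\nu,\beta}^{-1}(k)$ and $q = w_{\nu,\beta}^{-1}(k+1)$. If $p < q$, then $t_p - t_q \in \Phi^+ \subseteq \Phi_h$ and we are done, so it suffices to handle the case $p > q$, which says precisely that $(p,q) \in \inv(w_{\nu,\beta})$. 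By part~(1), at least one of $p, q$ lies in $\{a,b\}$; moreover, $p \neq a$ (else $k = \nu_1$) and $q \neq b$ (else $k = \nu_1$), so exactly one of the following occurs:
\begin{enumerate}
\item[(I)] $p = b$ and $k = \nu_1 + 1$;
\item[(II)] $q = a$ and $k = \nu_1 - 1$.
\end{enumerate}

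In case (I), the inequality $p > q$ gives $b > q$, so $t_b - t_q \in \Phi^-$. Assume for contradiction that $t_b - t_q \in I_h$. Since $\beta = t_a - t_b \in I_h$ as well, and $a > b > q$ gives $t_a - t_q \in \Phi^-$, the sum $(t_a - t_b) + (t_b - t_q) = t_a - t_q \in \Phi^-$ contradicts the abelian hypothesis on $I_h$. Hence $t_b - t_q \notin I_h$, i.e.\ $b \leq h(q)$, so $t_p - t_q = t_b - t_q \in \Phi_h^-\subseteq \Phi_h$. Case (II) is symmetric: $p > q = a > b$, and if we had $t_p - t_a \in I_h$ then $(t_p - t_a) + (t_a - t_b) = t_p - t_b \in \Phi^-$ would again violate abelianness, so $t_p - t_a \in \Phi_h$. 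The main obstacle is purely a careful bookkeeping of these two cases; once the inversion constraint from (1) pins down the shape of $p$ and $q$, the abelian property of $I_h$ together with $\beta \in I_h$ rules out the only obstruction to $w_{\nu,\beta}^{-1}(\alpha_k) \in \Phi_h$.
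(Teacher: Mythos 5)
Your proof is correct and follows essentially the same route as the paper's: part (1) is the same contrapositive argument from condition (2) of Definition~\ref{def: w-nu-beta}, and part (2) rests on the same two facts, namely that non-inversions give positive roots and that an adjacent simple root cannot map into $I_h$ because $\beta \in I_h$ and $I_h$ is abelian. The only cosmetic difference is that you organize the cases by the sign of $w_{\nu,\beta}^{-1}(\alpha_k)$ and use part (1) to force $k = \nu_1 \pm 1$, whereas the paper splits directly on Dynkin-diagram adjacency of $\alpha_k$ to $\alpha_\nu$; the substance is identical.
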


\begin{proof} 
To prove (1), we will show the contrapositive, i.e. if $\{ i,j \}\cap \{a,b\} = \emptyset $ then $(i,j)\notin \inv(w_{\nu,\beta})$.  Suppose $(i,j)$ such that $i>j$ and $\{i,j\}\cap \{a,b\} = \emptyset$.  Since $\{i,j\}\cap \{a,b\} = \emptyset$ we have $\{ w_{\nu,\beta}(i), w_{\nu,\beta}(j) \} \cap \{\nu_1, \nu_1+1\} =\emptyset$, and it follows that $w_{\nu, \beta}(i)>w_{\nu, \beta}(j)$ by condition (2) in Definition~\ref{def: w-nu-beta}.  Therefore $(i,j) \notin \inv(w_{\nu, \beta})$.

Now we prove (2).  By definition, $w_{\nu, \beta}^{-1}(\alpha_{\nu})=\beta$ so we need only show that $w_{\nu,\beta}^{-1}(J_{\nu}) \subseteq \Phi_h$. We take cases. First consider the case in which $\alpha+\alpha_{\nu}\in \Phi$, i.e. $\alpha$ and $\alpha_\nu$ correspond to adjacent vertices in the Dynkin diagram. Seeking a contradiction, suppose $w^{-1}_{\nu, \beta}(\alpha)\in I_h$.  Since $\alpha+\alpha_\nu \in \Phi$ we also have 
$w_{\nu,\beta}^{-1}(\alpha + \alpha_\nu) = w_{\nu,\beta}^{-1}(\alpha) + w_{\nu,\beta}^{-1}(\alpha_\nu) \in I_h$ since $I_h$ is an ideal.  On the other hand, this is a contradiction since $I_h$ is abelian.
Next, consider the case in which $\alpha+\alpha_{\nu}\notin \Phi$, i.e. $\alpha$ and $\alpha_\nu$ are not adjacent in the Dynkin diagram.  This means that $\alpha=t_{i}-t_{i+1}$ where $\{ i,i+1 \}\cap \{ \nu_1, \nu_1+1 \}= \emptyset$. Condition (2) in Definition~\ref{def: w-nu-beta} implies  
$w_{\nu, \beta}^{-1}(i) <w_{\nu,\beta}^{-1}(i+1)$ since $i,i+1\in [n]\setminus\{ \nu_1,\nu_1+1 \}$.  Therefore $w_{\nu, \beta}^{-1}(\alpha)\in \Phi^+ \subseteq \Phi_h$ as desired.
\end{proof}

With the element $w_{\nu, \beta}$ chosen as above, we can now construct some explicit maps which will be useful for our induction argument. Recall the bijection $\phi_T: [n]\setminus T \to [n-|T|]$ defined in Section~\ref{sec: Sink sets and induced subgraphs} by $\phi_T(j)=j-j'$ where $j'$ denotes the number of vertices $i\in T$ such that $i\leq j$.  We now consider this bijection for the case in which $T=\{a,b\}$ more carefully.  This bijection allows us to view $\mathfrak{gl}(n-2,\C)$ as a Lie subalgebra of $\mathfrak{gl}(n,\C)$.  We make this identification more precise in equations~\eqref{eq: tau to x-tau}, \eqref{eq: gl(n-2) identification}, and \eqref{eq: gl(n-2) roots identification} below. 

Viewing $\Symm_n$ as the automorphism group of the set $[n]$ of letters $\{1,2,\ldots, n\}$, there is a stabilizer subgroup 
\begin{equation}\label{eq: def Stab(a,b)}
	\Stab(a,b) := \{w \in \Symm_n \hsm\vert\hsm w(a)=a, w(b)=b\} \cong \mathrm{Aut}([n] \setminus \{a,b\})
\end{equation}
of $\Symm_n$ which is naturally isomorphic to $\Symm_{n-2}$ via the map
\begin{eqnarray}\label{eq: tau to x-tau}
\tau \mapsto x_{\tau} :=\left[ \phi_T(\tau(1))\hsm \phi_T(\tau(2)) \hsm \cdots \hsm \widehat{\phi_T(b)}\hsm \cdots \hsm \widehat{\phi_T(a)} \hsm \cdots \hsm \phi_T(\tau(n)) \right] 
\end{eqnarray}
where the marked entries are deleted.  In what follows we will frequently identify $\Stab(a,b)$ with $\Symm_{n-2}$. We have the following. 

\begin{lemma} \label{claim: Stab map}
Let $w \in \Symm_n$ such that $w(a)=\nu_1$ and $w(b)=\nu_1+1$. Then there exists a unique $\tau \in \Stab(a,b)$ such that $w = w_{\nu, \beta}\tau$. In particular, there is a well-defined map 
\[
\Psi_{\nu, \beta}: \{w \in \Symm_n \hsm\vert \hsm w(a)=\nu_1, w(b)=\nu_1+1 \} \to \Symm_{n-2} 
\]
defined by $\Psi_{\nu, \beta}(w) = x_{\tau}$ where $x_{\tau}$ is the unique element in $\Symm_{n-2}$ corresponding to $\tau \in \Stab(a,b)$ via~\eqref{eq: tau to x-tau}.
\end{lemma}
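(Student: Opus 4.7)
The plan is to produce $\tau$ explicitly as $\tau = w_{\nu,\beta}^{-1} w$, verify it lies in $\mathrm{Stab}(a,b)$, and then note uniqueness is automatic. First I would compute $\tau(a)$ and $\tau(b)$ directly: by the hypothesis on $w$ we have $w(a) = \nu_1 = w_{\nu,\beta}(a)$ (using condition (1) of Definition~\ref{def: w-nu-beta}), so $\tau(a) = w_{\nu,\beta}^{-1}(w(a)) = w_{\nu,\beta}^{-1}(\nu_1) = a$, and similarly $\tau(b) = b$. This shows $\tau \in \mathrm{Stab}(a,b)$ as defined in~\eqref{eq: def Stab(a,b)}, and by construction $w_{\nu,\beta}\tau = w$.

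For uniqueness, I would observe that if $w = w_{\nu,\beta}\tau = w_{\nu,\beta}\tau'$ for some $\tau,\tau' \in \mathrm{Stab}(a,b)$, then left-multiplying by $w_{\nu,\beta}^{-1}$ immediately gives $\tau = \tau'$. Combined with existence, this establishes the first assertion.

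For the second assertion, once the first is established, the map $\Psi_{\nu,\beta}$ is well-defined provided that the assignment $\tau \mapsto x_\tau$ from~\eqref{eq: tau to x-tau} is itself a well-defined function $\mathrm{Stab}(a,b) \to \Symm_{n-2}$. This is exactly the content of the isomorphism $\mathrm{Stab}(a,b) \cong \mathrm{Aut}([n] \setminus \{a,b\}) \cong \Symm_{n-2}$ recalled just above~\eqref{eq: tau to x-tau}, where $\phi_T$ (for $T = \{a,b\}$) supplies the identification of $[n] \setminus \{a,b\}$ with $[n-2]$. Thus $\Psi_{\nu,\beta}(w) = x_\tau$ is unambiguously defined.

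I do not anticipate any substantive obstacle here: the lemma is essentially bookkeeping, combining the specific construction of $w_{\nu,\beta}$ (which already satisfies the prescribed values at $a$ and $b$) with the standard identification $\mathrm{Stab}(a,b) \cong \Symm_{n-2}$ via $\phi_T$. The only point to be careful about is ensuring the two normalizations match, i.e., that $w_{\nu,\beta}$ is the \emph{unique} preferred lift so that every $w$ factors as $w_{\nu,\beta}$ times a stabilizer element on the right; this follows directly from the two defining conditions in Definition~\ref{def: w-nu-beta}.
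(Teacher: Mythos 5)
Your proposal is correct and follows essentially the same route as the paper: both arguments rest on the observation that $w$ and $w_{\nu,\beta}$ agree at $a$ and $b$, so their discrepancy $\tau = w_{\nu,\beta}^{-1}w$ is a stabilizer element, with uniqueness by left cancellation. Your explicit formula for $\tau$ is a slightly more algebraic phrasing of the paper's one-line-notation argument, but the content is identical.
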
 

\begin{proof}[Sketch of proof]
The hypotheses on $w$ completely determine the $a$-th and $b$-th entries in its one-line notation. The other entries must be a permutation of the set $[n] \setminus \{a,b\}$, and the hypotheses on $w$ place no condition on this permutation. Recall that for $w_{\nu,\beta}$ and any permutation $\tau$ in $\Symm_n$, right-composition with $\tau$ ``acts on positions'', i.e. if $w_{\nu,\beta}$ sends $i$ to $w_{\nu,\beta}(i)$, then $w_{\nu,\beta} \tau$ must send $i$ to $w_{\nu,\beta}(\tau(i))$. Thus if $\tau$ stabilizes $a$ and $b$, then $w=w_{\nu,\beta}\tau$ still satisfies $w(a)=\nu_1$ and $w(b)=\nu_1+1$. Moreover, it is straightforward to see that such a $\tau$ is unique, making $\Psi_{\nu,\beta}$ well-defined. 
\end{proof}

\begin{example}  \label{ex: w to tau correspondence}
Let $n=5$, $\beta=t_5-t_2$, and $\nu = (3,2)$.  Then $a=5$, $b=2$, and $\nu_1=3$.  Let $h=(3,4,5,5,5)$, which is abelian, and $I_h$ contains $\beta$. 
The table below gives an explicit description of the map $\Psi_{\nu, \beta}$ defined in Lemma~\ref{claim: Stab map}.
\begin{eqnarray*}
\begin{array} {c|c|c}
w\in \Symm_5 \textup{ such that } w(5)=3 \textup{ and } w(2) = 4 & \tau \in \Symm_5 & x_{\tau} \in \Symm_3 \\ \hline
\left[ 1\hsm \mathbf{4} \hsm 2 \hsm 5 \hsm \mathbf{3} \right] & \left[ 1\hsm \mathbf{2} \hsm 3 \hsm 4 \hsm \mathbf{5} \right] & \left[1 \hsm 2 \hsm 3\right] \\
\left[ 2\hsm \mathbf{4} \hsm 1 \hsm 5 \hsm \mathbf{3} \right] & \left[ 3\hsm \mathbf{2} \hsm 1 \hsm 4 \hsm \mathbf{5} \right] & \left[2 \hsm 1 \hsm 3\right] \\
\left[ 1\hsm \mathbf{4} \hsm 5 \hsm 2 \hsm \mathbf{3} \right] & \left[ 1\hsm \mathbf{2} \hsm 4 \hsm 3 \hsm \mathbf{5} \right] & \left[1 \hsm 3 \hsm 2\right] \\
\left[ 2\hsm \mathbf{4} \hsm 5 \hsm 1 \hsm \mathbf{3} \right] & \left[ 3\hsm \mathbf{2} \hsm 4 \hsm 1 \hsm \mathbf{5} \right] & \left[2 \hsm 3 \hsm 1\right] \\
\left[ 5\hsm \mathbf{4} \hsm 1 \hsm 2 \hsm \mathbf{3} \right] & \left[ 4\hsm \mathbf{2} \hsm 1 \hsm 3 \hsm \mathbf{5} \right] & \left[3 \hsm 1 \hsm 2\right] \\
\left[ 5\hsm \mathbf{4} \hsm 2 \hsm 1 \hsm \mathbf{3} \right] & \left[ 4\hsm \mathbf{2} \hsm 3 \hsm 1 \hsm \mathbf{5} \right] & \left[3 \hsm 2 \hsm 1\right]
\end{array}
\end{eqnarray*} 
\end{example}

There is a natural Lie subalgebra of $\mathfrak{gl}(n,\C)$ obtained by ``setting the variables in rows/columns $a$ and $b$ equal to zero''.  More precisely, we have a natural Lie algebra isomorphism, 
\begin{eqnarray}\label{eq: gl(n-2) identification}
\{ X \in \mathfrak{gl}(n,\C) \hsm\vert\hsm X_{ij} = 0 \textup{ if } \{i,j\} \cap \{a,b\}\neq \emptyset \} \cong \mathfrak{gl}(n-2,\C)
\end{eqnarray}
defined explicitly on the basis $\{E_{ij}\hsm \vert\hsm \{i,j\}\cap \{a,b\}=\emptyset \}$ of the LHS by $E_{ij}\mapsto E_{\phi_T(i)\phi_T(j)}$ and extended linearly.  Recall that in Section~\ref{sec: Sink sets and induced subgraphs} we proved that each sink set $T\in \SK_2(\Gamma_h)$ corresponds to a Hessenberg function $h_T:[n-2]\to [n-2]$ whose incomparability graph is obtained by deleting the vertices in $T$ and any incident edges from $\Gamma_h$. In fact, this is the Hessenberg function which corresponds to the Hessenberg space $H\cap \mathfrak{gl}(n-2,\C)$ under the identification in \eqref{eq: gl(n-2) identification}, and $\phi_T$ can also be used to give an explicit map between the corresponding root systems.  Using the notation of this section, $\beta= \beta_T = t_a-t_b$ so $T=\{a,b\}$.  We let 
\[
	\Phi[T]:=\{ t_i-t_j  \hsm \vert\hsm 1\leq i,j \leq n; \; \{ i,j\} \cap \{a,b\} = \emptyset \} \subseteq \Phi
\]
and $\Phi_T$ denote the root system of $\mathfrak{gl}(n-2, \C)$.  Now there is an explicit isomorphism of root systems
\begin{eqnarray}\label{eq: gl(n-2) roots identification}
\Phi[T] \cong \Phi_T \textup{ defined by } t_i-t_j \mapsto t_{\phi_T(i)} - t_{\phi_T(j)}.
\end{eqnarray}
where $\Phi[T]$ is viewed as a subroot system of $\Phi$ (since $\Phi[T]$ is closed under addition in $\Phi$).  Moreover, if $\Phi^-[T] := \Phi^- \cap \Phi[T]$, $\Phi_h[T] :=\Phi_h\cap \Phi[T]$, and $\Phi_h^-[T]: = \Phi_h^-\cap \Phi[T]$ then these subsets of $\Phi[T]$ correspond to $\Phi_T^-$, $\Phi_{h_T}$, and $\Phi_{h_T}^-$ respectively via~\eqref{eq: gl(n-2) roots identification}.

\begin{remark} \label{rem: inversion set compatibility}  The root system isomorphism given in~\eqref{eq: gl(n-2) roots identification} is compatible with the corresponding identification $\Stab(a,b)\cong \Symm_{n-2}$ given in~\eqref{eq: tau to x-tau}.  If $\tau\in \Stab(a,b)$ and $t_i - t_j \in \Phi[T]$ then $\tau(t_i-t_j)\in \Phi[T]$ and
\[
	t_k-t_{\ell} = \tau(t_i-t_j) \Leftrightarrow t_{\phi_T(k)}-t_{\phi_T(\ell)} = x_{\tau}(t_{\phi_T(i)} - t_{\phi_T(j)}).
\]
In particular,~\eqref{eq: gl(n-2) roots identification} maps $N^-(\tau)$ to $N^-(x_{\tau})$.
\end{remark}

\begin{example}  \label{ex: Phi to Phi[T] correspondence}
Continuing Example~\ref{ex: w to tau correspondence}, we get
\[
	\Phi^-[T] = \{ t_3-t_1, t_4-t_1, t_4-t_3 \}
\]
since $T=\{ 2,5 \}$.  Pictorially,
to obtain $\Phi[T]$ (respectively $\Phi^-[T]$) from $\Phi$ (respectively $\Phi^-$) we
simply remove those roots
in the $a$-th and $b$-th rows and columns.  The picture below 
illustrates the case 
$T=\{2,5\}$.
 \[
\ytableausetup{centertableaux}
\Phi_h: \;\; \begin{ytableau} \star & *(ltgrey) \star & \star & \star & *(ltgrey)\star \\ *(ltgrey)\star & *(ltgrey)\star & *(ltgrey)\star & *(ltgrey)\star &  *(ltgrey) \star \\  \star & *(ltgrey)\star & \star & \star & *(ltgrey)\star \\ \empty & *(ltgrey)\star & \star & \star & *(ltgrey)\star \\  *(ltgrey)\empty & *(ltgrey)\empty & *(ltgrey)\star & *(ltgrey)\star & *(ltgrey)\star   \end{ytableau}
\quad\quad\quad\quad\quad\quad\quad
\Phi_{h_T}: \;\; \begin{ytableau} \star & \star & \star \\ \star & \star & \star \\  \empty & \star & \star \end{ytableau}
\]
\end{example}

\begin{lemma} \label{lem: wmin conjugation}
Let $T \in SK_2(\Gamma_h)$ and let $\beta=\beta_T$ be the corresponding element of $I_h$. Let $w_{\nu,\beta}$ be the permutation of Definition~\ref{def: w-nu-beta} associated to $\beta=\beta_T$.  
\begin{enumerate} 
\item If $\alpha_i = t_{i} - t_{i+1}$ is adjacent to $\alpha_{\nu}$ in the Dynkin diagram (equivalently, $i=\nu_1-1$ or $i=\nu_1+1$), then $w_{\nu, \beta}^{-1}(\alpha) \not \in \Phi[T]$. 
\item If $\alpha_i$ is not adjacent to $\alpha_{\nu}$ in the Dynkin diagram, then $w_{\nu, \beta}^{-1}(\alpha_i) \in \Phi_h[T]$. 
\end{enumerate} 
Furthermore, the set $w_{\nu,\beta}^{-1}(J_{\nu}) \cap \Phi[T]$ becomes the subset $J_{\mu}$ for $\mu = (\mu_1, \mu_2)\vdash (n-2)$ via the identification in~\eqref{eq: gl(n-2) roots identification}.
\end{lemma}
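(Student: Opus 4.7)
The strategy is to compute $w_{\nu,\beta}^{-1}(\alpha_i)$ directly from Definition~\ref{def: w-nu-beta}. Recall that $\beta = t_a - t_b$, so $T = \{a,b\}$ and $\Phi[T]$ consists of roots $t_k - t_\ell$ with $\{k,\ell\} \cap \{a,b\} = \emptyset$. Moreover, $w_{\nu,\beta}^{-1}(\nu_1) = a$ and $w_{\nu,\beta}^{-1}(\nu_1+1) = b$, while $w_{\nu,\beta}^{-1}$ sends $[n] \setminus \{\nu_1, \nu_1+1\}$ bijectively and order-preservingly onto $[n] \setminus \{a,b\}$.

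For part (1), if $i = \nu_1 - 1$ then $w_{\nu,\beta}^{-1}(\alpha_i) = t_{w_{\nu,\beta}^{-1}(\nu_1-1)} - t_a$, which involves the index $a$ and therefore cannot lie in $\Phi[T]$; the case $i = \nu_1 + 1$ is symmetric, producing a root containing the index $b$. For part (2), when $\alpha_i$ is not adjacent to $\alpha_\nu$, both $i$ and $i+1$ lie in $[n] \setminus \{\nu_1, \nu_1+1\}$, so $w_{\nu,\beta}^{-1}(\alpha_i)$ automatically lies in $\Phi[T]$. The order-preserving property of $w_{\nu,\beta}^{-1}$ on this set forces $w_{\nu,\beta}^{-1}(i) < w_{\nu,\beta}^{-1}(i+1)$, and hence $w_{\nu,\beta}^{-1}(\alpha_i) \in \Phi^+ \subseteq \Phi_h[T]$.

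For the final claim, recall that $J_\nu = \Delta \setminus \{\alpha_{\nu_1}\}$, so combining (1) and (2) gives
\[
w_{\nu,\beta}^{-1}(J_\nu) \cap \Phi[T] = \{w_{\nu,\beta}^{-1}(\alpha_i) : i \in [n-1],\; i \notin \{\nu_1-1, \nu_1, \nu_1+1\}\}.
\]
The remaining step, and the only place where any real bookkeeping is required, is to identify these roots via~\eqref{eq: gl(n-2) roots identification}. Since $\phi_T \circ w_{\nu,\beta}^{-1}$ sends each element of $[n] \setminus \{\nu_1, \nu_1+1\}$ to its rank in the increasing listing of that set, a direct index computation shows that for $i < \nu_1 - 1$ the image of $w_{\nu,\beta}^{-1}(\alpha_i)$ in $\Phi_T$ is the simple root $t_i - t_{i+1}$, while for $i > \nu_1 + 1$ it is $t_{i-2} - t_{i-1}$. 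Together these exhaust all simple roots of $\Phi_T$ except the one indexed by $\nu_1 - 1 = \mu_1$, which is precisely $J_\mu$. No real conceptual obstacle arises; the entire lemma reduces to tracking how $w_{\nu,\beta}^{-1}$ permutes indices.
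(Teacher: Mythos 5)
Your proof is correct and follows essentially the same route as the paper: the same direct computation for (1), the same order-preservation argument for (2) (which the paper delegates to its earlier Lemma~\ref{lem: wmin satisfies the Hess. condition} but which amounts to exactly your inline argument), and the same identification of $w_{\nu,\beta}^{-1}(J_\nu)\cap\Phi[T]$ with $J_\mu$. The only cosmetic difference is in the last step, where you track indices explicitly (images $t_i-t_{i+1}$ for $i<\nu_1-1$ and $t_{i-2}-t_{i-1}$ for $i>\nu_1+1$) rather than invoking, as the paper does, that the root-system isomorphism preserves adjacency of simple roots; both correctly yield $\Delta_{n-2}\setminus\{\alpha_{\mu_1}\}=J_\mu$.
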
 

\begin{proof} To prove (1), recall that $w_{\nu, \beta}^{-1}(\nu_1)=a$ and $w_{\nu, \beta}^{-1}(\nu_1+1)=b$ by definition.  If $\alpha_i=t_i-t_{i+1}$ is adjacent to $\alpha_{\nu}$ in the Dynkin diagram then
\[
	w_{\nu,\beta}^{-1}(\alpha_i) = t_{w_{\nu,\beta}^{-1}(i)} - t_{w_{\nu, \beta}^{-1}(i+1)} = \left\{  \begin{array}{ll} t_{w_{\nu,\beta}^{-1}(\nu_1-1)} - t_{a}  & \textup{ if } i=\nu_1-1\\ t_{b} - t_{w_{\nu, \beta}^{-1}(\nu_1+2)} & \textup{ if } i=\nu_1+1 \end{array} \right..
\]
In either case, $w_{\nu, \beta}^{-1}(\alpha) \notin \Phi[T]$.

Now we prove (2). If $\alpha_i=t_i-t_{i+1}$ is not adjacent to $\alpha_{\nu}$ in the Dynkin diagram, then it is certainly the case that $w_{\nu, \beta}^{-1}(\alpha_i) \in \Phi[T]$ since $\{i, i+1\} \cap \{ \nu_1, \nu_1+1 \}=\emptyset$ implies $\{ w_{\nu, \beta}^{-1}(i), w_{\nu, \beta}^{-1}(i+1) \}\cap \{a,b\}=\emptyset$.  In addition $w_{\nu, \beta}^{-1}(\alpha_i)\in \Phi_h$ since $w_{\nu,\beta} \in \mathcal{D}_{\nu}(\beta)$ by Lemma~\ref{lem: wmin satisfies the Hess. condition}.

From (1) and (2) it follows that $\gamma \in w_{\nu,\beta}^{-1}(J_{\nu}) \cap \Phi[T]$ only if $\gamma=w_{\nu,\beta}^{-1}(\alpha)$ for some $\alpha=t_i-t_{i+1}\in J_{\nu}$ such that $\alpha$ is not adjacent to $\alpha_{\nu}$ in the Dynkin diagram. In this case, $\{w_{\nu, \beta}^{-1}(i), w_{\nu, \beta}^{-1}(i+1)\} \cap \{a,b\} = \emptyset$.  Furthermore, since the entries in positions $n\setminus\{a,b\}$ of the one-line notation for $w_{\nu,\beta}$ increase from left to right it follows that $\phi_T(w_{\nu, \beta}^{-1}(i+1))=\phi_T(w_{\nu, \beta}^{-1}(i))+1$.  In other words, $\alpha_{\phi_T(i)}= t_{\phi_T(i)} - t_{\phi_T(i+1)}$ is a positive simple root in $\Phi_T$.  Therefore the set $w_{\nu, \beta}^{-1}(J)$ for $J\subseteq \Delta$ defined by
\begin{itemize}
\item $J=\{ \alpha_1, ..., \alpha_{\nu_1-2}, \alpha_{\nu_1+2},..., \alpha_{n-1} \}$ if $2<\nu_1<n-2$, or
\item $J=\{ \alpha_3,..., \alpha_{n-1} \}$ if $\nu_1=1$, or
\item $J=\{ \alpha_4, ..., \alpha_{n-1} \}$ if $\nu_1=2$, or
\item $J=\{ \alpha_1,..., \alpha_{n-4} \}$ if $\nu_1= n-2$, or
\item $J=\{ \alpha_1,..., \alpha_{n-3} \}$ if $\nu_1= n-1$
\end{itemize} 
corresponds to a subset of positive simple roots in $\Phi_T$.  Finally, since~\eqref{eq: gl(n-2) roots identification} is an isomorphism of root systems, it preserves the addition of roots.  Thus if two simple roots in $J$ are adjacent in the Dynkin diagram for $\mathfrak{gl}(n,\C)$, then their images under $w_{\nu, \beta}^{-1}$ and~\eqref{eq: gl(n-2) roots identification} are also adjacent in the Dynkin diagram for $\mathfrak{gl}(n-2,\C)$. It follows that $w_{\nu, \beta}^{-1}(J) = w_{\nu,\beta}^{-1}(J_{\nu}) \cap \Phi[T]$ is the subset $J_{\mu}$ via the identification given in~\eqref{eq: gl(n-2) roots identification}.
\end{proof} 

\begin{example} \label{ex: J-nu to J-mu example}  Using the same set-up from Examples~\ref{ex: w to tau correspondence} and \ref{ex: Phi to Phi[T] correspondence}, we have $J_{\nu} =\{ \alpha_1, \alpha_2, \alpha_4 \}$ since $\nu=(3,2)$.  We track what happens to each of these simple roots under the action of $w_{\nu,\beta}^{-1}$ (where $w_{\nu,\beta} = \left[ 1\hsm \mathbf{4} \hsm 2 \hsm 5 \hsm \mathbf{3} \right] $) and subsequent identification with simple roots in $\mathfrak{gl}(n-2,\C)$ below.
\[
\ytableausetup{centertableaux}
J_{\nu}: \;\; \begin{ytableau} \empty & \star & \empty & \empty & \empty \\ \empty & \empty & \star & \empty & \empty \\  \empty & \empty & \empty & \empty & \empty \\ \empty & \empty & \empty & \empty & \star \\  \empty & \empty & \empty & \empty & \empty    \end{ytableau}
\longrightarrow
w_{\nu,\beta}^{-1}(J_{\nu}) : \;\; \begin{ytableau} \empty & *(ltgrey)\empty & \star & \empty & *(ltgrey)\empty \\  *(ltgrey)\empty & *(ltgrey)\empty & *(ltgrey)\empty & *(ltgrey)\star & *(ltgrey)\empty \\  \empty & *(ltgrey)\empty & \empty & \empty & *(ltgrey) \star \\ \empty & *(ltgrey)\empty & \empty & \empty & *(ltgrey)\empty \\  *(ltgrey)\empty & *(ltgrey)\empty & *(ltgrey)\empty & *(ltgrey)\empty & *(ltgrey)\empty    \end{ytableau}
\longrightarrow
J_{\mu} = J_{(2,1)} : \;\; \begin{ytableau} \empty & \star & \empty  \\ \empty & \empty & \empty  \\  \empty & \empty & \empty      \end{ytableau}
\]
In addition, we consider the restriction of $\Psi_{\nu,\beta}$ to the set $\mathcal{D}_{\nu}(\beta)$.  The table from Example~\ref{ex: w to tau correspondence} becomes
\begin{eqnarray*}
\begin{array} {c|c|c}
w\in \mathcal{D}_{\nu}(\beta) & \tau \in \Stab(a,b) & x_{\tau} \in \Symm_3 \\ \hline
\left[ 1\hsm \mathbf{4} \hsm 2 \hsm 5 \hsm \mathbf{3} \right] & \left[ 1\hsm \mathbf{2} \hsm 3 \hsm 4 \hsm \mathbf{5} \right] & \left[1 \hsm 2 \hsm 3\right] \\
\left[ 2\hsm \mathbf{4} \hsm 1 \hsm 5 \hsm \mathbf{3} \right] & \left[ 3\hsm \mathbf{2} \hsm 1 \hsm 4 \hsm \mathbf{5} \right] & \left[2 \hsm 1 \hsm 3\right] \\
\left[ 1\hsm \mathbf{4} \hsm 5 \hsm 2 \hsm \mathbf{3} \right] & \left[ 1\hsm \mathbf{2} \hsm 4 \hsm 3 \hsm \mathbf{5} \right] & \left[1 \hsm 3 \hsm 2\right] \\
\left[ 5\hsm \mathbf{4} \hsm 1 \hsm 2 \hsm \mathbf{3} \right] & \left[ 4\hsm \mathbf{2} \hsm 1 \hsm 3 \hsm \mathbf{5} \right] & \left[3 \hsm 1 \hsm 2\right] \\
\left[ 5\hsm \mathbf{4} \hsm 2 \hsm 1 \hsm \mathbf{3} \right] & \left[ 4\hsm \mathbf{2} \hsm 3 \hsm 1 \hsm \mathbf{5} \right] & \left[3 \hsm 2 \hsm 1\right]
\end{array}
\end{eqnarray*}
since $w=\left( 2\hsm \mathbf{4} \hsm 5 \hsm 1 \hsm \mathbf{3} \right)$ does not satisfy the condition that $w^{-1}(J_{\nu}) \subseteq \Phi_h$.  In addition, we note that the image of $\mathcal{D}_{\nu}(\beta)$ under $\Psi_{\nu, \beta}$ consists of those $x \in \Symm_3$ such that $x^{-1}(J_{\mu}) \subseteq \Phi_{h_T}$.  Our next lemma proves that this is true in general.
\end{example}

The next lemma identifies each $\mathcal{D}_{\nu}(\beta)$ with the set of permutations satisfying the ``Hessenberg conditions'' for $\Hess(\mathsf{X}_{\mu}, h_T)$, the smaller Hessenberg variety associated to $h_T$ and $\mu \vdash (n-2)$.

\begin{lemma}\label{lem:equivalence of Hess conditions} The following are equivalent: 
\begin{enumerate} 
\item $\tau^{-1}(w_{\nu, \beta}^{-1}(J_\nu)) \subseteq \Phi_h$ and 
\item $\tau^{-1}(w_{\nu, \beta}^{-1}(J_\nu) \cap \Phi[T]) \subseteq \Phi_h[T]$. 
\end{enumerate} 
In particular, the map $\Psi_{\nu, \beta}$ defined in Lemma~\ref{claim: Stab map} restricts to a bijection
\[
	{\Psi}_{\nu, \beta}: \mathcal{D}_{\nu}(\beta) \to \{x \in \Symm_{n-2} \hsm \vert\hsm x^{-1}(J_{\mu}) \subseteq \Phi_{h_T} \}
\]
where $\beta=\beta_T$. 
\end{lemma}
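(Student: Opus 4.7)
The plan is to first establish the equivalence of (1) and (2), and then deduce the bijection claim from this equivalence together with the identifications already set up in the excerpt.

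For the equivalence, the direction (1)$\Rightarrow$(2) is immediate: intersecting the inclusion $\tau^{-1}(w_{\nu,\beta}^{-1}(J_\nu))\subseteq\Phi_h$ with $\Phi[T]$ and using that $\tau\in\Stab(a,b)$ preserves $\Phi[T]$ yields $\tau^{-1}(w_{\nu,\beta}^{-1}(J_\nu)\cap\Phi[T])\subseteq\Phi_h\cap\Phi[T]=\Phi_h[T]$. The substantive direction is (2)$\Rightarrow$(1). By Lemma~\ref{lem: wmin conjugation}, the only roots in $w_{\nu,\beta}^{-1}(J_\nu)$ that lie outside $\Phi[T]$ come from simple roots $\alpha_i\in J_\nu$ adjacent to $\alpha_\nu$ in the Dynkin diagram. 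So the task reduces to showing that for each such $\alpha_i$, one has $\tau^{-1}w_{\nu,\beta}^{-1}(\alpha_i)\in\Phi_h$. Setting $w:=w_{\nu,\beta}\tau$, this is the same as $w^{-1}(\alpha_i)\in\Phi_h$. Here the abelian hypothesis on $I_h$ enters decisively, exactly as in the proof of Lemma~\ref{lem: wmin satisfies the Hess. condition}(2): since $\tau$ stabilizes $a,b$ we still have $w^{-1}(\alpha_\nu)=\beta\in I_h$, and $\alpha_i+\alpha_\nu\in\Phi^+$. If $w^{-1}(\alpha_i)\in I_h$, then $w^{-1}(\alpha_i)+\beta=w^{-1}(\alpha_i+\alpha_\nu)\in\Phi^-$, contradicting that $I_h$ is abelian. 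Hence $w^{-1}(\alpha_i)\in\Phi_h$ as required. I expect this direction to be the main (though not overly hard) obstacle, since it is the one place where the abelian assumption is essential.

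For the bijection claim, I would start by observing that $\mathcal{D}_\nu(\beta)$ is contained in $\{w\in\Symm_n\mid w(a)=\nu_1,\,w(b)=\nu_1+1\}$, the domain of $\Psi_{\nu,\beta}$ from Lemma~\ref{claim: Stab map}, and that the condition $w^{-1}(\alpha_\nu)=\beta$ in the definition of $\mathcal{D}_\nu(\beta)$ is automatic on this domain. Thus the content is to show, for $w=w_{\nu,\beta}\tau$ and $x_\tau=\Psi_{\nu,\beta}(w)$, that
\[
w^{-1}(J_\nu)\subseteq\Phi_h \iff x_\tau^{-1}(J_\mu)\subseteq\Phi_{h_T}.
\]
By the equivalence just proved, the left side is equivalent to $\tau^{-1}(w_{\nu,\beta}^{-1}(J_\nu)\cap\Phi[T])\subseteq\Phi_h[T]$. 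Applying the root-system isomorphism~\eqref{eq: gl(n-2) roots identification} and Remark~\ref{rem: inversion set compatibility}, which guarantee the compatibility of the action of $\Stab(a,b)\cong\Symm_{n-2}$ with the identification, the last condition translates verbatim into $x_\tau^{-1}(J_\mu)\subseteq\Phi_{h_T}$, using also the final statement of Lemma~\ref{lem: wmin conjugation} that $w_{\nu,\beta}^{-1}(J_\nu)\cap\Phi[T]$ is identified with $J_\mu$ and the fact that $\Phi_h[T]$ is identified with $\Phi_{h_T}$.

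Finally, bijectivity is inherited from Lemma~\ref{claim: Stab map}: $\Psi_{\nu,\beta}$ is already a bijection on the larger set $\{w\in\Symm_n\mid w(a)=\nu_1,\,w(b)=\nu_1+1\}$, and we have just shown that it restricts to a bijection between the subsets cut out by the Hessenberg conditions for $h$ and for $h_T$ respectively. This completes the proposed argument.
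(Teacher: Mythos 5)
Your proposal is correct and follows essentially the same route as the paper: the easy direction by intersecting with $\Phi[T]$, the hard direction by splitting $J_\nu$ into roots adjacent/non-adjacent to $\alpha_\nu$ via Lemma~\ref{lem: wmin conjugation} and using the abelian hypothesis to rule out $w^{-1}(\alpha_i)\in I_h$ for the adjacent ones, and the bijection via the final claim of Lemma~\ref{lem: wmin conjugation} together with the identifications~\eqref{eq: tau to x-tau}, \eqref{eq: gl(n-2) roots identification} and Remark~\ref{rem: inversion set compatibility}. The only cosmetic difference is that you derive the contradiction directly from the definition of abelian (two elements of $I_h$ summing to a root in $\Phi^-$), whereas the paper first invokes the ideal property; both are valid.
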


\begin{proof}
The fact that (1) implies (2) is clear since $\tau$ preserves $\Phi[T]$ and $\Phi_h[T]= \Phi_h\cap \Phi[T]$. 
To prove that (2) implies (1), assume $\tau^{-1}(w_{\nu, \beta}^{-1}(J_\nu) \cap \Phi[T]) \subseteq \Phi_h[T]$ and let $\gamma\in\tau^{-1}(w_{\nu, \beta}^{-1}(J_\nu))$ so $\tau(\gamma) = w_{\nu, \beta}^{-1}(\alpha)$ for some $\alpha\in J_{\nu}$. Either $\alpha$ is adjacent to $\alpha_{\nu}$ in the Dynkin diagram or it is not.  We consider each case.

If $\alpha$ is not adjacent to $\alpha_{\nu}$ in the Dynkin diagram, then statement (2) of Lemma~\ref{lem: wmin conjugation} implies that $w_{\nu, \beta}^{-1}(\alpha) \in w_{\nu, \beta}^{-1}(J_\nu) \cap \Phi[T]$ so $\gamma= \tau^{-1}(w_{\nu, \beta}^{-1}(\alpha)) \in\tau^{-1}(w_{\nu, \beta}^{-1}(J_\nu) \cap \Phi[T]) \subseteq \Phi_h[T] \subseteq \Phi_h$.  Now assume $\alpha$ is adjacent to $\alpha_{\nu}$ in the Dynkin diagram, and for the sake of  contradiction suppose that $\gamma = \tau^{-1}(w_{\nu, \beta}^{-1}(\alpha)) \in I_h$.  Now $\alpha+\alpha_{\nu} \in \Phi$ and $(w_{\nu, \beta} \tau)^{-1}(\alpha), (w_{\nu, \beta} \tau)^{-1}(\alpha_{\nu})$ are both elements of $I_h$.  Their sum must also be an element of $I_h$, contradicting the assumption that $I_h$ is abelian.  

The last assertion of the Lemma now follows directly from the last assertion of Lemma~\ref{lem: wmin conjugation}, the identifications in~\eqref{eq: tau to x-tau}, \eqref{eq: gl(n-2) roots identification}, and Remark~\ref{rem: inversion set compatibility}.
\end{proof}

We are now ready to prove an ungraded version of Proposition~\ref{prop: graded case}, i.e., for $t=1$.

\begin{proposition} 
Under the notation and assumptions of Proposition~\ref{prop:induction step}, let $T \in SK_2(\Gamma_h)$ and let $\beta=\beta_T$ as above. Then 
\begin{equation}\label{eq: DnubetaT and dim}
\lvert \mathcal{D}_{\nu}(\beta)  \rvert = \dim H^*(\Hess(\mathsf{X}_{\mu}, h_T))
\end{equation}
where $\mu=(\mu_1, \mu_2) \vdash (n-2)$. In particular, the ungraded version of Proposition~\ref{prop: graded case} holds, i.e., 
\[
\lvert \mathcal{D}_{\nu} \rvert = \sum_{T \in SK_2(\Gamma_h)} \dim H^*(\Hess(\mathsf{X}_{\mu}, h_T)).
\]
\end{proposition}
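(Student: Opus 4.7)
The plan is to assemble the pieces that have already been established. The first equality~\eqref{eq: DnubetaT and dim} is essentially immediate from Lemma~\ref{lem:equivalence of Hess conditions} together with Lemma~\ref{lem: Betti numbers of regular Hess}: the former produces a bijection
\[
\Psi_{\nu,\beta}: \mathcal{D}_{\nu}(\beta) \longrightarrow \{x \in \Symm_{n-2} \hsm \vert \hsm x^{-1}(J_\mu) \subseteq \Phi_{h_T}\},
\]
while the latter, specialized to the composition $\mu = (\mu_1, \mu_2)$ of $n-2$, the Hessenberg function $h_T$, and evaluated at $t=1$, identifies the cardinality of the right-hand side with $\dim H^*(\Hess(\mathsf{X}_\mu, h_T))$. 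Composing these two identifications yields~\eqref{eq: DnubetaT and dim}.

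For the ``In particular'' statement, I would invoke the disjoint union decomposition recorded in~\eqref{eq: Dnu decomposition}, namely
\[
\mathcal{D}_\nu = \bigsqcup_{T \in \SK_2(\Gamma_h)} \mathcal{D}_{\nu}(\beta_T),
\]
which is valid because $I_h$ is abelian and the bijection in Proposition~\ref{lem: root subsets} (via Proposition~\ref{lemma: abelian ht 1}) identifies $\SK_2(\Gamma_h)$ with $\mathcal{R}_1(I_h) \cong I_h$. Summing~\eqref{eq: DnubetaT and dim} over $T \in \SK_2(\Gamma_h)$ and using this disjoint union then gives the second displayed equation.

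There is no genuine obstacle here — the proof is essentially a one-line assembly. All of the combinatorial and Lie-theoretic content has already been extracted in Lemmas~\ref{claim: Stab map}, \ref{lem: wmin conjugation}, and~\ref{lem:equivalence of Hess conditions}, which together carefully translate the condition $w^{-1}(J_\nu) \subseteq \Phi_h$ with $w^{-1}(\alpha_\nu) = \beta$ into the corresponding Hessenberg condition on $\Symm_{n-2}$ for $h_T$ and $\mu$. The abelian hypothesis enters only through Lemma~\ref{lem:equivalence of Hess conditions}, where it ensures that restricting to $\Phi[T]$ does not lose information about the roots adjacent to $\alpha_\nu$ in the Dynkin diagram. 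Thus my proof proposal is simply: cite Lemma~\ref{lem:equivalence of Hess conditions} to pass from $\mathcal{D}_\nu(\beta)$ to permutations in $\Symm_{n-2}$ satisfying the appropriate Hessenberg condition for $(\mu, h_T)$, cite Lemma~\ref{lem: Betti numbers of regular Hess} at $t=1$ to identify the cardinality of that set with $\dim H^*(\Hess(\mathsf{X}_\mu, h_T))$, and then sum over $T \in \SK_2(\Gamma_h)$ using~\eqref{eq: Dnu decomposition}.
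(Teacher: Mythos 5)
Your proposal is correct and matches the paper's proof essentially verbatim: the paper also combines the bijection of Lemma~\ref{lem:equivalence of Hess conditions} with Lemma~\ref{lem: Betti numbers of regular Hess} evaluated at $t=1$, and then sums over $T$ using the decomposition~\eqref{eq: Dnu decomposition}. No gaps.
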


\begin{proof} 
In the setting of the proposition, $\mu=(\mu_1,\mu_2)$ is a partition of $n-2$. Recall from Lemma~\ref{lem: Betti numbers of regular Hess} that 
\begin{eqnarray} \label{eq: Hess dimn}
\dim H^*(\Hess(\mathsf{X}_{\mu}, h_T)) = \lvert \{ x \in \Symm_{n-2} \hsm \vert \hsm x^{-1}(J_\mu) \subseteq \Phi_{h_T} \} \rvert. 
\end{eqnarray}
Thus to prove~\eqref{eq: DnubetaT and dim} it suffices to show that 
\begin{equation}\label{eq: DnubetaT and tau} 
\lvert \mathcal{D}_{\nu}(\beta_T) \rvert = \lvert \{ x \in \Symm_{n-2} \hsm \vert \hsm x^{-1}(J_\mu) \subseteq \Phi_{h_T} \} \rvert
\end{equation}
but Lemma~\ref{lem:equivalence of Hess conditions} establishes a bijective correspondence between the two sets in question, so~\eqref{eq: DnubetaT and tau} holds. Now the decomposition 
\[
\mathcal{D}_\nu = \bigsqcup_{T \in SK_2(\Gamma_h)} \mathcal{D}_{\nu}(\beta_T)
\]
from~\eqref{eq: Dnu decomposition} immediately yields 
\[
\lvert \mathcal{D}_{\nu} \rvert = \sum_{T \in SK_2(\Gamma_h)} \lvert \mathcal{D}_{\nu}(\beta_T) \rvert = \sum_{T \in SK_2(\Gamma_h)} \dim H^*(\Hess(\mathsf{X}_{\mu}, h_T))
\]
where the second equality uses~\eqref{eq: Hess dimn} and~\eqref{eq: DnubetaT and tau} above. This concludes the proof. 
\end{proof}

We now turn our attention to the graded case, namely Proposition \ref{prop: graded case}.  The difficulty is that the inversion sets $N^-(w)\cap \Phi_h^- \cong \inv_h(w)$ for $w\in \mathcal{D}_{\nu}(\beta)$ are not related to the inversions sets $N^-(\tau)\cap \Phi_h^-[T] \cong \inv_{h_T}(x_{\tau})$ by a simple formula.  To remedy this, we need to shift each $w\in \mathcal{D}_{\nu}(\beta)$ by an appropriate translation, namely $w\mapsto \sigma_{\nu}w$ where $\sigma_{\nu}$ is the permutation defined below.

\begin{definition} \label{def: sigma-nu}
Define $\sigma_{\nu} \in \Symm_n$ by the following conditions: 
\begin{enumerate}
\item $\sigma_{\nu}(\nu_1)=1$ and $\sigma_{\nu}(\nu_1+1)=2$, and
\item the remaining entries in the one-line notation of $\sigma_{\nu}$ list the integers $[n]\setminus \{1,2\}$ in increasing order from left to right, i.e.
\[
	\sigma_{\nu}(i) = \left\{  \begin{array}{ll} i+2 & \textup{if } i<\nu_1\\
							i & \textup{if } i>\nu_1+1 \end{array}\right. . 
\]
\end{enumerate}
\end{definition}

Note that $\sigma_{\nu}$ is uniquely determined by the value of $\nu_1$.

\begin{example}\label{ex: sigma-nu translation}
Using the same set-up as in Examples~\ref{ex: w to tau correspondence} and \ref{ex: Phi to Phi[T] correspondence}, recall that $n=5$, $\nu=(3,2)$, and $\beta=t_5-t_2$.  Since $\nu_1=3$ and $\nu_1+1=4$ we get
\[
\sigma_{\nu} = \left[ 3 \hsm 4 \hsm \mathbf{1} \hsm \mathbf{2} \hsm 5  \right]
\]	
where condition (1) in Definition~\ref{def: sigma-nu} determines the entries in bold and condition (2) determines the rest.  Consider the translation $w\mapsto \sigma_{\nu}w$ for each $w\in \mathcal{D}_{\nu}(\beta)$, displayed in the table below.
\begin{eqnarray*}
\begin{array} {c|c}
w\in \mathcal{D}_{\nu}(\beta) & \sigma_{\nu}w \in \Symm_5 \\ \hline
\left[ 1\hsm \mathbf{4} \hsm 2 \hsm 5 \hsm \mathbf{3} \right] & \left[3\hsm \mathbf{2} \hsm 4 \hsm 5 \hsm \mathbf{1}\right] \\
\left[ 2\hsm \mathbf{4} \hsm 1 \hsm 5 \hsm \mathbf{3} \right] & \left[4\hsm \mathbf{2} \hsm 3 \hsm 5 \hsm \mathbf{1}\right] \\
\left[ 1\hsm \mathbf{4} \hsm 5 \hsm 2 \hsm \mathbf{3} \right] &\left[3\hsm \mathbf{2} \hsm 5 \hsm 4 \hsm \mathbf{1}\right] \\
\left[ 5\hsm \mathbf{4} \hsm 1 \hsm 2 \hsm \mathbf{3} \right] & \left[5\hsm \mathbf{2} \hsm 3 \hsm 4 \hsm \mathbf{1}\right]\\
\left[ 5\hsm \mathbf{4} \hsm 2 \hsm 1 \hsm \mathbf{3} \right] &\left[5\hsm \mathbf{2} \hsm 4 \hsm 3 \hsm \mathbf{1}\right]
\end{array}
\end{eqnarray*}
Note that translation by $\sigma_{\nu}$ sends $4\mapsto 2$ and $3\mapsto 1$ in the one-line notation for $w$, but the rest of the entries of $w$ remain in the same relative order in the one-line notation for $\sigma_{\nu}w$ as they were in the one-line notation of $w$.
\end{example}

The next lemma shows that translating the set $\mathcal{D}_{\nu}(\beta)$ by $\sigma_{\nu}$ does not change the inversions of $w$ that are also elements of $\Phi^-[T]$.

\begin{lemma} \label{lem: sigma-nu translation}
Suppose $\sigma_{\nu}$ is defined as above.  For all $w\in \mathcal{D}_{\nu}(\beta)$ we have 
\[
N^-(\sigma_\nu w) \cap \Phi^-[T] = N^-(w) \cap \Phi^-[T].
\]
\end{lemma}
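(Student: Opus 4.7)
The plan is to identify $N^-(w)$ with the set $\inv(w)$ of positional inversions and then reduce the claim to the purely combinatorial statement that $\sigma_\nu$ is order-preserving on the complement of $\{\nu_1,\nu_1+1\}$ in $[n]$. Concretely, for a root $\gamma = t_i - t_j \in \Phi^-[T]$ we have $i > j$ and $i,j \notin T = \{a,b\}$, and $\gamma \in N^-(w)$ is equivalent to $w(i) < w(j)$. Thus the lemma will follow if I can show
\[
w(i) < w(j) \iff \sigma_\nu w(i) < \sigma_\nu w(j) \quad\text{for all } i,j \in [n]\setminus\{a,b\}.
\]

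First I would use the defining conditions $w(a) = \nu_1$ and $w(b) = \nu_1+1$ of $w \in \mathcal{D}_\nu(\beta)$ to observe that for any $i \in [n]\setminus\{a,b\}$ the value $w(i)$ lies in $[n]\setminus\{\nu_1,\nu_1+1\}$. So the displayed equivalence reduces to showing that the restriction of $\sigma_\nu$ to $[n]\setminus\{\nu_1,\nu_1+1\}$ is strictly increasing in the usual order on $\Z$.

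Next, I would verify this directly from Definition~\ref{def: sigma-nu} by a short three-case analysis on $k,\ell \in [n]\setminus\{\nu_1,\nu_1+1\}$ with $k < \ell$: if both $k,\ell < \nu_1$, then $\sigma_\nu(k) = k+2 < \ell+2 = \sigma_\nu(\ell)$; if both $k,\ell > \nu_1+1$, then $\sigma_\nu(k)=k<\ell=\sigma_\nu(\ell)$; and if $k < \nu_1 < \nu_1+1 < \ell$, then $\sigma_\nu(k) = k+2 \leq \nu_1+1 < \ell = \sigma_\nu(\ell)$. This shows $\sigma_\nu$ is order-preserving on $[n]\setminus\{\nu_1,\nu_1+1\}$, which combined with the previous reduction proves the lemma.

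I do not anticipate any serious obstacle here; the statement is essentially a bookkeeping fact about the explicit permutation $\sigma_\nu$, and the only subtlety is being careful that the hypothesis $w \in \mathcal{D}_\nu(\beta)$ (rather than a general element of $\Symm_n$) is what guarantees $w$ sends $[n]\setminus\{a,b\}$ into $[n]\setminus\{\nu_1,\nu_1+1\}$, which is precisely where $\sigma_\nu$ behaves monotonically.
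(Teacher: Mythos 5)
Your proposal is correct and follows essentially the same route as the paper: both reduce the claim to the observation that, because $w(a)=\nu_1$ and $w(b)=\nu_1+1$, the values $w(i)$ for $i\notin\{a,b\}$ lie in $[n]\setminus\{\nu_1,\nu_1+1\}$, where $\sigma_\nu$ preserves relative order. Your explicit three-case check of monotonicity simply fills in a detail the paper asserts without computation.
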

\begin{proof}
Recall that $\Phi^-[T] = \{ t_i - t_j\in \Phi^- \hsm \vert \hsm \{i,j\}\cap \{a,b\} = \emptyset  \}$. By definition, $w(a)=\nu_1$ and $w(b) = \nu_1+1$ so $\sigma_{\nu}w(a) = 1$ and $\sigma_{\nu}w(b)=2$.  Thus the $a$-th and $b$-th entry of $\sigma_{\nu}w$ in one-line notation is determined.  By Condition (2) in Definition~\ref{def: sigma-nu}, $\sigma_{\nu}$ preserves the relative order of the values in the one-line notation of $w$ which are not in positions $a$ or $b$.  It follows that for $(i,j)$ with $\{i,j\} \cap \{a,b\} = \emptyset$ and $i>j$, we have $(i,j) \in \inv(w)$ if and only if $(i,j) \in \inv(\sigma_\nu w)$.
\end{proof}

The next lemma relates the grading computation for $\sigma_\nu w$ to the grading computation for $\tau$, up to a translation by $\deg(T)$. This explains why it is useful to introduce the translation by $\sigma_{\nu}$. One of the key points in the proof is that the LHS of~\eqref{eq:shift by degT} can be related to the edges of $\Gamma_h$ which contribute to the computation of $\deg(T)$. 

\begin{lemma} \label{lem: degree shifts}  
Let $w\in \mathcal{D}_{\nu}(\beta)$ for some $\beta= \beta_T\in I_h$ corresponding to $T\in \SK_2(\Gamma_h)$.  Let $w=w_{\nu, \beta}\tau$ be the decomposition of $w$ given in Lemma~\ref{claim: Stab map} for a unique $\tau\in \Stab(a,b)$. Then
\begin{equation}\label{eq:shift by degT} 
	|N^-(\sigma_{\nu}w)\cap \Phi_h^-| = \deg(T) + |N^-(\tau)\cap \Phi_h^-[T]|.
\end{equation} 
\end{lemma}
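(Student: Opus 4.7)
The plan is to split $\Phi_h^-$ according to whether its roots are incident to $T=\{a,b\}$ or not. That is, I would write
\[
\Phi_h^- = \Phi_h^-[T] \,\sqcup\, \bigl(\Phi_h^- \setminus \Phi_h^-[T]\bigr)
\]
where the second piece consists of roots $t_i - t_j$ with $\{i,j\} \cap \{a,b\} \neq \emptyset$, intersect both sides with $N^-(\sigma_\nu w)$, and show that the two resulting summands contribute $\deg(T)$ and $|N^-(\tau) \cap \Phi_h^-[T]|$ respectively.

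For the piece not in $\Phi_h^-[T]$ (the ``incident'' part), the key observation is that by $w \in \mathcal{D}_\nu(\beta)$ we have $w(a)=\nu_1$ and $w(b)=\nu_1+1$, so by Definition~\ref{def: sigma-nu} we get $\sigma_\nu w(a) = 1$ and $\sigma_\nu w(b) = 2$, while $\sigma_\nu w(\ell) \in \{3, \ldots, n\}$ for every $\ell \notin \{a,b\}$. Therefore $\sigma_\nu w(a) < \sigma_\nu w(b) < \sigma_\nu w(\ell)$ for every $\ell \notin \{a,b\}$. A direct case check then shows that the negative roots incident to $T$ which $\sigma_\nu w$ sends to positive roots are precisely those of the form $t_a - t_\ell$ with $\ell < a$, $\ell \neq b$, and $t_b - t_\ell$ with $\ell < b$, together possibly with $t_a - t_b = \beta$. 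However, $\beta \in I_h$ by hypothesis, so $\beta \not\in \Phi_h^-$ and does not contribute. Intersecting the remaining list with $\Phi_h^-$ is the same, via Definition~\ref{definition:incomparability graph}, as counting the edges of $\Gamma_h$ of the form $\{\ell, \ell'\}$ with $\ell' \in T$ and $\ell < \ell'$. By the characterization of $\deg(T)$ in~\eqref{eq: characterize deg T} (Lemma~\ref{lem: deg(T) property1}), this count is exactly $\deg(T)$.

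For the piece in $\Phi_h^-[T]$, I would first apply Lemma~\ref{lem: sigma-nu translation} to replace $\sigma_\nu w$ with $w$, so it suffices to prove $|N^-(w) \cap \Phi_h^-[T]| = |N^-(\tau) \cap \Phi_h^-[T]|$. Since $w = w_{\nu, \beta}\tau$ and $\tau \in \Stab(a,b)$, for any pair $(i,j)$ with $\{i,j\} \cap \{a,b\} = \emptyset$ we also have $\{\tau(i), \tau(j)\} \cap \{a,b\} = \emptyset$. By condition (2) of Definition~\ref{def: w-nu-beta}, $w_{\nu,\beta}$ restricts to an order-preserving bijection on $[n] \setminus \{a,b\}$, so $w_{\nu,\beta}(\tau(i)) < w_{\nu,\beta}(\tau(j))$ if and only if $\tau(i) < \tau(j)$. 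Hence $(i,j) \in \inv(w)$ if and only if $(i,j) \in \inv(\tau)$ for such pairs, giving $N^-(w) \cap \Phi^-[T] = N^-(\tau) \cap \Phi^-[T]$, and then intersecting with $\Phi_h^-$ gives the required identity.

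Combining these two counts yields~\eqref{eq:shift by degT}. The main obstacle — really just careful bookkeeping — is the incident-edges case, where one must verify that the root $\beta = t_a - t_b$, which is inverted by $\sigma_\nu w$, is correctly excluded because $\beta \in I_h \setminus \Phi_h^-$; and one must track signs/directions to ensure the remaining inversions match exactly the edge-count appearing in the formula for $\deg(T)$.
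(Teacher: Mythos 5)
Your proposal is correct and follows essentially the same route as the paper's proof: the same splitting of $\Phi_h^-$ into the part incident to $T$ and the part $\Phi_h^-[T]$, the same identification of the incident inversions of $\sigma_\nu w$ with the edge count defining $\deg(T)$ via Lemma~\ref{lem: deg(T) property1}, and the same reduction of the second summand to $N^-(\tau)\cap\Phi_h^-[T]$ via Lemma~\ref{lem: sigma-nu translation} and the order-preserving behaviour of $w_{\nu,\beta}$ on $[n]\setminus\{a,b\}$. The only cosmetic difference is that the paper handles one direction of the equality $N^-(w)\cap\Phi^-[T]=N^-(\tau)\cap\Phi^-[T]$ by contradiction through Lemma~\ref{lem: wmin satisfies the Hess. condition}(1), whereas you argue both directions directly from condition (2) of Definition~\ref{def: w-nu-beta}; both are valid.
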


\begin{proof}  Since
$\Phi^- = (\Phi^- \setminus \Phi^-[T]) \sqcup \Phi^-[T]$ we also have $\Phi_h^- = (\Phi_h^- \cap (\Phi^- \setminus \Phi^-[T])) \sqcup (\Phi_h^- \cap \Phi^-[T])$.  Since $\Phi_h^- \cap \Phi^-[T]$ is the set $\Phi_h^-[T]$ by definition, we conclude
\[
	|N^-(\sigma_{\nu}w)\cap \Phi_h^-| = |N^-(\sigma_{\nu}w) \cap \Phi_h^-\cap(\Phi^- \setminus \Phi^-[T])| + |N^-(\sigma_{\nu}w)\cap \Phi_h^-[T]|. 
\] 
Hence to prove~\eqref{eq:shift by degT}
it suffices to prove that 
\begin{eqnarray}\label{eq: eq1}
|N^-(\sigma_{\nu}w) \cap \Phi_h^-\cap(\Phi^- \setminus \Phi^-[T])| = \deg(T) 
\end{eqnarray}
and 
\begin{eqnarray}\label{eq: eq2}
N^-(\sigma_{\nu}w)\cap \Phi_h^-[T] = N^-(\tau)\cap \Phi_h^-[T].
\end{eqnarray}

We first prove~\eqref{eq: eq1}.  By definition, $\Phi^- \setminus \Phi^-[T] = \{ t_i - t_j \in \Phi^- \hsm \vert \hsm  \{i,j\}\cap \{a,b\}\neq \emptyset \}$.  Since $w \in \mathcal{D}_{\nu}(\beta)$, we know $w(a)=\nu_1$ and $w(b) = \nu_1+1$ by~\eqref{eq:wa and wb}, and by construction of $\sigma_\nu$ this implies $\sigma_{\nu} w (a)= 1$ and $\sigma_{\nu} w(b) = 2$. It follows that $1$ is in the $a$-th position of the one-line notation for $\sigma_{\nu} w$ and $2$ is in the $b$-th position.  Using the identification $N^-(\sigma_\nu w) \cong \mathrm{inv}(\sigma_\nu w)$, we obtain  
\begin{eqnarray*}
	N^-(\sigma_{\nu}w)  \cap (\Phi^- \setminus \Phi^-[T]) = \{ (b,j) \hsm \vert \hsm 1\leq j < b  \}  \cup \{ (a,j) \hsm \vert \hsm 1\leq j < a  \}
\end{eqnarray*}
and therefore 
\begin{eqnarray*}
	N^-(\sigma_{\nu}w)\cap \Phi_h^- \cap (\Phi^- \setminus \Phi^-[T]) &=& \{ (b,j) \hsm \vert \hsm 1\leq j < b \textup{ and } b \leq h(j)  \}  \cup \\
	&&\quad\quad\quad\quad \{ (a,j) \hsm \vert \hsm 1\leq j < a \textup{ and } a\leq h(j) \}.
\end{eqnarray*}
Since $T=\{a,b\}$, the elements in the sets above correspond to edges of $\Gamma_h$ that are incident to the vertices in $T$ and which must be oriented to the right in order for $a$ and $b$ to be sinks.  Thus, \eqref{eq: eq1} now follows immediately from Lemma~\ref{lem: deg(T) property1}.

Next, in order to prove~\eqref{eq: eq2} we note that $N^-(\sigma_\nu w) \cap \Phi^-[T] = N^-(w) \cap \Phi^-[T]$ by Lemma~\ref{lem: sigma-nu translation}. Intersecting both sides with $\Phi_h^-$ we obtain 
\begin{equation}\label{eq:intersect with PhihT}
N^-(\sigma_\nu w) \cap \Phi_h^-[T] = N^-(w) \cap \Phi_h^-[T].
\end{equation}
 Next we claim 
\begin{equation}\label{eq:w and tau}
N^-(w) \cap \Phi_h^-[T] = N^-(\tau) \cap \Phi_h^-[T].
\end{equation}
As in the argument above, to see this it suffices to prove $N^-(w) \cap \Phi^-[T] = N^-(\tau) \cap \Phi^-[T]$, since~\eqref{eq:w and tau} follows by intersecting both sides with $\Phi_h^-$. 

Suppose $t_i-t_j \in N^-(w) \cap \Phi^-[T]$. We wish to show $t_i-t_j \in N^-(\tau) \cap \Phi^-[T]$. By assumption we know $i>j$ and $\{i,j\} \cap \{a,b\} = \emptyset$ and $w_{\nu,\beta} \tau(i)=w(i) < w(j) =w_{\nu,\beta} \tau(j)$. Suppose in order to obtain a contradiction that $\tau(i) > \tau(j)$. Then $(\tau(i), \tau(j)) \in \inv(w_{\nu,\beta})$ by the above. Moreover, since $\tau \in \Stab(a,b)$ and $\{i,j\} \cap \{a,b\} = \emptyset$, we have $\{\tau(i), \tau(j)\} \cap \{a,b\} = \emptyset$ also. This contradicts part (1) of Lemma~\ref{lem: wmin satisfies the Hess. condition}. Thus $\tau(i) < \tau(j)$, or equivalently $t_i-t_j \in N^-(\tau) \cap \Phi^-[T]$ as desired. Conversely, suppose $t_i-t_j \in N^-(\tau) \cap \Phi^-[T]$. 
 Then $\tau(i)<\tau(j)$ and $\{\tau(i), \tau(j)\} \cap \{(a,b)\}= \emptyset$ and $w_{\nu, \beta}\tau(i)<w_{\nu,\beta}\tau(j)$ by condition (2) of Definition~\ref{def: w-nu-beta}. Hence $t_i-t_j\in N^-(w)\cap \Phi[T]$.  From the above it follows that $N^-(w)\cap \Phi[T] = N^-(\tau)\cap \Phi[T] $ and we obtain ~\eqref{eq:w and tau}. Now~\eqref{eq: eq2} follows from~\eqref{eq:w and tau} and~\eqref{eq:intersect with PhihT}.
\end{proof}

\begin{example}  
We confirm the results of Lemmas~\ref{lem: sigma-nu translation} and~\ref{lem: degree shifts} for $n=5$, $\nu=(3,2)$, and $\beta= t_5-t_2$.  This is the case considered in Example~\ref{ex: sigma-nu translation}, where $h=(3,4,5,5,5)$.  The table below displays each $w\in \mathcal{D}_{\nu}(\beta)$ and computes $N^-(w)\cap \Phi_h^-[T]$ and $N^-(w)\cap \Phi_h^- \cap (\Phi^- \setminus \Phi^-[T])$.  Here we use the identification $\inv(w)\cong N^-(w)$.
\begin{eqnarray*}
\begin{array} {c|c|c}
w\in \mathcal{D}_{\nu}(\beta) & N^-(w)\cap \Phi^-[T] & N^-(w)\cap \Phi_h^- \cap (\Phi^- \setminus \Phi^-[T]) \\ \hline
\left[ 1\hsm \mathbf{4} \hsm 2 \hsm 5 \hsm \mathbf{3} \right] & \emptyset & \{(3,2), (5,4)\} \\
\left[ 2\hsm \mathbf{4} \hsm 1 \hsm 5 \hsm \mathbf{3} \right] & \{(3,1)\} & \{ (3,2), (5,4) \}  \\
\left[ 1\hsm \mathbf{4} \hsm 5 \hsm 2 \hsm \mathbf{3} \right] & \{ (4,3) \} & \{ (4,2), (5,3) \} \\
\left[ 5\hsm \mathbf{4} \hsm 1 \hsm 2 \hsm \mathbf{3} \right] & \{ (3,1),(4,1) \} & \{ (2,1), (3,2), (4,2) \} \\
\left[ 5\hsm \mathbf{4} \hsm 2 \hsm 1 \hsm \mathbf{3} \right] & \{ (3,1),(4,1),(4,3)\} & \{ (2,1), (3,2),(4,2)\}
\end{array}
\end{eqnarray*}
Now we do the same computation for $\sigma_{\nu}w$.
\begin{eqnarray*}
\begin{array} {c|c|c|c}
w\in \mathcal{D}_{\nu}(\beta) & \sigma_{\nu}w & N^-(\sigma_{\nu}w)\cap \Phi^-[T] &N^-(\sigma_{\nu}w)\cap \Phi_h^- \cap (\Phi^- \setminus \Phi^-[T])   \\ \hline
\left[ 1\hsm \mathbf{4} \hsm 2 \hsm 5 \hsm \mathbf{3} \right] & \left[3\hsm \mathbf{2} \hsm 4 \hsm 5 \hsm \mathbf{1}\right]& \emptyset & \{ (2,1), (5,3), (5,4) \} \\
\left[ 2\hsm \mathbf{4} \hsm 1 \hsm 5 \hsm \mathbf{3} \right] & \left[4\hsm \mathbf{2} \hsm 3 \hsm 5 \hsm \mathbf{1}\right] & \{ (3,1) \} & \{ (2,1), (5,3), (5,4) \} \\
\left[ 1\hsm \mathbf{4} \hsm 5 \hsm 2 \hsm \mathbf{3} \right] & \left[3\hsm \mathbf{2} \hsm 5 \hsm 4 \hsm \mathbf{1}\right] & \{ (4,3) \} &  \{ (2,1), (5,3), (5,4) \} \\
\left[ 5\hsm \mathbf{4} \hsm 1 \hsm 2 \hsm \mathbf{3} \right] & \left[5\hsm \mathbf{2} \hsm 3 \hsm 4 \hsm \mathbf{1}\right] & \{ (3,1),(4,1) \} &  \{ (2,1), (5,3), (5,4) \}\\
\left[ 5\hsm \mathbf{4} \hsm 2 \hsm 1 \hsm \mathbf{3} \right] & \left[3\hsm \mathbf{2} \hsm 4 \hsm 3 \hsm \mathbf{1}\right] & \{ (3,1), (4,1), (4,3) \} &  \{ (2,1), (5,3), (5,4) \}
\end{array}
\end{eqnarray*}
The information in the tables above confirms the results of Lemma~\ref{lem: sigma-nu translation}. The graph below shows the orientation $\omega$ of $\Gamma_h$ with the property that $\sk(\omega)=\{2,5\}$ and $\asc(\omega)=\deg(T)$.
\vspace*{.15in}
\[
\xymatrix{1 \ar[r]   & {2}  & 3  \ar[l] \ar@/_1.5pc/[ll]  \ar@/^1.5pc/[rr] &  4  \ar[l] \ar[r] \ar@/_1.5pc/[ll]   & {5} }
\]
From the graph, we see $\deg(T)=3$, so the table above also confirms $|N^-(\sigma_{\nu}w)\cap \Phi_h^- \cap (\Phi^- \setminus \Phi^-[T])| = \deg(T)$ for all $w\in \mathcal{D}_{\nu}(\beta)$.  Finally, we consider each $\tau\in \Stab(2,5)$ such that $w=w_{\nu,\beta}\tau$  and compute both $N^-(\tau)\cap \Phi_h^-[T]$ and $N^-(\sigma_{\nu} w)\cap \Phi_{h}^-[T]$.
\begin{eqnarray*}
\begin{array} {c|c|c|c}
w\in \mathcal{D}_{\nu}(\beta) & \tau \in \Stab(2,5) & N^-(\tau)\cap \Phi_h^-[T] & N^-(\sigma_{\nu}w)\cap \Phi_h^-[T] \\ \hline
\left[ 1\hsm \mathbf{4} \hsm 2 \hsm 5 \hsm \mathbf{3} \right] &\left[ 1\hsm \mathbf{2} \hsm 3 \hsm 4 \hsm \mathbf{5} \right] & \emptyset & \emptyset \\
\left[ 2\hsm \mathbf{4} \hsm 1 \hsm 5 \hsm \mathbf{3} \right] & \left[ 3\hsm \mathbf{2} \hsm 1 \hsm 4 \hsm \mathbf{5} \right] & \{ (3,1) \} &  \{ (3,1) \} \\
\left[ 1\hsm \mathbf{4} \hsm 5 \hsm 2 \hsm \mathbf{3} \right] & \left[ 1\hsm \mathbf{2} \hsm 4 \hsm 3 \hsm \mathbf{5} \right] & \{ (4,3) \}  & \{ (4,3) \} \\
\left[ 5\hsm \mathbf{4} \hsm 1 \hsm 2 \hsm \mathbf{3} \right] &  \left[ 4\hsm \mathbf{2} \hsm 1 \hsm 3 \hsm \mathbf{5} \right] & \{ (3,1) \}& \{ (3,1) \} \\
\left[ 5\hsm \mathbf{4} \hsm 2 \hsm 1 \hsm \mathbf{3} \right] &  \left[ 4\hsm \mathbf{2} \hsm 3 \hsm 1 \hsm \mathbf{5} \right]  & \{ (3,1), (4,3) \} & \{ (3,1), (4,3) \}
\end{array}
\end{eqnarray*}
This table confirms that $N^-(w)\cap \Phi_h^-[T] = N^-(\sigma_{\nu} w)\cap \Phi_h^-[T] = N^-(\tau)\cap \Phi_h^-[T]$ as shown in the proof of Lemma~\ref{lem: degree shifts}.
\end{example}

\begin{corollary}\label{cor: sigma shifts}
Let $\nu=(\nu_1, \nu_2) = (\mu_1+1, \mu_2+1)\vdash n$ and $\sigma_{\nu}$ be defined as above.  Then
\[
	\sum_{\sigma_{\nu} w \,:\, w\in \mathcal{D}_{\nu}} t^{2|N^-(\sigma_{\nu} w)\cap \Phi_h^-|} = \sum_{T\in \SK_2(\Gamma_h)} t^{2\deg(T)}\, P(\Hess(\mathsf{X}_{\mu},h_T), t).
\]
\end{corollary}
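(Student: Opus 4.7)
The plan is to combine the decomposition $\mathcal{D}_\nu = \bigsqcup_{T \in \SK_2(\Gamma_h)} \mathcal{D}_\nu(\beta_T)$ from~\eqref{eq: Dnu decomposition} with the key shift formula of Lemma~\ref{lem: degree shifts} and then transport everything to $\Symm_{n-2}$ using the bijection from Lemma~\ref{lem:equivalence of Hess conditions}. In other words, the work has already been done in the preceding lemmas: this corollary is essentially a bookkeeping assembly.

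First, since $\sigma_\nu$ is a fixed permutation, the map $w \mapsto \sigma_\nu w$ is a bijection on $\Symm_n$, so the LHS equals
\[
\sum_{w \in \mathcal{D}_\nu} t^{2|N^-(\sigma_\nu w) \cap \Phi_h^-|} = \sum_{T \in \SK_2(\Gamma_h)} \sum_{w \in \mathcal{D}_\nu(\beta_T)} t^{2|N^-(\sigma_\nu w) \cap \Phi_h^-|}
\]
by applying~\eqref{eq: Dnu decomposition}. For each fixed $T \in \SK_2(\Gamma_h)$, write $\beta = \beta_T = t_a - t_b$ and apply Lemma~\ref{claim: Stab map} to decompose each $w \in \mathcal{D}_\nu(\beta)$ uniquely as $w = w_{\nu,\beta} \tau$ with $\tau \in \Stab(a,b)$. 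Lemma~\ref{lem: degree shifts} then gives
\[
|N^-(\sigma_\nu w) \cap \Phi_h^-| = \deg(T) + |N^-(\tau) \cap \Phi_h^-[T]|,
\]
so the inner sum becomes
\[
t^{2\deg(T)} \sum_{w \in \mathcal{D}_\nu(\beta_T)} t^{2|N^-(\tau) \cap \Phi_h^-[T]|}.
\]

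Next I would reindex this sum via the bijection $\Psi_{\nu,\beta_T}: \mathcal{D}_\nu(\beta_T) \to \{x \in \Symm_{n-2} \mid x^{-1}(J_\mu) \subseteq \Phi_{h_T}\}$ supplied by Lemma~\ref{lem:equivalence of Hess conditions}, which sends $w = w_{\nu,\beta_T} \tau$ to $x_\tau$. Under the root system isomorphism $\Phi[T] \cong \Phi_T$ of~\eqref{eq: gl(n-2) roots identification}, Remark~\ref{rem: inversion set compatibility} tells us that $N^-(\tau)$ maps to $N^-(x_\tau)$, and this identification carries $\Phi_h^-[T]$ to $\Phi_{h_T}^-$. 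Consequently $|N^-(\tau) \cap \Phi_h^-[T]| = |N^-(x_\tau) \cap \Phi_{h_T}^-|$, and the inner sum equals
\[
t^{2\deg(T)} \sum_{\substack{x \in \Symm_{n-2} \\ x^{-1}(J_\mu) \subseteq \Phi_{h_T}}} t^{2|N^-(x) \cap \Phi_{h_T}^-|} = t^{2\deg(T)} P(\Hess(\mathsf{X}_\mu, h_T), t),
\]
where the last equality is exactly Lemma~\ref{lem: Betti numbers of regular Hess} applied to the two-part composition $\mu = (\mu_1, \mu_2)$ and the Hessenberg function $h_T$ on $[n-2]$. Summing over $T \in \SK_2(\Gamma_h)$ yields the desired identity.

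There is no real obstacle left: all technical content (the restriction of $\Psi_{\nu,\beta}$ being a bijection onto the correct set, the compatibility of inversion sets under the identification $\Stab(a,b) \cong \Symm_{n-2}$, and the degree-shift formula) has already been established. The only points that warrant a moment's care in writing are the bijectivity of $w \mapsto \sigma_\nu w$ on $\mathcal{D}_\nu$ (so that no terms are double counted on the LHS) and checking that $\mu = (\mu_1, \mu_2)$ is indeed an admissible composition for Lemma~\ref{lem: Betti numbers of regular Hess}, which it is since $\mu_1 + \mu_2 = n - 2$ with $\mu_1, \mu_2 \geq 0$.
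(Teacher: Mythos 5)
Your proof is correct and follows essentially the same route as the paper's: decompose $\mathcal{D}_\nu$ by sink sets via~\eqref{eq: Dnu decomposition}, apply the degree-shift formula of Lemma~\ref{lem: degree shifts}, reindex through the bijection of Lemma~\ref{lem:equivalence of Hess conditions} together with Remark~\ref{rem: inversion set compatibility}, and finish with Lemma~\ref{lem: Betti numbers of regular Hess}. No gaps.
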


\begin{proof}  Under the identifications in~\eqref{eq: tau to x-tau} and \eqref{eq: gl(n-2) roots identification}, $\tau$ becomes $x_{\tau}$ and $\Phi_h^-[T] $ becomes $\Phi_{h_T}^-$.  By Remark~\ref{rem: inversion set compatibility} we have that $|N^-(\tau)\cap \Phi_h^-[T]| = |N^-(x_{\tau}) \cap \Phi_{h_T}^-|$.  
Therefore
\begin{eqnarray*}
	\sum_{\sigma_{\nu} w \,:\, w\in \mathcal{D}_{\nu}} t^{2|N^-(\sigma_{\nu} w)\cap \Phi_h^-|} &=& \sum_{T\in \SK_2(\Gamma_h)} \; \sum_{\sigma_{\nu}w \,:\,w\in \mathcal{D}_{\nu}(\beta_T)} t^{2|N^-(\sigma_{\nu} w)\cap \Phi_h^-|}\;\;  \textup{ since } \mathcal{D}_{\nu} = \bigsqcup_{T \in SK_2(\Gamma_h)} \mathcal{D}_{\nu}(\beta_T) \\
	& = &  \sum_{T \in SK_2(\Gamma_h)} \;\sum_{\substack{\sigma_\nu w_{\nu,\beta} \tau\, : \,\tau \in \Stab(a,b), \\  x_{\tau}^{-1}(J_\mu) \subseteq \Phi_{h_T}}} t^{2 \lvert N^-(\sigma_\nu w) \cap \Phi_h^- \rvert} \;\;  \textup{ by Lemma~\ref{lem:equivalence of Hess conditions} } \\ 
&=& \sum_{T\in \SK_2(\Gamma_h)} t^{2\deg(T)} \sum_{\substack{x_{\tau} \in \Symm_{n-2} \\ x_{\tau}^{-1}(J_{\mu}) \subseteq \Phi_{h_T}}} t^{2|N^-(x_{\tau}) \cap \Phi_{h_T}^- |} \;\; \textup{ by Lemma~\ref{lem: degree shifts} } \\
&=&  \sum_{T\in \SK_2(\Gamma_h)} t^{2\deg(T)} P(\Hess(\mathsf{X}_{\mu}, h_T), t) \;\; \textup{ by Lemma~\ref{lem: Betti numbers of regular Hess} } \\
\end{eqnarray*}
as desired. 
\end{proof}

We now consider the special case $\nu=(n-1, 1)$, which turns out to be critical.  Note that $\mathsf{X}_{(1,n-1)}$ and $\mathsf{X}_{\nu}$ are conjugate.  The basic trick in our proof below is to remember that the isomorphism class of Hessenberg varieties is preserved under conjugation, so in particular the Hessenberg varieties $\Hess(\mathsf{X}_{\nu},h)$ and $\Hess(\mathsf{X}_{(1,n-1)}, h)$ are isomorphic and hence have the same Poincar\'e polynomial.

\begin{proposition} \label{cor: subreg case}
Suppose $\nu=(n-1,1) \vdash n$. Then 
\begin{equation}\label{eq: inverted partition case}
\sum_{w\in \mathcal{D}_{\nu}} t^{2|N^-(w)\cap \Phi_h^-|} = \sum_{\sigma_{\nu} w \,:\, w\in \mathcal{D}_{\nu}} t^{2|N^-(\sigma_{\nu} w)\cap \Phi_h^-|}.	
\end{equation} 
\end{proposition}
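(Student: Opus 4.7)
The plan is to exploit two instances of the fact that Hessenberg varieties associated to conjugate regular elements are isomorphic, combined with the observation that $\sigma_{(1,n-1)}$ is trivial, to reduce the desired equality to a chain of applications of Corollary~\ref{cor: sigma shifts}. The case $\nu = (n-1,1)$ is special precisely because its ``mirror'' composition $\nu' = (1, n-1)$ gives rise to the identity permutation $\sigma_{\nu'}$, turning Corollary~\ref{cor: sigma shifts} into the very identity we want for $\nu$.

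First, since $\mathsf{X}_{(n-1,1)}$ and $\mathsf{X}_{(1,n-1)}$ differ only by a reordering of Jordan blocks, they are conjugate in $GL(n,\C)$, and hence $\Hess(\mathsf{X}_{(n-1,1)}, h) \cong \Hess(\mathsf{X}_{(1,n-1)}, h)$, yielding an equality of Poincar\'e polynomials. Applying~\eqref{eq: poincare polynomial decomp} to both sides (which is valid for any composition, per the standing hypotheses of this section) and cancelling the common summand $P(\Hess(\mathsf{N},h),t)$, I would conclude
\[
\sum_{w \in \mathcal{D}_{(n-1,1)}} t^{2|N^-(w) \cap \Phi_h^-|} = \sum_{w \in \mathcal{D}_{(1,n-1)}} t^{2|N^-(w) \cap \Phi_h^-|}.
\]
This shows that the LHS of Proposition~\ref{cor: subreg case} equals the summation over $\mathcal{D}_{(1,n-1)}$.

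Next, I would unpack the key observation that for $\nu' = (1,n-1)$ one has $\nu'_1 = 1$, so Definition~\ref{def: sigma-nu} forces $\sigma_{\nu'}(1)=1$, $\sigma_{\nu'}(2)=2$, and the remaining entries to list $\{3,4,\ldots,n\}$ in increasing order; i.e., $\sigma_{\nu'} = \mathrm{id}$. This means that the summation over $\mathcal{D}_{(1,n-1)}$ coincides with the corresponding translated sum, so Corollary~\ref{cor: sigma shifts} applied to the composition $\nu' = (1,n-1)$ (with $\mu' = (0, n-2)$) gives
\[
\sum_{w \in \mathcal{D}_{(1,n-1)}} t^{2|N^-(w) \cap \Phi_h^-|} = \sum_{T \in \SK_2(\Gamma_h)} t^{2\deg(T)} P(\Hess(\mathsf{X}_{(0,n-2)}, h_T), t).
\]
Likewise, Corollary~\ref{cor: sigma shifts} applied directly to $\nu = (n-1,1)$ (with $\mu = (n-2,0)$) gives
\[
\sum_{\sigma_\nu w : w \in \mathcal{D}_{(n-1,1)}} t^{2|N^-(\sigma_\nu w) \cap \Phi_h^-|} = \sum_{T \in \SK_2(\Gamma_h)} t^{2\deg(T)} P(\Hess(\mathsf{X}_{(n-2,0)}, h_T), t).
\]

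Finally, I would observe that $\mathsf{X}_{(0,n-2)}$ and $\mathsf{X}_{(n-2,0)}$ are again conjugate in $GL(n-2,\C)$ (both being regular nilpotent in $\mathfrak{gl}(n-2,\C)$, up to reordering of the trivial zero-block), so $P(\Hess(\mathsf{X}_{(0,n-2)}, h_T), t) = P(\Hess(\mathsf{X}_{(n-2,0)}, h_T), t)$ for each $T \in \SK_2(\Gamma_h)$. Chaining the three displayed equalities yields the desired identity. I do not anticipate a serious obstacle here, since the ``trick'' is explicitly flagged just before the proposition; the only subtlety is verifying that Corollary~\ref{cor: sigma shifts} genuinely applies to the composition $(1, n-1)$ (which is \emph{not} a partition in the customary sense $\nu_1 \geq \nu_2$), but this is precisely why the section establishes its formulas for arbitrary compositions $(\nu_1, \nu_2)$ with $\nu_1, \nu_2 \geq 0$.
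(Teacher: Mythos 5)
Your proposal is correct and follows essentially the same route as the paper's proof: both exploit the conjugacy $\mathsf{X}_{(n-1,1)} \sim \mathsf{X}_{(1,n-1)}$, the decomposition~\eqref{eq: poincare polynomial decomp}, the fact that $\sigma_{(1,n-1)}$ is the identity, and two applications of Corollary~\ref{cor: sigma shifts} (the paper simply identifies both $\mathsf{X}_{(n-2,0)}$ and $\mathsf{X}_{(0,n-2)}$ with $\mathsf{N}'$ outright rather than invoking conjugacy a second time). Your explicit remark that Corollary~\ref{cor: sigma shifts} must be applied to the non-partition composition $(1,n-1)$ is exactly the point the section's standing hypotheses on compositions are designed to cover.
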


\begin{proof} 
Consider the two partitions $\nu = (n-1,1)$ and $\nu'=(1,n-1)$ of $n$. As already noted above, $\Hess(\mathsf{X}_{\nu},h) \cong \Hess(\mathsf{X}_{\nu'},h)$ and hence the two Hessenberg varieties have the same Poincar\'e polynomial. Notice that in both cases, the partition $\mu$ of $n-2$ corresponding to the compositions defined by $\nu=(\nu_1, \nu_2) = (\mu_1+1, \mu_2+1)$ and $\nu' = (\nu_1',\nu_2') = (\mu_1+1, \mu_2+1)$ is the trivial partition of $n-2$ and in particular the $\mathsf{X}_\mu$ which appears in the RHS of both Proposition~\ref{prop: graded case} and Corollary~\ref{cor: sigma shifts} for both cases $\nu=(n-1,1)$ and $\nu'=(1,n-1)$, may be taken to be $\mathsf{N}'$, the regular nilpotent element of $\mathfrak{gl}(n-2,\C)$. Also observe that $\sigma_{\nu'}$ is the identity permutation by definition, since $\nu_1'=1$ in this case. 

From the above considerations we obtain: 
\begin{eqnarray*}\begin{split}
	P(\Hess(\mathsf{X}_{\nu},h),t)&=P(\Hess(\mathsf{X}_{\nu'},h),t)\\
	& =P(\Hess(\mathsf{N},h),t) + \sum_{w \,:\, w\in \mathcal{D}_{\nu'}} t^{2|N^-(w)\cap \Phi_h^-|}   \\ 
	& =P(\Hess(\mathsf{N},h),t) + \sum_{\sigma_{\nu'} w \,:\, w\in \mathcal{D}_{\nu'}} t^{2|N^-(\sigma_{\nu'} w)\cap \Phi_h^-|} \;\; \textup{ since $\sigma_{\nu'} = e$ } \\ 
	&= P(\Hess(\mathsf{N},h),t) + \sum_{T\in \SK_2(\Gamma_h)} t^{2\deg(T)}\, P(\Hess(\mathsf{N}',h_T), t) \\
	&= P(\Hess(\mathsf{N},h),t) + \sum_{\sigma_{\nu} w \,:\, w\in \mathcal{D}_{\nu}} t^{2|N^-(\sigma_{\nu} w)\cap \Phi_h^-|} \\ 
\end{split}\end{eqnarray*}
where the second equality is by Lemma~\ref{lem: Betti numbers of regular Hess} and~\eqref{eq: poincare polynomial decomp} applied to $\nu'=(1,n-1)$ and the fourth (respectively fifth) equality is 
by applying Corollary~\ref{cor: sigma shifts} to $\nu'=(1,n-1)$ (respectively $\nu=(n-1,1)$). 
On the other hand we also know 
\begin{equation*}\label{eq: 5.8 applied to n-1,1}
P(\Hess(\mathsf{X}_{\nu},h),t) = P(\Hess(\mathsf{N},h),t) + \sum_{w \, : \, w \in \mathcal{D}_{\nu}} t^{2|N^-(w)\cap \Phi_h^-|}
\end{equation*}
by Lemma~\ref{lem: Betti numbers of regular Hess} and~\eqref{eq: poincare polynomial decomp} applied to $\nu=(n-1,1)$. Since the RHS of both of the above equalities must be equal, the equality in~\eqref{eq: inverted partition case} follows. 
\end{proof}

The Proposition above proves a special case of our desired formula--namely, it implies that Proposition~\ref{prop: graded case} holds for $\mathsf{X}_{(n-1,1)}$ when combined with Corollary~\ref{cor: sigma shifts}.  We now reduce to this special case using calculations involving the shortest coset representative.

Let $\nu=(\nu_1, \nu_2) \vdash n$. Consider the Young subgroup $W_{\nu} := \Symm_{(\nu_1+1,1,...,1)}$ of $\Symm_n$ defined as permutations of $\{1,2,\ldots,\nu_1+1\}=[\nu_1+1]$. Note that $W_{\nu}$ is the Weyl group of $\mathfrak{gl}(\nu_1+1,\C)$ viewed as a subalgebra of $\mathfrak{gl}(n,\C)$ by identifying it with the upper left-hand $(\nu_1+1)\times (\nu_1+1)$ corner of the matrices in $\mathfrak{gl}(n,\C)$. We will need the following for our proof of Proposition \ref{prop: graded case} below (see e.g. \cite[Section 5]{Kos61}).

\begin{lemma} \label{lem: inversion set induction}
Any $w\in \Symm_n$ can be factored uniquely as $w=yz$ for some $y\in W_{\nu}$ and 
\[
z\in {^\nu W}:= \{ z\in \Symm_n \hsm\vert\hsm z^{-1}(\alpha_i)\in \Phi^+ \textup{ for all } i=1,..., \nu_1 \}.
\]
Moreover, $N^-(w) = N^-(z)\sqcup z^{-1}N^-(y)$.
\end{lemma}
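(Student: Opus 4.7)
The plan is to prove the factorization and the inversion-set identity in sequence, with the latter reducing to two key structural facts about elements of ${}^\nu W$.

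For the factorization, given $w \in \Symm_n$, I will construct $z \in {}^\nu W$ explicitly. Let $\{a_1 < a_2 < \cdots < a_{\nu_1+1}\}$ denote the set $\{w^{-1}(1), w^{-1}(2), \ldots, w^{-1}(\nu_1+1)\}$ arranged in increasing order. Define $z$ by requiring $z^{-1}(i) = a_i$ for $1 \leq i \leq \nu_1 + 1$ and $z^{-1}(i) = w^{-1}(i)$ for $i > \nu_1+1$. Then $z^{-1}(\alpha_i) = t_{a_i} - t_{a_{i+1}} \in \Phi^+$ for $1 \leq i \leq \nu_1$, so $z \in {}^\nu W$. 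Setting $y := wz^{-1}$, a direct check shows $y(i) = i$ for all $i > \nu_1+1$, so $y \in W_\nu$. Uniqueness follows from the standard fact (e.g. \cite{Kos61}) that ${}^\nu W$ is precisely the set of minimum length representatives of the right cosets $W_\nu \backslash \Symm_n$, and each coset contains exactly one such representative.

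The second assertion is the core of the lemma, and rests on the following structural fact, which I will prove first: the defining condition $z^{-1}(\alpha_i) \in \Phi^+$ for $i = 1, \ldots, \nu_1$ implies $z^{-1}(\Phi_\nu^+) \subseteq \Phi^+$, where $\Phi_\nu$ denotes the root system of $W_\nu$. Indeed, any $\alpha \in \Phi_\nu^+$ is a non-negative integral combination of $\alpha_1, \ldots, \alpha_{\nu_1}$, so $z^{-1}(\alpha)$ is a non-negative combination of positive roots, and hence positive. With this in hand, the two containments in $N^-(z) \sqcup z^{-1}N^-(y) \subseteq N^-(w)$ go as follows. If $\gamma \in N^-(z)$, then $z(\gamma) \in \Phi^+$, and in fact $z(\gamma) \notin \Phi_\nu^+$ (otherwise $z^{-1}(z(\gamma)) = \gamma \in \Phi^+$, contradicting $\gamma \in \Phi^-$). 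Writing $z(\gamma) = t_i - t_j$ with $i < j$, we must have $j > \nu_1 + 1$; since $y \in W_\nu$ fixes every letter outside $[\nu_1+1]$, a short case split on whether $i \leq \nu_1 + 1$ shows $y(z(\gamma)) \in \Phi^+$, i.e., $\gamma \in N^-(w)$. If instead $\gamma = z^{-1}(\delta)$ with $\delta \in N^-(y)$, then $\delta \in \Phi_\nu^-$ (because $y$ only inverts roots of $\Phi_\nu$), so $\gamma = z^{-1}(\delta) \in \Phi^-$ by the structural fact applied to $-\delta$, and $w(\gamma) = y(\delta) \in \Phi^+$.

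For the reverse containment and disjointness: let $\gamma \in N^-(w)$ and set $\beta := z(\gamma)$. If $\beta \in \Phi^+$, then $\gamma \in N^-(z)$. Otherwise $\beta \in \Phi^-$, and I will show $\beta \in \Phi_\nu^-$; granting this, $y(\beta) = w(\gamma) \in \Phi^+$ gives $\beta \in N^-(y)$ and hence $\gamma \in z^{-1}N^-(y)$. To see $\beta \in \Phi_\nu^-$, write $\beta = t_i - t_j$ with $i > j$; if either $i$ or $j$ lies outside $[\nu_1+1]$, the explicit description of the action of $y \in W_\nu$ forces $y(\beta)$ to remain in $\Phi^-$, contradicting $y(\beta) \in \Phi^+$, so both $i, j \in [\nu_1+1]$ and $\beta \in \Phi_\nu^-$ as needed. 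Disjointness is immediate: $z$ maps $N^-(z)$ into $\Phi^+$ but maps $z^{-1}N^-(y)$ into $\Phi_\nu^- \subseteq \Phi^-$.

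The main obstacle is the case analysis that establishes $y(z(\gamma)) \in \Phi^+$ when $\gamma \in N^-(z)$, and the symmetric step showing $\beta \in \Phi_\nu^-$ in the reverse containment. Both boil down to careful bookkeeping of which coordinates lie in $[\nu_1+1]$ and which do not, combined with the structural fact $z^{-1}(\Phi_\nu^+) \subseteq \Phi^+$; once this fact is firmly in hand, the rest is a finite case check.
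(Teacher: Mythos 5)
Your proof is correct. Note that the paper does not actually prove this lemma: it cites \cite[Section 5]{Kos61} and only supplies, in the surrounding discussion, the explicit one-line-notation description of $y$ and $z$ (which coincides exactly with your construction $z^{-1}(i)=a_i$ for $i\le \nu_1+1$, $z^{-1}(i)=w^{-1}(i)$ otherwise). So what you have written is a complete elementary verification of a fact the paper takes as known, and every step checks out: the structural fact $z^{-1}(\Phi_\nu^+)\subseteq\Phi^+$ follows even more directly from the observation that $z^{-1}(1)<z^{-1}(2)<\cdots<z^{-1}(\nu_1+1)$, and your case analyses for the two containments and for disjointness are sound. The one place where you lean on an external fact is uniqueness, which you delegate to the standard statement that ${}^\nu W$ is a transversal of $W_\nu\backslash\Symm_n$; if you want the argument fully self-contained, uniqueness also falls out of your own construction: if $w=yz$ with $y\in W_\nu$ and $z\in{}^\nu W$, then $\{z^{-1}(1),\ldots,z^{-1}(\nu_1+1)\}=\{w^{-1}(1),\ldots,w^{-1}(\nu_1+1)\}$ and the condition $z\in{}^\nu W$ forces $z^{-1}(1)<\cdots<z^{-1}(\nu_1+1)$, while $z^{-1}(i)=w^{-1}(i)$ for $i>\nu_1+1$; hence $z$, and therefore $y=wz^{-1}$, is determined by $w$.
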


The set ${^\nu W}$ is known as the \textbf{set of shortest cosets representatives} for $W_{\nu}\backslash \Symm_n$.  The factors $y$ and $z$ in the decomposition given in Lemma~\ref{lem: inversion set induction} have a straightforward interpretation in terms of the one-line notation of $w$. More specifically, we can describe the one-line notation of $z$ as follows. Suppose 
\[
w = [ w(1) \hsm w(2) \cdots \hsm w(n) ] 
\]
is the one-line notation of $w$. In order to obtain the one-line notation for $z$, we look at the entries in $w$ which lie in $\{1,2,\ldots,\nu_1+1\}$, and re-write them in increasing order from left to right. All other entries remain unchanged. The result is the one-line notation for $z$. 

\begin{example} \label{ex: shortest coset one-line}
For example, if $n=7$ and $\nu_1+1=4$ and $w = [6 \hsm \mathbf{4} \hsm \mathbf{1} \hsm 7 \hsm \mathbf{2} \hsm 5 \hsm \mathbf{3}]$ where the numbers in boldface correspond to the entries in $\{1,2,3,4\}$, by re-ordering just these entries we obtain $z = [6 \hsm \mathbf{1} \hsm \mathbf{2} \hsm 7 \hsm \mathbf{3} \hsm 5 \hsm \mathbf{4}]$. Now $y$ is simply the element of $\Symm_4 \subseteq \Symm_7$ which permutes $\{1,2,3,4\}$ to the ordering that was found in the original $w$, so in this example $y = [4 \hsm 1 \hsm 2 \hsm 3 \hsm 5 \hsm 6 \hsm 7]$ 
which can also be viewed (since $y$ stabilizes $\{5,6,7\}$) as $y=[4 \hsm 1 \hsm 2 \hsm 3] \in \Symm_4$. 
\end{example}

\begin{example}\label{example: shortest coset reps} 
Let $n=5$ and $\nu=(3,2)$ so $\nu_1=3$ and $\nu_1+1=4$ as in Example~\ref{ex: w to tau correspondence}. Then $W_\nu = \Symm_{\{1,2,3,4\}} \cong \Symm_4$. The set of shortest coset representatives for $\Symm_4 \setminus \Symm_5$ consist of the permutations $w$ for which the values of $\{1,2,3,4\}$ appear in increasing order in the one-line notation of $w$. Thus there are $5$ elements in $^{\nu} W$ in this case: 
\[
^{\nu} W = \{ [1 \hsm 2\hsm 3\hsm 4 \hsm 5], [1 \hsm 2 \hsm 3 \hsm 5 \hsm 4], [1 \hsm 2 \hsm 5 \hsm 3 \hsm 4], [1\hsm 5 \hsm 2 \hsm 3 \hsm 4], [5 \hsm 1\hsm 2\hsm 3\hsm 4] \}.
\]

\end{example}

\begin{example} 
To illustrate the decomposition $N^-(w) = N^-(z) \sqcup z^{-1}N^-(y)$ we consider the example 
$w = (6, 4, 1, 7, 2, 5, 3), z = (6, 1, 2, 7, 3, 5, 4)$ as in Example~\ref{ex: shortest coset one-line}. Then
\[
N^-(z) = \{ (2,1), (3, 1), (5,1), (6, 1), (7,1), (5,4), (6,4), (7,4), (7,6) \}
\]
and $N^-(y) = \{ (2,1), (3,1), (4,1) \}$ so
\[
z^{-1} N^-(y) = \{ (3,2), (5,2), (7,2) \}. 
\]
The reader can check that $N^-(w) = N^-(z) \sqcup z^{-1}N^-(y)$.
\end{example} 

The following result is a special case of \cite[Proposition 5.2]{Precup2013}.

\begin{lemma}\label{lemma: h_v induction}
Suppose $h: [n] \to [n]$ is a Hessenberg function and $H \subseteq \mathfrak{gl} (n,\C)$ is the associated Hessenberg space.  For every $z\in {^\nu W}$
\[
	H_z:= zH z^{-1}\cap \mathfrak{gl}(\nu_1+1,\C)
\] 
is a Hessenberg space of $\mathfrak{gl}(\nu_1+1,\C)$.
\end{lemma}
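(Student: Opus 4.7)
The plan is to exhibit an explicit Hessenberg function $h_z: [\nu_1+1] \to [\nu_1+1]$ such that $H_z$ equals the associated Hessenberg space of $\mathfrak{gl}(\nu_1+1,\C)$, constructed from $h$ via the ``shortest-coset'' shape of $z$. First, I would translate the defining condition $z^{-1}(\alpha_i) \in \Phi^+$ for $i = 1, \ldots, \nu_1$ into the combinatorial statement that
\[
	z^{-1}(1) < z^{-1}(2) < \cdots < z^{-1}(\nu_1+1),
\]
so that the restriction $\pi := z^{-1}\vert_{[\nu_1+1]}$ is a strictly increasing injection $[\nu_1+1] \hookrightarrow [n]$ whose image is precisely $z^{-1}([\nu_1+1])$.

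Next, I would rewrite $zHz^{-1}$ explicitly using the fact that conjugation by the permutation matrix $z$ sends $E_{ij}$ to $E_{z(i), z(j)}$. Since $H = \mathrm{span}_{\C}\{E_{ij} : i \leq h(j)\}$, we obtain
\[
	zHz^{-1} = \mathrm{span}_{\C}\{E_{z(i), z(j)} : i \leq h(j)\}.
\]
Intersecting with $\mathfrak{gl}(\nu_1+1,\C)$ retains only those basis vectors with $z(i), z(j) \in [\nu_1+1]$, equivalently with $i, j \in \pi([\nu_1+1])$. Re-indexing by $a = z(i), b = z(j)$ (so that $i = \pi(a)$, $j = \pi(b)$), we find
\[
	H_z = \mathrm{span}_{\C}\bigl\{E_{a,b} : a, b \in [\nu_1+1],\ \pi(a) \leq h(\pi(b))\bigr\}.
\]

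Now I would define $h_z: [\nu_1+1] \to [\nu_1+1]$ by
\[
	h_z(b) := \max\{a \in [\nu_1+1] : \pi(a) \leq h(\pi(b))\}.
\]
Three verifications remain. First, $h_z(b) \geq b$ because $\pi(b) \leq h(\pi(b))$ by the Hessenberg property of $h$, so $a = b$ itself satisfies the defining condition. Second, $h_z(b+1) \geq h_z(b)$: if $\pi(a) \leq h(\pi(b))$ then, since both $\pi$ and $h$ are non-decreasing, $\pi(a) \leq h(\pi(b)) \leq h(\pi(b+1))$, so any $a$ eligible for $h_z(b)$ remains eligible for $h_z(b+1)$. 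Third, because $\pi$ is strictly increasing, the set $\{a \in [\nu_1+1] : \pi(a) \leq h(\pi(b))\}$ is a downward-closed interval $\{1, \ldots, h_z(b)\}$, so the description of $H_z$ above coincides with $\mathrm{span}_{\C}\{E_{a,b} : a \leq h_z(b)\}$, i.e., with the Hessenberg space of $\mathfrak{gl}(\nu_1+1,\C)$ associated to $h_z$.

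I do not anticipate a genuine obstacle; the argument is essentially a direct unpacking of definitions. The only subtlety is encoding the shortest-coset-representative property of $z$ as the monotonicity of $\pi$, after which the Hessenberg axioms for $h_z$ follow immediately from those for $h$ together with the monotonicity of $\pi$.
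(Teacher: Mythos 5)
Your proof is correct: the paper itself does not prove this lemma (it cites \cite[Proposition 5.2]{Precup2013}), but your computation of $H_z$ as the span of $\{E_{a,b} : a,b\in[\nu_1+1],\ z^{-1}(a)\leq h(z^{-1}(b))\}$ is exactly the description the paper records immediately after the lemma, and your verification that the shortest-coset condition makes $z^{-1}|_{[\nu_1+1]}$ strictly increasing is precisely the point needed to see that the eligible set of row indices is a downward-closed interval, so that $h_z$ is a well-defined Hessenberg function. Nothing is missing.
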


As for Lemma~\ref{lem: inversion set induction} above, there is a straightforward way to interpret the Hessenberg space $H_z$ in terms of the one-line notation for $z \in {^\nu W}$. Recall from Definition~\ref{definition:Hessenberg subspace} that $H$ is spanned by $\{E_{ij} \hsm \vert \hsm i \leq h(j) \}$, which implies $z H z^{-1} \cap \mathfrak{gl} (\nu_1+1,\C)$ is spanned by $\{E_{ij} \hsm \vert \hsm i, j \in [\nu_1+1] \textup{ and } z^{-1}(i) \leq h(z^{-1}(j)) \}$. 
Now let $h_z: [\nu_1+1]\to [\nu_1+1]$ denote the Hessenberg function corresponding to $H_z$. From the definition we obtain $\Phi_{h_z} = z\Phi_h \cap \Phi_{\nu}$ where $\Phi_{\nu}:=\{ t_i-t_j \hsm \vert \hsm 1\leq i, j \leq \nu_1+1 \} \subseteq \Phi$ is the root system of $\mathfrak{gl}(\nu_1+1,\C)$ considered as a subroot system of $\Phi$.  Similarly, $I_{h_z}= zI_h \cap \Phi_{\nu}$ is the ideal of $\Phi_\nu^-$ corresponding to $h_z$.  

We are finally ready to prove Proposition \ref{prop: graded case}.  The main idea is to decompose the set $\mathcal{D}_{\nu}$ by subdividing the elements according to their shortest coset representative.  This allows us to reduce to the case in which $y\in W_{\nu}$ and $J_{(\nu_1, \nu_2)}\cap \Phi_{\nu} = J_{(\nu_1,1)}$, the special case from Proposition~\ref{cor: subreg case}.

\begin{proof}[Proof of Proposition \ref{prop: graded case}]

We first claim that it suffices to prove that for all $\nu=(\nu_1, \nu_2) =(\mu_1+1, \mu_2+1) \vdash n$, we have
\begin{eqnarray} \label{eq: key equality}
	\sum_{w\in \mathcal{D}_{\nu}} t^{2 |N^-(w)\cap \Phi_h^-|} = \sum_{\sigma_{\nu} w \,:\, w\in \mathcal{D}_{\nu}} t^{2 |N^-(\sigma_{\nu} w)\cap \Phi_h^-|}.		
\end{eqnarray}
Indeed, given~\eqref{eq: key equality} it follows from Corollary \ref{cor: sigma shifts} that
\begin{eqnarray}\label{eq: prop claim}
	\sum_{w\in \mathcal{D}_{\nu}} t^{ 2 |N^-(w)\cap \Phi_h^-|} = \sum_{T\in \SK_2(\Gamma_h)} t^{2\deg(T)}\, P(\Hess(\mathsf{X}_{\mu},h_T), t)
\end{eqnarray}
which is the desired claim of Proposition~\ref{prop: graded case}. 
We now proceed to prove~\eqref{eq: key equality}.  

Given $w\in \mathcal{D}_{\nu}$, let $w=yz$ with $y\in W_{\nu}$ and $z\in {^\nu W}$ be the decomposition from Lemma \ref{lem: inversion set induction} and let $\sigma_\nu$ be as above. 
Note that, by definition, $\sigma_\nu \in W_\nu$. It follows that $\sigma_{\nu}w = (\sigma_\nu y)(z)$, where $\sigma_\nu y \in W_\nu$ and $z \in {^{\nu} W}$, is the decomposition of $\sigma_\nu w$ from Lemma~\ref{lem: inversion set induction} above. Thus
\[
	N^-(w)=N^-(z)\sqcup z^{-1}N^-(y) \textup{ and } N^-(\sigma_{\nu}w) = N^-(z) \sqcup z^{-1}N^-(\sigma_{\nu}y). 
\]	
In particular, 
\begin{eqnarray*}
	|N^-(w)\cap \Phi_h^-| &=& |N^-(z)\cap \Phi_h^-| + |z^{-1}N^-(y)\cap \Phi_h^-|.
\end{eqnarray*}
Next note that the action of $z$ gives a bijective correspondence between $z^{-1} N^-(y) \cap \Phi_h^-$ and 
\[
	N^-(y) \cap z \Phi_h^- = N^-(y) \cap z\Phi_h^- \cap \Phi_\nu = N^-(y) \cap \Phi^-_{h_z}
\]
where the first equality holds since $N^-(y) \subseteq \Phi_\nu$ and the second equality is by definition of $h_z$. Thus 
\begin{equation}\label{eq:Nw and Nz and Ny}
\lvert N^-(w) \cap \Phi_h^- \rvert = \lvert N^-(z) \cap \Phi_h^- \rvert + \lvert N^-(y) \cap \Phi_{h_z}^- \rvert.
\end{equation}
Similarly
$|N^-(\sigma_{\nu}w) \cap \Phi_h^-| =  |N^-(z)\cap \Phi_h^-| + |N^-(\sigma_{\nu}y)\cap \Phi_{h_z}^-|$. 

For each $z\in {^\nu W}$, define 
\[
	\mathcal{D}_{\nu, z} := \{ w\in \mathcal{D}_{\nu} \hsm \vert \hsm w=yz \textup{ for some } y\in W_{\nu}\}.  
\]
Note that $\mathcal{D}_{\nu, z}$ may be empty for some $z$. However, Lemma~\ref{lem: inversion set induction} guarantees that  
$\mathcal{D}_\nu = \bigsqcup_{z\in {^\nu W}} \mathcal{D}_{\nu,z}$.

Fix $z$ such that $\mathcal{D}_{\nu,z} \neq \emptyset$ and let $y \in \Symm_\nu$ such that $w = yz \in \mathcal{D}_{\nu,z}$. We have that $w^{-1}(J_{\nu}) \subseteq \Phi_h$ and $w^{-1}(\alpha_{\nu}) \in I_h$ if and only if  $y^{-1}(J_{\nu}) \subseteq z \Phi_h$ and $y^{-1}(\alpha_{\nu}) \in zI_h$. Next we claim that these two conditions hold if and only if $y^{-1}(J_\nu \cap \Phi_\nu) \subseteq \Phi_{h_z}$ and $y^{-1}(\alpha_{\nu}) \in I_{h_z}$. The implication
in the forward direction is straightforward since we can intersect both conditions with $\Phi_\nu$ and $y \in \Symm_\nu$ preserves $\Phi_\nu$. 
Thus it suffices to show the reverse implication. 
By the definition of $\Phi_{h_z}$ and $I_{h_z}$, it in fact suffices to show that 
$y^{-1}(J_\nu) \subseteq z \Phi_h$. Note that $J_\nu \setminus (J_\nu \cap \Phi_\nu) = \{\alpha_{\nu_1+1}, \ldots, \alpha_{n-1}\}$. Since we already know $y^{-1}(J_\nu \cap \Phi_\nu) \subseteq z\Phi_h$, it is enough to show $y^{-1}(\alpha_s) \in z \Phi_h$ for $\nu_1+1 \leq s \leq n-1$. We take cases. For $s$ with $\nu_1+2 \leq s \leq n-1$, the fact that $y \in \Symm_{\nu}$ implies $y^{-1}(\alpha_s)=\alpha_s$, and now the assumption that $w=yz$ lies in $\mathcal{D}_{\nu}$ implies the desired result.  For $s  = \nu_1+1$, suppose for a contradiction that $y^{-1}(\alpha_{\nu_1+1}) \not \in z \Phi_h$. Then by definition of the ideal $I_h$ we have $y^{-1}(\alpha_{\nu_1+1}) \in z I_h$. On the other hand, by assumption $y^{-1}(\alpha_{\nu_1})$ also lies in $z I_h$. Since $\alpha_{\nu_1}$ and $\alpha_{\nu_1+1}$ are adjacent in the Dynkin diagram (equivalently, $\alpha_{\nu_1} + \alpha_{\nu_1+1} \in \Phi$) and since $I_h$ is an ideal, we conclude that $y^{-1}(\alpha_{\nu_1}) + y^{-1}(\alpha_{\nu_1+1}) \in z I_h$, i.e. $w^{-1}(\alpha_{\nu_1}) + w^{-1}(\alpha_{\nu_1+1}) \in I_h$ where both $w^{-1}(\alpha_{\nu_1})$ and $w^{-1}(\alpha_{\nu_1+1})$ are elements of $I_h$, but this contradicts the fact that $I_h$ is an abelian ideal. Hence we must have $y^{-1}(\alpha_{\nu_1+1}) \in z\Phi_h$, as desired. 

In addition, $J_{\nu}\cap \Phi_{\nu} = J_{(\nu_1,1)}$ when viewed as a subset of simple roots in $\mathfrak{gl}(\nu_1+1,\C)$.  The above considerations allow us to conclude 
\begin{equation}\label{eq: Dvz versus Dv1} 
\{ y\in W_{\nu} \hsm\vert\hsm yz\in \mathcal{D}_{\nu,z} \} = \{ y\in W_{\nu} \hsm\vert\hsm y^{-1}(J_{(\nu_1,1)}) \subseteq \Phi_{h_z} \textup{ and } y^{-1}(\alpha_{\nu}) \in I_{h_z} \}
\end{equation}
where the RHS is equal to the set $\mathcal{D}_{(\nu_1,1)}\subseteq W_{\nu}$ corresponding to the Hessenberg function $h_z: [\nu_1+1] \to [\nu_1+1]$.  
Finally, since the $z$ action on the set $\Phi$ is an automorphism it is straightforward from the definitions to see that $I_h$ abelian implies that $I_{h_z}$ is abelian. Moreover, from its definition it follows that $\sigma_{(\nu_1,1)}$ can be identified with $\sigma_{\nu}$ via the inclusion $W_\nu \cong \Symm_{\nu_1+1} \hookrightarrow \Symm_n$, since the first part of each of the partitions $\nu=(\nu_1, \nu_2)$ and $(\nu_1,1)$ is the same.

The arguments above imply that 
\begin{eqnarray*}
	\sum_{w\in \mathcal{D}_{\nu}} t^{ 2 |N^-(w)\cap \Phi_h^-|} &=& \sum_{z\in {^\nu W}} \; \sum_{yz\in \mathcal{D}_{\nu, z}} t^{2|N^-(z)\cap \Phi_h^-|+ 2| N^-(y)\cap \Phi_{h_z}^-| }\quad  \textup{ by~\eqref{eq:Nw and Nz and Ny} and $\mathcal{D}_{\nu} = \bigsqcup_{z \in {^{\nu}W}} \mathcal{D}_{\nu,z}$  } \\
	&=& \sum_{z\in {^\nu W}}\, t^{2|N^-(z)\cap \Phi_h^-|} \sum_{y\in \mathcal{D}_{(\nu_1,1)}} t^{2| N^-(y) \cap \Phi_{h_z}^-|}  \quad \textup{ by~\eqref{eq: Dvz versus Dv1} } \\
	&=& \sum_{z\in {^\nu W}}\, t^{2|N^-(z)\cap \Phi_h^-|} \sum_{\sigma_{(\nu_1,1)}y \,:\, y \in \mathcal{D}_{(\nu_1,1)}} t^{2 |N^-(\sigma_{(\nu_1,1)}y) \cap \Phi_{h_z}^-|} \quad \textup{ by Proposition~\ref{cor: subreg case} } \\
	&=& \sum_{z\in {^\nu W}}\; \sum_{\sigma_{\nu}yz \,:\, yz \in \mathcal{D}_{\nu,z}} t^{2|N^-(z)\cap \Phi_h^-|+2 |N^-(\sigma_{\nu}y) \cap \Phi_{h_z}^-|}\quad \textup{ since $\sigma_{(\nu_1,1)} = \sigma_{\nu}$}\\
	&=& \sum_{z\in {^\nu W}}\, \sum_{\sigma_{\nu}w \,:\, w \in \mathcal{D}_{\nu, z}} t^{2 |N^-(\sigma_{\nu}w) \cap \Phi_{h}^-|} \quad \textup{ by~\eqref{eq:Nw and Nz and Ny} applied to $\sigma_\nu w = (\sigma_\nu y)(z)$} \\
	&=&\sum_{\sigma_{\nu} w \,:\, w\in \mathcal{D}_{\nu}} t^{ 2 |N^-(\sigma_{\nu} w)\cap \Phi_h^-|}
\end{eqnarray*}
and proves~\eqref{eq: key equality}, as desired. 
\end{proof}

The proof of our main technical proposition is now a simple matter. 

\begin{proof}[Proof of Proposition~\ref{prop:induction step}] 
As already noted in~\eqref{eq: poincare polynomial decomp}, Lemma~\ref{lem: Betti numbers of regular Hess} implies 
\[
P(\Hess(\mathsf{X}_\nu, h), t) = P(\Hess(\mathsf{N}, h), t) + \sum_{w \in \mathcal{D}_\nu} t^{2 \lvert N^-(w) \cap \Phi_h^- \rvert}.
\]
Now Proposition~\ref{prop: graded case} says 
\[
\sum_{w \in \mathcal{D}_\nu} t^{2 \lvert N^-(w) \cap \Phi_h^- \rvert} = \sum_{T \in SK_2(\Gamma_h)} t^{2 \deg(T)} P(\Hess(\mathsf{X}_\mu, h_T),t)
\]
so the desired statement follows immediately. 
\end{proof}


\subsection{Proof of the graded Stanley-Stembridge conjecture for the abelian case}\label{subsec: graded SS for abelian}

We can now prove the graded Stanley-Stembridge conjecture for the abelian case by induction. We have the following. 

\begin{corollary}\label{corollary: graded SS for abelian}
Let $n$ be a positive integer and $h:[n] \to [n]$ a Hessenberg function such that $I_h$ is abelian. Then the integers $c_{\lambda,i}$ appearing in~\eqref{eq: decomp into Mlambda} are non-negative.
\end{corollary}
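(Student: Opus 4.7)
The plan is to prove Corollary~\ref{corollary: graded SS for abelian} by strong induction on $n$. For the base cases $n=1$ and $n=2$, the Hessenberg varieties in question are so small that the dot-action representations can be computed by hand (e.g., for $n=1$, $\Hess(\mathsf{S},h)$ is a point, and for $n=2$, it is either two points or all of $\mathbb{P}^1$), and one verifies the claim directly. For the inductive step, suppose $n \geq 3$ and the statement holds for all Hessenberg functions $h': [n'] \to [n']$ with $n' < n$ whose associated ideal $I_{h'}$ is abelian (or empty). Apply Theorem~\ref{theorem:induction} to obtain
\[
H^{2i}(\Hess(\mathsf{S}, h)) = c_{(n),i} M^{(n)} + \sum_{T \in SK_2(\Gamma_h)} \sum_{\substack{\mu \,\vdash (n-2)\\ \mu=(\mu_1,\mu_2)}} c_{\mu, i-\deg(T)}^T \, M^{(\mu_1+1,\mu_2+1)}
\]
in $\mathcal{R}ep(\Symm_n)$.

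Next, I would show each coefficient on the right-hand side is non-negative. First, $c_{(n),i} \geq 0$ by Corollary~\ref{corollary: triv coeff is nonneg}, since it counts a certain set of acyclic orientations. Second, for each $T \in SK_2(\Gamma_h)$, I need to verify that the inductive hypothesis applies to the smaller Hessenberg variety $\Hess(\mathsf{S}', h_T) \subseteq \Flags(\C^{n-2})$. By Lemma~\ref{lem: maximal sink set induction}(1) combined with Proposition~\ref{lem: root subsets}, we have $m(\Gamma_{h_T}) \leq m(\Gamma_h) = ht(I_h) + 1 = 2$, so $ht(I_{h_T}) \leq 1$; by Proposition~\ref{lemma: abelian ht 1}, $I_{h_T}$ is either abelian or empty. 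Either way, since $n - 2 < n$, the inductive hypothesis yields $c^T_{\mu, i-\deg(T)} \geq 0$ for all $\mu \vdash (n-2)$ and all $i$.

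Since $\{M^{\lambda}\}_{\lambda \vdash n}$ forms a $\Z$-basis of $\mathcal{R}ep(\Symm_n)$ (as noted in Section~\ref{sec: Stanley-Stembridge conjecture and dot action}), the coefficient $c_{\lambda, i}$ of $M^\lambda$ appearing in~\eqref{eq: decomp into Mlambda} is uniquely determined. For $\lambda = (n)$, this coefficient is $c_{(n),i}$, which is non-negative. For $\lambda = (\lambda_1, \lambda_2)$ with two parts, it is the sum $\sum_T c^T_{(\lambda_1-1, \lambda_2-1), i-\deg(T)}$ over those $T \in SK_2(\Gamma_h)$ contributing to that partition, a sum of non-negative terms and therefore non-negative. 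Finally, for $\lambda$ with more than two parts, Corollary~\ref{corollary: max sink set gives bound on lambda} gives $c_{\lambda,i}=0$, which is certainly non-negative.

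Because all of the heavy lifting is carried out in Theorem~\ref{theorem:induction}, Corollary~\ref{corollary: triv coeff is nonneg}, and Lemma~\ref{lem: maximal sink set induction}, there is no serious obstacle in this argument; the only subtlety to handle carefully is the bookkeeping that several distinct sink sets $T$ can yield the same partition $(\mu_1+1, \mu_2+1)$, so the coefficient of a given $M^{\lambda}$ in the expansion is a sum of contributions indexed by such $T$'s — but since each contribution is itself non-negative by induction, the conclusion follows immediately.
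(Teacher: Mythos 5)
Your proposal is correct and follows essentially the same route as the paper's proof: strong induction on $n$ with the base cases $n=1,2$ computed directly, the inductive step via Theorem~\ref{theorem:induction}, non-negativity of $c_{(n),i}$ from Corollary~\ref{corollary: triv coeff is nonneg}, and the abelianness of $h_T$ (so that the inductive hypothesis applies to $\Hess(\mathsf{S}',h_T)$) deduced from Lemma~\ref{lem: maximal sink set induction} together with Propositions~\ref{lem: root subsets} and~\ref{lemma: abelian ht 1}. Your explicit remark that distinct sink sets $T$ may contribute to the same $M^{\lambda}$, so that $c_{\lambda,i}$ is a sum of non-negative terms, is a worthwhile clarification but not a different argument.
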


\begin{proof}
We argue by induction. Our base cases are $n=1$ and $n=2$. The case $n=1$ is trivial in the sense that the regular semisimple Hesenberg variety under consideration is just a single point, and the symmetric group is the trivial group. Hence the claim holds in this case. 

The next case $n=2$ is the first case in which the corresponding flag variety $\Flags(\C^n) = \Flags(\C^2) \cong \P^1$ is non-trivial. In this case there are only two Hessenberg functions to consider: $h=(1,2)$ and $h=(2,2)$. Both cases correspond to abelian ideals. If $h=(1,2)$, the corresponding variety $\Hess(\mathsf{S},(1,2))$ consists of two points $\{N,S\}$ (the ``north pole'' and ``south pole'' of the $\P^1$) and hence its cohomology is non-zero only in degree $0$.  In this case, the corresponding Hessenberg space $H$ is the Borel subalgebra and Teff's results \cite{Teff2013a} prove that $H^0(\Hess(\mathsf{S},(1,2))) \cong M^{(1,1)}$.  (The reader may confirm this by computing the corresponding representation directly using, for example, the explicit description of Tymoczko's action in \cite{Tym08} via GKM theory.)  If $h=(2,2)$, the variety $\Hess(\mathsf{S},(2,2))$ is equal to the entire flag variety $\P^1$ and has non-zero cohomology only in degrees $0$ and $2$. Another direct computation shows that $H^0(\Hess(\mathsf{S},(2,2))) \cong M^{(2)}$ and $H^2(\Hess(\mathsf{S},(2,2))) \cong M^{(2)}$. In both cases we conclude that the $\Symm_2$-representation, in each degree, is a non-negative sum of tabloid representations. Thus the claim of the theorem holds in these base cases.

Now suppose $n \geq 3$ and let $\mathsf{S}'$ be any regular semisimple element of $\mathfrak{gl}(k,\C)$ for $k$ for $1 \leq k \leq n-1$.  Suppose also by induction that for any Hessenberg function $h': [k] \to [k]$ with $I_{h'}$ abelian, we have that 
\[
H^{2i}(\Hess(\mathsf{S}', h')) = \sum_{\lambda\vdash k} c_{\lambda,i}' M^{\lambda}
\]
where the $c_{\lambda,i}'$ are all non-negative. We wish to show that the corresponding statement is true for $k=n$.  To see this, fix $i\geq 0$. Suppose $h: [n] \to [n]$ is a Hessenberg function such that $I_h$ is abelian. We wish to show that 
\[
H^{2i}(\Hess(\mathsf{S},h)) = \sum_{\lambda \vdash n} c_{\lambda,i} M^{\lambda}
\]
where each $c_{\lambda, i} \in \Z$ and $c_{\lambda,i} \geq 0$. 

On the other hand, by Theorem~\ref{theorem:induction} we know   
\[
H^{2i}(\Hess(\mathsf{S}, h)) = c_{(n),i} M^{(n)} + \left( \sum_{T \in SK_2(\Gamma_h)} \left( \sum_{\substack{\mu \vdash (n-2)\\ \mu=(\mu_1,\mu_2)}} c_{\mu, i-\deg(T)}^T M^{(\mu_1+1,\mu_2+1)} \right) \right). 
\]
On the RHS of the above equality, the coefficient $c_{(n),i}$ is non-negative by Corollary~\ref{corollary: triv coeff is nonneg}.  Moreover, in the summation expression on the RHS, each $\mu$ is a partition of $n-2$ and $c_{\mu, i-\deg(T)}^T$ is the coefficient of $M^{\mu}$ in the decomposition of $H^{2i-2\deg(T)}(\Hess(\mathsf{S}',h_T))$.  Since $h$ is an abelian Hessenberg function, $h_T$ is also abelian by Lemma~\ref{lem: maximal sink set induction}, and therefore each coefficient $c_{\mu, i-\deg(T)}^T$ is non-negative by the induction hypothesis. Thus, the above equality expresses $H^{2i}(\Hess(\mathsf{S},h))$ as a non-negative linear combination of tabloid  representations. This completes the inductive step and hence the proof of the theorem.
\end{proof}


\section{A conjecture for the general case}\label{section: conjecture} 

Although we prove our main result, Theorem~\ref{theorem:main}, for abelian Hessenberg varieties only, much of the framework and analysis in Sections~\ref{sec: sink sets and induction} and~\ref{sec: sink sets, ideals and representations} is general.  In particular the analysis of maximal sink sets of the graph $\Gamma_h$ in Section~\ref{sec: sink sets and induction} shows that every acyclic orientation of $\Gamma_h$ corresponding to such a sink set $T$ is obtained inductively from an acyclic orientation of the smaller graph $\Gamma_{h_T}$ on $n-|T|$ vertices.  Using Theorem~\ref{thm: Stanley}, this indicates a correspondence between the representations $H^*(\Hess(\mathsf{S},h))$ and $H^*(\Hess(\mathsf{S}', h_T))$, where $\mathsf{S}'$ denotes a regular semisimple element in $\mathfrak{gl}(n-|T|, h_T)$.  Suppose $\lambda \vdash  n$ has $m(\Gamma_h)$ parts.  We now formulate a conjectured formula for the coefficient of $M^{\lambda}$ occurring in $H^*(\Hess(\mathsf{S}, h))$ as a function of the coefficients of $M^{\mu}$ occurring in $H^*(\Hess(\mathsf{S}', h_T))$ where $\mu \vdash (n-|T|)$.  

\begin{conjecture}\label{conj: general case} 
Let $h: [n] \to [n]$ be a Hessenberg function and $\lambda\vdash n$ be a partition with exactly $m = m(\Gamma_h)$ parts.  Let $\mu = (\mu_1, \mu_2,..., \mu_m)$ be a partition of $n-|T|$ such that $\lambda = (\mu_1+1, \mu_2+1 , \cdots, \mu_m +1)$.  Then for all $i\geq 0$,
\[
	c_{\lambda, i} = \sum_{T\in \SK_m(\Gamma_h)} c_{\mu, i-\deg(T)}^T.
\]
\end{conjecture}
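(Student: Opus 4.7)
The plan is to follow the template of the proof of Theorem~\ref{theorem:induction}, upgrading each of its steps from the case $m=2$ to arbitrary $m = m(\Gamma_h)$. As a first step I would promote Conjecture~\ref{conj: general case} to a full representation-ring identity for $H^{2i}(\Hess(\mathsf{S},h))$ that accounts for all partitions $\lambda$ with $\mathrm{parts}(\lambda) \leq m$. The maximum-parts piece would be
\[
\sum_{T \in \SK_m(\Gamma_h)} \sum_{\substack{\mu \,\vdash\, (n-m) \\ \mu=(\mu_1,\ldots,\mu_m)}} c^T_{\mu,\,i-\deg(T)}\, M^{(\mu_1+1,\ldots,\mu_m+1)},
\]
while the strictly-smaller-parts pieces would be governed by analogous sink-set sums indexed by $\SK_k(\Gamma_h)$ for $k < m$. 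By Corollary~\ref{corollary: max sink set gives bound on lambda} all coefficients $c_{\lambda,i}$ vanish for $\mathrm{parts}(\lambda) > m$, so by Lemma~\ref{lemma: restrict to submatrix} the whole identity reduces to equalities $\dim H^{2i}(\Hess(\mathsf{S},h))^{\Symm_\nu} = \cdots$ for each $\nu \vdash n$ with $\mathrm{parts}(\nu) \leq m$. Invoking Theorem~\ref{thm: BrosnanChow main thm} converts the left side into $\dim H^{2i}(\Hess(\mathsf{X}_\nu,h))$, reducing the problem to equalities of Betti numbers of regular Hessenberg varieties.

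The second step is the key technical input: proving an inductive formula for the Poincar\'e polynomial of $\Hess(\mathsf{X}_\nu,h)$ generalizing Proposition~\ref{prop:induction step}, of the shape
\[
P(\Hess(\mathsf{X}_\nu,h),t) = P(\Hess(\mathsf{N},h),t) + \sum_{k=2}^{\mathrm{parts}(\nu)} \sum_{T \in \SK_k(\Gamma_h)} t^{2\deg(T)}\, P(\Hess(\mathsf{X}_{\mu_T^\nu}, h_T), t),
\]
where $\mu_T^\nu$ is the composition of $n - |T|$ obtained from $\nu$ by subtracting $1$ from each of the $|T|$ largest parts. The starting point is the Betti-number formula of Lemma~\ref{lem: Betti numbers of regular Hess}. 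One decomposes the indexing set $\mathcal{D}_\nu$ of ``extra'' permutations by refining \eqref{eq: Dnu decomposition}, using the bijection of Proposition~\ref{lem: root subsets} between sink sets of size $k$ and height-$(k-1)$ chains in $I_h$. The analog of the combinatorial Lemma~\ref{lem: two-part induction} should become a Kostka-number identity comparing $\dim(M^{(\mu_1+1,\ldots,\mu_m+1)})^{\Symm_{(\mu_1'+1,\ldots,\mu_m'+1)}}$ with $\dim(M^\mu)^{\Symm_{\mu'}}$, provable by a careful row-removal argument akin to the one that worked in the two-row case.

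The main obstacle will be generalizing the $\sigma_\nu$-shift machinery of Section~\ref{sec: main proofs}. In the abelian case the minimal permutation $w_{\nu,\beta}$ was supported on a single root $\beta \in I_h$, and the shift $\sigma_\nu$ permuted only one adjacent pair of values. For higher $m$ one must construct $w_{\nu,R}$ and $\sigma_\nu$ associated to an entire chain $R \in \mathcal{R}_{m-1}(I_h)$, and establish the analogs of Lemmas~\ref{lem: wmin conjugation}, \ref{lem:equivalence of Hess conditions}, and the degree identity of Lemma~\ref{lem: degree shifts}. The abelian hypothesis was used crucially to rule out certain root sums from landing in $I_h$; for general height one would instead exploit the \emph{maximality} of $T$ (so $R_T$ cannot be extended) to control those sums. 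A further subtlety lies in the base case: Proposition~\ref{cor: subreg case} handled $\nu=(n-1,1)$ via the conjugation $\Hess(\mathsf{X}_{(n-1,1)},h) \cong \Hess(\mathsf{X}_{(1,n-1)},h)$, and for $m \geq 3$ one would likely need to iterate the shortest-coset-representative reduction (Lemma~\ref{lem: inversion set induction}) through a cascade of near-hook partitions $(n-m+1,1,\ldots,1)$ and their $\Symm_n$-conjugates, bootstrapping the formula to arbitrary $\nu$ with $m$ parts. Once this is accomplished, the analog of Corollary~\ref{corollary: graded SS for abelian} extends by induction on $ht(I_h)$, with the present abelian result (height $1$) providing the base case for the full graded Stanley--Stembridge conjecture.
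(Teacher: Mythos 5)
The statement you are addressing is stated in the paper as Conjecture~\ref{conj: general case}: the authors do not prove it, and your proposal does not close the gap either --- it is a road map whose critical steps are precisely the open problems the authors themselves identify at the end of Section~\ref{section: conjecture}. The most serious issue is already in your first step. To reduce a representation-ring identity to Betti numbers via Lemma~\ref{lemma: restrict to submatrix} and Theorem~\ref{thm: BrosnanChow main thm}, you need a candidate expression for \emph{every} coefficient $c_{\lambda,i}$ with $\mathrm{parts}(\lambda)\le m$, because each invariant dimension $\dim(H^{2i}(\Hess(\mathsf{S},h)))^{\Symm_\nu}=\sum_\lambda c_{\lambda,i}\dim(M^\lambda)^{\Symm_\nu}$ mixes all of them (indeed $N_{\lambda\nu}=\sum_\mu K_{\mu\lambda}K_{\mu\nu}\ge K_{(n)\lambda}K_{(n)\nu}=1$ for every pair). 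The conjecture supplies only the maximal-parts piece, and the paper's example with $n=7$, $h=(3,4,5,6,7,7,7)$ makes explicit that no formula for the intermediate coefficients (those of $M^{(6,1)}$, $M^{(5,2)}$, $M^{(4,3)}$) is known. Your proposed fix --- analogous sink-set sums indexed by $\SK_k(\Gamma_h)$ for $k<m$ --- cannot be correct as stated: the restriction bijection of Proposition~\ref{proposition: max sink set induction}, which underlies the entire abelian argument, fails for non-maximal sink sets (the paper exhibits $T=\{5\}$ for $h=(3,4,5,5,5)$ as a counterexample), so $\{\omega\in\mathcal{A}(\Gamma_h) : \sk(\omega)=T\}$ is in general a proper subset of $\mathcal{A}(\Gamma_{h_T})$ and the degree-shifted Poincar\'e polynomials $t^{2\deg(T)}P(\Hess(\mathsf{X}_{\mu^\nu_T},h_T),t)$ would overcount.

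The later steps are likewise flagged rather than resolved. The abelian hypothesis is not a convenience in Section~\ref{sec: main proofs}: it is exactly what forces $w_{\nu,\beta}^{-1}(J_\nu)\subseteq\Phi_h$ in Lemma~\ref{lem: wmin satisfies the Hess. condition} and what drives the implication $(2)\Rightarrow(1)$ in Lemma~\ref{lem:equivalence of Hess conditions}, in both cases by forbidding a sum of two elements of $I_h$ from lying in $\Phi^-$. When $ht(I_h)\ge 2$ such sums do occur, and maximality of the chain $R_T$ only prevents extending $R_T$ itself; it says nothing about arbitrary sums $w^{-1}(\alpha)+w^{-1}(\alpha_\nu)$ for $\alpha\in J_\nu$, so a genuinely new mechanism is required. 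Similarly, the base case of Proposition~\ref{cor: subreg case} rests on the coincidence that $\nu=(n-1,1)$ and $\nu'=(1,n-1)$ give isomorphic Hessenberg varieties while $\sigma_{\nu'}$ is the identity; for $m\ge 3$ no single composition plays both roles, and the proposed cascade through near-hook shapes is not constructed. As written, the proposal correctly maps the terrain but establishes nothing beyond what Theorem~\ref{theorem:induction} already gives; the conjecture remains open.
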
 

This conjecture extends the results of Theorem~\ref{theorem:induction} to arbitrary regular semisimple Hessenberg varieties.  However, unless the Hessenberg variety is abelian (i.e., unless $m(\Gamma_h)\leq 2$), this formula does not determine the entire representation $H^*(\Hess(\mathsf{S},h))$.  The next example demonstrates this.

\begin{example} 
Suppose $n=7$ and $h=(3,4,5,6,7,7,7)$. In this case, $m(\Gamma_h) = 3$ so $h$ is not an abelian. We will show that Conjecture~\ref{conj: general case} correctly predicts the coefficients of $M^{\lambda}$ for $\lambda \vdash 7$ with exactly three parts.  Note that, in this case, there is only one maximal sink set, namely $T=\{1, 4, 7\}$.  The graph below shows the acyclic orientation $\omega \in \mathcal{A}_2(\Gamma_h)$ such that $\asc(\omega) = \deg(\{1,4,7\})$.  The vertices in $\{1,4,7\}$ and incident edges are highlighted in red, and we display the corresponding acyclic orientation of $\Gamma_h - T \cong \Gamma_{h_T}$ on the right.
\vspace*{.15in}
\[\xymatrix{ {\color{red}1}    & {2} \ar@[red][l] \ar@[red]@/^1.5pc/[rr]  & 3 \ar@[red][r] \ar[l] \ar@[red]@/_1.5pc/[ll]  &  {\color{red}4}    & {5} \ar@[red]@/^1.5pc/[rr]  \ar@/_1.5pc/[ll] \ar@[red][l]  & {6} \ar@[red][r] \ar[l] \ar@[red]@/_1.5pc/[ll] & {\color{red}7} & &
1 & 2 \ar[l] & 3 \ar[l] & 4 \ar[l]
}\]
The graphs above show that $\deg(\{1,4,7\}) = 4$ and $h_T=(2,3,4,4)$.  The left-hand table below computes the representation $H^*(\Hess(\mathsf{S}_T,h_T))$ as a sum of tabloid representations in each degree.  The right-hand table below shows the tabloid representations corresponding to partitions with $3$ parts that occur as summands of $H^*(\Hess(\mathsf{S},h))$ in $\mathcal{R}ep(\Symm_n)$.
\[
\begin{array}{|c|c|}\hline
H^0(\Hess(\mathsf{S}',h_T)) & M^{(4)}\\ 
H^2(\Hess(\mathsf{S}',h_T))  & M^{(4)} + M^{(3,1)} + M^{(2,2)} \\
H^4(\Hess(\mathsf{S}',h_T)) &M^{(4)} + M^{(3,1)} + M^{(2,2)} \\
H^6(\Hess(\mathsf{S}',h_T))  & M^{(4)}\\
\hline
\end{array}
\quad
\begin{array}{|c|c|}\hline
H^8(\Hess(\mathsf{S},h)) & M^{(5,1,1)}\\
H^{10}(\Hess(\mathsf{S},h)) &M^{(5,1,1)} + M^{(4,2,1)} + M^{(3,3,1)}  \\
H^{12}(\Hess(\mathsf{S},h)) & M^{(5,1,1)} + M^{(4,2,1)} + M^{(3,3,1)}\\
H^{14}(\Hess(\mathsf{S},h)) & M^{(5,1,1)} \\\hline
\end{array}
\]
These tables confirm Conjecture~\ref{conj: general case}.  Although the conjectured formula correctly determines the tabloid representations $M^{\lambda}$ appearing for $\lambda$ with three parts, it does \emph{not} determine the entire representation.  For example, 
\[
H^{10}(\Hess(\mathsf{S},h)) = 32M^{(7)} + 27M^{(6,1)}+19 M^{(5,2)} + 15M^{(4,3)}+ M^{(5,1,1)} + M^{(4,2,1)} + M^{(3,3,1)}
\]
and we do not know of an inductive formula for the coefficients of $M^{(6,1)}$, $M^{(5,2)}$, or $M^{(4,3)}$ at this time.
\end{example} 

The formula given in Conjecture~\ref{conj: general case} does not determine the coefficients for $M^{\lambda}$ unless $\lambda$ has a maximal number of parts.  In this sense, this conjecture represents \emph{the tip of an iceberg}. To obtain a formula which fully generalizes Theorem~\ref{theorem:induction} we need some idea of how to inductively obtain the coefficients $c_{\lambda}$ for $\lambda$ with any number of parts. We intend to pursue this in future work.

%

\def\cprime{$'$}

\end{document}